\newtheorem{theo}{Theorem}[subsection]
\newtheorem{defi}[theo]{Definition}
\newtheorem{lem}[theo]{Lemma}
\newtheorem{rem}[theo]{Remark}
\newtheorem{prop}[theo]{Proposition}
\newtheorem{cor}[theo]{Corollary}
\newtheorem{conj}[theo]{Conjecture}
\newcommand{\nc}{\newcommand}
\nc{\on}{\operatorname}
\nc{\C}{\mathbb{C}}
\nc{\R}{\mathbb{R}}
\nc{\Q}{\mathbb{Q}}
\nc{\Z}{\mathbb{Z}}
\nc{\N}{\mathbb{N}}
\nc{\bbH}{\mathbb{H}}
\nc{\bbK}{\mathbb{K}}
\nc{\bfa}{\mathbf{a}}
\nc{\bfA}{\mathbf{A}}
\nc{\bfb}{\mathbf{b}}
\nc{\bfi}{\mathbf{i}}
\nc{\bfk}{\mathbf{k}}
\nc{\bfone}{\boldsymbol 1}
\nc{\bfeta}{\boldsymbol \eta}
\nc{\bfkappa}{\boldsymbol \kappa}
\nc{\bfsigma}{\boldsymbol \sigma}
\nc{\bfvarsigma}{\boldsymbol \varsigma}
\nc{\bfzeta}{\boldsymbol \zeta}
\nc{\A}{\mathbf{A}}
\nc{\U}{\mathbf{U}}
\nc{\clC}{\mathcal{C}}
\nc{\clF}{\mathcal{F}}
\nc{\clM}{\mathcal{M}}
\nc{\clO}{\mathcal{O}}
\nc{\clU}{\mathcal{U}}
\nc{\clUi}{\clU^{\imath}}
\nc{\clX}{\mathcal{X}}
\nc{\clXs}{\clX_{\mathrm{s}}}
\nc{\clY}{\mathcal{Y}}
\nc{\clYs}{\clY_{\mathrm{s}}}
\nc{\clW}{\mathcal{W}}
\nc{\clZ}{\mathcal{Z}}
\nc{\fra}{\mathfrak{a}}
\nc{\frh}{\mathfrak{h}}
\nc{\g}{\mathfrak{g}}
\nc{\frgl}{\mathfrak{gl}}
\nc{\frk}{\mathfrak{k}}
\nc{\fram}{\mathfrak{m}}
\nc{\frn}{\mathfrak{n}}
\nc{\frp}{\mathfrak{p}}
\nc{\frs}{\mathfrak{s}}
\nc{\frt}{\mathfrak{t}}
\nc{\frsl}{\mathfrak{sl}}
\nc{\frso}{\mathfrak{so}}
\nc{\frsp}{\mathfrak{sp}}
\nc{\frsu}{\mathfrak{su}}
\nc{\fru}{\mathfrak{u}}
\nc{\frz}{\mathfrak{z}}
\nc{\fin}{\mathrm{fin}}
\nc{\inv}{^{-1}}
\nc{\qu}{\quad}
\nc{\qqu}{\qquad}
\nc{\la}{\langle}
\nc{\ra}{\rangle}
\nc{\Ker}{\on{Ker}}
\nc{\im}{\on{Im}}
\nc{\Hom}{\on{Hom}}
\nc{\End}{\on{End}}
\nc{\Span}{\on{Span}}
\nc{\id}{\on{id}}
\nc{\Aut}{\on{Aut}}
\nc{\ad}{\on{ad}}
\nc{\sgn}{\on{sgn}}
\nc{\tot}{\on{tot}}
\nc{\rk}{\on{rk}}
\nc{\rank}{\on{rank}}
\nc{\Wt}{\on{Wt}}
\nc{\diag}{\on{diag}}
\nc{\Mat}{\on{Mat}}
\nc{\tr}{\on{tr}}
\nc{\Diag}{\on{Diag}}
\nc{\GL}{\on{GL}}
\nc{\SO}{\on{SO}}
\nc{\Sp}{\on{Sp}}
\nc{\gr}{\on{gr}}
\nc{\Ind}{\on{Ind}}
\nc{\Res}{\on{Res}}
\nc{\ol}{\overline}
\nc{\ul}{\underline}
\nc{\hf}{\frac{1}{2}}
\nc{\vphi}{\varphi}
\nc{\vrho}{\varrho}
\nc{\vpi}{\varpi}
\nc{\vep}{\varepsilon}
\nc{\eps}{\epsilon}
\nc{\lm}{\lambda}
\nc{\til}{\widetilde}
\nc{\IF}{\text{ if }}
\nc{\AND}{\text{ and }}
\nc{\OR}{\text{ or }}
\nc{\OW}{\text{ otherwise}}
\nc{\lowerterms}{\text{(lower terms)}}
\nc{\higherterms}{\text{(higher terms)}}
\nc{\lex}{\text{lex}}
\nc{\sesi}{\text{ss}}
\nc{\ST}{\text{ s.t. }}
\nc{\Forsome}{\text{ for some }}
\nc{\Forall}{\text{ for all }}
\nc{\Xtil}{\widetilde{X}}
\nc{\Ytil}{\widetilde{Y}}
\nc{\Ui}{\U^{\imath}}
\nc{\taui}{\tau^{\imath}}
\nc{\plim}[1][]{\mathop{\varprojlim}\limits_{#1}}
\nc{\ilim}[1][]{\mathop{\varinjlim}\limits_{#1}}
\nc{\TBA}{{\large {\bf \textcolor{red}{To Appear}}}}
\title{Classical weight modules over $\imath$quantum groups}
\author[H. Watanabe]{Hideya Watanabe}
\address{(H. Watanabe) Research Institute for Mathematical Sciences, Kyoto University, Kyoto 606-8502, Japan}
\email{hideya@kurims.kyoto-u.ac.jp}
\subjclass[2010]{Primary~17B37; Secondary~17B10}
\keywords{$\imath$quantum group, quantum symmetric pair, classical weight module, coideal, highest weight theory}
\date{\today}
\begin{document}
\maketitle

\begin{abstract}
$\imath$quantum groups are generalizations of quantum groups which appear as coideal subalgebras of quantum groups in the theory of quantum symmetric pairs. In this paper, we define the notion of classical weight modules over an $\imath$quantum group, and study their properties along the lines of the representation theory of weight modules over a quantum group. In several cases, we classify the finite-dimensional irreducible classical weight modules by a highest weight theory.
\end{abstract}

\section{Introduction}
To a complex semisimple Lie algebra $\g$, one can associate a certain Hopf algebra $\U = U_q(\g)$, called the quantum group, over the field $\C(q)$ of rational functions. The representation theory of quantum groups have been extensively studied, and been shown to have many applications to various branches of mathematics and physics such as representation theory of Lie algebras, knot theory, orthogonal polynomials, and integrable systems.

The quantum group $\U$ can be regarded as a $q$-deformation of the universal enveloping algebra $U(\g)$. However, it is not unique with this property. In fact, Gavrilik and Klimyk \cite{GK91} defined a $q$-deformation $U'_q(\frso_n)$ of $U(\frso_n)$ by deforming a set of defining relations of $\frso_n$ in a different way from the usual quantum group $U_q(\frso_n)$. Noumi \cite{N96} constructed a $q$-deformations $U'_q(\frso_n)$ and $U'_q(\frsp_{2n})$ of $U(\frso_n)$ and $U(\frsp_{2n})$, respectively, in terms of reflection equation as the usual quantum group is defined in terms of Yang-Baxter equation. These nonstandard quantum groups were then used to construct a quantum analog of \ symmetric spaces $\GL_n/\SO_n$ and $\GL_{2n}/\Sp_{2n}$.

Related to the symmetric spaces, there is a notion of symmetric pairs. A symmetric pair is a pair $(\g,\frk)$ of a complex semisimple Lie algebra $\g$ and its fixed point subalgebra $\frk = \g^\theta$ with respect to an involutive Lie algebra automorphism $\theta$ on $\g$. Symmetric pairs naturally appear in the classification theory of real semisimple Lie algebras. The pairs $(\frsl_n,\frso_n)$ and $(\frsl_{2n},\frsp_{2n})$ are examples of symmetric pairs. Hence, the pairs $(U_q(\frsl_n),U'_q(\frso_n))$ and $(U_q(\frsl_{2n}),U'_q(\frsp_{2n}))$ mentioned earlier can be thought of as quantizations of a symmetric pair.

Letzter \cite{Le99} defined a $q$-deformation $\Ui = \Ui(\g,\frk)$ of $U(\frk)$ as a coideal subalgebra of $\U = U_q(\g)$ in a unified way for each symmetric pair $(\g,\frk)$. The nonstandard $q$-deformations $U'_q(\frso_n)$ or $U'_q(\frsp_{2n})$ mentioned earlier can be seen as an example of Letzter's $\Ui(\frsl_n,\frso_n)$ or $\Ui(\frsl_{2n},\frsp_{2n})$, although their constructions much differ. The pair $(\U,\Ui) = (U_q(\g),\Ui(\g,\frk))$ is called a quantum symmetric pair. For the last few decades, the quantum symmetric pairs have gained much attention in numerous areas of mathematics and physics such as representation theory of Lie superalgebras, orthogonal polynomials, and integrable systems. Hence, a systematic study of representations of $\Ui$ becomes more and more important.

The coideal subalgebras of the form $\Ui$ are nowadays recognized as generalizations of the usual quantum groups, and called $\imath$quantum groups. Indeed, it has the bar-involution, the universal $K$-matrix (the counterpart of the universal $R$-matrix), and the canonical basis theory (\cite{BW18}; see also \cite{BK15}, \cite{BK19}, and references therein). Other than these, many important results of quantum groups have been generalized to $\imath$quantum groups. However, the classification of finite-dimensional irreducible $\Ui$-modules, which is one of the most important and fundamental problems in representation theory of $\Ui$, is far from completion.

One of the difficulties of this classification problem is the lack of Cartan subalgebras. Unlike the usual quantum groups, the $\imath$quantum groups have no natural commutative subalgebra which plays the role of a Cartan subalgebra. In \cite{IK05}, the finite-dimensional $\Ui(\frsl_n,\frso_n)$-modules were classified by means of analogs of Gelfand-Tsetlin bases, which does not require a Cartan subalgebra. In \cite{M06}, the finite-dimensional $\Ui(\frsl_{2n},\frsp_{2n})$-modules were classified in terms of highest weights by using a Poincar\'{e}-Birkhoff-Witt-type basis. In this case, we fortunately have a Cartan subalgebra isomorphic to the algebra of Laurent polynomials in several variables.

Recently, Letzter \cite{Le19} constructed a commutative subalgebra of $\Ui$ which specializes to a Cartan subalgebra $\frt$ of $\frk$ as $q$ goes to $1$. When $(\g,\frk) = (\frsl_{2r+1}, \frs(\frgl_r \oplus \frgl_{r+1}))$, her Cartan subalgebra is closely related to the one in \cite{Wa17}, which was used to classify the irreducible $\Ui$-modules in a certain category. Hence, it was expected that the classification problem for an arbitrary $\imath$quantum groups would be solved in terms of highest weights by using Letzter's Cartan subalgebra. Indeed, the classification theorem for $\Ui(\frsl_n,\frso_n)$ was reproved by Wenzl \cite{We18} in this way.

However, the situation is not so simple. In order to tackle the classification problem using Letzter's Cartan subalgebra, one has to decompose the generators of $\Ui$ into root vectors. By observations in several examples, it seems to be impossible to control the root decompositions for arbitrary $\imath$quantum groups.

To overcome this difficulties, we propose to restrict our attention to certain classes of $\Ui$-modules which we call classical weight modules. The classical weight modules are defined as follows. First, we define a commutative subalgebra $\Ui(\frt')$ of $\Ui$ which specializes to the universal enveloping algebra $U(\frt')$ of a Lie subalgebra $\frt'$ of the Cartan subalgebra $\frt$. We say that a $\Ui$-module $M$ is a classical weight module if it admits a weight space decomposition with respect to $\Ui(\frt')$ with eigenvalues being specializable at $q = 1$. Let $\clC'$ denote the category of classical $\Ui$-modules whose weight spaces are all finite-dimensional. The category $\clC'$ does not contain all of finite-dimensional $\Ui$-modules, in general, but it contains every finite-dimensional $\U$-modules regarded as $\Ui$-modules by restriction. Moreover, $\clC'$ has a duality functor taking the restricted dual, and the tensor product of a classical weight $\Ui$-module and a finite-dimensional $\U$-module is again a classical weight $\Ui$-module. Hence, the category $\clC'$ is not too small, and it admits interesting structures.

Next, we define a new algebra $\clUi$ as an algebra consisting of certain operators acting on each classical weight $\Ui$-modules. We show that $\clUi$ contains $\Ui$ as a subalgebra, and each classical weight $\Ui$-module is lifted to a $\clUi$-module. Hence, the study of classical weight $\Ui$-modules is reduced to the study of $\clUi$-modules.

One of the merits to consider $\clUi$ instead of $\Ui$ is that it has a commutative subalgebra $\clUi(\frt')$ isomorphic to the algebra of Laurent polynomials with the number of variables being equal to the dimension of $\frt'$. Furthermore, $\clUi(\frt')$ specializes to $U(\frt')$ as $q$ goes to $1$ in a suitable sense. Hence, this commutative subalgebra can play the role of $\frt'$. Moreover, the root decomposition of each generator of $\clUi$ with respect to $\clUi(\frt')$ can be explicitly described.

For a highest weight theory, we need triangular decomposition of $\clUi$. It is quite non-trivial whether such a decomposition exists or not. We show, in several cases, $\clUi$ admits a triangular decomposition. In these cases, we construct the zero part by using certain automorphisms on $\clUi$; since $\frt'$ is smaller than $\frt$, the zero part should be larger than $\clUi(\frt')$. This construction enables us to reduce the problem of showing the existence of a triangular decomposition of $\clUi$ to the same problem for smaller rank. On the other hand, this construction makes the zero part noncommutative, in general. However, as results in \cite{Wa17} suggest, the noncommutativity of the zero part does not cause any serious trouble. Then, we complete the classification of finite-dimensional irreducible $\clUi$-modules in terms of highest weights with the help of a classical limit procedure.

Our results contain most of the cases when $\g = \frsl_n$. In particular, we reprove the classification theorems in \cite{M06} (for $(\frsl_{2n},\frsp_{2n})$), a part of the classification theorem in \cite{IK05}, \cite{We18} (for $(\frgl_n,\frso_n)$), and enhance the classification theorem in \cite{Wa17} (for $(\frsl_{2r+1}, \frs(\frgl_r \oplus \frgl_{r+1}))$) in a unified way. The classification theorem for $(\frsl_{2r},\frs(\frgl_r \oplus \frgl_r))$ is new.

During the proof of the classification theorem of this paper, we encounter various triples which play the role of quantum $\frsl_2$-triple. Unlike the usual quantum $\frsl_2$-triple, the multiplication structures of these new triples are not the same as each other. We expect that these new triples can be used to formulate an analog of useful tools appearing in the representation theory of quantum groups such as the crystal basis theory, braid group actions, and complete reducibility of the finite-dimensional modules, where the usual quantum $\frsl_2$-triple plays a key role.

This paper is organized as follows. In Section \ref{QSP}, we prepare basic notions concerning $\imath$quantum groups. Especially, we define an anti-automorphism on $\Ui$ which is used to define a duality on the category $\clC'$. In Section \ref{Classical weight modules}, we introduce the notion of classical weight modules, and study their fundamental properties. Also, we state a conjecture including the existence of a triangular decomposition, and study its consequences. In Section \ref{Construction}, we verify the conjecture for several cases.

\subsection*{Acknowledgement}
This work was supported by JSPS KAKENHI Grant Number JP20K14286. The author would like to thank the referee for many valuable comments which helped him to improve the paper.

\section{$\imath$quantum groups}\label{QSP}
\subsection{Symmetric pairs and Satake diagrams}
A symmetric pair is a pair $(\g,\frk)$, where $\g$ is a complex semisimple Lie algebra, and $\frk$ is the fixed point subalgebra $\g^\theta$ of $\g$ with respect to an involutive Lie algebra automorphism $\theta$ on $\g$. When $\g$ is simple, the symmetric pairs are classified by the Satake diagrams.

A Satake diagram is a pair of a Dynkin diagram some of whose vertices are painted black, and an involutive automorphism on it, satisfying certain conditions (\cite{A62}). We represent a Satake diagram by a triple $(I,I_\bullet,\tau)$, where $I$ denotes the set of vertices of the Dynkin diagram, $I_\bullet \subset I$ the set of black vertices, and $\tau$ the permutation on $I$ induced from the diagram involution. Then, the axioms for Satake diagrams can be stated as follows (\cite[Definition 2.3]{Ko14}):
\begin{align}
\label{Satake1}&w_\bullet(\alpha_i) = -\alpha_{\tau(i)} \qu \Forall i \in I_\bullet, \\
\label{Satake2}&\la \rho_\bullet^\vee, \alpha_i \ra \in \Z \Forall i \in I_\circ := I \setminus I_\bullet \text{ such that } \tau(i) = I,
\end{align}
where $\alpha_i$ denotes the simple root corresponding to $i \in I$, $w_\bullet$ the longest element in the Weyl group $W_\bullet$ associated to $I_\bullet$, $\la \cdot,\cdot \ra: \frh^* \times \frh \rightarrow \C$ the canonical pairing, $\frh \subset \g$ the Cartan subalgebra, and $\rho_\bullet^\vee$ half the sum of all positive coroots of $I_\bullet$. The Satake diagrams, except the cases when $I_\bullet = I$, are listed in page \pageref{List}. The meaning of marked vertices $\otimes$ will be explained later. At the moment, we just regard them as white vertices. The involution $\tau$ is represented by two-sided arrows. We omit the two-sided arrows on $I_\bullet$ because they are automatically determined by the condition \eqref{Satake1}.

Let us recall how to recover a symmetric pair from its Satake diagram. Here, we follow arguments in \cite[Section 2.4]{Ko14}. Let $(I,I_\bullet,\tau)$ be a Satake diagram, $\g = \g(I)$ the complex semisimple Lie algebra associated to the Dynkin diagram $I$. Let $e_i,f_i,h_i$, $i \in I$ be the Chevalley generators of $\g$. Then, there exists a unique automorphism $\omega$ on $\g$ such that
$$
\omega(e_i) = f_i, \qu \omega(f_i) = e_i, \qu \omega(h_i) = -h_i \Forall i \in I.
$$

The diagram automorphism $\tau$ induces a Lie algebra automorphism, also denoted by $\tau$, defined by
$$
\tau(e_i) = e_{\tau(i)}, \qu \tau(f_i) = f_{\tau(i)}, \qu \tau(h_i) = h_{\tau(i)}.
$$

To a tuple $\bfeta = (\eta_i)_{i \in I} \in (\C^\times)^{I}$, we associate a Lie algebra automorphism $\ad \bfeta$ on $\g$ by
$$
(\ad \bfeta)(e_i) = \eta_i e_i, \qu (\ad \bfeta)(f_i) = \eta_i\inv f_i, \qu (\ad \bfeta)(h_i) = h_i.
$$

Let $W$ denote the Weyl group of $\g$. Recall that the $W$-action on $\frh := \Span_{\C}\{ h_i \mid i \in I \}$ can be lifted to a braid group action on $\g$; for each $i \in I$, define a Lie algebra automorphism $\ol{T}_i$ on $\g$ by
$$
\ol{T}_i := \exp(\ad e_i) \exp(- \ad f_i) \exp(\ad e_i).
$$
Let $w_\bullet = s_{i_1} \cdots s_{i_p}$ be a reduced expression. Set
$$
\ol{T}_{w_\bullet} := \ol{T}_{i_1} \cdots \ol{T}_{i_p}.
$$

Let $(a_{i,j})_{i,j \in I}$ denote the Cartan matrix of $I$. Choose a tuple $\ol{\bfvarsigma} = (\ol{\varsigma_i})_{i \in I} \in (\C^\times)^{I}$ in a way such that
\begin{align}
&\ol{\varsigma_i} = -1 \qu \IF i \in I_\bullet, \\
%\item $\ol{\varsigma_i} = \ol{\varsigma_{\tau(i)}}$ if $\la h_i, w_\bullet(\alpha_{\tau(i)}) \ra = 0$,
&\ol{\varsigma_{\tau(i)}} = (-1)^{\la 2\rho_\bullet^\vee, \alpha_i \ra} \ol{\varsigma}_i \qu \IF i \in I_\circ.
\end{align}

Define an automorphism $\theta$ on $\g$ by
$$
\theta := \ol{T}_{w_\bullet} \circ \tau \circ (\ad \ol{\bfvarsigma}) \circ \omega.
$$

Now, we set
$$
\frk := \{ x \in \g \mid \theta(x) = x \}.
$$
Then, $(\g,\frk)$ is a symmetric pair.

$\frh^\theta := \frh \cap \frk$ is spanned by $\{ h_i - h_{\tau(i)} \mid i \in I_\circ \} \cup \{ h_i \mid i \in I_\bullet \}$. For each $i \in I$, set
$$
b_i := \begin{cases}
f_i + \theta(f_i) = f_i + \ol{\varsigma_i} \ol{T}_{w_\bullet}(e_{\tau(i)}) \qu & \IF i \in I_\circ, \\
\hf(f_i + \theta(f_i)) = f_i \qu & \IF i \in I_\bullet.
\end{cases}
$$
Let $\frn_\bullet$ denote the Lie subalgebra of $\g_\bullet$ generated by $e_i$, $i \in I_\bullet$. Then, $\frk$ is generated by $\frn_\bullet$, $\frh^\theta$ and $\{ b_i \mid i \in I \}$.

\subsection{Quantum symmetric pairs}
Let $q$ be an indeterminate. For $n \in \Z$ and $a \in \Z_{> 0}$, set
$$
[n]_{q^a} := \frac{q^{an} - q^{-an}}{q^a-q^{-a}}.
$$
Also, for $m \geq n \geq 0$, set
$$
[n]_{q^a}! := [n]_{q^a} \cdots [2]_{q^a}[1]_{q^a}, \qu {m \brack n}_{q^a} := \frac{[m]_{q^a}!}{[m-n]_{q^a}! [n]_{q^a}!},
$$
where we understand that $[0]_{q^a}! = 1$. When $a = 1$, we often omit the subscript $q^a$.

Let $A$ be an associative algebra over $\C(q)$. For $x,y \in A$, $z \in A^\times$, $a \in \Z_{> 0}$ and $b \in \Z$, set
\begin{align}
\begin{split}
&[z;n]_{q^a} := \frac{q^{an}z-q^{-an}z\inv}{q^a - q^{-a}}, \qu (x;n)_{q^a} := \frac{q^{an}x - 1}{q^a - 1} \\
&\{ z;n \}_{q^a} := q^{an}z + q^{-an}z\inv, \qu [x,y]_{q^b} := xy - q^byx.
\end{split} \nonumber
\end{align}
When $a = 1$ or $b = 0$, we often omit the subscripts $q^a$ or $q^b$, respectively.

Let $(I,I_\bullet,\tau)$ be a Satake diagram, $(a_{i,j})_{i,j \in I}$ the Cartan matrix of $I$. There exists a unique $(d_i)_{i \in I} \in \Z_{> 0}^I$ such that $d_i a_{i,j} = d_j a_{j,i}$ for all $i,j \in I$ and $d_i$'s are pairwise coprime. Set $q_i := q^{d_i}$.

Let $A$ be an associative algebra over $\C(q)$. For $i \neq j \in I$ and $x,y \in A$, set
$$
S_{i,j}(x,y) := \sum_{r=0}^{1-a_{i,j}} (-1)^r { 1-a_{i,j} \brack r }_{q_i} x^{1-a_{i,j}-r} y x^r \in A.
$$
In terms of $q$-commutator, $S_{i,j}(x,y)$ for $a_{i,j} = 0,-1,-2$ can be rewritten as follows:
$$
S_{i,j}(x,y) = \begin{cases}
[x,y] \qu & \IF a_{i,j} = 0, \\
[x,[x,y]_{q_i}]_{q_i\inv} \qu & \IF a_{i,j} = -1, \\
[x,[x,[x,y]]_{q_i^2}]_{q_i^{-2}} \qu & \IF a_{i,j} = -2.
\end{cases}
$$

Let $\U = U_q(\g)$ denote the quantum group associated to the Dynkin diagram $I$. Namely, $\U$ is an associative $\C(q)$-algebra with $1$ generated by $E_i,F_i,K_i^{\pm 1}$, $i \in I$ subject to the following relations:
\begin{align}
\begin{split}
&K_i K_i\inv = 1 = K_i\inv K_i, \\
&K_i K_j = K_j K_i, \\
&K_i E_j = q_i^{a_{i,j}} E_j K_i, \qu K_i F_j = q_i^{-a_{i,j}} F_j K_i, \\
&[E_i,F_j] = \delta_{i,j} [K_i;0]_{q_i}, \\
&S_{i,j}(E_i,E_j) = 0 = S_{i,j}(F_i,F_j) \qu \IF i \neq j.
\end{split} \nonumber
\end{align}

Let $Q := \sum_{i \in I} \Z \alpha_i$ denote the root lattice of $\g$. For $\alpha = \sum_{i \in I} c_i \alpha_i \in Q$, set $K_\alpha := \prod_{i \in I} K_i^{c_i}$. Then, for each $i \in I$ and $\alpha,\beta \in Q$, we have
$$
K_\alpha E_i = q^{(\alpha,\alpha_i)}E_iK_\alpha, \qu K_\alpha F_i = q^{-(\alpha,\alpha_i)}F_iK_\alpha, \qu K_\alpha K_\beta = K_{\alpha + \beta},
$$
where $(\cdot, \cdot)$ is a symmetric bilinear form on $Q$ defined by $(\alpha_i,\alpha_j) := \la d_ih_i, \alpha_j \ra = d_ia_{i,j}$.

For each $i \in I$, let $T_i$ denote the Lusztig's automorphism $T''_{i,1}$ on $\U$ (\cite[37.1.3]{Lu10}). Namely, it is defined by
\begin{align}
\begin{split}
&T_i(E_j) = \begin{cases}
-F_iK_i \qu & \IF j = i, \\
\sum_{r+s = -a_{i,j}} (-1)^r q_i^{-r} E_i^{(s)} E_j E_i^{(r)} \qu & \IF j \neq i,
\end{cases} \\
&T_i(F_j) = \begin{cases}
-K_i\inv E_i \qu & \IF j = i, \\
\sum_{r+s = -a_{i,j}} (-1)^r q_i^{r} F_i^{(r)} F_j F_i^{(s)} \qu & \IF j \neq i,
\end{cases} \\
&T_i(K_\alpha) = K_{s_i(\alpha)},
\end{split} \nonumber
\end{align}
where $E_i^{(n)} := \frac{1}{[n]_i!} E_i^n$ and $F_i^{(n)} := \frac{1}{[n]_i!} F_i^n$ are divided powers. These $T_i$'s give rise to a braid group action on $\U$. Set
$$
T_{w_\bullet} := T_{i_1} \cdots T_{i_p},
$$
where $w_\bullet = s_{i_1} \cdots s_{i_p}$ is a reduced expression.

Recall that we have chosen a tuple $\ol{\bfvarsigma} = (\ol{\varsigma_i})_{i \in I} \in (\C^\times)^{I}$. Fix a tuple $\bfvarsigma_i = (\varsigma_i)_{i \in I_\circ} = (\varsigma_i(q))_{i \in I_\circ} \in (\C(q)^\times)^{I_\circ}$ in a way such that
\begin{align}
%\item $\varsigma_{\tau(i)} = (-1)^{\la 2\rho_\bullet^\vee, \alpha_i \ra} q_i^{-\la h_i, 2\rho_\bullet + w_\bullet(\alpha_{\tau(i)}) \ra} \varsigma_i(q\inv)$,
&\varsigma_i = \varsigma_{\tau(i)} \qu \IF \la h_i, w_\bullet(\alpha_{\tau(i)}) \ra = 0, \label{Axiom for varsigma}\\
&\varsigma_i(1) = \ol{\varsigma_i}.
\end{align}

For each $i \in I$, set
$$
B_i := \begin{cases}
F_i + \varsigma_i T_{w_\bullet} (E_{\tau(i)}) K_i\inv \qu & \IF i \in I_\circ, \\
F_i \qu & \IF i \in I_\bullet,
\end{cases} \qu
k_i := \begin{cases}
K_iK_{\tau(i)}\inv \qu & \IF i \in I_\circ, \\
K_i \qu & \IF i \in I_\bullet.
\end{cases}
$$

Let $Q^\theta$ denote the sublattice of $Q$ spanned by $\{ \alpha_i-\alpha_{\tau(i)} \mid i \in I_\circ \} \cup \{ \alpha_i \mid i \in I_\bullet \}$. Let $\U(\frh^\theta)$ denote the subalgebra of $\U$ spanned by $\{ K_\alpha \mid \alpha \in Q^\theta \}$. Note that $\U(\frh^\theta)$ is generated by $k_i^{\pm 1}$ with $i \in I$.

Let $\U_\bullet$ denote the quantum group associated to the subdiagram $I_\bullet$. It is a subalgebra of $\U$ generated by $E_i,F_i,K_i^{\pm 1}$, $i \in I_\bullet$. Let $\U(\frn_\bullet)$ denote the subalgebra of $\U_\bullet$ generated by $E_i$, $i \in I_\bullet$.

\begin{defi}[{\cite[Section 4]{Le99}, see also \cite[Definition 3.5]{BW18}}]\normalfont
The $\imath$quantum group $\Ui = \Ui_{\bfvarsigma}$ associated to a Satake diagram $(I,I_\bullet,\tau)$ is the subalgebra of $\U$ generated by $\U(\frn_\bullet)$, $\U(\frh^\theta)$, and $\{ B_i \mid i \in I \}$.
\end{defi}

\begin{rem}\normalfont
In general, $\imath$quantum groups admits another parameter $\bfkappa = (\kappa_i)_{i \in I_\circ} \in \C(q)^{I_\circ}$ satisfying certain conditions, and $B_i$, $i \in I_\circ$ is defined to be $F_i + \varsigma_i T_{w_\bullet}(E_{\tau(i)})K_i\inv + \kappa_i K_i\inv$. Our $\imath$quantum group is called standard, whereas an $\imath$quantum group with $\bfkappa \neq (0)_{i \in I_\circ}$ is called nonstandard. Most of results in this paper holds even for nonstandard $\imath$quantum groups, though we concentrate on standard ones. We will give comments when additional argument for nonstandard $\imath$quantum groups is needed.
\end{rem}

The defining relations of $\Ui$ with respect to the generators $E_i,B_j,K_\alpha$, $i \in I_\bullet,\ j \in I,\ \alpha \in Q^\theta$ was obtained in \cite{Le03}. For reader's convenience, we list them below. Let $\alpha,\beta \in Q^\theta$, $i,j \in I$.
\begin{align}
\begin{split}
&K_0 = 1,\ K_\alpha K_\beta = K_{\alpha + \beta}, \\
&K_\alpha E_i = q^{(\alpha,\alpha_i)} E_i K_\alpha \qu \IF i \in I_\bullet, \\
&K_\alpha B_i = q^{-(\alpha,\alpha_j)} B_i K_\alpha, \\
&[E_i,B_j] = \delta_{i,j}[k_i;0]_{q_i} \qu \IF i \in I_\bullet, \\
&S_{i,j}(E_i,E_j) = 0 \qu \IF i,j \in I_\bullet \AND i \neq j, \\
&S_{i,j}(B_i,B_j) = C_{i,j} \qu \IF i \neq j,
\end{split} \nonumber
\end{align}
where $C_{i,j}$'s are certain elements in $\Ui$ described as follows. Since our notation is closer to \cite{Ko14} than \cite{Le03}, we follow Kolb's description. Below, our $\varsigma_i \clZ_i$ and $\varsigma_i \clW_{i,j}$ are equal to $c_i \clZ_i$ and $c_i \clW_{i,j}$ in \cite{Ko14}, respectively.

\begin{theo}[{\cite[Section 7]{Ko14}}]
Let $i,j \in I$ be such that $i \neq j$.
\begin{enumerate}
\item If $i \in I_\bullet$, then $C_{i,j} = 0$.
\item If $i \notin \{ \tau(i),\tau(j) \}$, then $C_{i,j} = 0$.
\item If $i,j \in I_\circ$, then there exists $\clZ_i \in \U(\frn_\bullet) k_i\inv$ such that
$$
C_{i,j} = \begin{cases}
\delta_{i,\tau(j)}\frac{\varsigma_i \clZ_i - \varsigma_{\tau(i)} \clZ_{\tau(i)}}{q_i-q_i\inv} \qu & \IF a_{i,j} = 0, \\
\delta_{i,\tau(i)} q_i \varsigma_i \clZ_i B_j - \delta_{i,\tau(j)} [2]_{q_i}(q_i \varsigma_{\tau(i)} \clZ_{\tau(i)} + q_i^{-2} \varsigma_i \clZ_i) B_i \qu & \IF a_{i,j} = -1, \\
q_i [2]_{q_i}^2 \varsigma_i \clZ_i [B_i,B_j] \qu & \IF a_{i,j} = -2, \\
-[2]_{q_i}([2]_{q_i}[4]_{q_i} + q_i^2 + q_i^{-2}) q_i \varsigma_i B_iB_jB_i \\
+ ([3]_{q_i}^2 + 1)q_i \varsigma_i(B_i^2B_j + B_jB_i^2) - [3]_{q_i}^2(q_i\varsigma_i)^2 B_j \qu & \IF a_{i,j} = -3.
\end{cases}
$$
\item If $i \in I_\circ$ and $j \in I_\bullet$, then there exists $\clW_{i,j} \in \U(\frn_\bullet)$ such that
$$
C_{i,j} = \begin{cases}
0 \qu & \IF a_{i,j} = 0, \\
\delta_{i,\tau(i)} \varsigma_i(\frac{q_i^2 B_j \clZ_i - \clZ_i B_j}{q_i-q_i\inv} + \frac{q_i + q_i\inv}{q_i - q_i\inv} \clW_{i,j} K_j) \qu & \IF a_{i,j} = -1, \\
\frac{\varsigma_i}{q_i-q_i\inv}(q_i^2([3]_{q_i}B_iB_j - (q_i^2+2)B_jB_i)\clZ_i \\
- \clZ_i((2+q_i^{-2})B_iB_j - [3]_{q_i}B_jB_i)) -[q_i;0]_{q_j}[2]_{q_i}^2 [3]_{q_i} B_i \clW_{i,j} K_j \qu & \IF a_{i,j} = -2.
\end{cases}
$$
\end{enumerate}
\end{theo}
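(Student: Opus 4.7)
The plan is to verify each relation by direct expansion of $S_{i,j}(B_i,B_j)$ in $\U$, using the Serre relations for the $F$'s and the $E$'s as the starting point. Write $B_i = F_i + \varsigma_i \til\clZ_i$ with $\til\clZ_i := T_{w_\bullet}(E_{\tau(i)}) K_i\inv$ for $i \in I_\circ$, and $\til\clZ_i := 0$ (so $B_i = F_i$) for $i \in I_\bullet$. Multilinearity of $S_{i,j}$ in each slot splits $S_{i,j}(B_i,B_j)$ into four groups: the pure lower term $S_{i,j}(F_i,F_j)$, which vanishes by the Serre relations in $\U$; the pure upper term $S_{i,j}(\til\clZ_i,\til\clZ_j)$, which after pulling $K$-factors to one side reduces to a $T_{w_\bullet}$-image of an $E$-Serre-type expression and so vanishes up to controllable Cartan corrections that are absorbed into the remaining pieces; and two families of mixed $F$-times-$\til\clZ$ terms. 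Identifying $C_{i,j}$ thus reduces to analyzing the mixed residues.

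Next, I would dispose of cases (1) and (2) using structural properties of $T_{w_\bullet}$. In case (1), $i \in I_\bullet$ gives $B_i = F_i$, and the mixed terms amount to commuting $F_i$ past $T_{w_\bullet}(E_{\tau(j)}) K_j\inv$; the braid identity $T_k T_{w_\bullet} = T_{w_\bullet} T_{k'}$ for appropriate $k,k' \in I_\bullet$, together with the Satake condition $-w_\bullet(\alpha_i) = \alpha_{\tau(i)}$ on $I_\bullet$, shows that $T_{w_\bullet}(E_{\tau(j)})$ sits inside a subalgebra on which $F_i$ acts in a manner compatible with the $\U$-Serre relation, forcing the residue to vanish. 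In case (2), the hypothesis $i \notin \{\tau(i),\tau(j)\}$ ensures that no $\til\clZ$-factor is ever paired with its $\tau$-partner during the Serre expansion, and the mixed terms cancel directly; hence $C_{i,j} = 0$ in both cases.

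Cases (3) and (4) are the essential content and demand explicit computation. The element $\clZ_i \in \U(\frn_\bullet) k_i\inv$ is extracted during the calculation as the element obtained after rewriting $\til\clZ_i$ in $\U(\frn_\bullet) k_i\inv$-normal form, which is possible by Lusztig-type expansions for $T_{w_\bullet}$ applied to the simple root vector $E_{\tau(i)}$. The element $\clW_{i,j} \in \U(\frn_\bullet)$ appears in case (4), when $j \in I_\bullet$, by commuting $F_j$ through $\til\clZ_i$ via $[E_k,F_j] = \delta_{k,j}[k_j;0]_{q_j}$ and isolating the non-Cartan residue. The hardest step will be the $q$-binomial bookkeeping in the subcases $a_{i,j} \in \{-2,-3\}$: iterated $q$-commutators must be unwound using identities among the $[n]_{q_i}$, signs tracked carefully, and coefficients matched against the tabulated right-hand sides, with the $G_2$-type $a_{i,j}=-3$ case being the most delicate. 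I would follow Kolb's derivation, translating between his $c_i$ normalization and our $\varsigma_i$ at the end.
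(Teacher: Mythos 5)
This theorem is not proved in the paper at all: it is quoted verbatim from \cite[Section 7]{Ko14} (which in turn builds on \cite{Le03}), so there is no internal argument to compare yours with. Your sketch is, in outline, the computation carried out in that literature — grade $S_{i,j}(B_i,B_j)$ by the number of $E$-type factors, show the extreme graded pieces vanish, and compute the intermediate ones — so the strategy is the right one; but as a proof attempt it has several genuine gaps.

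First, $S_{i,j}(x,y)$ is linear only in $y$; in $x$ it is homogeneous of degree $1-a_{i,j}$, so the claimed ``multilinearity in each slot'' and the resulting four-way split are false for $a_{i,j}<0$. Expanding $B_i^{1-a_{i,j}-r}$ produces all words in $F_i$ and $\varsigma_i T_{w_\bullet}(E_{\tau(i)})K_i\inv$, and it is exactly these intermediate components that produce the $B$-linear, $B$-quadratic and (for $a_{i,j}=-3$) $B$-cubic terms of $C_{i,j}$; they cannot be dismissed as ``mixed residues'' without the filtration and projection machinery that Letzter and Kolb set up for this purpose. Second, your description of the origin of $\clZ_i$ is wrong: $\clZ_i$ is not a normal-form rewriting of $T_{w_\bullet}(E_{\tau(i)})K_i\inv$ — that element has weight $w_\bullet(\alpha_{\tau(i)})-\alpha_i$ and does not lie in $\U(\frn_\bullet)k_i\inv$ at all. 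Rather, $\clZ_i$ (and likewise $\clW_{i,j}$) arises from commuting $F_{\tau(i)}$ (resp.\ $F_j$ with $j\in I_\bullet$) against $T_{w_\bullet}(E_{\tau(i)})$, i.e.\ from the skew-derivative/Cartan terms of such commutators; this is why it lands in $\U(\frn_\bullet)k_i\inv$ and why $\clZ_i=k_i\inv$ in the quasi-split case used later in the paper. Third, cases (1) and (2) are asserted rather than proved: for $i\in I_\bullet$ the vanishing of $S_{i,j}(F_i,T_{w_\bullet}(E_{\tau(j)})K_j\inv)$ rests on the fact that this element generates a finite-dimensional integrable $\ad(\U_\bullet)$-module of the appropriate lowest weight (this is where $-w_\bullet(\alpha_i)=\alpha_{\tau(i)}$ for $i\in I_\bullet$ enters), and case (2) is a weight argument constraining the possible correction terms, not a direct cancellation of paired terms. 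Finally, the explicit coefficients — which are the actual content of parts (3) and (4), above all for $a_{i,j}\in\{-2,-3\}$ — are deferred to Kolb, so the attempt does not independently establish the stated formulas; given that the paper itself simply cites \cite{Ko14}, that deferral is reasonable practice, but it means the proposal is an outline of the known proof rather than a proof.
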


\begin{rem}\normalfont\label{defining relations for nonstandard}
The defining relations for a nonstandard $\imath$quantum group is the same as for a standard one; they are independent of the second parameter $\bfkappa$.
\end{rem}

\subsection{Coideal structure}
Let us equip $\U$ with the structure of a Hopf algebra by the following comultiplication $\Delta$, counit $\eps$, and antipode $S$:
\begin{align}
\begin{split}
&\Delta(E_i) = E_i \otimes 1 + K_i \otimes E_i, \\
&\Delta(F_i) = 1 \otimes F_i + F_i \otimes K_i\inv, \\
&\Delta(K_i) = K_i \otimes K_i, \\
&\eps(E_i) = \eps(F_i) = 0, \qu \eps(K_i) = 1, \\
&S(E_i) = -K_i\inv E_i, \qu S(F_i) = -F_iK_i, \qu S(K_i) = K_i\inv.
\end{split} \nonumber
\end{align}
With respect to this Hopf algebra structure, the $\imath$quantum group $\Ui$ is a right coideal of $\U$; i.e., we have $\Delta(\Ui) \subset \Ui \otimes \U$. In particular, we can equip the tensor product of a $\Ui$-module and a $\U$-module with a $\Ui$-module structure via $\Delta$.

It is worth noting that for $i \in I_\circ$ such that $a_{i,j} = 0$ for all $j \in I_\bullet$, we have $B_i = F_i + \varsigma_i E_{\tau(i)}K_i\inv$ and
$$
\Delta(B_i) = B_i \otimes K_i\inv + 1 \otimes F_i + \varsigma_i k_{\tau(i)} \otimes E_{\tau(i)} K_i\inv.
$$
Suppose further that $\tau(i) = i$. Then, we have $k_{\tau(i)} = 1$, and hence
$$
\Delta(B_i) = B_i \otimes K_i\inv + 1 \otimes B_i.
$$

\begin{rem}\normalfont
In the nonstandard case, we have
$$
\Delta(B_i) = B_i \otimes K_i\inv + 1 \otimes (B_i - \kappa_i K_i\inv)
$$
if $i \in I_\circ$ satisfies $\tau(i) = i$ and $a_{i,j} = 0$ for all $j \in I_\bullet$.
\end{rem}

\subsection{Classical limit}
Roughly speaking, the classical limit is a procedure which evaluates various quantum objects at $q=1$. In subsequent arguments, we need to extend the base field $\C(q)$ to its algebraic closure $\bbK$. According to this extension, we have to modify the notion of classical limit. Recall from a theorem of Puiseux that $\bbK$ is a subfield of $\bigcup_{n=1}^{\infty} \C((q^{1/n}))$ (see for example \cite[Corollary 13.15]{E95}). Set
$$
\bbK_1 := \bbK \cap \bigcup_{n=1}^\infty \left\{ \sum_{m = 0}^\infty a_m(q^{1/n}-1)^m \mid a_m \in \C \right\}.
$$
Then, $\bbK_1$ is a subring of $\bbK$. Let $\ol{\ \cdot\ } : \bbK_1 \rightarrow \C$ be a ring homomorphism defined by
$$
\ol{\sum_{m = 0}^\infty a_m(q^{1/n}-1)^m} := a_0.
$$
Note that if $a = a(q) \in \C(q)$ is a rational function which is regular at $q = 1$, then we have $a \in \bbK_1$, and $\ol{a} = a(1)$. Hence, this notation is consistent with our parameter $\bfvarsigma$ for $\Ui$ and $\ol{\bfvarsigma}$ for $\frk$.

From now on, we consider the quantum group $\U$ over $\bbK$, and choose the parameter $\bfvarsigma$ from $(\bbK_1^\times)^{I_\circ}$.

\begin{rem}\normalfont
When we consider a nonstandard $\imath$quantum group, we choose the second parameter $\bfkappa$ from $\bbK_1^{I_\circ}$.
\end{rem}

\begin{prop}\label{On K1}
$\bbK_1$ is a local ring with the maximal ideal generated by $q-1$. Also, $\bbK_1$ is integrally closed in $\bbK$.
\end{prop}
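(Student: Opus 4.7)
The plan is to recognize $\bbK_1$ as the valuation ring in $\bbK$ of the $(q-1)$-adic valuation $v$ (the unique extension to $\bbK$ of the usual valuation on $\C(q)$). With that identification, all three assertions follow from standard properties of valuation rings. To begin, I would verify that $\ol{\ \cdot\ } : \bbK_1 \to \C$ is well-defined and a ring homomorphism. Well-definedness hinges on the fact that $q-1 = (q^{1/n}-1)\cdot u_n$ with $u_n := \sum_{k=0}^{n-1} q^{k/n}$ of constant value $n$, hence a unit in $\bbK_1$; so $q-1$ and any $q^{1/n}-1$ are associates in $\bbK_1$, which makes the ``constant term'' $a_0$ independent of the choice of $n$ (it is $a$ modulo $(q-1)\bbK_1$).

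Next, the local structure. For $a \in \bbK_1$ with $\ol{a} \neq 0$, I would write $a = \ol{a}(1 + b)$ where $v(b) > 0$ (using $v(q^{1/n}-1) = v(q-1) > 0$, visible from the Taylor expansion $q^{1/n} = 1 + (q-1)/n + O((q-1)^2)$). The geometric series $(1+b)^{-1} = \sum_k (-b)^k$ converges in the $(q-1)$-adic completion since $v(b^k)$ tends to $\infty$, and rearranging term-by-term gives an expansion $a^{-1} = \sum_m a'_m(q^{1/n}-1)^m$ of the form required for $\bbK_1$-membership. Combined with the trivial converse, this shows $\bbK_1$ is local with maximal ideal $\mathfrak{m} = \ker(\ol{\ \cdot\ })$. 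To see $\mathfrak{m} = (q-1)\bbK_1$, the containment $\supseteq$ is clear; for the other direction, any $a \in \mathfrak{m}$ has its expansion start at $m \geq 1$ and so factors as $(q^{1/n}-1)\cdot a'$ in $\bbK_1$, and the identity $q^{1/n}-1 = (q-1)u_n^{-1}$ from above puts $a$ in $(q-1)\bbK_1$.

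For integral closedness, suppose $x \in \bbK$ satisfies $x^n + \sum_{i<n} c_i x^i = 0$ with $c_i \in \bbK_1$ (so $v(c_i) \geq 0$). The standard ultrametric argument applies: if $v(x) < 0$, then $v(x^n) = n\,v(x)$ is strictly less than $v(c_i x^i) \geq i\,v(x) > n\,v(x)$ for all $i < n$, so $v(x^n + \sum c_i x^i) = n\,v(x) < \infty$, contradicting the equation. Hence $v(x) \geq 0$, and $x \in \bbK_1$ by the identification with the valuation ring. The \emph{main obstacle} in this plan is precisely establishing the identification $\{a \in \bbK : v(a) \geq 0\} = \bbK_1$, in particular the nontrivial inclusion $\supseteq$: one must produce, for every $x \in \bbK$ with non-negative $(q-1)$-adic valuation, an actual expansion $\sum_m a_m(q^{1/n}-1)^m$ with $a_m \in \C$. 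I would handle this by using the embedding $\bbK \subseteq \bigcup_n \C((q^{1/n}))$ to place $x$ in some $\C((q^{1/n}))$, and then invoking the standard fact that the $(q-1)$-adic completion of the DVR $\C[q^{1/n}]_{(q^{1/n}-1)}$ is exactly $\C[[q^{1/n}-1]]$, which furnishes the desired Taylor expansion of $x$ at $q=1$.
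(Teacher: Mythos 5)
Your treatment of the first assertion is sound and is essentially the paper's: $\ol{\ \cdot\ }$ is a surjective ring homomorphism onto $\C$, each $q^{1/n}-1$ is an associate of $q-1$ because $\sum_{k=0}^{n-1}q^{k/n}$ is a unit, and an element with nonzero constant term is inverted by a series again of the allowed form (the paper phrases this as ``$a\inv$ is regular at $q^{1/n}=1$'').

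The genuine gap is exactly at the point you yourself flag as the main obstacle, and the proposed fix does not close it. First, a minor slip: the $(q-1)$-adic valuation of $\C(q)$ does not extend uniquely to $\bbK=\ol{\C(q)}$ (already on $\C(q^{1/2})$ there are two places over $q=1$, with $q^{1/2}\mapsto\pm1$), so one must fix the place compatible with the chosen expansions. More seriously, the inclusion $\{x\in\bbK\mid v(x)\geq 0\}\subseteq\bbK_1$ is not delivered by your argument: an element of $\bbK\cap\C((q^{1/n}))$ is a Laurent series at $q^{1/n}=0$ and need not lie in $\C(q^{1/n})$, so the fact that the completion of $\C[q^{1/n}]_{(q^{1/n}-1)}$ is $\C[[q^{1/n}-1]]$ produces no expansion of $x$ at $q=1$; the correct local ring to expand in is the completion of the finite extension $\C(q)(x)$ at the chosen place over $q=1$, and that is a ring of Puiseux series in $(q-1)^{1/e}$, not of power series in $q-1$. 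Since $q^{1/n}-1$ is a unit multiple of $q-1$ at this place (the extension $\C(q^{1/n})/\C(q)$ is unramified over $q=1$), the definition of $\bbK_1$ admits only integer powers of $q-1$; an element such as $(q-1)^{1/2}\in\bbK$ has $v\geq 0$ but no such expansion. So the identification of $\bbK_1$ with the full valuation ring breaks down — and it must break down for your plan to be coherent, because the valuation ring of $\bbK$ at such a place has value group $\Q$ and hence a non-principal maximal ideal, contradicting the principality of $\fram$ you establish in the first half; your two halves rest on incompatible descriptions of $\bbK_1$. The paper does not pass through this identification at all: it proves integral closedness directly from the definition, taking an integral $b\notin\bbK_1$, multiplying by a suitable integer power $(q-1)^l$ so as to land in $\bbK_1\setminus\fram$, and then contradicting the primality of $\fram$ via the monic equation. (Note that this step, too, trades on integer-order behaviour at $q=1$; elements like $(q-1)^{1/2}$ show that the precise reading of the definition of $\bbK_1$ is delicate for the second assertion, so this is a place to argue from the definition rather than from a valuation-ring identification.)
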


\begin{proof}
Since $\ol{\ \cdot\ } : \bbK_1 \rightarrow \C$ is a surjective ring homomorphism, its kernel $\fram$, which is generated by $\{ q^{1/n}-1 \mid n \in \Z_{> 0} \}$, is a maximal ideal of $\bbK_1$. Since we have $(q^{1/n} -1) = (q-1)(q^{(n-1)/n} + \cdots + 1)\inv$, the ideal $\fram$ is generated by $q-1$. Hence, in order to prove the first assertion, we need to show that each element of $\bbK_1 \setminus \fram$ is invertible. Let $a \in \bbK_1 \setminus \fram$. Then, we can consider its inverse in $\bbK$. Let $n > 0$ be such that $a = \sum_{m = 0}^\infty a_m(q^{1/n}-1)^m$. Since $a_0 = \ol{a} \neq 0$, we see that $a\inv$ is regular at $q^{1/n} = 1$. Hence, $a\inv \in \bbK_1$. This proves the first assertion.

Next, let us prove the second assertion. Let $b \in \bbK \setminus \{0\}$ be integral over $\bbK_1$. We can write
$$
b^r + a_{r-1}b^{r-1} + \cdots + a_1 b + a_0 = 0
$$
for some $r \in \Z_{> 0}$ and $a_0,\ldots,a_{r-1} \in \bbK_1$. Assume that $b \notin \bbK_1$. Then, there exists a unique $l \in \Z_{> 0}$ such that $b' := (q-1)^l b \in \bbK_1 \setminus \fram$. Then, we have
$$
(b')^r + (q-1)^l a_{r-1} (b')^{r-1} + \cdots + (q-1)^{l(r-1)} a_1 b' + (q-1)^{lr}a_0 = 0,
$$
which implies that $(b')^r \in \fram$. This contradicts that $b' \notin \fram$. Hence, the second assertion follows.
\end{proof}

The local ring $\bbK_1$ plays the role of $\bfA_1 := \C[q,q\inv]_{(q-1)}$, which is used to take the classical limit of the quantum group or related objects over $\C(q)$.

\begin{defi}\normalfont
The $\bbK_1$-form $\U_{\bbK_1}$ of $\U$ is the subalgebra over $\bbK_1$ generated by $E_i$, $F_i$, $(K_i;0)_{q_i}$, and $K_i^{\pm 1}$ with $i \in I$. The classical limit $\ol{\U}$ is the $\C$-algebra defined by
$$
\ol{\U} := \U_{\bbK_1} \otimes_{\bbK_1} \C,
$$
where the $\bbK_1$-algebra structure on $\C$ is given by $\ol{\ \cdot\ } : \bbK_1 \rightarrow \C$.
\end{defi}

For each $x \in \U$, we write $\ol{x} := x \otimes 1$. Then, the following is a version of \cite[Proposition 1.5]{DCK91} (see also \cite[Theorem 3.4.9]{HK02}).

\begin{prop}
There exists a unique isomorphism $\ol{\U} \rightarrow U(\g)$ of algebras over $\C$ which sends $\ol{E_i}, \ol{F_i}, \ol{(K_i;0)_{q_i}}, \ol{K_i}$ to $e_i,f_i,h_i,1$, respectively.
\end{prop}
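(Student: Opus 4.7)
The plan is to construct a homomorphism $\phi : U(\g) \to \ol{\U}$ from the Chevalley--Serre presentation of $U(\g)$, prove it is an isomorphism, and then take its inverse as the desired map; the uniqueness is automatic since $\ol{\U}$ is generated by the listed elements. A preliminary observation is that $\ol{K_i} = 1$: indeed, inside $\U_{\bbK_1}$ one has the identity $K_i - 1 = (q_i - 1)(K_i;0)_{q_i}$, and $q_i - 1 = q^{d_i} - 1$ lies in the maximal ideal $\fram$ of $\bbK_1$ by Proposition~\ref{On K1}, so specializing yields $\ol{K_i} - 1 = 0$.

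The next step is to verify that $\ol{E_i}, \ol{F_i}, \ol{(K_i;0)_{q_i}}$ satisfy the Chevalley--Serre relations defining $U(\g)$. For the Cartan action I would rewrite the relation $K_i E_j = q_i^{a_{i,j}} E_j K_i$ inside $\U_{\bbK_1}$ as
\[
[(K_i;0)_{q_i}, E_j] \;=\; \frac{q_i^{a_{i,j}} - 1}{q_i - 1}\, E_j K_i,
\]
whose right-hand side specializes to $a_{i,j} \ol{E_j}$, and symmetrically for $F_j$. For the Heisenberg relation $[E_i, F_j] = \delta_{i,j}[K_i;0]_{q_i}$, a short calculation using $(K_i;0)_{q_i}(K_i + 1) = \frac{q_i+1}{q_i} K_i\, [K_i;0]_{q_i}$ shows $\ol{[K_i;0]_{q_i}} = \ol{(K_i;0)_{q_i}}$, matching $[e_i,f_j] = \delta_{i,j}h_i$. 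The Serre relations $S_{i,j}(E_i,E_j) = 0$ specialize directly since the $q_i$-binomials $\binom{1-a_{i,j}}{r}_{q_i}$ are regular at $q=1$ with value $\binom{1-a_{i,j}}{r}$. By the universal property of $U(\g)$ this produces $\phi$, and surjectivity is immediate from the fact that $\ol{\U}$ is generated over $\C$ by $\{\ol{E_i},\ol{F_i},\ol{K_i^{\pm 1}},\ol{(K_i;0)_{q_i}}\}$ together with $\ol{K_i^{\pm 1}} = 1$.

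The main obstacle is injectivity. My approach is a PBW-basis argument in the spirit of De Concini--Kac~\cite{DCK91}: fix a reduced expression for the longest element of $W$ and use Lusztig's root vectors to produce a PBW-type basis of $\U$ over $\bbK$. A standard refinement shows that $\U_{\bbK_1}$ is in fact a free $\bbK_1$-module with a basis consisting of ordered monomials in these root vectors, monomials in $(K_i;0)_{q_i}$, and the $K_i^{\pm 1}$. Base-changing along $\ol{\ \cdot\ } : \bbK_1 \to \C$ turns this basis into a $\C$-basis of $\ol{\U}$, and the image under $\phi$ of the classical PBW basis of $U(\g)$ matches this basis element-by-element (up to the identifications $\ol{K_i} = 1$ and $\ol{(K_i;0)_{q_i}} \leftrightarrow h_i$). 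Hence $\phi$ sends a basis to a basis, so it is an isomorphism, and we set the desired map to be $\phi^{-1}$.
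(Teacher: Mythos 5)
Your argument is correct and is essentially the proof the paper relies on: the paper does not prove this proposition itself but cites \cite[Proposition 1.5]{DCK91} and \cite[Theorem 3.4.9]{HK02}, whose argument is exactly your combination of checking the Serre presentation at $q=1$ (including $\ol{K_i}=1$ and $\ol{[K_i;0]_{q_i}}=\ol{(K_i;0)_{q_i}}$, which the paper records in the adjacent remark) with injectivity via freeness of the integral form on a PBW-type basis. The only point specific to this paper's setting is that the base ring is $\bbK_1$ rather than $\bfA_1=\C[q,q\inv]_{(q-1)}$, and your freeness claim for $\U_{\bbK_1}$ is justified because $\bbK_1$ is local with residue field $\C$ (Proposition \ref{On K1}) and $\U_{\bbK_1}$ is the $\bbK_1$-span of the $\bfA_1$-form, so its PBW basis stays a basis after this base change.
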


In what follows, we identify $\ol{\U}$ with $U(\g)$ via this isomorphism.

\begin{rem}\normalfont
Since we have
$$
[K_i;0]_{q_i} = \frac{K_i-K_i\inv}{q_i-q_i\inv} = \frac{K_i-1}{q_i-1} \cdot \frac{q_i^{1/2}(1+K_i\inv)}{q_i^{1/2}+q_i^{-1/2}},
$$
it follows that
$$
\ol{[K_i;0]_{q_i}} = \ol{(K_i;0)_{q_i}} = h_i.
$$
\end{rem}

Given a subspace $S$ of $\U$, we set $S_{\bbK_1} := S \cap \U_{\bbK_1}$, and $\ol{S} := S_{\bbK_1} \otimes_{\bbK_1} \C$. Under the identification $\ol{\U} = U(\g)$, we regard $\ol{S}$ as a subspace of $U(\g)$. Then, the following is a version of \cite[Theorem 4.8]{Le99}.

\begin{prop}
We have $\ol{\Ui} = U(\frk)$.
\end{prop}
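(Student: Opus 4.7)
The plan is to establish the equality by descending to a $\bbK_1$-integral form of $\Ui$. Let $\Ui^{\mathrm{int}}$ denote the $\bbK_1$-subalgebra of $\Ui_{\bbK_1} := \Ui \cap \U_{\bbK_1}$ generated by $E_i$ for $i \in I_\bullet$, by $k_i^{\pm 1}$ and $(k_i;0)_{q_i}$ for $i \in I$, and by $B_j$ for $j \in I$. Each of these generators lies in $\U_{\bbK_1}$: for the $B_j$ with $j \in I_\circ$ this uses $\varsigma_j \in \bbK_1^\times$ (by the hypothesis on $\bfvarsigma$) together with $T_{w_\bullet}(E_{\tau(j)}) \in \U_{\bbK_1}$, the latter following by induction on $\ell(w_\bullet)$ from Lusztig's formulas for $T_i$ and the fact that each divided power $E_i^{(n)}$, $F_i^{(n)}$ already belongs to $\U_{\bbK_1}$. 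Hence $\ol{\Ui^{\mathrm{int}}} \subseteq \ol{\Ui}$.

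Next I would compute the classical limit of each generator of $\Ui^{\mathrm{int}}$. The preceding proposition gives $\ol{E_i} = e_i$, $\ol{k_i^{\pm 1}} = 1$, and $\ol{(k_i;0)_{q_i}}$ equal to $h_i - h_{\tau(i)}$ for $i \in I_\circ$ and to $h_i$ for $i \in I_\bullet$. Together these span $\frn_\bullet + \frh^\theta$. The crucial step is $\ol{B_i} = b_i$. For $i \in I_\bullet$ this is immediate, while for $i \in I_\circ$ it reduces to the identity $\ol{T_{w_\bullet}(E_{\tau(i)})} = \ol{T}_{w_\bullet}(e_{\tau(i)})$. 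I would prove this by induction on $\ell(w_\bullet)$ from the single-step compatibility $\ol{T_i(x)} = \ol{T}_i(\ol{x})$ for $x \in \U_{\bbK_1}$ with $T_i(x) \in \U_{\bbK_1}$, checked on the generators $E_j, F_j, K_\alpha$. For instance, Lusztig's formula
$$
T_i(E_j) = \sum_{r+s=-a_{i,j}}(-1)^r q_i^{-r} E_i^{(s)} E_j E_i^{(r)}
$$
specializes at $q=1$ to $\sum_{r+s=-a_{i,j}}(-1)^r e_i^s e_j e_i^r/(s!\,r!)$, which is exactly $\ol{T}_i(e_j) = \exp(\ad e_i)\exp(-\ad f_i)\exp(\ad e_i)(e_j)$ after a standard Lie-theoretic calculation. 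Combined with $\ol{K_i^{-1}} = 1$ and with $\ol{\varsigma_i}$ (the classical limit of the parameter) agreeing with $\ol{\varsigma_i}$ (the preassigned scalar) by the second condition in the choice of $\bfvarsigma$, this yields $\ol{B_i} = f_i + \ol{\varsigma_i}\,\ol{T}_{w_\bullet}(e_{\tau(i)}) = b_i$. Since $\frn_\bullet$, $\frh^\theta$, and $\{b_i \mid i \in I\}$ generate $\frk$, we conclude $U(\frk) \subseteq \ol{\Ui^{\mathrm{int}}} \subseteq \ol{\Ui}$.

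The remaining and principal obstacle is the reverse inclusion $\ol{\Ui} \subseteq U(\frk)$. It demands that $\Ui^{\mathrm{int}} + (q-1)\Ui_{\bbK_1} = \Ui_{\bbK_1}$, equivalently, that $\Ui_{\bbK_1}$ admits a PBW-type $\bbK_1$-basis of monomials in the chosen integral generators, which at $q=1$ reduces to a PBW basis of $U(\frk)$. This is precisely Letzter's integral-form theorem, \cite[Theorem 4.8]{Le99}, whose proof transfers verbatim from the base ring $\bfA_1 = \C[q,q^{-1}]_{(q-1)}$ to our $\bbK_1$: Proposition~\ref{On K1} supplies exactly the structural properties invoked there, namely that $\bbK_1$ is a local ring with maximal ideal $(q-1)$ and is integrally closed in $\bbK$. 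Invoking this PBW theorem over $\bbK_1$ then closes the proof, delivering $\ol{\Ui} = U(\frk)$.
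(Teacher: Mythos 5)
Your proposal is correct and takes essentially the same route as the paper, which gives no separate argument but simply declares the proposition to be a version of \cite[Theorem 4.8]{Le99}, the point being exactly the one you make: Proposition \ref{On K1} guarantees that $\bbK_1$ is a local ring with maximal ideal generated by $q-1$ and is integrally closed in $\bbK$, so Letzter's specialization argument over $\bfA_1$ carries over to $\bbK_1$. Your explicit computation of the classical limits of the generators (yielding $U(\frk) \subseteq \ol{\Ui}$ via $\ol{B_i} = b_i$, $\ol{(k_i;0)_{q_i}} = h_i - h_{\tau(i)}$ or $h_i$, and $\ol{E_i} = e_i$ for $i \in I_\bullet$) agrees with the Remark the paper records immediately after the proposition.
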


\begin{rem}\normalfont
For each $i \in I$, we have
$$
\ol{B_i} = b_i + \ol{\kappa}_i, \ \ol{k_i} = 1, \AND \ol{[k_i;0]_{q_i}} = \begin{cases}
h_i - h_{\tau(i)} \qu & \IF i \in I_\circ, \\
h_i \qu & \IF i \in I_\bullet.
\end{cases}
$$
\end{rem}

\subsection{Isomorphisms between $\imath$quantum groups}
For each $\bfeta = (\eta_i)_{i \in I} \in (\bbK_1^\times)^I$, let $\ad \bfeta$ denote the automorphism on $\U$ defined by
$$
\ad \bfeta(E_i) = \eta_i E_i, \qu \ad \bfeta(F_i) = \eta_i\inv F_i, \qu \ad \bfeta(K_i) = K_i.
$$
If we have $\eta_i = 1$ for all $i \in I_\bullet$, then the automorphism $\ad \bfeta$ restricts to an isomorphism $\Ui_{\bfvarsigma} \rightarrow \Ui_{\bfvarsigma'}$ which sends $E_i$ to $E_i$, $B_i$ to $\eta_i\inv B_i$, and $k_i$ to $k_i$, where $\bfvarsigma' = (\varsigma_i \eta_i \eta_{\tau(i)})_{i \in I_\circ}$.

Suppose that we are given $\bfzeta = (\zeta_i)_{i \in I} \in (\bbK_1^\times)^I$ such that
\begin{itemize}
\item $\zeta_i \zeta_{\tau(i)} = 1$ for all $i \in I$,
\item $\zeta_i = 1$ unless $i \in I_\circ$, $\tau(i) \neq i$, and $\la h_i, w_\bullet(\alpha_{\tau(i)}) \ra \neq 0$.
\end{itemize}
Then, from the defining relations for $\Ui$, we see that there exists an isomorphism $\psi_{\bfzeta} : \Ui_{\bfvarsigma} \rightarrow \Ui_{\bfvarsigma''}$ which sends $E_i$ to $E_i$, $B_i$ to $B_i$ and $k_i$ to $\zeta_i\inv k_i$, where $\bfvarsigma'' = (\varsigma_i \zeta_i)_{i \in I_{\circ}}$.

Combining these two types of isomorphisms, we obtain an isomorphism
$$
\phi_{\bfeta,\bfzeta} := \ad \bfeta \circ \psi_{\bfzeta} : \Ui_{(\varsigma_i)_i} \rightarrow \Ui_{(\varsigma_i \zeta_i \eta_i \eta_{\tau(i)})_i}
$$
which sends $E_i$ to $E_i$, $B_i$ to $\eta_i\inv B_i$, and $k_i \to \zeta_i\inv k_i$.

\begin{lem}\label{isomorphism between Uis}
Let $\bfvarsigma,\bfvarsigma'$ be two parameters for an $\imath$quantum group. Then, there exist $\bfeta,\bfzeta \in (\bbK_1^\times)^I$ such that $\phi_{\bfeta,\bfzeta}$ gives an isomorphism between $\Ui_{\bfvarsigma}$ and $\Ui_{\bfvarsigma'}$.
\end{lem}

\begin{proof}
Only in this proof, we fix a total order $<$ on $I$. According to this ordering, let us choose $\bfeta$ and $\bfzeta$ as follows:
\begin{itemize}
\item $\eta_i = \zeta_i = 1$ if $i \in I_\bullet$.
\item $\zeta_i = 1$, $\eta_i = \sqrt{\varsigma_i\inv \varsigma_i'}$ if $i \in I_\circ$ and $\tau(i) = i$.
\item $\zeta_i = \zeta_{\tau(i)} = 1$, $\eta_i = \eta_{\tau(i)} = \sqrt{\varsigma_i\inv \varsigma_i'}$ if $i \in I_\circ$, $\tau(i) \neq i$, and $\la h_i, w_\bullet(\alpha_{\tau(i)}) \ra = 0$.
\item $\zeta_i = \sqrt{\varsigma_i\inv \varsigma_i' (\varsigma_{\tau(i)}\inv \varsigma_{\tau(i)}')\inv}$, $\zeta_{\tau(i)} = \zeta_i\inv$, $\eta_i = \eta_{\tau(i)} = \sqrt{\varsigma_i\inv \varsigma_i' \zeta_i\inv}$ if $i \in I_\circ$, $i < \tau(i)$, and $\la h_i, w_\bullet(\alpha_{\tau(i)}) \ra \neq 0$.
\end{itemize}
Then, it is straightforward to verify that
\begin{itemize}
\item $\eta_i = 1$ if $i \in I_\bullet$,
\item $\zeta_i \zeta_{\tau(i)} = 1$ for all $i \in I$,
\item $\zeta_i = 1$ unless $i \in I_\circ$, $\tau(i) \neq i$, and $\la h_i, w_\bullet(\alpha_{\tau(i)}) \ra \neq 0$,
\item $\varsigma_i \zeta_i \eta_i \eta_{\tau(i)} = \varsigma_i'$ for all $i \in I_\circ$.
\end{itemize}
This completes the proof.
\end{proof}

\begin{rem}\normalfont
When $\zeta_i = 1$ for all $i \in I$, the isomorphism $\phi_{\bfeta,\bfzeta}$ is just the restriction of a Hopf algebra automorphism $\ad \bfeta$ on $\U$.
\end{rem}

\begin{rem}\normalfont\label{remark on parameter}
Recall from Remark \ref{defining relations for nonstandard} that there exists a unique isomorphism $\Ui_{\bfvarsigma,\bfkappa} \rightarrow \Ui_{\bfvarsigma}$ which sends $E_i,B_i,k_i$ to $E_i,B_i,k_i$, respectively, where $\Ui_{\bfvarsigma,\bfkappa}$ denotes the nonstandard $\imath$quantum group with parameters $\bfvarsigma,\bfkappa$. Hence, the composition
$$
\Ui_{\bfvarsigma,\bfkappa} \rightarrow \Ui_{\bfvarsigma} \xrightarrow[]{\phi_{\bfeta,\bfzeta}} \Ui_{\bfvarsigma'} \rightarrow \Ui_{\bfvarsigma',\bfkappa'}
$$
is an isomorphism similar to $\phi_{\bfeta,\bfzeta}$ in Lemma \ref{isomorphism between Uis}.
\end{rem}

\subsection{Anti-automorphism $S^{\imath}$}
\begin{lem}\label{S and ad E}
Let $i,j \in I$. Then, we have
$$
\tau \omega S T_i(E_j) = q_i^{2a_{i,j}} T_{\tau(i)}\inv \tau \omega S(E_j).
$$
\end{lem}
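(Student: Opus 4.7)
The strategy is to first eliminate $\tau$ using its compatibility with the braid operators, then verify the remaining identity by term-by-term calculation with Lusztig's explicit formulas.

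Since $\tau$ acts on $\U$ by permuting Chevalley generators, it intertwines $\tau T_i = T_{\tau(i)} \tau$, and therefore $T_{\tau(i)}^{-1}\tau = \tau T_i^{-1}$. Cancelling $\tau$ from both sides of the claim reduces it to
$$
\omega S\,T_i(E_j) \;=\; q_i^{2a_{i,j}}\, T_i^{-1}\, \omega S(E_j),
$$
where $\omega S$ is an anti-algebra automorphism of $\U$ with $\omega S(E_i) = -K_iF_i$, $\omega S(F_i) = -E_iK_i^{-1}$, and $\omega S(K_\alpha)=K_\alpha$. The case $j=i$ I would dispatch directly from $T_i(E_i) = -F_iK_i$ and $T_i^{-1}(F_i) = -E_iK_i$: both sides of the reduced identity evaluate to $q_i^2E_i$.

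For $j\neq i$, set $n := -a_{i,j}$ and use the formulas
$$
T_i(E_j)=\sum_{r+s=n}(-1)^rq_i^{-r}E_i^{(s)}E_jE_i^{(r)}, \qquad T_i^{-1}(F_j)=\sum_{r+s=n}(-1)^rq_i^{r}F_i^{(s)}F_jF_i^{(r)}.
$$
Applying the anti-multiplicative map $\omega S$ to the first formula and expanding $(-K_iF_i)^r=(-1)^rq_i^{r(r-1)}K_i^rF_i^r$, each $(r,s)$-term becomes a scalar multiple of $K_i^rF_i^{(r)}K_jF_jK_i^sF_i^{(s)}$. Sweeping every $K$ to the left via the commutations $F_iK_j=q_i^{-n}K_jF_i$, $F_jK_i=q_i^{-n}K_iF_j$, and $F_i^{(r)}K_i^s=q_i^{2rs}K_i^sF_i^{(r)}$ produces $q_i^{-n^2+2rs}K_i^nK_jF_i^{(r)}F_jF_i^{(s)}$, and collecting all the $q$-contributions together with the $q_i^{r(r-1)+s(s-1)}$ from the two $(-K_iF_i)^\bullet$ expansions makes use of the algebraic identity
$$
r(r-1)+s(s-1)+2rs \;=\; (r+s)^2-(r+s) \;=\; n^2-n,
$$
which is crucially independent of the splitting $r+s=n$. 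The surviving expression is
$$
\omega S\,T_i(E_j) \;=\; -(-1)^n q_i^{-n}\,K_i^nK_j\sum_{r+s=n}(-1)^r q_i^{-r}\,F_i^{(r)}F_jF_i^{(s)}.
$$

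On the other side, $T_i^{-1}(K_j)=K_jK_i^n$ gives
$$
q_i^{2a_{i,j}}\,T_i^{-1}\omega S(E_j) \;=\; -q_i^{-2n}K_jK_i^n\sum_{r+s=n}(-1)^rq_i^{r}F_i^{(s)}F_jF_i^{(r)},
$$
and reindexing $r\leftrightarrow s$ in the sum replaces $(-1)^rq_i^r$ by $(-1)^{n-r}q_i^{n-r}=(-1)^nq_i^n(-1)^rq_i^{-r}$, producing precisely the prefactor needed so that both displays coincide. I expect the only real obstacle to be the careful bookkeeping of $q$-powers in the commutation step; once the collapse $r(r-1)+s(s-1)+2rs=n^2-n$ is spotted, the remaining comparison is purely combinatorial.
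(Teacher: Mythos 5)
Your proof is correct: the reduction via $\tau T_i = T_{\tau(i)}\tau$, the $j=i$ check, the expansion of $\omega S$ applied to Lusztig's formula for $T_i(E_j)$, the exponent bookkeeping (which indeed collapses using $r+s=-a_{i,j}$), and the comparison with $T_i^{-1}(K_jF_j)$ after reindexing all check out. Apart from the cosmetic preliminary step of stripping off $\tau$, this is essentially the same direct term-by-term computation as the paper's own proof, which likewise applies the anti-automorphism to the sum formula, commutes the $K$'s to the left, and matches the result against the explicit formula for $T_{\tau(i)}^{-1}$ applied to $-K_{\tau(j)}F_{\tau(j)}$.
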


\begin{proof}
The assertion for $j = i$ is easy. So, let us compute $\tau \omega S T_i(E_j)$ with $j \neq i$ as
\begin{align}
\begin{split}
\tau \omega S T_i(E_j) &= \tau \omega S(\sum_{r+s = -a_{i,j}}(-1)^r q_i^{-r} E_i^{(s)} E_j E_i^{(r)}) \\
&= \tau \omega \sum_{r+s = -a_{i,j}} (-1)^{1-s} q_i^{r^2-2r+s^2-s} K_i^{-r} E_i^{(r)} K_j\inv E_j K_i^{-s} E_i^{(s)} \\
&= \sum_{r+s = -a_{i,j}} (-1)^{1-s} q_i^{r^2-2r+s^2-s} K_{\tau(i)}^{r} F_{\tau(i)}^{(r)} K_{\tau(j)} F_{\tau(j)} K_{\tau(i)}^{s} F_{\tau(i)}^{(s)} \\
&= \sum_{r+s = -a_{i,j}} (-1)^{1-s} q_i^{r^2-2r+s^2-s+ra_{i,j} + sa_{i,j} + 2rs} K_{\tau(j)}K_{\tau(i)}^{-a_{i,j}} F_{\tau(i)}^{(r)} F_{\tau(j)} F_{\tau(i)}^{(s)} \\
&= \sum_{r+s = -a_{i,j}} (-1)^{1-s} q_i^{2a_{i,j}+s} K_{s_{\tau(i)}(\alpha_{\tau(j)})} F_{\tau(i)}^{(r)} F_{\tau(j)} F_{\tau(i)}^{(s)}.
\end{split} \nonumber
\end{align}
For the second equality, we used
$$
S(E_i^{(n)}) = (-1)^n q_i^{n^2-n} K_i^{-n} E_i^{(n)},
$$
which can be found in \cite[3.3.3]{Lu10}.

On the other hand, we have
\begin{align}
\begin{split}
T_{\tau(i)}\inv \tau \omega S(E_j) &= -T_{\tau(i)}\inv(K_{\tau(j)}F_{\tau(j)}) \\
&= -K_{s_{\tau(i)}(\alpha_{\tau(j)})} \sum_{r+s=-a_{i,j}} (-1)^s q_i^s F_{\tau(i)}^{(r)} F_{\tau(j)} F_{\tau(i)}^{(s)} \\
&= q_i^{-2a_{i,j}} \tau \omega S T_i(E_j).
\end{split} \nonumber
\end{align}
This proves the assertion.
\end{proof}

\begin{prop}
The anti-automorphism $T_{w_\bullet} \tau \omega S$ on $\U$ restricts to an anti-isomorphism $\Ui_{\bfvarsigma} \rightarrow \Ui_{\bfvarsigma'}$ for some $\bfvarsigma'$.
\end{prop}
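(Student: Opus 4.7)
The anti-automorphism $\Psi := T_{w_\bullet}\tau\omega S$ is defined on all of $\U$ (as $T_{w_\bullet},\tau,\omega$ are algebra automorphisms and $S$ is an anti-automorphism), so the claim reduces to showing that $\Psi$ carries a chosen set of generators of $\Ui_{\bfvarsigma}$ into $\Ui_{\bfvarsigma'}$ for an appropriate $\bfvarsigma'$; the reverse inclusion, and hence the anti-isomorphism statement, will then follow by running the same analysis for $\Psi\inv$. I would take as generators the Cartan elements $K_\alpha$ with $\alpha\in Q^\theta$, the elements $E_j$ with $j\in I_\bullet$, and the elements $B_i$ with $i\in I$.

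The first two families are handled by inspection. A direct computation yields $\Psi(K_\alpha)=K_{w_\bullet\tau(\alpha)}$, and $w_\bullet\tau$ preserves $Q^\theta$ because $-w_\bullet(\alpha_j)=\alpha_{\tau(j)}$ for $j\in I_\bullet$, while for $i\in I_\circ$ the difference $w_\bullet(\alpha_i)-\alpha_i$ lies in $\sum_{j\in I_\bullet}\Z\alpha_j\subseteq Q^\theta$ (since $w_\bullet\in W_\bullet$). For $E_j$ with $j\in I_\bullet$, unfolding the definitions gives $\Psi(E_j)=-K_{-\alpha_j}T_{w_\bullet}(F_{\tau(j)})$, and this lies in $\U_\bullet\subseteq\Ui$ because $T_{w_\bullet}$ preserves $\U_\bullet$.

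The substantive case is $B_i$ for $i\in I_\circ$, where $B_i=F_i+\varsigma_iT_{w_\bullet}(E_{\tau(i)})K_i\inv$. Writing $\mu:=-w_\bullet(\alpha_{\tau(i)})$, direct expansion yields $\Psi(F_i)=-T_{w_\bullet}(E_{\tau(i)})K_\mu$. For the second summand I would iterate Lemma \ref{S and ad E} along a reduced expression $w_\bullet=s_{i_1}\cdots s_{i_p}$; the lemma says, schematically, that $\tau\omega S$ intertwines $T_{i_k}$ with $T_{\tau(i_k)}\inv$ up to an explicit scalar depending on the weight of the argument, and telescoping along the reduced word produces an identity
$$
T_{w_\bullet}\tau\omega ST_{w_\bullet}(E_{\tau(i)})=-q^{\gamma_i}K_iF_i
$$
for an explicit integer exponent $\gamma_i$ read off from the weights. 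Combining with the computation of $\Psi(F_i)$ and rearranging $K$-factors through the usual weight commutation relations, the image collapses to
$$
\Psi(B_i) = -\varsigma_iq^{\gamma_i}\,K_\nu\bigl(F_i+\varsigma'_iT_{w_\bullet}(E_{\tau(i)})K_i\inv\bigr),
$$
with $\nu=\alpha_i-w_\bullet(\alpha_{\tau(i)})$ and $\varsigma'_i\in\bbK_1^\times$ explicit. The decomposition $\nu=(\alpha_i-\alpha_{\tau(i)})+(\alpha_{\tau(i)}-w_\bullet(\alpha_{\tau(i)}))$ places $\nu$ in $Q^\theta$ (first summand by definition, second because $w_\bullet\in W_\bullet$), so $K_\nu\in\U(\frh^\theta)$ and the bracketed factor is precisely $B_i^{\bfvarsigma'}$; hence $\Psi(B_i)\in\Ui_{\bfvarsigma'}$.

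The last step is to verify that $\bfvarsigma'=(\varsigma'_i)_{i\in I_\circ}$ meets the admissibility condition $\varsigma'_i=\varsigma'_{\tau(i)}$ whenever $\la h_i,w_\bullet(\alpha_{\tau(i)})\ra=0$. Under this constraint, the symmetry of $\varsigma_iq^{\gamma_i}$ under the swap $i\leftrightarrow\tau(i)$ (and hence the admissibility of $\bfvarsigma'$) ultimately comes from $w_\bullet$ being an isometric involution commuting with $\tau$ on the constrained indices. The principal technical obstacle is the bookkeeping in iterating Lemma \ref{S and ad E}: each step introduces a twisting factor whose interaction with the remaining $T_{i_{k+1}}\cdots T_{i_p}$ must be tracked precisely in order to extract a closed form for $\gamma_i$ and $\varsigma'_i$, and one must confirm that the resulting exponent is independent of the chosen reduced expression (automatic from the well-definedness of $T_{w_\bullet}$, but needed explicitly to write down $\bfvarsigma'$).
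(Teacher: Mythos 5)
Your proposal follows essentially the same route as the paper's proof: check the images of the generators (the Cartan part and the $I_\bullet$-part by inspection), reduce the substantive case $B_i$, $i\in I_\circ$, to iterating Lemma \ref{S and ad E} along a reduced expression of $w_\bullet$ so that $T_{w_\bullet}\tau\omega S T_{w_\bullet}(E_{\tau(i)})$ collapses to a scalar multiple of $F_iK_i$, whence $T_{w_\bullet}\tau\omega S(B_i)$ becomes a scalar times $K_{\alpha_i-w_\bullet(\alpha_{\tau(i)})}\in\U(\frh^\theta)$ times the new generator $F_i+\varsigma'_iT_{w_\bullet}(E_{\tau(i)})K_i\inv$, and finally verify admissibility of $\bfvarsigma'$. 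The only pieces you leave schematic are exactly what the paper makes explicit — the telescoped exponent, computed via the fact that $\alpha_{j_r}, s_{j_r}(\alpha_{j_{r-1}}),\ldots$ enumerate the positive roots of $\g_\bullet$, giving $\varsigma'_i=\varsigma_i\inv q_i^{-\la h_i,4\rho_\bullet+w_\bullet(\alpha_{\tau(i)})\ra+2}$ — together with the trivial case $B_i=F_i$ for $i\in I_\bullet$, which you omit but which is handled by the same inspection as $E_j$.
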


\begin{proof}
Let $i \in I_\bullet$. Recall that we have $w_\bullet(h_i) = -h_{\tau(i)}$. Hence,
\begin{align}
\begin{split}
&T_{w_\bullet}(E_i) = T_{\tau(i)}(E_{\tau(i)}) = -F_{\tau(i)}K_{\tau(i)}, \\
&T_{w_\bullet}(F_i) = T_{\tau(i)}(F_{\tau(i)}) = -K_{\tau(i)}\inv E_{\tau(i)}, \\
&T_{w_\bullet}(K_i) = K_{\tau(i)}\inv.
\end{split} \nonumber
\end{align}
Therefore, we have
\begin{align}
\begin{split}
T_{w_\bullet} \tau \omega S(E_i) &= -T_{w_\bullet}(K_{\tau(i)}F_{\tau(i)}) \\
&= K_i\inv K_i\inv E_i = K_i^{-2} E_i.
\end{split} \nonumber
\end{align}
Similarly, we obtain
$$
T_{w_\bullet} \tau \omega S(F_i) = F_iK_i^2,\ T_{w_\bullet} \tau \omega S(K_i) = K_i\inv.
$$

Next, let $i \in I_\circ$. Let $w_\bullet = s_{j_1} \cdots s_{j_r}$ be a reduced expression. Then, we have
$$
T_{w_\bullet}(E_{\tau(i)}) = T_{j_1} \cdots T_{j_r}(E_{\tau(i)}),
$$
and
$$
T_{j_k} T_{j_{k+1}} \cdots T_{j_r}(E_{\tau(i)}) \in \U(\frn) \Forall j = 1,\ldots,r,
$$
by \cite[Lemma 40.1.2]{Lu10}, where $\U(\frn)$ denotes the subalgebra of $\U$ generated by $E_j$, $j \in I$. Then, by Proposition \ref{S and ad E}, we see that
\begin{align}
\begin{split}
\tau \omega S T_{w_\bullet}(E_{\tau(i)}) &= \tau \omega S T_{j_1} \cdots T_{j_r}(E_{\tau(i)}) \\
&= q_{j_1}^{2\la h_{j_1}, s_{j_2} \cdots s_{j_r}(\alpha_{\tau(i)}) \ra} T_{\tau(j_1)}\inv \tau\omega S T_{j_2} \cdots T_{j_r}(E_{\tau(i)}) \\
&= q_{j_1}^{2\la h_{j_1}, s_{j_2} \cdots s_{j_2}(\alpha_{\tau(i)}) \ra} \cdots q_{j_r}^{2\la h_{j_r}, \alpha_{\tau(i)} \ra} T_{\tau(j_1)}\inv \cdots T_{\tau(j_r)}\inv(-K_iF_i).
\end{split} \nonumber
\end{align}
Since $\tau$ is a diagram automorphism, the sequence $\tau(j_1),\ldots,\tau(j_r)$ again gives a reduced expression of $w_\bullet$. Also, we have
$$
q_{j_k}^{2\la h_{j_k}, s_{j_{k+1}} \cdots s_{j_r}(\alpha_{\tau(i)}) \ra} = q_i^{2\la h_{\tau(i)}, s_{j_r} \cdots s_{j_{k+1}}(\alpha_{j_k}) \ra}.
$$
Since $w_\bullet = s_{j_1} \cdots s_{j_r}$ is a reduced expression, the roots $\alpha_{j_r}, s_{j_r}(\alpha_{j_{r-1}}), \ldots, s_{j_r} \cdots s_{j_2}(\alpha_{j_1})$ are the positive roots for $\g_\bullet$ (with respect to the simple system $\{ \alpha_j \mid j \in I_\bullet \}$). Hence, we obtain
$$
\tau \omega S T_{w_\bullet}(E_{\tau(i)}) = -q_i^{2\la h_{\tau(i)}, 2\rho_\bullet \ra -2} T_{w_\bullet}\inv(F_iK_i),
$$
where $2\rho_\bullet$ denotes the sum of positive roots for $\g_\bullet$.

Now, let us compute $T_{w_\bullet} \tau \omega S(B_i)$ as
\begin{align}
\begin{split}
T_{w_\bullet} \tau \omega S(B_i) &= T_{w_\bullet} \tau \omega S(F_i + \varsigma_i T_{w_\bullet}(E_{\tau(i)})K_i\inv) \\
&= T_{w_\bullet}(-E_{\tau(i)}K_{\tau(i)}\inv - \varsigma_i q_i^{\la h_{\tau(i)}, 4\rho_\bullet \ra -2} K_{\tau(i)}\inv T_{w_\bullet}\inv(F_iK_i)) \\
&= -T_{w_\bullet}(E_{\tau(i)}) T_{w_\bullet}(K_{\tau(i)}\inv) - \varsigma_i q_i^{\la h_{\tau(i)}, 4\rho_\bullet \ra -2} T_{w_\bullet}(K_{\tau(i)}\inv) F_iK_i \\
&= -T_{w_\bullet}(E_{\tau(i)}) T_{w_\bullet}(K_{\tau(i)}\inv) - \varsigma_i q_i^{\la h_i,4\rho_\bullet + w_\bullet(\alpha_{\tau(i)}) \ra -2} F_i K_i T_{w_\bullet}(K_{\tau(i)}\inv) \\
&= -(\varsigma'_i)\inv(F_i + \varsigma'_i T_{w_\bullet}(E_{\tau(i)}) K_i\inv) K_i T_{w_\bullet}(K_{\tau(i)}\inv),
\end{split} \nonumber
\end{align}
where $\varsigma'_i := \varsigma_i\inv q_i^{-\la h_{i}, 4\rho_\bullet + w_\bullet(\alpha_{\tau(i)}) \ra +2}$. Then, we have $\varsigma'_{\tau(i)} = \varsigma'_i$ if $\varsigma_{\tau(i)} = \varsigma_i$. Hence, $\bfvarsigma' := (\varsigma'_i)_{i \in I_\circ}$ can be used for the parameter of an $\imath$quantum group.

Finally, let us compute $T_{w_\bullet} \tau \omega S(k_i)$ for $i \in I_\circ$. We have
$$
T_{w_\bullet} \tau \omega S(K_i) = T_{w_\bullet}(K_{\tau(i)}).
$$
We can write
$$
w_\bullet(\alpha_{\tau(i)}) = \alpha_{\tau(i)} + \sum_{j \in I_\bullet} c_j \alpha_j,
$$
where $c_j \in \Z$ is such that
$$
-a_{k,i} = a_{k,\tau(i)} + \sum_{j \in I_\bullet} c_j a_{k,j} \Forall k \in I_\bullet.
$$
Exchanging $i$ and $\tau(i)$, we obtain
$$
w_\bullet(\alpha_i) = \alpha_i + \sum_{j \in I_\bullet} c_j \alpha_j.
$$
Therefore, we have
$$
T_{w_\bullet} \tau \omega S(k_i) = T_{w_\bullet}(K_{\tau(i)} K_i\inv) = K_{\tau(i)} K_i\inv = k_i\inv.
$$

By above, we see that $T_{w_\bullet} \tau \omega S$ sends the generators of $\Ui_{\bfvarsigma}$ to those of $\Ui_{\bfvarsigma'}$. This proves the assertion.
\end{proof}

Let $\bfeta,\bfzeta$ be such that
\begin{itemize}
\item $\phi_{\bfeta,\bfzeta} : \Ui_{\bfvarsigma'} \rightarrow \Ui_{\bfvarsigma}$ is defined.
\item $\eta_{\tau(i)} = \eta_i$ for all $i \in I_\circ$.
\end{itemize}
Note that such $\bfeta,\bfzeta$ exist; see the proof of Lemma \ref{isomorphism between Uis}. Define an anti-automorphism $S^\imath = S^\imath_{\bfeta,\bfzeta}$ on $\Ui$ by
$$
S^\imath : \Ui = \Ui_{\bfvarsigma} \xrightarrow[]{T_{w_\bullet} \tau \omega S} \Ui_{\bfvarsigma'} \xrightarrow[]{\phi_{\bfeta,\bfzeta}} \Ui_{\bfvarsigma} = \Ui.
$$

\begin{rem}\normalfont
For a nonstandard $\imath$quantum group, we define $S^\imath$ to be the composition
$$
\Ui_{\bfvarsigma,\bfkappa} \rightarrow \Ui_{\bfvarsigma} \xrightarrow[]{S^\imath} \Ui_{\bfvarsigma} \rightarrow \Ui_{\bfvarsigma,\bfkappa}.
$$
\end{rem}

\begin{prop}\label{Simath}
Let $i \in I$. Then, we have
\begin{align}
\begin{split}
&S^\imath(E_i) = K_i^{-2} E_i \qu \IF i \in I_\bullet, \\
&S^\imath(F_i) = F_i K_i^2 \qu \IF i \in I_\bullet, \\
&S^\imath(k_i) = \zeta_i k_i\inv, \\
&S^\imath(B_i) = -\varsigma_i\inv \eta_{\tau(i)} B_i K_{\alpha_i-w_\bullet(\alpha_{\tau(i)})} \qu \IF i \in I_\circ.
\end{split} \nonumber
\end{align}
\end{prop}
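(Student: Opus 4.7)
The proof simply combines the explicit formulas for $T_{w_\bullet}\tau\omega S$ on generators that were computed in the proof of the preceding proposition with the action of $\phi_{\bfeta,\bfzeta}$, and invokes the defining parameter identity $\varsigma_i = \varsigma'_i \eta_i \eta_{\tau(i)} \zeta_i$ for the reverse isomorphism $\phi_{\bfeta,\bfzeta} : \Ui_{\bfvarsigma'} \to \Ui_{\bfvarsigma}$, which is Lemma \ref{isomorphism between Uis} applied with the roles of $\bfvarsigma$ and $\bfvarsigma'$ exchanged.

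For $i \in I_\bullet$, the previous proof already gave $T_{w_\bullet}\tau\omega S(E_i) = K_i^{-2}E_i$ and $T_{w_\bullet}\tau\omega S(F_i) = F_iK_i^2$, and the constraints $\eta_i = 1$ and $\zeta_i = 1$ on $I_\bullet$ force $\phi_{\bfeta,\bfzeta}$ to fix each of $E_i, F_i, K_i$ individually, so the formulas pass through unchanged. The computation $T_{w_\bullet}\tau\omega S(k_i) = k_i^{-1}$ likewise comes from the previous proof (uniformly for $i \in I_\circ$ and $i \in I_\bullet$, where in the latter case $k_i = K_i$), and then $\phi_{\bfeta,\bfzeta}(k_i^{-1}) = \zeta_i k_i^{-1}$.

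The substantive case is $B_i$ for $i \in I_\circ$. The previous proof ended with
$$T_{w_\bullet}\tau\omega S(B_i) = -(\varsigma'_i)^{-1}\bigl(F_i + \varsigma'_i T_{w_\bullet}(E_{\tau(i)})K_i^{-1}\bigr)\, K_i T_{w_\bullet}(K_{\tau(i)}^{-1}),$$
where the bracket is precisely the generator $B_i$ viewed inside $\Ui_{\bfvarsigma'}$ and the toral factor equals $K_{\alpha_i - w_\bullet(\alpha_{\tau(i)})}$. Applying $\phi_{\bfeta,\bfzeta}$ sends this $\Ui_{\bfvarsigma'}$-generator to $\eta_i^{-1} B_i$; expanding the toral factor as $k_i \prod_{j \in I_\bullet} k_j^{-c_j}$ using the identity $w_\bullet(\alpha_{\tau(i)}) - \alpha_{\tau(i)} \in \bigoplus_{j \in I_\bullet}\Z\alpha_j$ established in the previous proof, and using $\zeta_j = 1$ for $j \in I_\bullet$, shows that $\phi_{\bfeta,\bfzeta}$ acts on it by the single scalar $\zeta_i^{-1}$. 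Combining yields $S^\imath(B_i) = -(\varsigma'_i \eta_i \zeta_i)^{-1} B_i K_{\alpha_i - w_\bullet(\alpha_{\tau(i)})}$, and rearranging $\varsigma_i = \varsigma'_i \eta_i \eta_{\tau(i)} \zeta_i$ gives $(\varsigma'_i \eta_i \zeta_i)^{-1} = \varsigma_i^{-1}\eta_{\tau(i)}$, producing the claimed formula.

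The only bookkeeping hurdle is tracking which parameter ($\bfvarsigma$ or $\bfvarsigma'$) sits on each copy of $B_i$ and on which side of $\phi_{\bfeta,\bfzeta}$ the scalars appear; once the toral factor is expanded in the $\{k_i\}\cup\{k_j : j\in I_\bullet\}$ basis of $\U(\frh^\theta)$, no new computation beyond what is already in the preceding proposition is required.
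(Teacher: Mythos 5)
Your proposal is correct and follows essentially the same route as the paper: the first three formulas are read off from the computations of $T_{w_\bullet}\tau\omega S$ in the preceding proposition (with $\eta_i=\zeta_i=1$ on $I_\bullet$ making $\phi_{\bfeta,\bfzeta}$ harmless there), and the $B_i$ case is obtained by applying $\phi_{\bfeta,\bfzeta}$ to $-(\varsigma'_i)^{-1}B_iK_{\alpha_i-w_\bullet(\alpha_{\tau(i)})}$ and using $\varsigma'_i\eta_i\eta_{\tau(i)}\zeta_i=\varsigma_i$. Your explicit expansion of the toral factor in terms of $k_i$ and $k_j$, $j\in I_\bullet$, to justify the single scalar $\zeta_i^{-1}$ is a detail the paper leaves implicit, but it is exactly the intended bookkeeping.
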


\begin{proof}
The first three equalities are clear as we have seen that
$$
T_{w_\bullet} \tau \omega S(E_i) = K_i^{-2}E_i, \ T_{w_\bullet} \tau \omega S(F_i) = F_iK_i^2, \qu T_{w_\bullet} \tau \omega S(k_i) = k_i\inv.
$$

For the last equality, we compute as
\begin{align}
\begin{split}
S^\imath(B_i) &= \phi_{\bfeta,\bfzeta} \circ (T_{w_\bullet} \tau \omega S)(B_i) \\
&= \phi_{\bfeta,\bfzeta}(-(\varsigma'_i)\inv B_i K_i T_{w_\bullet}(K_{\tau(i)}\inv)) \\
&= -(\varsigma'_i)\inv \eta_i\inv \zeta_i\inv B_i K_i T_{w_\bullet}(K_{\tau(i)}\inv).
\end{split} \nonumber
\end{align}
Since we have $\varsigma'_i \eta_i \eta_{\tau(i)} \zeta_i = \varsigma_i$, and $K_i T_{w_\bullet}(K_{\tau(i)}\inv) = K_{\alpha_i - w_\bullet(\alpha_{\tau(i)})}$, the assertion follows.
\end{proof}

Now, the following are immediate consequences.

\begin{prop}\label{Simath2}
Let $i \in I$.
\begin{enumerate}
\item $(S^\imath)^2(E_i) = q_i^{-4\la h_i,\rho_\bullet \ra} E_i$ if $i \in I_\bullet$.
\item $(S^\imath)^2(B_i) = q_i^{4\la h_i,\rho_\bullet \ra} B_i$.
\item $(S^\imath)^2(k_i) = k_i$.
\end{enumerate}
\end{prop}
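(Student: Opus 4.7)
The proof is a direct computation using the formulas for $S^\imath$ on generators given in Proposition \ref{Simath} together with the anti-automorphism property. Part (3) is a one-line check: since $S^\imath(k_i) = \zeta_i k_i\inv$, we compute $(S^\imath)^2(k_i) = \zeta_i S^\imath(k_i\inv) = \zeta_i \cdot (\zeta_i k_i\inv)\inv = k_i$. For the case $i \in I_\bullet$ in parts (1) and (2), note that $S^\imath(K_i) = S^\imath(k_i) = k_i\inv = K_i\inv$ since $\zeta_i = 1$ on $I_\bullet$, so applying $S^\imath$ to $K_i^{-2} E_i$ or to $F_i K_i^2$, together with the commutations $K_i^{\mp 2} E_i K_i^{\pm 2} = q_i^{\mp 4} E_i$ and $K_i^{\mp 2} F_i K_i^{\pm 2} = q_i^{\pm 4} F_i$, gives $(S^\imath)^2(E_i) = q_i^{-4} E_i$ and $(S^\imath)^2(F_i) = q_i^4 F_i$; these match the claim since $\la h_i, \rho_\bullet \ra = 1$ for a simple root of $\g_\bullet$.

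The substantive case is (2) for $i \in I_\circ$. Set $\gamma := \alpha_i - w_\bullet(\alpha_{\tau(i)})$; using the expansion $w_\bullet(\alpha_{\tau(i)}) = \alpha_{\tau(i)} + \sum_{j \in I_\bullet} c_j \alpha_j$ from the proof of Proposition \ref{Simath}, we see that $\gamma \in Q^\theta$ and $K_\gamma = k_i \prod_{j \in I_\bullet} k_j^{-c_j}$. Hence $S^\imath(K_\gamma) = \zeta_i K_{-\gamma}$ by the previous cases. Applying $S^\imath$ twice to $B_i$ and using the anti-automorphism property yields
\[
(S^\imath)^2(B_i) = \varsigma_i^{-2} \eta_{\tau(i)}^2 \zeta_i \cdot K_{-\gamma} B_i K_\gamma,
\]
and then the commutation $K_\alpha B_i K_\alpha\inv = q^{-(\alpha, \alpha_i)} B_i$ for $\alpha \in Q^\theta$ rewrites this as $\varsigma_i^{-2} \eta_{\tau(i)}^2 \zeta_i \cdot q_i^{2 - \la h_i, w_\bullet(\alpha_{\tau(i)}) \ra} B_i$.

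The main obstacle is identifying this scalar with $q_i^{4 \la h_i, \rho_\bullet \ra}$. To handle this, I will combine the constraint $\varsigma'_i \eta_i \eta_{\tau(i)} \zeta_i = \varsigma_i$ on the parameters of $\phi_{\bfeta,\bfzeta} : \Ui_{\bfvarsigma'} \rightarrow \Ui_{\bfvarsigma}$ (coming from Lemma \ref{isomorphism between Uis}) with the explicit value $\varsigma'_i = \varsigma_i\inv q_i^{-\la h_i, 4\rho_\bullet + w_\bullet(\alpha_{\tau(i)}) \ra + 2}$ extracted from the proof of Proposition \ref{Simath}, and the hypothesis $\eta_i = \eta_{\tau(i)}$ built into the definition of $S^\imath$. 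Substituting shows that $\varsigma_i^{-2} \eta_{\tau(i)}^2 \zeta_i = q_i^{\la h_i, 4\rho_\bullet + w_\bullet(\alpha_{\tau(i)}) \ra - 2}$, so the two $q_i$-exponents combine to precisely $4 \la h_i, \rho_\bullet \ra$, completing the proof.
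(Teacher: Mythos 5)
Your computation is correct and is exactly the route the paper intends: it records Proposition \ref{Simath2} as an ``immediate consequence'' of Proposition \ref{Simath}, and your verification simply makes that explicit, using the formulas for $S^\imath$ on generators, the constraint $\varsigma'_i\eta_i\eta_{\tau(i)}\zeta_i=\varsigma_i$ with $\eta_i=\eta_{\tau(i)}$, and the value $\varsigma'_i=\varsigma_i\inv q_i^{-\la h_i,4\rho_\bullet+w_\bullet(\alpha_{\tau(i)})\ra+2}$ from the preceding proof. The scalar bookkeeping (including the exponent cancellation leaving $q_i^{4\la h_i,\rho_\bullet\ra}$ and the fact $\la h_i,\rho_\bullet\ra=1$ for $i\in I_\bullet$) checks out.
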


\subsection{Symmetries}
In this subsection, we assume that our Satake diagram is quasi-split, i.e., $I_\bullet = \emptyset$. Then, we have $\clZ_i = k_i\inv$ for all $i \in I_\circ = I$, and hence, the defining relations can be rewritten as follows:

\begin{align}
\begin{split}
&K_0 = 1,\ K_\alpha K_\beta = K_{\alpha + \beta}, \\
&K_\alpha B_i = q^{-(\alpha,\alpha_i)} B_i K_\alpha, \\
&S_{i,j}(B_i,B_j) = \begin{cases}
-\delta_{i,\tau(j)}\varsigma_i[k_i;0]_{q_i} \qu & \IF a_{i,j} = 0, \\
\delta_{i,\tau(i)} q_i \varsigma_i B_j - \delta_{i,\tau(j)} [2]_{q_i} (q_i \varsigma_{\tau(i)} k_i + q_i^{-2} \varsigma_i k_i\inv) B_i \qu & \IF a_{i,j} = -1, \\
q_i [2]_{q_i}^2 \varsigma_i[B_i,B_j] \qu & \IF a_{i,j} = -2, \\
-[2]_{q_i}([2]_{q_i}[4]_{q_i} + q_i^2 + q_i^{-2}) q_i \varsigma_i B_iB_jB_i \\
+ ([3]_{q_i}^2 + 1)q_i \varsigma_i(B_i^2B_j + B_jB_i^2) - [3]_{q_i}^2(q_i\varsigma_i)^2 B_j \qu & \IF a_{i,j} = -3.
\end{cases}
\end{split} \nonumber
\end{align}

By \cite{KP11}, there are automorphisms (a braid group action) $T^{\imath}_i$ on $\Ui$ which satisfy certain braid relations. Here, we list parts of formulas describing $T^{\imath}_i$. The formulas below are obtained from those in \cite{KP11} via an isomorphism $\phi_{\bfeta,\bfzeta}$:
$$
T^\imath_i : \Ui = \Ui_{\bfvarsigma} \xrightarrow[]{\phi_{\bfeta,\bfzeta}\inv} \Ui_{\bfvarsigma'} \xrightarrow[]{\tau_i} \Ui_{\bfvarsigma'} \xrightarrow[]{\phi_{\bfeta,\bfzeta}} \Ui_{\bfvarsigma},
$$ 
where $\varsigma'_i := -q^{-a_{i,\tau(i)}}$, and $\tau_i$ is the automorphism on $\Ui_{\bfvarsigma'}$ defined in \cite{KP11}.
\begin{itemize}
\item When $\tau = \id$ and $|a_{i,j}| \leq 1$ for all $j \neq i$,
$$
T^{\imath}_i(B_j) := \begin{cases}
B_j \qu & \IF j = i \OR a_{i,j} = 0, \\
\frac{1}{\sqrt{-q_i^2 \varsigma_i}}[B_j,B_i]_{q_i} \qu & \IF a_{i,j} = -1, \\
%\frac{1}{-[2]q^2\varsigma_n}[[B_{j},B_i],B_i]_{q^2} + B_{j} \qu & \IF a_{i,j} = -2, \\
%\frac{1}{\sqrt{-q_i^2 \varsigma_i}}([3]\inv[2]\inv(-q_i^{-2}\varsigma_i\inv [[[B_j,B_i]_{q\inv}, B_i]_q, B_i]_{q^3} \\
%+ q\inv [B_j,B_i]_{q^3}) + [B_j,B_i]_q) \qu & \IF a_{i,j} = -3
\end{cases}
$$
\item[]
\item When our Satake diagram is of type AIII with $s = r+1$ and $1 \leq i \leq r-1$,
\begin{align}
\begin{split}
&T^{\imath}_i(B_j) = \begin{cases}
q \eta_i\inv \eta_{\tau(i)} k_i\inv B_{\tau(i)} \qu & \IF j = i, \\
q\inv \eta_i \eta_{\tau(i)}\inv B_i k_i \qu & \IF j = \tau(i), \\
q^{-1/2} \eta_i [B_j,B_i]_q \qu & \IF |j-i| = 1, \\
-q^{1/2} \eta_{\tau(i)} [B_{\tau(i)},B_{j}]_{q\inv} \qu & \IF |\tau(j)-i| = 1, \\
B_j \qu & \OW,
\end{cases} \\
&T^\imath_i(k_j) = T_iT_{\tau(i)}(k_j),
\end{split} \nonumber
\end{align}
where $\eta_i \in \bbK_1$ such that $\varsigma_i \eta_i\eta_{\tau(i)} = -1$.
\item[]
\item When our Satake diagram is of type AIII-2 and $1 \leq i \leq r-1$,
\begin{align}
\begin{split}
&T^{\imath}_i(B_j) = \begin{cases}
q \eta_i\inv \eta_{\tau(i)} k_i\inv B_{\tau(i)} \qu & \IF j = i, \\
q\inv \eta_i \eta_{\tau(i)}\inv B_i k_i \qu & \IF j = \tau(i), \\
q^{-1/2} \eta_i [B_j,B_i]_q \qu & \IF |j-i| = 1 \AND j \neq r, \\
-q^{1/2} \eta_{\tau(i)} [B_{\tau(i)},B_j]_{q\inv} \qu & \IF |\tau(j)-i| = 1 \AND j \neq r, \\
\varsigma_{r-1}\inv[B_{r+1},[B_r,B_{r-1}]_q]_{q\inv} + B_rk_{r-1} \qu & \IF (i,j) = (r-1,r), \\
B_j \qu & \OW,
\end{cases} \\
&T^\imath_i(k_j) = T_iT_{\tau(i)}(k_j),
\end{split} \nonumber
\end{align}
where $\eta_i \in \bbK_1$ such that $\varsigma_i \eta_i\eta_{\tau(i)} = -1$.
\item[]
\item When our Satake diagram is of type DIII and $1 \leq n-2$, where $n$ is the rank of $\g$,
\begin{align}
\begin{split}
&T^\imath_i(B_j) = \begin{cases}
\frac{1}{\sqrt{-q^2 \varsigma_i}}[B_j,B_i]_q \qu & \IF |j-i| = 1, \\
B_j \qu & \IF \OW,
\end{cases} \\
&T^\imath_i(k_{n-1}) = k_{n-1}.
\end{split} \nonumber
\end{align}
And, we have
\begin{align}
\begin{split}
&T^\imath_{n-1}(B_j) = \begin{cases}
q \eta_{n-1}\inv \eta_n k_{n-1}\inv B_n \qu & \IF j = n-1, \\
q\inv \eta_{n-1} \eta_n\inv B_{n-1} k_{n-1} \qu & \IF j = n, \\
\varsigma_{n-1}\inv [B_n,[B_{n-1},B_{n-2}]_q]_{q\inv} + B_{n-2} k_{n-1} \qu & \IF j=n-2, \\
B_j \qu & \OW,
\end{cases} \\
&T^\imath_{n-1}(k_{n-1}) = k_{n-1}\inv,
\end{split} \nonumber
\end{align}
where $\eta_i \in \bbK_1$ such that $\varsigma_i \eta_i\eta_{\tau(i)} = -1$.
\end{itemize}

Moreover, these $T^{\imath}_i$'s satisfy the following:
\begin{align}
\begin{split}
&T^{\imath}_iT^{\imath}_j = T^{\imath}_jT^{\imath}_i \qu \IF a_{i,j} = 0, \\
&T^{\imath}_iT^{\imath}_jT^{\imath}_i = T^{\imath}_jT^{\imath}_iT^{\imath}_j \qu \IF a_{i,j} = -1.
\end{split} \nonumber
\end{align}

Given a sequence $i_1,\ldots,i_l \in I$, we abbreviate the composite $T^\imath_{i_1} \cdots T^\imath_{i_l}$ as $T^\imath_{i_1,\ldots,i_l}$.

\section{Classical weight modules}\label{Classical weight modules}

\subsection{Category $\clC$}\label{Category clC}
Let $\Phi$ denote the set of roots of $\g$ with respect to the Cartan subalgebra $\frh$, $\Phi^+$ the set of positive roots such that the roots $\alpha_i$ of $e_i$, $i \in I$ form a simple system. Set $\Delta := \{ \alpha_i \mid i \in I \}$. For $\alpha \in \Phi$, let $\g_\alpha$ denote the root space of root $\alpha$.

\begin{theo}[{\cite[Theorem 2.5]{Le19}}]
There exists a subset $\Gamma \subset \Phi^+$ such that
$$
\frh^\theta \oplus \bigoplus_{\gamma \in \Gamma} \C (f_\gamma + e_\gamma)
$$
is a Cartan subalgebra of $\frk$, where $e_\gamma$ (resp., $f_\gamma$) is a certain root vector in $\g_\gamma$ (resp., $\g_{-\gamma}$).
\end{theo}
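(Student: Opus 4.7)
My plan is to reduce to the centralizer of $\frh^\theta$ in $\g$ and then apply a Cayley-transform argument, mimicking the construction of a maximally compact Cartan subalgebra in a split real form.

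Set $\fra := \{h \in \frh : \theta(h) = -h\}$ so that $\frh = \frh^\theta \oplus \fra$, and let $\theta^*$ denote the induced involution on $\frh^*$. Since the $(-1)$-eigenspace of $\theta^*$ is precisely the annihilator of $\frh^\theta$, a root $\gamma \in \Phi$ vanishes on $\frh^\theta$ iff $\theta^*(\gamma) = -\gamma$; call such $\gamma$ \emph{real} and write $\Phi^{\mathrm{re}}$ for the set of real roots. First I would identify the centralizer of $\frh^\theta$ in $\g$ as the $\theta$-stable reductive Levi
$$
\frl_\g \;:=\; \frh \;\oplus\; \bigoplus_{\gamma \in \Phi^{\mathrm{re}}} \g_\gamma,
$$
which has Cartan $\frh$ and root system $\Phi^{\mathrm{re}}$. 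Any Cartan subalgebra of $\frk$ containing $\frh^\theta$ must sit inside $\frl_\g \cap \frk$.

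Second, I would note that $(\frl_\g,\, \frl_\g \cap \frk)$ is itself a symmetric pair of \emph{split} type: $\theta$ acts as $-1$ on $\fra$ and swaps $\g_\gamma \leftrightarrow \g_{-\gamma}$ for every $\gamma \in \Phi^{\mathrm{re}}$, so $\fra$ is a maximally noncompact Cartan subspace of $\frl_\g$. For each positive real root $\gamma$, choose $e_\gamma \in \g_\gamma$ and put $f_\gamma := \theta(e_\gamma) \in \g_{-\gamma}$, rescaled so that $(e_\gamma, f_\gamma, h_\gamma)$ is a standard $\frsl_2$-triple; then $f_\gamma + e_\gamma$ lies in $\frk$ by construction. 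Third, I would pick $\Gamma \subset \Phi^{\mathrm{re}} \cap \Phi^+$ to be a Kostant cascade, i.e. a maximal strongly orthogonal family in $\Phi^{\mathrm{re}}$. Strong orthogonality $\gamma \pm \gamma' \notin \Phi$ forces $[f_\gamma + e_\gamma,\, f_{\gamma'} + e_{\gamma'}] = 0$, and commutation with $\frh^\theta$ is automatic since each $\gamma \in \Gamma$ vanishes on $\frh^\theta$. Hence
$$
\frt \;:=\; \frh^\theta \;\oplus\; \bigoplus_{\gamma \in \Gamma} \C (f_\gamma + e_\gamma)
$$
is an abelian subalgebra of $\frk$, and its self-centralization in $\frk$ will follow from that of $\frt$ in $\frl_\g \cap \frk$, which is the standard content of the Cayley-transform theorem applied to the split pair $(\frl_\g,\, \frl_\g \cap \frk)$.

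The hard part will be the last dimension count, namely producing $\Gamma$ of the correct size $\dim \fra$ (so that $\dim \frt = \on{rank}(\g) = \on{rank}(\frk)$). This reduces to showing that $\Phi^{\mathrm{re}}$ spans $\fra^*$ and that the Kostant cascade inside $\Phi^{\mathrm{re}}$ has cardinality matching the restricted rank of $(\frl_\g,\, \frl_\g \cap \frk)$. For Satake diagrams, this can be verified uniformly using the explicit structure of $\theta = \ol{T}_{w_\bullet} \tau (\ad \ol{\bfvarsigma}) \omega$: each $\tau$-orbit of positive roots supported outside $I_\bullet$ admits a $\theta^*$-antifixed combination, which serves as a real-root preimage of the corresponding restricted simple root, and a Kostant-cascade induction on $\Phi^{\mathrm{re}}$ then produces $\Gamma$ of the required size.
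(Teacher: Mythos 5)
The paper does not prove this statement at all: it is quoted verbatim from Letzter \cite[Theorem 2.5]{Le19}, so the only comparison available is with her proof. Your skeleton --- restrict to the roots vanishing on $\frh^\theta$, observe that $\theta$ is of split type there, build $\Gamma$ as a strongly orthogonal family and use $\frsl_2$/Cayley-transform arguments to get commutativity and semisimplicity of the elements $f_\gamma+e_\gamma$ --- is in the right spirit and is close to what Letzter actually does (her $\Gamma$ is a maximal strongly orthogonal system of positive roots satisfying $\Theta(\gamma)=-\gamma$, chosen recursively; note it need not be the cascade built from highest roots, e.g.\ for AI-2 her $\Gamma$ contains the simple roots $\alpha_1,\alpha_3,\dots$).

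However, the step you yourself identify as the hard part is aimed at a false target, so as written the argument has a genuine gap. A Cartan subalgebra of $\frk$ has dimension $\on{rank}\frk$, which is \emph{not} $\on{rank}\g$ in general, so the required size of $\Gamma$ is $\on{rank}\frk-\dim\frh^\theta$, not $\dim\fra$. For $(\frsl_{2r+1},\frso_{2r+1})$ one has $\frh^\theta=0$, $\dim\fra=2r$, but $\on{rank}\frk=r$; consistently, the Kostant cascade of $A_{2r}$ you propose has only $r$ elements, contradicting your own count $|\Gamma|=\dim\fra$. Worse, the proposed reduction ``$\Phi^{\mathrm{re}}$ spans $\fra^*$'' and ``each restricted simple root has a real-root preimage'' is simply false: for AII, $(\frsl_{2r},\frsp_{2r})$, no root vanishes on $\frh^\theta=\on{Span}\{h_i\mid i\in I_\bullet\}$, so $\Phi^{\mathrm{re}}=\emptyset$ even though $\fra\neq 0$ (the theorem holds there with $\Gamma=\emptyset$ because $\dim\frh^\theta=\on{rank}\frsp_{2r}$ already); also a $\theta^*$-antifixed linear combination of roots in a $\tau$-orbit need not be a root. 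Finally, the self-centralization of $\frt$ in $\frl_\g\cap\frk$ --- which together with the correct equality $\dim\frh^\theta+|\Gamma|=\on{rank}\frk$ is the real content of the theorem --- is deferred to an unspecified ``Cayley-transform theorem''; this is exactly the point where maximality of the strongly orthogonal family must be exploited (and where one must rule out leftover rank coming from roots orthogonal but not strongly orthogonal to $\Gamma$), and it is where Letzter's proof does its work. So the proposal sketches a plausible strategy but does not yet constitute a proof.
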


Set $I_{\otimes} := \{ i \in I \mid \alpha_i \in \Gamma \}$. In Table in page \pageref{List}, the vertices corresponding to $I_{\otimes}$ are represented by marked vertices $\otimes$. Important facts we can see from the definition of $\Gamma$, or from Table in page \pageref{List} are the following:
\begin{align}\label{fundamental facts for marked vertices}
\begin{split}
&\text{If $i \in I_{\otimes}$, then $\tau(i) = i$}. \\
&\text{If $i,j \in I_{\otimes}$ and $i \neq j$, then $a_{i,j} = 0$}. \\
&\text{If $i \in I_\bullet$ and $j \in I_{\otimes}$, then $a_{i,j} = 0$}.
\end{split}
\end{align}
These facts imply that $[B_i,B_j] = 0$ for all $i,j \in I_{\otimes}$ and $[k_i,B_j] = 0$ for all $i \in I$, $j \in I_{\otimes}$.

\begin{defi}\normalfont
\ \begin{enumerate}
\item Let $\frt'$ denote the subspace of $\frk$ spanned by $\frh^\theta$ and $\{ b_j \mid j \in I_{\otimes} \}$.
\item Let $\Ui(\frt')$ denote the subalgebra of $\Ui$ generated by $\U(\frh^\theta)$ and $\{ B_j \mid j \in I_{\otimes} \}$.
\end{enumerate}
\end{defi}

From the observation above, $\frt'$ is an abelian Lie subalgebra of $\frk$, and $\Ui(\frt')$ is a commutative subalgebra of $\Ui$.

\begin{defi}\normalfont
Let $M$ be a $\Ui$-module. We say $M$ is a classical weight module if $\Ui(\frt')$ acts on $M$ semisimply with eigenvalues of $(k_i;0)_{q_i}$, $B_j$ for $i \in I \setminus I_\otimes$, $j \in I_\otimes$ being in $\bbK_1$. Let $\clC = \clC_{\bfvarsigma}$ denote the full subcategory of the category of $\Ui$-modules consisting of classical weight modules, and $\clC' = \clC'_{\bfvarsigma}$ the full subcategory of $\clC$ consisting of classical weight modules whose simultaneous eigenspaces of $\Ui(\frt')$ are all finite-dimensional.
\end{defi}

Note that $k_i = (q_i-1)(k_i;0)_{q_i} + 1$. Hence, $k_i$ acts on a classical weight module diagonally with eigenvalues in $\{ a \in \bbK_1^\times \mid \ol{a} = 1 \}$.

The category $\clC'$ is not so small. Actually, we have the following result.

\begin{prop}\label{fd U-mod in clC}
Let $M$ be a finite-dimensional $\U$-module. Then, $M \in \clC'$.
\end{prop}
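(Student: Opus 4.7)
My plan is to verify the three conditions defining $\clC'$: (i) $\Ui(\frt')$ acts semisimply on $M$; (ii) the eigenvalues of $(k_i;0)_{q_i}$ for $i \in I \setminus I_\otimes$ and of $B_j$ for $j \in I_\otimes$ all lie in $\bbK_1$; (iii) the joint $\Ui(\frt')$-eigenspaces are finite-dimensional. Since $\dim M < \infty$, condition (iii) will follow automatically from (i). Because $\Ui(\frt')$ is commutative and generated by the elements $k_i^{\pm 1}$ ($i \in I$) together with $B_j$ ($j \in I_\otimes$), conditions (i) and (ii) reduce to showing that each of these generators acts diagonalizably on $M$ with eigenvalues in $\bbK_1$. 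For the $k_i$'s this is immediate from the standard $\U(\frh)$-weight space decomposition of $M$: each $K_\alpha$ acts by an integer power of $q$ lying in $\bbK_1^\times$ and specializing to $1$, and the induced eigenvalues of $(k_i;0)_{q_i}$ are then quantum integers in $\bbK_1$.

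The substantive step is the diagonalizability of $B_j$ for $j \in I_\otimes$. Here I would exploit the bulleted properties of $I_\otimes$ preceding the definition of $\frt'$, namely $\tau(j) = j$ and $a_{i,j} = 0$ for every $i \in I_\bullet$. The latter forces every Lusztig operator $T_i$ with $i \in I_\bullet$ to fix $E_j$, so $T_{w_\bullet}(E_j) = E_j$ and therefore
\[
B_j = F_j + \varsigma_j E_j K_j^{-1} \in \U_j := \la E_j, F_j, K_j^{\pm 1} \ra \cong U_{q_j}(\frsl_2).
\]
Restricting $M$ to $\U_j$ yields a finite-dimensional $U_{q_j}(\frsl_2)$-module, which decomposes as a direct sum of finite-dimensional simples over $\bbK$. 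Since $B_j$ lies in $\U_j$ it preserves this decomposition, and it suffices to prove that $B_j$ is diagonalizable with eigenvalues in $\bbK_1$ on a single $(n+1)$-dimensional simple summand $L(n)$.

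For this last step, I would combine a classical limit computation with Proposition~\ref{On K1}. On $L(n)$ equipped with its standard $\bbK_1$-lattice (the highest weight vector together with its iterated $F_j$-images), the matrix of $B_j$ is tridiagonal with entries in $\bbK_1$, so its characteristic polynomial $p(x)$ lies in $\bbK_1[x]$. The reduction $\bar p(x) \in \C[x]$ is the characteristic polynomial of $\bar B_j = f_j + \bar\varsigma_j e_j$ on the classical $(n+1)$-dimensional irreducible $\frsl_2(\C)$-module; because $\bar\varsigma_j \in \C^\times$, the element $f_j + \bar\varsigma_j e_j$ is $\mathrm{SL}_2(\C)$-conjugate to a nonzero scalar multiple of $h_j$, and therefore acts with $n+1$ pairwise distinct eigenvalues. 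Consequently the discriminant of $p$ does not lie in the maximal ideal of the local ring $\bbK_1$, so it is a unit, which forces $p$ to have $n+1$ distinct roots in $\bbK$; by the integral closedness of $\bbK_1$ in $\bbK$ (Proposition~\ref{On K1}) these roots in fact lie in $\bbK_1$. The main obstacle is precisely this final step, where one must convert the semisimplicity of $B_j$ over the algebraically closed field $\bbK$ into the stronger $\bbK_1$-integrality of its eigenvalues by combining the classical limit with the integral closedness of $\bbK_1$.
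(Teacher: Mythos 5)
Your proof is correct, but it takes a genuinely different route from the paper at the decisive rank-one step. Both arguments reduce, using $\tau(j)=j$ and $a_{i,j}=0$ for $i\in I_\bullet$ (so that $T_{w_\bullet}(E_j)=E_j$ and $B_j=F_j+\varsigma_jE_jK_j\inv$ lies in the copy $\U_j\simeq U_{q_j}(\frsl_2)$), to showing that this single element acts diagonalizably with eigenvalues in $\bbK_1$ on finite-dimensional $\U_j$-modules; the paper performs this reduction by first twisting the parameter to $\varsigma_j=q_j\inv$ via $\ad\bfeta$, while you keep $\varsigma_j$ general, which is equally fine. From there the paper embeds the module into $V^{\otimes n}\otimes\bbK_e$ and inducts on $n$, using the coideal formula $\Delta(B)=B\otimes K\inv+1\otimes B$ to produce explicit $B$-eigenvectors $v_\alpha\otimes(v_++(\lambda_\pm-c_\alpha q\inv)v_-)$ with eigenvalues $\lambda_\pm\in\bbK_1$; you instead work simple summand by simple summand and run a classical-limit argument: the tridiagonal matrix of $B_j$ on the standard $\bbK_1$-lattice specializes to $f_j+\ol{\varsigma_j}e_j$, which is regular semisimple, so the discriminant of the characteristic polynomial is a unit of the local ring $\bbK_1$, the eigenvalues are pairwise distinct, and the integral closedness of $\bbK_1$ in $\bbK$ (Proposition \ref{On K1}) forces them into $\bbK_1$. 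Your route is sound and has the merit of actually using the integral-closedness half of Proposition \ref{On K1} and avoiding explicit eigenvector formulas; both routes rest on the same standard facts about finite-dimensional $U_{q_j}(\frsl_2)$-modules over $\bbK$ (your complete reducibility versus the paper's embedding into $V^{\otimes n}\otimes\bbK_e$). What the paper's computation buys, and yours does not, are the byproducts: the explicit form $[m]_{q_j}\sqrt{q_j\varsigma_j}$ of the eigenvalues recorded in the subsequent Remark, and Proposition \ref{tensor product is classical} (that $M\otimes N\in\clC$ for $M\in\clC$ and $N$ a finite-dimensional $\U$-module), which the paper obtains directly from the inductive tensor step; under your approach that proposition would still require the paper's argument, since a general $M\in\clC$ need not restrict to a semisimple $\U_j$-module. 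One small point to patch: for simple $\U_j$-summands of type $-1$ (highest $K_j$-eigenvalue $-q_j^n$) your identification of the classical limit with $f_j+\ol{\varsigma_j}e_j$ needs the one-line check that the signs coming from $E_j$ and $K_j\inv$ cancel (they do), or alternatively a twist by the one-dimensional character, as the paper handles via the factor $\bbK_e$.
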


\begin{proof}
Let $\bfvarsigma'$ and $\bfeta$ be such that $\varsigma'_j = q_j\inv$ for all $j \in I_{\otimes}$, and the automorphism $\ad \bfeta$ on $\U$ restricts to an isomorphism $\Ui_{\bfvarsigma'} \xrightarrow[]{\sim} \Ui_{\bfvarsigma}$. Then, a finite-dimensional $\U$-module $M$ is a classical weight module over $\Ui_{\bfvarsigma}$ if and only if the twisted $\U$-module $M^{(\ad \bfeta)}$ is a classical weight module over $\Ui_{\bfvarsigma'}$, where the $\U$-module structure of $M^{(\ad \bfeta)}$ is given by
$$
\U \xrightarrow[]{\ad \bfeta} \U \rightarrow \End_{\bbK}M.
$$

Hence, to prove the assertion, it suffices to show the following: On a finite-dimensional module $M$ over $U_q(\frsl_2)$, the vector $B := F + q\inv EK\inv$ acts diagonally with eigenvalues in $\bbK_1$. Since such $M$ appears as a submodule of $V^{\otimes n} \otimes \bbK_e$ for some $n \in \Z_{\geq 0}$ and $e \in \{ +,- \}$, we may assume that $M = V^{\otimes n} \otimes \bbK_e$, where $V = \bbK v_+ \oplus \bbK v_-$ denotes the vector representation of $U_q(\frsl_2)$ defined by
$$
Ev_+ = 0, \qu Ev_- = v_+, \qu Fv_+ = v_-, \qu Fv_- = 0, \qu Kv_\pm = q^{\pm 1}v_\pm,
$$
and $\bbK_e = \bbK$ denotes a $1$-dimensional module defined by
$$
E 1 = F 1 = 0, \qu K 1 = e1.
$$
Since $\Delta(B) = 1 \otimes B + B \otimes K\inv$, it is clear that the tensor product of a classical weight $\Ui$-module and $\bbK_e$ is a classical weight $\Ui$-module. Therefore, we need to prove that $V^{\otimes n}$ is a classical weight $\Ui$-module.

We proceed by induction on $n$. It is clear that $B$ acts on the trivial $U_q(\frsl_2)$-module $\bbK_+ = V^{\otimes 0}$ as $0$. Assume that $V^{\otimes n-1} \in \clC$, and let $v_\alpha \in V^{\otimes n-1}$, $\alpha \in A$ be the $B$-eigenvectors with eigenvalues $c_\alpha \in \bbK_1$. Then, $V^{\otimes n}$ is spanned by $\{ v_\alpha \otimes v_+, v_\alpha \otimes v_- \mid \alpha \in A \}$. By direct calculation, we see that $v_\alpha \otimes (v_+ + (\lm_\pm - c_\alpha q\inv)v_-)$ is a $B$-eigenvector with eigenvalue $\lm_\pm$, where
$$
\lm_\pm := \frac{[2]c_\alpha \pm \sqrt{(q-q\inv)^2 c_\alpha^2 + 4}}{2} \in \bbK_1.
$$
Since $\ol{\lm_\pm - c_\alpha q\inv} = \pm 1$, the vectors $v_\alpha \otimes (v_+ + (\lm_\pm - c_\alpha q\inv)v_-)$ are linearly independent. Hence, $V^{\otimes n}$ is also spanned by $B$-eigenvectors with eigenvalues in $\bbK_1$. This completes the proof of the proposition.
\end{proof}

\begin{rem}\normalfont
With the notation in the proof of the previous proposition, when $c_\alpha = [m]$ for some $m \in \Z$, we have
$$
\lm_\pm = [m \pm 1].
$$
Hence, $B$-eigenvalues are of the form $[m]$, $m \in \Z$. Therefore, in general, $B_j$, $j \in I_{\otimes}$ acts on each finite-dimensional $\U$-module diagonally with eigenvalues in $\{ [m]_{q_j}\sqrt{q_j \varsigma_j} \mid m \in \Z \}$.
\end{rem}

By the proof of Proposition \ref{fd U-mod in clC}, we see the following:

\begin{prop}\label{tensor product is classical}
Let $M \in \clC$, and $N$ a finite-dimensional $\U$-module. Then, we have $M \otimes N \in \clC$.
\end{prop}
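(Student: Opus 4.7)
The plan is to mimic the proof of Proposition~\ref{fd U-mod in clC}. The coideal property $\Delta(\Ui) \subseteq \Ui \otimes \U$ endows $M \otimes N$ with a canonical $\Ui$-module structure, and the task is to verify that $\Ui(\frt')$ acts semisimply on $M \otimes N$ with the prescribed eigenvalues. Since $\Ui(\frt')$ is commutative and generated by $\{k_i^{\pm 1} : i \in I\} \cup \{B_j : j \in I_{\otimes}\}$, I can treat each generator separately; joint diagonalization then follows automatically from the fact that pairwise commuting diagonalizable operators admit a common eigenbasis.

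The Cartan generators are immediate: each $k_i$ is group-like, so it acts on $M \otimes N$ by $k_i \otimes k_i$. Semisimplicity, together with (for $i \in I \setminus I_{\otimes}$) the condition that $(k_i;0)_{q_i}$ has eigenvalues in $\bbK_1$, follows from the corresponding conditions on $M$ (by hypothesis) and on the finite-dimensional $\U$-module $N$ (as in Proposition~\ref{fd U-mod in clC}).

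The substantive step is $B_j$ for $j \in I_{\otimes}$. Using $\tau(j) = j$ and $a_{j,k} = 0$ for $k \in I_\bullet$, the coproduct reads $\Delta(B_j) = B_j \otimes K_j^{-1} + 1 \otimes B_j$. After the $\ad \bfeta$-twist permitted by Lemma~\ref{isomorphism between Uis}, I may assume $\varsigma_j = q_j^{-1}$. Restricting $N$ to the $U_q(\frsl_2)$-subalgebra $\U_j \subseteq \U$ generated by $E_j, F_j, K_j^{\pm 1}$, I decompose $N$ as a $\U_j$-module into finite-dimensional irreducibles, each isomorphic to some $V(m) \otimes \bbK_e$ and embedding $\U_j$-equivariantly into $V^{\otimes m} \otimes \bbK_e$. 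Since $\Delta(B_j) \in \U_j \otimes \U_j$, it preserves $M \otimes L$ for every $\U_j$-submodule $L \subseteq N$; hence it suffices to establish that $\Delta(B_j)$ diagonalizes on $M \otimes V^{\otimes n} \otimes \bbK_e$ with eigenvalues in $\bbK_1$ for every $n \geq 0$ and $e \in \{+,-\}$.

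This last claim follows by induction on $n$, reproducing the matrix computation from Proposition~\ref{fd U-mod in clC} with $M$ in place of the trivial starting module $V^{\otimes 0}$. The base case $n = 0$ gives $\Delta(B_j)|_{M \otimes \bbK_e} = e^{-1} B_j \otimes 1$, whose eigenvalues $e^{-1} c_\alpha$ lie in $\bbK_1$. For the induction step, given a $B_j$-eigenbasis $\{v_\alpha\}$ of $M \otimes V^{\otimes (n-1)} \otimes \bbK_e$ with eigenvalues $c_\alpha \in \bbK_1$, the same matrix calculation as in Proposition~\ref{fd U-mod in clC} shows that the vectors $v_\alpha \otimes \bigl(v_+ + (\lm_\pm - c_\alpha q_j^{-1}) v_-\bigr)$ form a $B_j$-eigenbasis of $M \otimes V^{\otimes n} \otimes \bbK_e$, with eigenvalues $\lm_\pm = \bigl([2]_{q_j} c_\alpha \pm \sqrt{c_\alpha^2 (q_j - q_j^{-1})^2 + 4}\bigr)/2 \in \bbK_1$ (the square root lies in $\bbK_1$ because $\bbK_1$ is integrally closed in $\bbK$, by Proposition~\ref{On K1}). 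The main conceptual point is that this matrix calculation is insensitive to the identity of the first tensor factor, since it invokes only the $B_j$-eigenvalue of $v_\alpha$ together with the standard $\U_j$-action on $V$; once that is recognized, the argument from the proof of Proposition~\ref{fd U-mod in clC} transfers verbatim.
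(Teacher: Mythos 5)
Your proof is correct and follows essentially the same route as the paper, which establishes this proposition simply by noting that the induction in the proof of Proposition \ref{fd U-mod in clC} carries over: the key observation you state explicitly — that the $2\times 2$ eigenvalue computation uses only the $B_j$-eigenvalue of the first tensor factor and the $\U_j$-action on $V$ — is precisely why the paper can cite that proof verbatim. Your added details (the group-like $k_i$'s, the reduction to $\varsigma_j=q_j^{-1}$, restriction to $\U_j$ and embedding into $V^{\otimes m}\otimes\bbK_e$) are the same ingredients already used there.
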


\begin{rem}\normalfont
For a nonstandard $\imath$quantum group, we need to replace $B$ in the proof of Proposition \ref{fd U-mod in clC} by $F + q\inv EK\inv + \kappa K\inv$ for some $\kappa \in \bbK_1$. Even in this case, the same argument works. Hence, Propositions \ref{fd U-mod in clC} and \ref{tensor product is classical} are valid for nonstandard $\imath$quantum groups.
\end{rem}

\begin{defi}\normalfont
Let $M \in \clC$, $\bfa = (a_i)_{i \in I} \in \bbK_1^I$, $\lm \in (\frt')^*$. Set
$$
M_{\bfa} := \{ m \in M \mid (k_i;0)_{q_i} m = a_im,\ B_jm = a_jm \Forall i \in I \setminus I_{\otimes}, j \in I_{\otimes} \},
$$
and
$$
M_\lm := \bigoplus_{\bfa} M_{\bfa},
$$
where the direct sum takes all $\bfa \in \bbK_1^I$ such that $\ol{a_i} = \la \ol{(k_i;0)_{q_i}}, \lm \ra$ and $\ol{a_j} = \la b_j, \lm \ra$ for all $i \in I \setminus I_{\otimes}$, $j \in I_{\otimes}$. We call $M_{\bfa}$ and $M_\lm$ the weight space of $M$ of weight $\bfa$ and $\lm$, respectively.
\end{defi}

Let $M \in \clC'$. By definition, we have $M = \bigoplus_{\bfa \in \bbK_1^I} M_{\bfa}$, and $\dim M_{\bfa} < \infty$ for all $\bfa \in \bbK_1^I$. Let $M^\vee$ denote the restricted dual of $M$. Namely, $M^\vee = \bigoplus_{\bfa \in \bbK_1^I} \Hom_{\bbK}(M_{\bfa},\bbK)$. Let us equip $M^\vee$ with a $\Ui$-module structure by the anti-algebra automorphism $S^{\imath}$.

\begin{prop}\label{prop 319}
Let $M \in \clC'$. Then, we have $M^\vee \in \clC'$. Moreover, $\dim (M^{\vee \vee})_{\bfa} = \dim M_{\bfa}$ for all $\bfa \in \bbK$, where $M^{\vee \vee} := (M^\vee)^\vee$.
\end{prop}

\begin{proof}
The assertions are immediate consequences of Propositions \ref{Simath}, \ref{Simath2}, and the third condition of \eqref{fundamental facts for marked vertices} in page \pageref{fundamental facts for marked vertices}.
\end{proof}

Let $F$ be the forgetful functor from $\clC$ to the category of $\bbK$-vector spaces that sends $M \in \clC$ to its underlying vector space. Set $R = R_{\bfvarsigma}$ to be the endomorphism ring of $\clF$. Namely, an element $f$ of $R$ is a collection $(f_M)_{M \in \clC}$ of linear endomorphisms $f_M \in \End_{\bbK} M$ making the following diagram
$$
\xymatrix{
M \ar[r]^{f_M} \ar[d]_{\vphi} & M \ar[d]^{\vphi} \\
N \ar[r]_{f_N} & N
}
$$
commute for all $\vphi \in \Hom_{\Ui}(M,N)$. Obviously, each $u \in \Ui$ defines an element $r(u)$ of $R$. In this way, we obtain a ring homomorphism $r : \Ui \rightarrow R$. The restriction $r|_{\bbK}$ is injective, and the images of the elements of $\bbK$ are central in $R$. Hence, $R$ is equipped with a structure of $\bbK$-algebra.

\begin{prop}
The algebra homomorphism $r : \Ui \rightarrow R$ is injective.
\end{prop}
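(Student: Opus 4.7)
The plan is to reduce the injectivity of $r$ to the standard faithfulness of $\U$ on its category of finite-dimensional modules, using Proposition \ref{fd U-mod in clC} as a bridge between $\clC$ and finite-dimensional $\U$-modules.

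First, I would unpack the definitions. Suppose $u \in \Ui$ satisfies $r(u) = 0$. By construction of $R$ as the endomorphism ring of the forgetful functor $F : \clC \to \bbK\text{-Vec}$, this means that for every $M \in \clC$ the operator $u|_M \in \End_{\bbK}(M)$ is zero; equivalently, $u$ annihilates every classical weight $\Ui$-module.

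Second, I would apply Proposition \ref{fd U-mod in clC}, which asserts that every finite-dimensional $\U$-module, viewed as a $\Ui$-module by restriction, belongs to $\clC'\subset \clC$. Consequently, $u$ annihilates every finite-dimensional $\U$-module. Since $\Ui$ is a subalgebra of $\U$, we can view $u$ as an element of $\U$ that lies in the kernel of the diagonal representation
\[
\U \longrightarrow \prod_{M} \End_{\bbK}(M),
\]
where $M$ runs over all finite-dimensional $\U$-modules (and, in particular, over those of the form $V^{\otimes n}\otimes \bbK_e$ that appeared in the proof of Proposition \ref{fd U-mod in clC}, which generate the tensor category of finite-dimensional $\U$-modules).

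Third, I would invoke the classical faithfulness statement: the algebra $\U = U_q(\g)$ acts faithfully on the direct sum of its finite-dimensional modules. This is well-known for quantum groups of finite type over $\C(q)$ (one standard argument uses the triangular decomposition of $\U$ together with highest-weight theory, evaluating an element against Verma quotients of sufficiently generic dominant highest weight, and observing that finite-dimensional simples separate nonzero PBW-components; sign-twist modules account for the choice of type). Faithfulness is preserved under the base change $\C(q) \hookrightarrow \bbK$, giving the same statement over $\bbK$. Combined with the previous step this forces $u = 0$, proving injectivity of $r$.

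The only genuinely non-trivial ingredient is the faithfulness of $\U$ on the category of finite-dimensional modules, which is really the main obstacle the argument rests on; everything else is a short bookkeeping step using Proposition \ref{fd U-mod in clC}.
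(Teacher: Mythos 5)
Your proposal is correct and follows essentially the same route as the paper: assume $r(u)=0$, use Proposition \ref{fd U-mod in clC} to conclude that $u$ kills every finite-dimensional $\U$-module, and then invoke the faithfulness of $\U$ on its finite-dimensional (type $\bfone$) modules, which the paper cites as \cite[Proposition 5.11]{J96}. The extra remarks on base change to $\bbK$ and on sketching the faithfulness argument are harmless additions but not a different method.
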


\begin{proof}
Let $x \in \Ker r$. This means that $x$ acts by $0$ on every classical weight module. In particular, by Proposition \ref{fd U-mod in clC}, $x$ annihilates every finite-dimensional $\U$-module of type $\bfone$. Then, we must have $x = 0$ by \cite[Proposition 5.11]{J96}.
\end{proof}

From now on, we regard $\Ui$ as a subalgebra of $R$ via the injection $r$.

Let $\psi$ be an algebra isomorphism from $\Ui_{\bfvarsigma'}$ to $\Ui_{\bfvarsigma}$. Given a classical weight $\Ui_{\bfvarsigma}$-module $M$, we denote by $M^\psi$ the twisted module. Namely, $M^\psi$ is the vector space $M$ with the $\Ui_{\bfvarsigma'}$-module structure given by $x \cdot m := \psi(x)m$ for all $x \in \Ui_{\bfvarsigma'}$, $m \in M$. Suppose that $M^\psi \in \clC_{\bfvarsigma'}$ for all $M \in \clC_{\bfvarsigma}$, and $N^{\psi\inv} \in \clC_{\bfvarsigma}$ for all $N \in \clC_{\bfvarsigma'}$. For each $f = (f_N)_{N \in \clC_{\bfvarsigma'}} \in R_{\bfvarsigma'}$, define $\psi(f) = (\psi(f)_M)_{M \in \clC_{\bfvarsigma}} \in R_{\bfvarsigma}$ by $\psi(f)_M(m) = f_{M^\psi}(m)$ for all $M \in \clC_{\bfvarsigma}$, $m \in M$. Then, the assignment $f \mapsto \psi(f)$ gives rise to an algebra isomorphism from $R_{\bfvarsigma'}$ to $R_{\bfvarsigma}$ which extends the original $\psi:\Ui_{\bfvarsigma'} \rightarrow \Ui_{\bfvarsigma}$. In particular, we have the following:

\begin{prop}\label{change of parameter for R}
Let $\bfvarsigma,\bfvarsigma'$ be two parameters for an $\imath$quantum group, and $\bfeta,\bfzeta$ such that $\phi_{\bfeta,\bfzeta} : \Ui_{\bfvarsigma'} \rightarrow \Ui_{\bfvarsigma}$ is defined. Then, $\phi_{\bfeta,\bfzeta}$ extends to an isomorphism $R_{\bfvarsigma'} \rightarrow R_{\bfvarsigma}$.
\end{prop}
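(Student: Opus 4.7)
The strategy is to invoke the general construction from the paragraph preceding the proposition: an algebra isomorphism $\psi : \Ui_{\bfvarsigma'} \to \Ui_{\bfvarsigma}$ automatically extends to an isomorphism $R_{\bfvarsigma'} \to R_{\bfvarsigma}$ once one knows that twisting by $\psi$ (resp.\ $\psi\inv$) sends $\clC_{\bfvarsigma}$ into $\clC_{\bfvarsigma'}$ (resp.\ $\clC_{\bfvarsigma'}$ into $\clC_{\bfvarsigma}$). So taking $\psi = \phi_{\bfeta,\bfzeta}$, it suffices to check these two preservation statements.

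By Lemma \ref{isomorphism between Uis}, $\phi_{\bfeta,\bfzeta}$ sends $k_i \in \Ui_{\bfvarsigma'}$ to $\zeta_i\inv k_i \in \Ui_{\bfvarsigma}$ and, for $j \in I_\otimes$ (so $\tau(j) = j$), $B_j$ to $\eta_j\inv B_j$. Each coefficient $\zeta_i\inv, \eta_j\inv$ is a unit in $\bbK_1$, so $\phi_{\bfeta,\bfzeta}$ restricts to an algebra isomorphism $\Ui_{\bfvarsigma'}(\frt') \to \Ui_{\bfvarsigma}(\frt')$. Given $M \in \clC_{\bfvarsigma}$ with simultaneous eigenspace decomposition $M = \bigoplus_{\bfa} M_{\bfa}$, the same decomposition serves as a simultaneous eigenspace decomposition of $M^{\phi_{\bfeta,\bfzeta}}$ for $\Ui_{\bfvarsigma'}(\frt')$, with eigenvalues rescaled by the corresponding $\bbK_1$-scalars. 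In particular, semisimplicity of the $\Ui(\frt')$-action carries over and the $B_j$-eigenvalues ($j \in I_\otimes$) remain in $\bbK_1$ automatically.

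The main technical step is to verify the same for the $(k_i;0)_{q_i}$-eigenvalues with $i \in I \setminus I_\otimes$. On $M^{\phi_{\bfeta,\bfzeta}}$ this operator acts as $\phi_{\bfeta,\bfzeta}((k_i;0)_{q_i}) = (\zeta_i\inv k_i - 1)/(q_i - 1)$, so its eigenvalue on $M_{\bfa}$ involves the quantity $\zeta_i\inv a_i - 1$, where $a_i$ is the original $k_i$-eigenvalue (and hence $\ol{a_i} = 1$). Membership in $\bbK_1$ is equivalent to $\zeta_i\inv a_i - 1$ lying in the maximal ideal $(q-1)\bbK_1$ of Proposition \ref{On K1}, i.e.\ to $\ol{\zeta_i} = 1$. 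This is the point I expect to need the most care, but it follows from the constraints determining $\bfzeta$ in the proof of Lemma \ref{isomorphism between Uis}: since both $\bfvarsigma$ and $\bfvarsigma'$ specialize to the same classical datum $\ol{\bfvarsigma}$, the equation $\zeta_i^2 = \varsigma_i\inv\varsigma_{\tau(i)}\varsigma'_i(\varsigma'_{\tau(i)})\inv$ forces $\ol{\zeta_i^2} = 1$, and the $\pm$-sign ambiguity in the choice of the square root lets us arrange $\ol{\zeta_i} = 1$ (compatibly with the condition $\zeta_i\zeta_{\tau(i)} = 1$, since $\zeta_{\tau(i)} = \zeta_i\inv$). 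Applying the same reasoning to $\phi_{\bfeta,\bfzeta}\inv$ yields the symmetric preservation of $\clC_{\bfvarsigma'}$ inside $\clC_{\bfvarsigma}$, and the proposition then follows at once from the general framework.
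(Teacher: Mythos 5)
Your proposal follows the paper's own route: the paper obtains this proposition as an immediate consequence (``in particular'') of the general twisting construction in the paragraph right before it, and what you add is the verification -- left implicit in the paper -- that twisting by $\phi_{\bfeta,\bfzeta}$ and by its inverse preserves the classes of classical weight modules. Your computation of how the $B_j$-eigenvalues ($j \in I_\otimes$) and the $(k_i;0)_{q_i}$-eigenvalues transform is correct, and you rightly isolate $\ol{\zeta_i}=1$ as the crucial condition. The one caveat is a quantifier mismatch at exactly that point: the proposition fixes $\bfeta,\bfzeta$ in advance, whereas your argument that $\ol{\zeta_i}=1$ invokes the freedom to choose the sign of the square root, so it proves the statement for a \emph{suitable} choice of $(\bfeta,\bfzeta)$ rather than for an arbitrary admissible one (one can in fact modify an admissible pair so that $\ol{\zeta_i}=-1$ by flipping the signs of $\zeta_i,\zeta_{\tau(i)}$ and of $\eta_i$, and then the twist of a classical weight module has $k_i$-eigenvalues with classical limit $-1$, so the construction does not apply as stated). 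Since the paper supplies no verification at all and later uses the proposition only to change the parameter by \emph{some} such isomorphism, your version covers every actual application; to match the literal statement you should either add the hypothesis $\ol{\zeta_i}=1$ for all $i$ or remark explicitly that such a choice of $(\bfeta,\bfzeta)$ always exists.
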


For $j \in I_{\otimes}$, define an element $l_j \in R$ by
$$
l_j := \frac{(q_j-q_j\inv)B_j + \sqrt{(q_j-q_j\inv)^2 B_j^2 + 4}}{2}.
$$
Namely, on each $M \in \clC$, $l_j$ acts by
$$
l_j m = \frac{(q_j-q_j\inv)a + \sqrt{(q_j-q_j\inv)^2 a^2 + 4}}{2} m \qu \IF B_j m = am \Forsome a \in \bbK_1.
$$

\begin{rem}\normalfont
\ \begin{enumerate}
\item $\sqrt{(q_j-q_j\inv)^2 a^2 + 4}$ is uniquely determined as a square root of $(q_j-q_j\inv)^2 a^2 + 4$ whose classical limit is $+2$. This implies that the classical limit of the eigenvalues of $l_j$ are $1$.
\item $l_j$ is invertible in $R$;
$$
l_j\inv = \frac{-(q_j-q_j\inv)B_j + \sqrt{(q_j-q_j\inv)^2 B_j^2 + 4}}{2}.
$$
This implies that
$$
B_j = [l_j;0]_{q_j}.
$$
\item For each $j \in I_{\otimes}$ and $n \in \Z$, the element $\{ l_j;n \}_{q^j} \in R$ is invertible as it acts on each $M \in \clC$ diagonally, and the classical limit of its eigenvalues are $2$.
\end{enumerate}
\end{rem}

\begin{defi}\normalfont
\ \begin{enumerate}
\item Let $\clUi = \clUi_{\bfvarsigma}$ denote the subalgebra of $R$ generated by $\Ui$ and $\{ l_j^{\pm 1}, \{ l_j;n \}_{q_j}\inv \mid j \in I_{\otimes}, n \in \Z \}$.
\item Let $\clUi(\frt')$ denote the subalgebra of $\clUi$ generated by $\{ k_i^{\pm 1} \mid i \in I \}$ and $\{ l_j^{\pm 1},\ \{ l_j;n \}_{q_j}\inv \mid j \in I_{\otimes}, n \in \Z \}$.
\end{enumerate}
\end{defi}

By definition of $\clUi$, each classical weight $\Ui$-module $M$ is lifted to a $\clUi$-module; we denote it by $\Ind M$. Conversely, each $\clUi$-module $M$ can be regarded as a $\Ui$-module (not necessarily a classical weight module) by restricting the action; we denote it by $\Res M$. Since $\Res(\Ind M) = M$ as a $\Ui$-module for all $M \in \clC$, we regard $M$ as a $\clUi$-module.

\subsection{Weight space decomposition of $\clUi$}
\begin{defi}\normalfont
\ \begin{enumerate}
\item $(\frt')^*_\Z := \{ \lm \in (\frt')^* \mid \la \ol{(k_i;0)_{q_i}},\lm \ra, \la b_j,\lm \ra \in \Z \Forall i \in I \setminus I_{\otimes}, j \in I_{\otimes} \}$.
\item For $\lm \in (\frt')^*_\Z$, set
$$
\clUi_\lm := \{ x \in \clUi \mid k_i x k_i\inv = q_i^{a_i}x,\ l_j x l_j\inv = q_j^{a_j}x \Forall i \in I \setminus I_{\otimes}, j \in I_{\otimes} \},
$$
where $a_i \in \Z$ such that $\la \ol{(k_i;0)_{q_i}},\lm \ra = a_i$ and $\la \ol b_j,\lm \ra = a_j$ for all $i \in I \setminus I_{\otimes}$, $j \in I_{\otimes}$. We call an element of $\clUi_\lm$ a weight vector of weight $\lm$.
\end{enumerate}
\end{defi}

For each $i \in I$, set $\beta_i := \alpha_i|_{\frh^\theta}$. We regard it as an element of $(\frt')^*$ by setting $\la b_j ,\beta_i \ra = 0$ for all $j \in I_{\otimes}$. For each $j \in I_{\otimes}$, define $b^j \in (\frt')^*$ by $\la \frh^\theta, b^j \ra = 0$ and $\la b_i, b^j \ra = \delta_{i,j}$ for all $i \in I_{\otimes}$. Then, we have $\beta_i,b^j \in (\frt')^*_\Z$ for all $i \in I$, $j \in I_{\otimes}$.

In this subsection, we show that
$$
\clUi = \bigoplus_{\lm \in (\frt')^*_\Z} \clUi_\lm
$$
by decomposing $B_i$ into the sum of finitely many weight vectors for all $i \in I$.

Since we already know $K_\alpha B_i = q^{-(\alpha,\alpha_i)} B_i K_\alpha$ for all $\alpha \in Q^\theta$, it suffices to investigate how $B_j$, $j \in I_{\otimes}$ interact with $B_i$. Since $[B_j,B_i] = 0$ if $j \in I_{\otimes}$ and $a_{i,j} = 0$, we may ignore such $j$'s. From Table in page \pageref{List}, we see that there are only 7 cases:
\begin{enumerate}
\item[$A_1$] $\xymatrix@R=2.5pt{
 \\
\circ \ar@{}[u]|{i}
}$ \text{ or } $\xymatrix@R=2.5pt{
 \\
\bullet \ar@{}[u]|{i}
}$
\item[$A_2$] $\xymatrix@R=2.5pt{
 & & & \\
\text{{\tiny $\otimes$}} \ar@{}[u]|{j} \ar@{-}[r] & \circ \ar@{}[u]|{i}
}$
\item[$A_3$] $\xymatrix@R=2.5pt{
 & & & \\
\text{{\tiny $\otimes$}} \ar@{}[u]|{j_1} \ar@{-}[r] & \circ \ar@{}[u]|{i} \ar@{-}[r] & \text{{\tiny $\otimes$}} \ar@{}[u]|{j_2}
}$
\item[$B_2$] $\xymatrix@R=2.5pt{
 & & & \\
\text{{\tiny $\otimes$}} \ar@{}[u]|{j} \ar@{=>}[r] & \circ \ar@{}[u]|{i}
}$
\item[$B_3$] $\xymatrix@R=2.5pt{
 & & & \\
\text{{\tiny $\otimes$}} \ar@{}[u]|{j_1} \ar@{-}[r] & \circ \ar@{}[u]|{i} \ar@{=>}[r] & \text{{\tiny $\otimes$}} \ar@{}[u]|{j_2}
}$
\item[$D_4$] $\xymatrix@C=10pt@R=2.5pt{
 & & \text{{\tiny $\otimes$}} \ar@{}[r]|{j_2} & \\
\text{{\tiny $\otimes$}} \ar@{}[u]|{j_1} \ar@{-}[r] & \circ\ar@{}[u]|{i} \ar@{-}[ur] \ar@{-}[dr] \\
 & & \text{{\tiny $\otimes$}} \ar@{}[r]|{j_3} &
}$
\item[$G_2$] $\xymatrix@R=2.5pt{
 & & & \\
\text{{\tiny $\otimes$}} \ar@{}[u]|{j} \ar@{-}[r] \ar@{=>}[r] & \circ \ar@{}[u]|{i}
}$
\end{enumerate}

Since we are now interested in the algebraic structure of $\clUi$, we may choose the parameter $\bfvarsigma$ arbitrarily (see Proposition \ref{change of parameter for R}). Hence, in this subsection, we set $\varsigma_i = q_i\inv$ for all $i \in I_{\otimes}$. Then, in the sequel, we will often encounter elements $X,W$ of $\clUi$ and $a \in \Z$ satisfying the following:
\begin{align}
\begin{split} \label{General 1}
&\text{$W$ acts on each $M \in \clC$ diagonally with eigenvalues in $\bbK_1$}\\
&[W,[W,X]_{q^a}]_{q^{-a}} = X.
\end{split}
\end{align}

Set $l = l(W) := \frac{(q^a-q^{-a})W + \sqrt{(q^a-q^{-a})^2W^2 + 4}}{2}$, and 
$$
X_\pm := (Xl^{\pm 1} \pm [W,X]_{q^a})\{l;0\}_{q^a}\inv.
$$
Note that these make sense in $R$. As in the previous subsection, we see that $l$ is invertible in $R$, and we have
\begin{align}
W = [l;0]_{q^a}. \label{General 3}
\end{align}

Let us compute $W X_\pm$ as
\begin{align}
\begin{split}
WX_\pm &= (([W,X]_{q^a} + q^a XW)l^{\pm 1} \pm (X + q^{-a}[W,X]_{q^a} W)) \{ l;0 \}\inv \\
&= (Xl^{\pm 1}(q^a W \pm l^{\mp 1}) \pm [W,X]_{q^a}(q^{-a}W \pm l^{\pm 1}))\{ l;0 \}\inv, \\
&= X_\pm [l;\pm 1]_{q^a}.
\end{split} \nonumber
\end{align}
For the last equality, we used equation \eqref{General 3}. This equality implies that for each $M \in \clC$, $c \in \bbK_1$, $m \in M$ such that $Wm = cm$, the vector $X_\pm m$ is again a $W$-eigenvector of eigenvalue $[l(c);\pm 1]_{q^a}$, where $l(c)$ is the $l$-eigenvalue of $m$. Hence, $X_\pm m$ is an $l$-eigenvector of eigenvalue
$$
\frac{(q^a-q^{-a})[l(c); \pm 1]_{q^a} + \sqrt{(q^a-q^{-a})^2[l(c);\pm1]_{q^a}^2 + 4}}{2},
$$
which equals $q^{\pm a} l(c)$. Therefore, it holds that
$$
lX_{\pm} = q^{\pm a} X_\pm l.
$$

Now, let us investigate the relations between $X_+$ and $X_-$. It is easily seen from the definition of $X_\pm$ that
$$
X_+ + X_- = X, \qu X_+l\inv - X_-l = [W,X]_{q^a}.
$$
Hence, we obtain
\begin{align}
\begin{split}
[[W,X]_{q^a},X]_{q^{-a}} &= [X_+l\inv - X_-l, X_+ + X_-]_{q^{-a}} \\
&= q^aX_+X_-l\inv - q^{-a}X_-X_+l\inv - q^aX_-X_+l + q^{-a}X_+X_-l \\
&= X_+X_-\{ l;-1 \}_{q^a} - X_-X_+\{ l;1 \}_{q^a} \\
&= [X_+\{ l;0 \}_{q^a}, X_-\{ l;0 \}_{q^a}] \{ l;0 \}_{q^a}\inv.
\end{split} \nonumber
\end{align}
Summarizing, we have
\begin{align} \label{General 2}
\begin{split}
&[X_+\{l;0\}_{q^a},X_-\{l;0\}_{q^a}] = [[W,X]_{q^a},X]_{q^{-a}} \{l;0\}_{q^a}. 
\end{split}
\end{align}

\subsubsection{$A_1$}
Let us consider the case $A_1$. In this case, $B_i$ is itself a weight vector of weight $-\beta_i$.

\subsubsection{$A_2$, $B_2$, $G_2$}\label{A2,B2,G2}
Let us consider the case $A_2$, $B_2$, or $G_2$. In this case, $(X,W,q^a) = (B_i,B_j,q_j)$ satisfies the condition \eqref{General 1} in page \pageref{General 1}. Note that we have $l(B_j) = l_j$. Hence, by setting
$$
B_{i,\pm} := (B_i l_j^{\pm 1} \pm [B_j,B_i]_{q_j})\{l_j;0\}_{q_j}\inv,
$$
we obtain
\begin{align}
\begin{split}
&B_i = B_{i,+} + B_{i,-}, \\
&l_j B_{i,\pm} = q_j^{\pm 1} B_{i,\pm} l_j.
\end{split} \nonumber
\end{align}

In particular, $B_i = B_{i,+} + B_{i,-}$ is the weight vector decomposition, and the weight of $B_{i,\pm}$ is equal to $-\beta_i \pm b^j$.

\subsubsection{$A_3$}\label{A3}
Let us consider the case $A_3$. In this case, we have $q_{j_1} = q_{j_2} = q_i$. Let $B_{i,\pm}$, $l_{j_1}$ be as in the case $A_2$. Then, for each $e \in \{ +,- \}$, the triple $(X,W,q^a) = (B_{i,e},B_{j_2},q_i)$ satisfies the condition \eqref{General 1} in page \pageref{General 1}. Hence, by setting
$$
B_{i,e\pm} := (B_{i,e} l_{j_2}^{\pm 1} \pm [B_{j_2},B_{i,e}]_{q_i})\{j_2;0\}_{q_i}\inv,
$$
we obtain
\begin{align}
\begin{split}
&B_{i,e} = B_{i,e+} + B_{i,e-}, \\
&l_{j_2} B_{i,e\pm} = q_i^{\pm 1} B_{i,e\pm} l_{j_2}.
\end{split} \nonumber
\end{align}

In particular, $B_i = B_{i,++} + B_{i,+-} + B_{i,-+} + B_{i,--}$ is the weight vector decomposition and the weight of $B_{i,e_1e_2}$ is equal to $e_1b^{j_1} + e_2b^{j_2}$. Note that, in this case, $\beta_i = 0$.

\subsubsection{$B_3$}
Let us consider the case $B_3$. In this case, we have $q_{j_1} = q_i = q^2$ and $q_{j_2} = q$. From the defining relations of $\Ui$, we have
$$
[B_{j_2},[B_{j_2},[B_{j_2},B_i]]_{q^2}]_{q^{-2}} = [2]^2[B_{j_2},B_i].
$$
In particular, $(X,W,q^a) = ([B_{j_2},B_i],\frac{B_{j_2}}{[2]},q^2)$ satisfies the condition \eqref{General 1} in page \pageref{General 1}. Note that we have
$$
l \left( \frac{B_{j_2}}{[2]} \right) = \frac{(q^2-q^{-2})\frac{B_{j_2}}{[2]} + \sqrt{(q^2-q^{-2})^2 \frac{B_{j_2}^2}{[2]^2} + 4}}{2} = \frac{(q-q\inv)B_{j_2} + \sqrt{(q-q\inv)^2B_{j_2}^2 + 4}}{2} = l_{j_2}.
$$
Hence, by setting
$$
B'_\pm := ([B_{j_2},B_i]l_{j_2}^{\pm 1} \pm [\frac{B_{j_2}}{[2]},[B_{j_2},B_i]]_{q^2})\{l_{j_2};0\}\inv,
$$
we have
\begin{align}
\begin{split}
&[B_{j_2},B_i] = B'_+ + B'_-, \\
&l_{j_2} B'_{\pm} = q^{\pm 2} B'_\pm l_{j_2}.
\end{split} \nonumber
\end{align}

Set $B_+ := B'_+ \{ l_{j_2};1 \}\inv$, $B_- := -B'_- \{ l_{j_2};-1 \}\inv$. Then we have
$$
B_+ + B_- = (q\inv [B_{j_2}, B_i](l_{j_2}-l_{j_2}\inv) + [B_{j_2},[B_{j_2},B_i]]_{q^2})\{ l_{j_2};1 \}\inv \{l_{j_2};-1 \}\inv.
$$
Hence, setting $B_0 := B - (B_+ + B_-)$, one can easily verify that
$$
[B_{j_2},B_0] = 0.
$$
This implies the following:
\begin{align}
\begin{split}
&B_i = B_+ + B_0 + B_{-}, \\
&l_{j_2} B_\pm = q^{\pm 2} B_\pm l_{j_2}, \\
&l_{j_2} B_0 = B_0 l_{j_2}.
\end{split} \nonumber
\end{align}

Let $x \in \{ 0,+,- \}$. Then, $(X,W,q^a) = (B_{i,x},B_{j_1},q^2)$ satisfies the condition \eqref{General 1} in page \pageref{General 1}. Hence, by setting
$$
B_{i,\pm x} := (B_{i,x} l_{j_1}^{\pm 1} \pm [B_{j_1},B_{i,x}]_{q^2}) \{ l_{j_1};0 \}\inv,
$$
we obtain
\begin{align}
\begin{split}
&B_{i,x} = B_{i,+x} + B_{i,-x}, \\
&l_{j_1} B_{i,\pm x} = q^{\pm 2} B_{i,\pm x} l_{j_1}.
\end{split} \nonumber
\end{align}

In particular, $B_i = B_{i,++} + B_{i,-+} + B_{i,+0} + B_{i,-0} + B_{i,+-} + B_{i,--}$ is the weight vector decomposition, and the weight of $B_{i,ex}$ is equal to $eb^{j_1} + 2xb^{j_2}$.

\subsubsection{$D_4$}
Let us consider the case $D_4$. In this case, we have $q_{j_1} = q_{j_2} = q_{j_3} = q_i = q$. For each $e_1,e_2 \in \{ +,- \}$, let $B_{i,e_1e_2}$ be as in the case $A_3$. Then, $(X,W,q^a) = (B_{i,e_1e_2},B_{j_3},q)$ satisfies the condition \eqref{General 1} in page \pageref{General 1}. Hence, by setting
$$
B_{i,e_1e_2\pm} := (B_{i,e_1e_2} l_{j_3}^{\pm 1} \pm [B_{j_3},B_{i,e_1e_2}]_q) \{ l_{j_3};0 \}\inv,
$$
we obtain
\begin{align}
\begin{split}
&B_{i,e_1e_2} = B_{i,e_1e_2+} + B_{i,e_1e_2-}, \\
&l_{j_3} B_{i,e_1e_2\pm} = q^{\pm 1} B_{i,e_1e_2\pm} l_{j_3}
\end{split} \nonumber
\end{align}
for each $e_1,e_2 \in \{ +,- \}$. In particular, $B_i = \sum_{e_1,e_2,e_3 \in \{ +,- \}} B_{i,e_1e_2e_3}$ is the weight vector decomposition, and the weight of $B_{i,e_1e_2e_3}$ is equal to $e_1b^{j_1} + e_2b^{j_2} + e_3b^{j_3}$.

\subsection{Highest weight theory}
Let $\clUi_{\bbK_1}$ denote the subalgebra of $\clUi$ over $\bbK_1$ generated by $\Ui_{\bbK_1}$ and $\{ (l_j;0)_{q_j},\ l_j^{\pm1},\ \{ l_j;a \}\inv \mid j \in I_{\otimes},\ a \in \Z \}$. Then, we can consider its classical limit $\ol{\clUi} := \clUi_{\bbK_1} \otimes_{\bbK_1} \C$. The injection $r : \Ui_{\bbK_1} \hookrightarrow \clUi_{\bbK_1}$ induces a $\C$-algebra homomorphism
$$
\ol{r} : U(\frk) = \ol{\Ui} \rightarrow \ol{\clUi}.
$$

\begin{prop}
The homomorphism $\ol{r}$ is an isomorphism.
\end{prop}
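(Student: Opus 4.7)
The plan is to establish surjectivity and injectivity of $\ol{r}$ separately.

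For surjectivity, since $\clUi_{\bbK_1}$ is generated over $\Ui_{\bbK_1}$ by the extra elements $l_j^{\pm 1}$, $(l_j;0)_{q_j}$, and $\{l_j;a\}\inv$ for $j \in I_\otimes$ and $a \in \Z$, I would compute the classical limits of these generators. Using
\[
l_j = \tfrac{1}{2}\bigl((q_j - q_j\inv)B_j + \sqrt{(q_j - q_j\inv)^2 B_j^2 + 4}\bigr)
\]
and the convention (recorded in the remark on $l_j$) that $\sqrt{(q_j-q_j\inv)^2 B_j^2 + 4}$ specializes to $+2$, one obtains $l_j - 1 \in (q-1)\clUi_{\bbK_1}$, whence $\ol{l_j^{\pm 1}} = 1$ and $\ol{\{l_j;a\}\inv} = 1/2$. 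For $(l_j;0)_{q_j}$, a manipulation parallel to the identity $[K_i;0]_{q_i} = (K_i;0)_{q_i}\cdot\frac{q_i^{1/2}(1+K_i\inv)}{q_i^{1/2}+q_i^{-1/2}}$ from the earlier remark, combined with the relation $[l_j;0]_{q_j} = B_j$, gives $\ol{(l_j;0)_{q_j}} = \ol{B_j} = b_j$. Since $1$, $1/2$, and $b_j$ all lie in $U(\frk) = \ol{\Ui}$, surjectivity of $\ol{r}$ follows.

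For injectivity, I would use a faithful-action argument. The inclusion $U(\frk) \hookrightarrow U(\g)$ acts faithfully on the direct sum $\ol{M} := \bigoplus_\lm \ol{V}(\lm)$ of all finite-dimensional type-$\bfone$ simple $\g$-modules. Each $\ol{V}(\lm)$ is the classical limit of a simple $\U$-module $V(\lm)$ over $\bbK$, which by Proposition~\ref{fd U-mod in clC} lies in $\clC'$, so $\clUi$ acts on $V(\lm)$. The key step is to exhibit a $\clUi_{\bbK_1}$-stable $\bbK_1$-lattice $L \subseteq V(\lm)$ whose reduction modulo $(q-1)$ agrees with $\ol{V}(\lm)$ as a $U(\frk)$-module (via $\ol{r}$). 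Granted this, $\ol{\clUi}$ acts on $\ol{V}(\lm)$ extending the standard $U(\frk)$-action through $\ol{r}$, and any element of $\Ker \ol{r}$ then annihilates each $\ol{V}(\lm)$ and hence vanishes.

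To construct $L$, I would start with $L^0$, the base-change to $\bbK_1$ of the Lusztig integral form of $V(\lm)$; it is $\U_{\bbK_1}$-stable (hence $\Ui_{\bbK_1}$-stable) and has classical limit $\ol{V}(\lm)$ as a $U(\g)$-module. Set $L := \clUi_{\bbK_1}\cdot L^0$. Using the congruence $\clUi_{\bbK_1} \equiv \Ui_{\bbK_1} \pmod{(q-1)\clUi_{\bbK_1}}$ (a consequence of the classical-limit computations above applied to each of the adjoined generators), one gets $L = L^0 + (q-1)L$. A Nakayama-type argument, exploiting that $\bbK_1$ is local with maximal ideal $(q-1)$ and that $L^0$ is a free $\bbK_1$-module of full rank inside $V(\lm)$, then shows $L^0 \cap (q-1) L = (q-1)L^0$, so that $L/(q-1)L \cong L^0/(q-1) L^0 = \ol{V}(\lm)$ as a $U(\frk)$-module.

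The hard part will be the Nakayama-type step in constructing the lattice $L$, since $\bbK_1$ may fail to be Noetherian; however, the finite-dimensionality of $V(\lm)$ over $\bbK$ together with the explicit congruences $l_j \equiv 1$, $\{l_j;a\}\inv \equiv 1/2$, and $(l_j;0)_{q_j} \equiv \tfrac{1+q_j\inv}{2}B_j$ modulo $(q-1)\clUi_{\bbK_1}$ reduce the problem to a finite-rank $\bbK_1$-module comparison inside $V(\lm)$ that is tractable by direct analysis.
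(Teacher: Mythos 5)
Your surjectivity computation is the same as the paper's: the paper simply notes $\ol{l_j}=\ol{(q_j-1)(l_j;0)_{q_j}+1}=1$ (this identity is a cleaner route to $l_j-1\in(q-1)\clUi_{\bbK_1}$ than your square-root manipulation), and the limits $\ol{\{l_j;a\}\inv}=1/2$ and $\ol{(l_j;0)_{q_j}}=\ol{B_j}=b_j$ that you record are exactly the remaining checks. For injectivity you also follow the route the paper intends (argue as for the injectivity of $r$, i.e.\ let an element of the kernel act on the classical limits of all finite-dimensional type~$\bfone$ $\U$-modules and use that $U(\frk)\subset U(\g)$ acts faithfully on their direct sum), and you correctly isolate the key technical point: one needs, in each such module $V$, a $\clUi_{\bbK_1}$-stable $\bbK_1$-lattice whose reduction modulo $q-1$ is $\ol{V}$ with its standard $U(\frk)$-action.

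However, the mechanism you propose for producing that lattice has a genuine gap. From $\clUi_{\bbK_1}=\Ui_{\bbK_1}+(q-1)\clUi_{\bbK_1}$ you only obtain $L=L^0+(q-1)L$ for $L:=\clUi_{\bbK_1}L^0$, and this identity is far too weak to force $L/(q-1)L\simeq L^0/(q-1)L^0$: it is satisfied, for instance, by any $\bbK$-subspace $L\supseteq L^0$ (where $(q-1)L=L$ and $L/(q-1)L=0$). A Nakayama-type argument requires $L$ (equivalently $L/L^0$) to be finitely generated over the local ring $\bbK_1$, i.e.\ $L\subseteq(q-1)^{-N}L^0$ for some $N$, and neither $\dim_{\bbK}V<\infty$ nor your congruences $l_j\equiv1$, $\{l_j;a\}\inv\equiv1/2$, $(l_j;0)_{q_j}\equiv B_j$ modulo $(q-1)\clUi_{\bbK_1}$ gives any control on denominators; this boundedness is precisely the nontrivial content, not a routine finite-rank comparison. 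The missing idea is to show that $L^0$ itself is already $\clUi_{\bbK_1}$-stable: $B_j\in\U_{\bbK_1}$ preserves $L^0$ and acts on $V$ semisimply with eigenvalues of the form $[m]_{q_j}\sqrt{q_j\varsigma_j}$; distinct eigenvalues have distinct classical limits, so their pairwise differences are units of $\bbK_1$, the spectral projections lie in $\bbK_1[B_j]$, and hence $L^0$ is the direct sum of its intersections with the $B_j$-eigenspaces. On each such piece the operators $l_j^{\pm1}$, $\{l_j;a\}\inv$, $(l_j;0)_{q_j}$ act by scalars in $\bbK_1$, so they preserve $L^0$. With $L=L^0$ the reduction is $\ol{V}$ as a $U(\frk)$-module, and your faithfulness argument then completes the injectivity.
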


\begin{proof}
Since $\ol{l_j} = \ol{(q_j-1)(l_j;0)_{q_j} + 1} = 1$, and $\ol{(l_j;0)_{q_j}} = \ol{\frac{q_j\inv-l_j\inv}{1+q_j\inv} (l_j;0)_{q_j} + [l_j;0]_{q_j}} = \ol{B_j}$, it is clear that $\ol{r}$ is surjective. The injectivity can be proved in a similar way to the injectivity of $r$.
\end{proof}

In the sequel, we identify $\ol{\clUi}$ with $U(\frk)$ via this isomorphism $\ol{r}$.

\begin{defi}\normalfont
Let $\lm \in (\frt')^*_\Z$, $x \in \clUi_\lm$. We say that $x$ is a $\frt'$-root vector if $\ol{x} \in \frk$.
\end{defi}

\begin{conj}\label{Assumption}
There exist finite sets $\clX,\clY,\clW \subset \clUi_{\bbK_1}$ satisfying the following:
\begin{enumerate}
\item\label{Assumption 1} $\clUi_{\bbK_1} = \clUi_{-,\bbK_1} \clUi_{0,\bbK_1} \clUi_{+,\bbK_1}$, where
\begin{itemize}
\item $\clUi_{-,\bbK_1}$ is the $\bbK_1$-subalgebra of $\clUi_{\bbK_1}$ generated by $\clY$.
\item $\clUi_{0,\bbK_1}$ is the $\bbK_1$-subalgebra of $\clUi_{\bbK_1}$ generated by $\clW$ and $(k_i;0)_{q_i}, k_i\inv, (l_j;0)_{q_j}, l_j\inv, \{l_j;a\}\inv$, $i \in I$, $j \in I_{\otimes}$, $a \in \Z$.
\item $\clUi_{+,\bbK_1}$ is the $\bbK_1$-subalgebra of $\clUi_{\bbK_1}$ generated by $\clX$.
\end{itemize}
\item\label{Assumption 2} The elements of $\clX,\clY,\clW$ are $\frt'$-root vectors, and the elements of $\clW$ commute with $\clUi(\frt')$. Moreover, $\frt' \subset \la \ol{\clW} \ra_{\text{Lie-alg}}$, where $\la \ol{\clW} \ra_{\text{Lie-alg}}$ denotes the Lie subalgebra of $\frk$ generated by $\{ \ol{w} \mid w \in \clW \}$.
\item\label{Assumption 3} $[w_1,w_2] \in (q-1)\sum_{x \in \clX} \clUi_{\bbK_1} x$ for all $w_1,w_2 \in \clW$.
\item\label{Assumption 4} $\frk = \la \ol{\clY} \ra_{\text{Lie-alg}} \oplus \la \ol{\clW} \ra_{\text{Lie-alg}} \oplus \la \ol{\clX} \ra_{\text{Lie-alg}}$ is a triangular decomposition of the reductive Lie algebra $\frk$.
\end{enumerate}
\end{conj}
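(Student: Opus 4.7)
The plan is to verify the conjecture case by case across the local Satake-diagram configurations $A_1, A_2, A_3, B_2, B_3, D_4, G_2$ enumerated in the preceding subsection, since a uniform argument seems out of reach at present. The construction of $\clX$ and $\clY$ is dictated directly by the weight-vector decompositions of the $B_i$ already obtained there: each $B_i$ has been split into explicit $\frt'$-root vectors (such as $B_{i,\pm}$, $B_{i,e_1e_2}$, or the six-term decomposition in the $B_3$ case), and their $\frt'$-weights have been computed. Fix a linear order on $(\frt')^*_\Z$ refining the height function induced by the simple roots of $\g$; declare $\clX$ to consist of those pieces whose weight is strictly positive together with $E_i$ for $i \in I_\bullet$, and $\clY$ to consist of those pieces whose weight is strictly negative. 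The weight-zero pieces (which do occur, for instance in $B_3$) must be absorbed into $\clW$.

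The critical step is the construction of the zero part $\clW$. Since $\dim \frt' < \dim \frt$ in general, $\clUi(\frt')$ alone is too small to yield a full Cartan subalgebra in the classical limit. Following the prescription of the introduction, I would define $\clW$ inductively on the rank of $I$ by transporting zero-part generators from lower-rank $\imath$subalgebras via the braid-type symmetries $T^\imath_i$ recalled earlier; this produces $\clW$-elements lying outside $\clUi(\frt')$ that classically limit to precisely the Cartan directions of $\frk$ missing from $\frt'$. For each quasi-split family (AIII, AIV, DIII, and their companions) the resulting $\clW$ is written down explicitly, and the fact that its elements commute with $\clUi(\frt')$, which is item (2), is verified by direct computation using the explicit formulas for $T^\imath_i$.

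Items (4) and then (1) should follow in sequence. For (4), each $\ol{B_{i,\cdots}}$ is by construction a root vector in $\frk$ of the computed weight, and the $\ol{\clW}$-generators land in a Cartan subalgebra of $\frk$; comparing simple systems with the restricted root data of $(\g,\frk)$ recovers the standard triangular decomposition. For (1), one rewrites each monomial in the $\bbK_1$-generators of $\clUi_{\bbK_1}$ in the order $\clY$-$\clW$-$\clX$ by a PBW-style straightening, using the commutation relations and, crucially, item (3) to keep the error terms inside the prescribed subspace; linear independence, equivalently the flatness of $\clUi_{\bbK_1}$ over $\bbK_1$, follows from dimension comparison against the classical limit via the isomorphism $\ol{\clUi} \simeq U(\frk)$ established earlier.

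The main obstacle I anticipate is item (3): the requirement that $[w_1,w_2] \in (q-1)\sum_{x \in \clX} \clUi_{\bbK_1} x$, rather than merely $(q-1)\clUi_{\bbK_1}$, is exactly what forces the noncommutative $\clW$ to commutativize in the classical limit in a Lie-algebraic rather than purely associative sense, and it is also what closes the rank induction for $\clW$. Proving it will require tracking the denominators $\{l_j;n\}\inv$ that appear in the weight-vector formulas, together with a careful analysis of $T^\imath_i$-twisted commutators, and I expect this step, rather than the combinatorics of $\clX$ and $\clY$, to account for most of the genuine work in Section \ref{Construction}.
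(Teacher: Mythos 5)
Your outline reproduces the strategy sketched in the paper's introduction (weight pieces of the $B_i$ for the $\pm$ parts, a zero part transported from lower rank by the $T^\imath_i$, item (3) as the crux, classical-limit comparison for (1) and (4)), but there are two genuine gaps. First, the scope of the "case by case" claim is wrong: the seven local configurations $A_1,\ldots,G_2$ only govern how a single generator $B_i$ decomposes against $\Ui(\frt')$; the statement itself is a global assertion about $\clUi$ for a whole Satake diagram, and having all local decompositions in hand is nowhere near a verification. Indeed the paper, which has exactly these decompositions, only establishes the conjecture for the specific pairs $(\frsl_{2r+1},\frso_{2r+1})$, $(\frsl_{2r},\frso_{2r})$, $(\frsl_{2r},\frsp_{2r})$, $(\frsl_{2r+1},\frs(\frgl_r\oplus\frgl_{r+1}))$, $(\frsl_{2r},\frs(\frgl_r\oplus\frgl_r))$, and explicitly stops short for BI, DI, DIII (even at $r=2$) because the required computations are long; the general statement remains a conjecture. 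The content of the paper's verification is precisely the part your proposal defers: the complete list of cross-commutation relations (obtained by weight-decomposing the Serre-type relations, by direct or computer calculation in rank $2$, and by a degree-bounded induction on the rank via the $T^\imath_i$), from which (1) and (3) are read off.

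Second, your recipe for $\clX,\clY$ — the positive/negative $\frt'$-weight pieces of the $B_i$ together with all $E_i$, $i\in I_\bullet$, in $\clX$ — fails in exactly the cases where $I_\otimes$ is small or empty. For $(\frsl_{2r},\frsp_{2r})$ and $(\frsl_{2r+1},\frs(\frgl_r\oplus\frgl_{r+1}))$ each $B_i$ is already a single $\frt'$-weight vector, and its classical limit $b_i=f_i+\theta(f_i)$ is a sum of a positive and a negative $\frt$-root vector of $\frk$; the Lie algebra generated by these sums is a proper "diagonal" subalgebra of the nilradical, so item (4) cannot hold for your sets. The paper repairs this by adjoining genuinely new elements not visible in the weight decomposition of the generators: $B'_{2i}=q\inv T_{w_\bullet}\inv(B_{2i})$ in the $\frsp_{2r}$ case, and $e'_i=[e_i,t_{i+1}]_{q\inv}$, $f'_i=[t_{i+1},f_i]_q$ built from the $T^\imath$-transported zero-part elements $t_i$ in the AIII case; moreover in the $\frsp_{2r}$ case the split alternates ($\clX$ contains $E_1,F_3,E_5,\ldots$ and $\clY$ contains $F_1,E_3,\ldots$), contradicting the prescription that all $E_i$, $i\in I_\bullet$, go to $\clX$. (A minor slip: no weight-zero pieces occur in the $B_3$ configuration; the weights there are $eb^{j_1}+2xb^{j_2}$ with $e=\pm$.) Without identifying these extra elements and verifying their relations, neither (1), (3) nor (4) is actually established by your argument.
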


For a later use, we present here a general strategy for proving Conjecture \ref{Assumption} \eqref{Assumption 4}.

\begin{lem}\label{proof of Conjecture 4}
Suppose that there exist $\clX,\clY,\clW$ satisfying \ref{Assumption} \eqref{Assumption 1}--\eqref{Assumption 3}. Assume further that there exist $x_1,\ldots,x_m \in \ol{\clX}$, $y_1,\ldots,y_m \in \ol{\clY}$, $w_1,\ldots,w_m \in \ol{\clW}$ such that
\begin{itemize}
\item $\la \ol{\clX} \ra_{\text{Lie-alg}}$ is generated by $x_1,\ldots,x_m$, and $\la \ol{\clY} \ra_{\text{Lie-alg}}$ is generated by $y_1,\ldots,y_m$.
\item The triple $(x_i,y_i,w_i)$ forms an $\frsl_2$-triple for each $i = 1,\ldots,m$, i.e.,
$$
[x_i,y_i] = w_i, \ [w_i,x_i] = 2x_i, \ [w_i,y_i] = -2y_i.
$$
\item $[x_i,y_j] = 0$ for all $i \neq j$.
\item The matrix $(c_{i,j})_{1 \leq i,j \leq m}$ coincides (up to the order of row and column index) with the Cartan matrix of the derived subalgebra $[\frk,\frk]$ of $\frk$, where $c_{i,j}$ is such that $[w_i,x_j] = c_{i,j} x_j$.
\end{itemize}
Then, Conjecture \ref{Assumption} \eqref{Assumption 4} holds.
\end{lem}

\begin{proof}
By our assumption, we have
$$
\frk = \la \ol{\clY} \ra_{\text{Lie-alg}} \oplus \la \ol{\clW} \ra_{\text{Lie-alg}} \oplus \la \ol{\clX} \ra_{\text{Lie-alg}},
$$
and the elements $x_1,\ldots,x_m,y_1,\ldots,y_m,w_1,\ldots,w_m$ form Chevalley generators of $[\frk,\frk]$ (the Serre relations follow from the finite-dimensionality of $[\frk,\frk]$). Hence the decomposition of $\frk$ in question is surely a triangular decomposition.
\end{proof}

Until the end of this subsection, we assume Conjecture \ref{Assumption}.

Let $\Phi_{\frk}$ denote the set of roots of $\frk$ with respect to the Cartan subalgebra $\frt := \la \ol{\clW} \ra_{\text{Lie alg}}$. Let $\Delta_{\frk}$ denote the set of simple roots such that the $\frt$-roots of $\la \ol{\clX} \ra_{\text{Lie-alg}}$ are positive roots.

\begin{defi}\normalfont
\ \begin{enumerate}
\item For each $\clUi$-module, set $M_{\clX} := \{ m \in M \mid xm = 0 \Forall x \in \clX \}$.
\item Let $\clC_{\clW}$ (resp., $\clC'_{\clW}$) denote the full subcategory of $\clC$ (resp., $\clC'$) consisting of $M$ such that each $w \in \clW$ acts diagonally on $M_{\clX}$ with eigenvalues in $\bbK_1$.
\item Let $M \in \clC_{\clW}$, and $\bfa = (a_w)_{w \in \clW} \in \bbK_1^{\clW}$. We say that $m \in M \setminus \{0\}$ is a highest weight vector of highest weight $\bfa$ if $m \in M_{\clX}$ and $wm = a_wm$ for all $w \in \clW$.
\item Let $M \in \clC_{\clW}$, and $\lm \in \frt^*$. We say that $m \in  M$ is a highest weight vector of highest weight $\lm$ if $m$ is a highest weight vector of highest weight $\bfa$ such that $\ol{a_w} = \la \ol{w}, \lm \ra$ for all $w \in \clW$.
\item Let $M \in \clC_{\clW}$. We say that $M$ is a highest weight module of highest weight $\bfa$ (resp., $\lm$) if it is generated by a highest weight vector of highest weight $\bfa$ (resp., $\lm$).
\end{enumerate}
\end{defi}

\begin{defi}\normalfont
For $\bfa = (a_w)_{w \in \clW} \in \bbK_1^{\clW}$, set
$$
M(\bfa) := \clUi/(\sum_{x \in \clX} \clUi x + \sum_{w \in \clW} \clUi(w - a_w)).
$$
Since $M(\bfa)$ admits a unique simple quotient, we denote it by $V(\bfa)$. The image of $1 \in \clUi$ in $V(\bfa)$ is denoted by $v_{\bfa}$. Note that $V(\bfa)$ is a highest weight module of highest weight $\bfa$, and $v_{\bfa}$ is a highest weight vector.
\end{defi}

\begin{lem}\label{3.3.6}
Let $\bfa \in \bbK_1^{\clW}$. Then, $V(\bfa) \in \clC$ if and only if the classical limit of the eigenvalues of $v_{\bfa}$ with respect to the action of $k_i$, $l_j$ for $i \in I \setminus I_\otimes$, $j \in I_\otimes$ are all $1$.
\end{lem}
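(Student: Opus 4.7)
The plan is to prove both implications using the triangular decomposition provided by Conjecture \ref{Assumption}, with the reverse direction carrying most of the computational weight.

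The forward direction is the softer half. If $V(\bfa) \in \clC$, then by the definition of $\clC$ (together with the observation that $k_i = (q_i-1)(k_i;0)_{q_i}+1$ and $B_j = [l_j;0]_{q_j}$) the operators $k_i$ and $l_j$ act diagonally on $V(\bfa)$ with eigenvalues in $\{a \in \bbK_1^\times \mid \ol{a}=1\}$. It remains to check that the specific generator $v_\bfa$ is itself a joint eigenvector of $\{k_i, l_j\}$. To see this, I would decompose $v_\bfa = \sum v_\bfa^{(\bfc,\bfd)}$ according to the joint $k_i,l_j$-eigenspaces of $V(\bfa)$. Since each $x \in \clX$ is a $\frt'$-root vector, it shifts the $(k_i,l_j)$-eigenvalues by a definite amount, so the relation $xv_\bfa = 0$ splits into the components $xv_\bfa^{(\bfc,\bfd)} = 0$; similarly, $\clW$ commutes with $\clUi(\frt')$ by Conjecture \ref{Assumption}(\ref{Assumption 2}), so each nonzero $v_\bfa^{(\bfc,\bfd)}$ is itself a highest weight vector of weight $\bfa$. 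Simplicity of $V(\bfa)$ then forces only one $(\bfc,\bfd)$-component to be nonzero, so $v_\bfa$ is a joint eigenvector, with eigenvalues in $\{a \in \bbK_1^\times \mid \ol a = 1\}$.

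For the reverse direction, assume $k_i v_\bfa = c_i v_\bfa$ and $l_j v_\bfa = d_j v_\bfa$ with $\ol{c_i} = \ol{d_j} = 1$. By Conjecture \ref{Assumption}(\ref{Assumption 1}), $V(\bfa) = \clUi_- \clUi_0 \clUi_+ v_\bfa$. Because $\clUi_+$ is generated by $\clX$ which annihilates $v_\bfa$, we have $\clUi_+ v_\bfa = \bbK v_\bfa$. Each generator of $\clUi_{0,\bbK_1}$ acts on $v_\bfa$ as a scalar in $\bbK_1$: elements $w \in \clW$ by $a_w$; $k_i^{\pm1}$, $l_j^{\pm1}$ by $c_i^{\pm1}, d_j^{\pm1}$; and $\{l_j;n\}\inv$ by $(q_j^n d_j + q_j^{-n} d_j\inv)\inv$, which lies in $\bbK_1^\times$ because the denominator has nonzero classical limit $2$. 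Hence $\clUi_0 v_\bfa = \bbK v_\bfa$ and $V(\bfa) = \clUi_- v_\bfa$, spanned by monomials $y_1 \cdots y_n v_\bfa$ with $y_\ell \in \clY$. Each $y_\ell$ being a $\frt'$-root vector of weight $\mu_\ell \in (\frt')^*_\Z$ implies $k_i y_\ell = q_i^{\la \ol{(k_i;0)_{q_i}}, \mu_\ell \ra} y_\ell k_i$, and commuting $k_i$ past the product yields that $y_1 \cdots y_n v_\bfa$ is a joint eigenvector of $k_i, l_j$ with eigenvalues in $\bbK_1^\times$ still of classical limit $1$. Since such vectors span $V(\bfa)$ and $\clUi(\frt')$ is commutative, $\clUi(\frt')$ acts semisimply on $V(\bfa)$ with eigenvalues of the required type, so $V(\bfa) \in \clC$.

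The only subtle point is the reduction $\clUi_0 v_\bfa = \bbK v_\bfa$, given that the generators in $\clW$ need not strictly commute. However, by Conjecture \ref{Assumption}(\ref{Assumption 3}) their commutators lie in $(q-1)\sum_{x \in \clX} \clUi\, x$, which annihilates $v_\bfa$; thus on the highest weight vector the generators of $\clUi_0$ commute as scalars and the reduction is clean. Once this is noted, the whole proof reduces to the weight computation above, relying crucially on condition (\ref{Assumption 2}) that the elements of $\clY$ are $\frt'$-root vectors.
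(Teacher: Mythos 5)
Your reverse direction is exactly the intended argument (the paper dismisses the lemma as clear precisely because the elements of $\clY$ are $\frt'$-root vectors, which is the heart of your computation): from the triangular decomposition one gets $V(\bfa)=\clUi_{-}v_{\bfa}$, since $\clUi_{+}v_{\bfa}=\bbK v_{\bfa}$ and $\clUi_{0}v_{\bfa}=\bbK v_{\bfa}$ — your handling of the generators $\{l_j;n\}\inv$ via the invertibility of $q_j^{n}d_j+q_j^{-n}d_j\inv$ in $\bbK_1$, and of the noncommutativity of $\clW$ via Conjecture \ref{Assumption}\eqref{Assumption 3}, is correct — and then every monomial $y_1\cdots y_n v_{\bfa}$ is a joint $k_i,l_j$-eigenvector whose eigenvalues differ from $c_i,d_j$ by integer powers of $q$, hence still have classical limit $1$; this yields semisimplicity of $\Ui(\frt')$ with $(k_i;0)_{q_i}$- and $B_j$-eigenvalues in $\bbK_1$. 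The easy half of the forward direction — on any module in $\clC$ the eigenvalues of $k_i$ and of $l_j$ have classical limit $1$ — is also right.

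The one step that does not hold as written is the assertion, in the forward direction, that ``simplicity of $V(\bfa)$ forces only one $(\bfc,\bfd)$-component of $v_{\bfa}$ to be nonzero.'' Simplicity only tells you that each nonzero component generates $V(\bfa)$; it does not by itself exclude two highest weight vectors of the same highest weight $\bfa$ sitting in different $\clUi(\frt')$-eigenspaces. To rule that out you would have to compare eigenvalue tuples: each component lies in $\clUi_{-}$ applied to the other, so the two tuples differ by $q$-powers attached to sums of $\clY$-weights, and the two-way comparison forces such a sum to vanish on $\frt'$; concluding that the sum is empty needs the cone of $\clY$-weights to be pointed, i.e.\ the partial order $\leq'$, which in the paper is only available after the later hypothesis $\gamma|_{\frt'}\neq 0$ for all $\gamma\in\Delta_{\frk}$. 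Fortunately this extra claim is not needed for the lemma: its statement (and all subsequent uses of $V(\bfa)$) already treats $v_{\bfa}$ as a joint $k_i,l_j$-eigenvector, and the forward implication only requires the fact about eigenvalues on modules in $\clC$ that you proved. So the proposal is correct in substance once that sentence is dropped or replaced by the weight-comparison argument under the later hypothesis.
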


\begin{proof}
Assume first that $V(\bfa) \in \clC$. Then, recall from Section \ref{Category clC} that the eigenvalues of $k_i$ and $l_j$ are all $1$ at $q = 1$. This proves ``only if'' part of the lemma. Next, assume that the eigenvalues of $k_i,l_j$ on $v_{\bfa}$ are all $1$ at $q = 1$. Then, the $k_i,l_j$'s act on $V(\bfa)$ diagonally with eigenvalues in $\bbK_1$ whose classical limits are all $1$, since the elements of $\clY$ $q$-commute with $k_i,l_j$. This proves ``if'' part. Hence, the proof completes.
\end{proof}

\begin{prop}\label{prop 338}
Let $M \in \clC_{\clW}$ be an irreducible $\clUi$-module such that $M_{\clX}$ is a nonzero finite-dimensional space. Then, $M$ is isomorphic to $V(\bfa)$ for some $\bfa \in \bbK_1^{\clW}$.
\end{prop}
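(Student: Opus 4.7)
The plan is to extract a highest weight vector from $M_\clX$ and then invoke the universal property of $M(\bfa)$. The standard highest weight argument needs three ingredients on $M_\clX$: that $\clW$, the $k_i^{\pm 1}$, and the $l_j^{\pm 1}$ preserve it; that they pairwise commute on it; and that they act diagonally with eigenvalues in $\bbK_1$. Once these are in hand, any nonzero joint eigenvector gives the desired highest weight vector.

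For preservation, each $x \in \clX$ is a $\frt'$-root vector, so $xk_i = q_i^{-a_i} k_i x$ for some $a_i \in \Z$; hence $x(k_i m) = q_i^{-a_i} k_i (xm) = 0$ for $m \in M_\clX$, and analogously for the $l_j$. The operators $w \in \clW$ preserve $M_\clX$ implicitly by the definition of $\clC_\clW$, which requires them to act on $M_\clX$. For commutativity, $\clUi(\frt')$ is commutative and contains all the $k_i, l_j$; each $w$ commutes with $\clUi(\frt')$ by Conjecture~\ref{Assumption}(\ref{Assumption 2}); and by Conjecture~\ref{Assumption}(\ref{Assumption 3}), the commutator $[w_1, w_2]$ lies in $(q-1)\sum_{x \in \clX} \clUi_{\bbK_1} x$, which annihilates every vector of $M_\clX$. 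For the eigenvalue condition, $k_i$ and $l_j$ act diagonally with eigenvalues in $\bbK_1$ on all of $M$ by the definition of a classical weight module, and each $w$ does so on $M_\clX$ by the hypothesis $M \in \clC_\clW$. Since $\clW$ is finite and all the operators above are diagonalizable on $M_\clX$, I may choose a nonzero joint eigenvector $m \in M_\clX$, and set $\bfa := (a_w)_{w \in \clW}$ with $wm = a_w m$ and $a_w \in \bbK_1$.

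With $m$ in hand, I apply the triangular decomposition of Conjecture~\ref{Assumption}(\ref{Assumption 1}). Any product $x_1 \cdots x_n$ with $n \ge 1$ and $x_k \in \clX$ annihilates $m$ because the innermost $x_n$ does, so $\clUi_+ m = \bbK m$. Each generator of $\clUi_0$ acts on $m$ as a scalar in $\bbK$, so likewise $\clUi_0 m = \bbK m$. Combining these, $\clUi m = \clUi_- \clUi_0 \clUi_+ m = \clUi_- m$. Irreducibility forces $\clUi m = M$, so $M$ is a highest weight module generated by $m$ with highest weight $\bfa$. Sending $v_\bfa \mapsto m$ then defines a surjection $M(\bfa) \twoheadrightarrow M$ via the defining relations of $M(\bfa)$; since $M$ is irreducible it is a simple quotient, hence isomorphic to the unique simple quotient $V(\bfa)$.

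The only subtle step is the joint diagonalization: the elements of $\clW$ need not commute in $\clUi$ itself, so one must use both the annihilation $\clX \cdot M_\clX = 0$ and the precise form of the commutator in Conjecture~\ref{Assumption}(\ref{Assumption 3}) to push them past each other on $M_\clX$. Everything after that is routine bookkeeping.
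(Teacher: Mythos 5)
Your proposal is correct and follows essentially the same route as the paper: use Conjecture \ref{Assumption} \eqref{Assumption 3} (together with the $\clC_{\clW}$ hypothesis) to produce a joint eigenvector of $\clW$ in $M_{\clX}$, i.e.\ a highest weight vector, and then realize $M$ as the image of $M(\bfa)$, which by irreducibility must be the unique simple quotient $V(\bfa)$. The extra details you supply (preservation of $M_{\clX}$, commutation with $\clUi(\frt')$, simultaneous diagonalization) are exactly the points the paper's shorter argument leaves implicit.
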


\begin{proof}
By Conjecture \ref{Assumption} \eqref{Assumption 3}, the elements of $\clW$ commute with each other on $M_{\clX}$. Since $M_{\clX}$ is nonzero and finite-dimensional, there exists a highest weight vector $m \in M_{\clX}$. Hence, the submodule $\clUi m$ of $M$ is a nonzero homomorphic image of $M(\bfa)$. Since $M$ is irreducible, we have $M = \clUi m \simeq V(\bfa)$.
\end{proof}

Let $\bfa = (a_w)_{w \in \clW} \in \bbK_1^{\clW}$. Set
$$
V(\bfa)_{\bbK_1} := \clUi_{\bbK_1} v_{\bfa}, \qu \ol{V(\bfa)} := V(\bfa)_{\bbK_1} \otimes_{\bbK_1} \C.
$$
Define $\lm_\bfa \in \frt^*$ by
$$
\la \ol{w}, \lm_{\bfa} \ra = \ol{a_w}.
$$
Then, $\ol{V(\bfa)}$ is a (not necessarily irreducible) highest weight $U(\frk)$-module with highest weight $\lm_{\bfa}$, and highest weight vector $\ol{v_{\bfa}} := v_{\bfa} \otimes 1$.

By the triangular decomposition $\clUi_{\bbK_1} = \clUi_{-,\bbK_1} \clUi_{0,\bbK_1} \clUi_{+,\bbK_1}$ and the definition of $V(\bfa)$, we have
$$
V(\bfa)_{\bbK_1} = \clUi_{-,\bbK_1} v_{\bfa}.
$$
Hence, $V(\bfa)_{\bbK_1}$ is spanned over $\bbK_1$ by various vectors of the form $y_1 \cdots y_q v_{\bfa}$ with $y_1,\ldots,y_q \in \clY$.

For each $p \in \Z_{\geq 0}$, set
\begin{align}
\begin{split}
&V(\bfa)^{(p)} := \Span_{\bbK} \{ y_1 \cdots y_q v_{\bfa} \mid y_1,\ldots,y_q \in \clY \AND q \leq p \}, \\
&V(\bfa)_{\bbK_1}^{(p)} := \Span_{\bbK_1} \{ y_1 \cdots y_q v_{\bfa} \mid y_1,\ldots,y_q \in \clY \AND q \leq p \}, \\
&\ol{V(\bfa)}^{(p)} := \Span_{\C} \{ \ol{y_1} \cdots \ol{y_q} \ol{v_{\bfa}} \mid y_1,\ldots,y_q \in \clY \AND q \leq p \}.
\end{split} \nonumber
\end{align}
Since $V(\bfa)_{\bbK_1}^{(p)}$ is a finitely generated module over a local ring $\bbK_1$, it is free. Hence, we obtain
$$
\dim_{\bbK} V(\bfa)^{(p)} = \rank_{\bbK_1} V(\bfa)_{\bbK_1}^{(p)} = \dim_{\C} \ol{V(\bfa)}^{(p)} < \infty.
$$

This observation proves the following.

\begin{prop}\label{Classical limit of Va}
Let $\bfa = (a_w)_{w \in \clW} \in \bbK_1^{\clW}$. Then, $V(\bfa)$ is finite-dimensional if and only if so is $\ol{V(\bfa)}$. Moreover, if $V(\bfa)$ is finite-dimensional, then $\dim V(\bfa) = \dim \ol{V(\bfa)}$.
\end{prop}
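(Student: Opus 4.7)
The plan is to exploit the filtrations $V(\bfa)^{(p)} \subseteq V(\bfa)^{(p+1)} \subseteq \cdots$ and $\ol{V(\bfa)}^{(p)} \subseteq \ol{V(\bfa)}^{(p+1)} \subseteq \cdots$ together with the equality $\dim_\bbK V(\bfa)^{(p)} = \dim_\C \ol{V(\bfa)}^{(p)}$ already established in the paragraph preceding the statement. The first preliminary step is to check that each filtration exhausts its module. Since $\clUi_{-,\bbK_1}$ is generated as a $\bbK_1$-algebra by $\clY$, we have $V(\bfa)_{\bbK_1} = \clUi_{-,\bbK_1} v_\bfa = \bigcup_{p} V(\bfa)_{\bbK_1}^{(p)}$, and extending scalars gives $V(\bfa) = \bigcup_p V(\bfa)^{(p)}$. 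Because the specialization functor $-\otimes_{\bbK_1}\C$ commutes with ascending unions, the same holds in the classical limit: $\ol{V(\bfa)} = \bigcup_p \ol{V(\bfa)}^{(p)}$.

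The second preliminary step is a stabilization lemma: \emph{if $V(\bfa)^{(p_0)} = V(\bfa)^{(p_0+1)}$ for some $p_0$, then $V(\bfa) = V(\bfa)^{(p_0)}$}, and likewise for $\ol{V(\bfa)}$. Indeed, under that hypothesis $V(\bfa)^{(p_0)}$ is stable under left multiplication by each $y \in \clY$ (for $v \in V(\bfa)^{(p_0)}$, $yv \in V(\bfa)^{(p_0+1)} = V(\bfa)^{(p_0)}$), hence iteratively stable under the $\bbK$-algebra $\clUi_-$ generated by $\clY$. Since $v_\bfa \in V(\bfa)^{(p_0)}$, this gives $V(\bfa)^{(p_0)} \supseteq \clUi_- v_\bfa = V(\bfa)$. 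The very same reasoning, with $\ol{\clY}$ generating the classical limit of $\clUi_-$, handles $\ol{V(\bfa)}$.

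With these two ingredients the proposition follows at once. If $V(\bfa)$ is finite-dimensional, the non-decreasing integer sequence $(\dim V(\bfa)^{(p)})_{p \geq 0}$ is bounded, hence stabilizes at some $p_0$ where $V(\bfa)^{(p_0)} = V(\bfa)^{(p_0+1)}$, so $V(\bfa) = V(\bfa)^{(p_0)}$. The dimension equality then yields $\dim \ol{V(\bfa)}^{(p_0)} = \dim \ol{V(\bfa)}^{(p_0+1)}$, which combined with the inclusion $\ol{V(\bfa)}^{(p_0)} \subseteq \ol{V(\bfa)}^{(p_0+1)}$ forces equality, so $\ol{V(\bfa)} = \ol{V(\bfa)}^{(p_0)}$ with $\dim_\C \ol{V(\bfa)} = \dim_\bbK V(\bfa)$. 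The converse direction is entirely symmetric. I do not expect any serious obstacle here; the dimension equality proved just above the statement does all the heavy lifting, and the only point requiring care is the exhaustion of $\ol{V(\bfa)}$ by the $\ol{V(\bfa)}^{(p)}$, which is a routine consequence of the commutation between specialization and direct limits.
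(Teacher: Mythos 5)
Your argument is correct and is exactly the route the paper intends: the displayed equality $\dim_{\bbK} V(\bfa)^{(p)} = \dim_{\C}\ol{V(\bfa)}^{(p)}$, together with the exhaustion of $V(\bfa)$ and $\ol{V(\bfa)}$ by these filtration pieces (both being generated from $v_{\bfa}$, resp.\ $\ol{v_{\bfa}}$, by $\clY$, resp.\ $\ol{\clY}$), is all the paper invokes when it says ``this observation proves the following.'' Your stabilization lemma merely spells out the details the paper leaves implicit, so there is no substantive difference in approach.
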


\begin{cor}\label{Sufficient condition for fd of irr hwm}
Let $\bfa = (a_w)_{w \in \clW}$. Suppose that for each $\gamma \in \Delta_{\frk}$, there exists $y_\gamma \in \clUi_{\bbK_1}$ such that $\ol{y_\gamma} \in \frk_\gamma \setminus \{0\}$, and $y_\gamma^{\la w_\gamma, \lm_{\bfa} \ra + 1} v_{\bfa} = 0$, where $w_\gamma \in \frt$ denotes the simple coroot corresponding to $\gamma$. Then, $V(\bfa)$ is finite-dimensional.
\end{cor}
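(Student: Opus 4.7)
The plan is to reduce the problem to the classical Lie-algebraic theorem on finite-dimensional highest weight modules over the reductive Lie algebra $\frk$, by means of the classical-limit machinery set up in this section.

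First, the proposition immediately preceding this corollary reduces the question: $V(\bfa)$ is finite-dimensional if and only if $\ol{V(\bfa)}$ is, so it suffices to show the latter is a finite-dimensional $U(\frk)$-module. By construction, $\ol{V(\bfa)}$ is a highest weight $U(\frk)$-module of highest weight $\lm_\bfa$, generated by the image $\ol{v_\bfa}$ of the cyclic vector, hence a quotient of the Verma module $M(\lm_\bfa)$.

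Second, I would transfer the hypothesis to the classical limit. The relation $y_\gamma^{\la w_\gamma,\lm_\bfa \ra + 1} v_\bfa = 0$ holds inside the $\bbK_1$-lattice $V(\bfa)_{\bbK_1} = \clUi_{-,\bbK_1} v_\bfa$ (as used in the proof of the previous proposition), so tensoring with $\C$ through $\ol{\ \cdot\ }$ yields
\[
\ol{y_\gamma}^{\la w_\gamma,\lm_\bfa \ra + 1}\,\ol{v_\bfa} = 0 \qquad \text{in } \ol{V(\bfa)}.
\]
For the exponent to be meaningful we must have $\la w_\gamma,\lm_\bfa\ra \in \Z_{\geq 0}$ for every $\gamma \in \Delta_\frk$, which forces $\lm_\bfa$ to be dominant integral; and by hypothesis each $\ol{y_\gamma}$ is a nonzero simple root vector of $\frk$ attached to $\gamma$.

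Third, I would invoke the classical Harish-Chandra/Serre theorem: a highest weight $U(\frk)$-module with dominant integral highest weight $\lm_\bfa$ on which the Serre-type vanishing $\ol{y_\gamma}^{\la w_\gamma,\lm_\bfa\ra+1}\ol{v_\bfa}=0$ holds for every simple $\gamma \in \Delta_\frk$ is necessarily a quotient of the finite-dimensional irreducible module $L(\lm_\bfa)$, since the submodule of $M(\lm_\bfa)$ generated by the corresponding Serre relations coincides with its unique maximal submodule. Thus $\ol{V(\bfa)}$ is finite-dimensional, and by Step~1 so is $V(\bfa)$. The only delicate point is verifying that $v_\bfa$ is a primitive generator of a well-behaved $\bbK_1$-form so that the relation truly descends; but this is exactly what Conjecture~\ref{Assumption} secures via $V(\bfa)_{\bbK_1} = \clUi_{-,\bbK_1} v_\bfa$ being finitely generated, hence free, over the local ring $\bbK_1$, with reduction equal to $\ol{V(\bfa)}$.
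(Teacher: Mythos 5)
Your proposal is correct and follows essentially the same route as the paper: reduce the relation to the classical limit $\ol{y_\gamma}^{\la w_\gamma,\lm_{\bfa}\ra+1}\,\ol{v_{\bfa}}=0$, invoke the well-known classical criterion for finite-dimensionality of highest weight $U(\frk)$-modules, and conclude via the preceding proposition that $\dim V(\bfa)=\dim\ol{V(\bfa)}<\infty$. You merely spell out the "well-known" step (Verma quotient and Serre-type relations) and the descent to the $\bbK_1$-lattice in more detail than the paper does.
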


\begin{proof}
By the hypothesis, we have
$$
\ol{y_\gamma}^{\la w_\gamma, \lm_{\bfa} \ra + 1} \ol{v_{\bfa}} = 0
$$
for all $\gamma \in \Delta_{\frk}$. As is well-known, this is equivalent to that $\dim \ol{V(\bfa)} < \infty$.
\end{proof}

\begin{cor}\label{fd quotient is irreducible}
Let $\bfa \in \bbK_1^{\clW}$, and $U$ a quotient module of $M(\bfa)$. If $\dim U < \infty$, then we have $U = V(\bfa)$.
\end{cor}

\begin{proof}
Since the proof of Proposition \ref{Classical limit of Va} does not require the irreducibility of $V(\bfa)$, we see that $\dim U = \dim \ol{U}$, where $\ol{U}$ is defined in the same way as $\ol{V(\bfa)}$. As $V(\bfa)$ is also a quotient of $U$, it is also finite-dimensional. Hence, we have $\dim V(\bfa) = \dim \ol{V(\bfa)}$. Here, note that both $\ol{U}$ and $\ol{V(\bfa)}$ are finite-dimensional highest weight $U(\frk)$-module of the same highest weight. This implies that $\dim \ol{U} = \dim \ol{V(\bfa)}$. Therefore, we obtain $\dim U = \dim V(\bfa)$. Again, since $V(\bfa)$ is a quotient of $U$, we conclude that $U = V(\bfa)$, as required.
\end{proof}

From now on, suppose that $\gamma|_{\frt'} \neq 0$ for all $\gamma \in \Delta_{\frk}$. Let $x \in \clX$, and $\ol{x} = \sum_{\gamma \in \Phi_{\frk}} x_\gamma$, $x_\gamma \in \frk_\gamma$ the $\frt$-root vector decomposition. Then, the weight $\lm \in (\frt')^*_\Z$ of $x$ is the restriction of $\gamma$ for some (equivalently, any) $\gamma \in \Phi_{\frk}$ such that $x_\gamma \neq 0$. Hence, $\lm$ is a $\Z_{\geq 0}$-linear combination of $\gamma|_{\frt'}$, $\gamma \in \Delta_{\frk}$. Similarly, the weight of $y \in \clY$ is a $\Z_{\geq 0}$-linear combination of $-\gamma|_{\frt'}$, $\gamma \in \Delta_{\frk}$.

Let us define a partial order $\leq'$ on $(\frt')^*$ by
$$
\mu \leq' \lm \text{ if and only if } \lm - \mu \in \sum_{\gamma \in \Delta_{\frk}} \Z_{\geq 0} (\gamma|_{\frt'}).
$$
Let $M \in \clC$, $\lm \in (\frt')^*$, $m \in M_\lm$. Then, for each $x \in \clX$ (resp., $y \in \clY$), the vector $xm$ (resp., $ym$) is a weight vector of weight strictly higher (resp., lower) than $\lm$ with respect to the partial order $\leq'$. Therefore, for each nonzero finite-dimensional classical weight $\Ui$-module $M$, the subspace $M_\clX$ is nonzero.

Similarly, define a partial order $\leq$ on $\frt^*$ by
$$
\mu \leq \lm \text{ if and only if } \lm - \mu \in \sum_{\gamma \in \Delta_{\frk}} \Z_{\geq 0} \gamma.
$$
Then, for each $\lm,\mu \in \frt^*$, we have $\mu|_{\frt'} <' \lm|_{\frt'}$ if $\mu < \lm$. Let $w_{\frk}$ denote the longest element of the Weyl group of $\frk$. Then, the following two lemmas are clear.

\begin{lem}\label{3.3.10}
Let $\bfa \in \bbK_1^{\clW}$. Then, the weight space $V(\bfa)_\mu$ is zero unless $\mu \leq' \lm_{\bfa}|_{\frt'}$, and  $V(\bfa)_{\lm_{\bfa}|_{\frt'}}$ is one-dimensional. Moreover, if $V(\bfa)$ is finite-dimensional, then $V(\bfa)_{\mu} = 0$ unless $w_{\frk}(\lm_{\bfa})|_{\frt'} \leq' \mu$, and $V(\bfa)_{w_{\frk}(\lm_{\bfa})|_{\frt'}}$ is one-dimensional.
\end{lem}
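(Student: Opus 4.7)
The plan is to combine the triangular decomposition of Conjecture~\ref{Assumption} with a passage to the classical limit, where $\ol{V(\bfa)}$ is a highest weight $U(\frk)$-module of highest weight $\lm_{\bfa}\in\frt^{*}$. First I would pin down the $\frt'$-weight of the cyclic generator $v_{\bfa}$ itself: decomposing $v_{\bfa} = \sum_{\nu} v_{\bfa}^{(\nu)}$ into $\frt'$-weight components, each summand is still killed by $\clX$ and is a $\clW$-eigenvector with eigenvalues $a_{w}$ (since $\clX$ consists of $\frt'$-weight vectors and $\clW$ has $\frt'$-weight $0$, the relations defining $V(\bfa)$ are preserved componentwise). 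Passing to the classical limit, each nonzero $\ol{v_{\bfa}^{(\nu)}}$ becomes a highest weight vector of $\ol{V(\bfa)}$ of $\frt$-weight $\lm_{\bfa}$; by one-dimensionality of the top weight space of a highest weight $U(\frk)$-module, each $\ol{v_{\bfa}^{(\nu)}}$ is a scalar multiple of $\ol{v_{\bfa}}$, which forces $\nu = \lm_{\bfa}|_{\frt'}$. Hence $v_{\bfa} \in V(\bfa)_{\lm_{\bfa}|_{\frt'}}$.

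Next, using $\clX v_{\bfa} = 0$ and the triangular decomposition, $V(\bfa) = \clUi_{-}\clUi_{0}v_{\bfa}$. Since all elements of $\clUi_{0}$ have $\frt'$-weight $0$ and each element of $\clY$ has $\frt'$-weight $-\gamma|_{\frt'}$ for some $\gamma \in \Phi_{\frk}^{+}$ (hence a $\Z_{\geq 0}$-combination of the simple root restrictions), every homogeneous piece lies in some $V(\bfa)_{\mu}$ with $\mu \leq' \lm_{\bfa}|_{\frt'}$, which proves the first assertion. For one-dimensionality of $V(\bfa)_{\lm_{\bfa}|_{\frt'}}$, I would use the standing hypothesis $\gamma|_{\frt'}\neq 0$ for $\gamma \in \Delta_{\frk}$, together with pointedness of the cone they generate, to conclude that no nonempty product of elements of $\clY$ has $\frt'$-weight $0$; then $V(\bfa)_{\lm_{\bfa}|_{\frt'}} = \clUi_{0}v_{\bfa}$. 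Since $v_{\bfa}$ is a $\frt'$-weight vector, $\clUi(\frt')$ acts on it by scalars, and together with the $\clW$-eigenproperty and commutativity of $\clUi_{0}$, this gives $\clUi_{0}v_{\bfa} = \bbK v_{\bfa}$, which is one-dimensional.

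For the finite-dimensional case, the preceding proposition identifies $\ol{V(\bfa)}$ with the irreducible finite-dimensional $U(\frk)$-module of highest weight $\lm_{\bfa}$, whose $\frt$-weights $\nu$ satisfy $w_{\frk}(\lm_{\bfa})\leq \nu \leq \lm_{\bfa}$ and so restrict to $(\frt')^{*}$ between $w_{\frk}(\lm_{\bfa})|_{\frt'}$ and $\lm_{\bfa}|_{\frt'}$. Using the $\bbK_{1}$-freeness of $V(\bfa)_{\bbK_{1}}$ over the DVR $\bbK_{1}$ (Proposition~\ref{On K1}), weight-space dimensions transfer as $\dim_{\bbK} V(\bfa)_{\mu} = \sum_{\nu|_{\frt'}=\mu}\dim_{\C}\ol{V(\bfa)}_{\nu}$. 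This immediately forces $V(\bfa)_{\mu} = 0$ unless $w_{\frk}(\lm_{\bfa})|_{\frt'} \leq' \mu$, and one-dimensionality of $V(\bfa)_{w_{\frk}(\lm_{\bfa})|_{\frt'}}$ follows from the symmetric cone argument applied at the bottom: no $\frt$-weight $\nu > w_{\frk}(\lm_{\bfa})$ of $\ol{V(\bfa)}$ restricts to $w_{\frk}(\lm_{\bfa})|_{\frt'}$, while $\ol{V(\bfa)}_{w_{\frk}(\lm_{\bfa})}$ is one-dimensional by classical theory.

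The main obstacle is justifying the pointedness used twice above, namely that no nontrivial $\Z_{\geq 0}$-combination of $\{\gamma|_{\frt'} : \gamma \in \Delta_{\frk}\}$ vanishes in $(\frt')^{*}$; this goes slightly beyond the bare hypothesis $\gamma|_{\frt'}\neq 0$ and must be verified for the Satake diagrams under consideration. Alternatively, the whole argument can be routed exclusively through the classical limit, using $\bbK_{1}$-freeness of the individual weight spaces $V(\bfa)_{\bbK_{1},\mu}$ to deduce one-dimensionality of the top (and bottom, in the finite case) $\frt'$-weight space of $V(\bfa)$ directly from the corresponding classical statements for $\ol{V(\bfa)}$, bypassing the intrinsic step in paragraph two.
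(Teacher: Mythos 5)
Your overall route --- the triangular decomposition of Conjecture~\ref{Assumption}, the fact that elements of $\clX,\clY$ are $\frt'$-root vectors whose weights are $\Z_{\geq 0}$-combinations of $\pm\gamma|_{\frt'}$, and a comparison with the classical limit $\ol{V(\bfa)}$ --- is exactly the argument the paper intends (it gives no written proof, declaring the lemma clear from the preceding discussion). Your flag about pointedness is also well taken: to pass from ``each $y\in\clY$ strictly lowers the $\frt'$-weight'' to ``no nonempty $\clY$-monomial preserves it'' (and indeed to know that each $y$ has nonzero weight at all) one needs that no nontrivial $\Z_{\geq 0}$-combination of $\{\gamma|_{\frt'}\mid\gamma\in\Delta_{\frk}\}$ vanishes, which is more than the standing hypothesis $\gamma|_{\frt'}\neq 0$; in the cases where the conjecture is verified the restrictions are either linearly independent or coincide in pairs with the independent $\beta_i$'s, so the cone is pointed, but this should be said.

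Two steps of your plan are, however, genuinely unjustified as written. First, the decomposition $v_{\bfa}=\sum_\nu v_{\bfa}^{(\nu)}$ into $\frt'$-weight components presupposes that $\clUi(\frt')$ acts semisimply, with eigenvalues in $\bbK_1$, on a subspace containing $v_{\bfa}$; nothing in the construction of $V(\bfa)$ guarantees this for general $\bfa$. In the paper this eigenvector property of $v_{\bfa}$ is an implicit part of the setup (it is what makes Lemma~\ref{3.3.6} and the identity $V(\bfa)_{\bbK_1}=\clUi_{-,\bbK_1}v_{\bfa}$ meaningful), so your first paragraph either assumes what it sets out to prove or must be replaced by taking that property as given; moreover the components need not lie in $V(\bfa)_{\bbK_1}$ (eigenvalue differences are divisible by $q-1$), so one must rescale before taking classical limits, and ``commutativity of $\clUi_0$'' is not available (Conjecture~\ref{Assumption}~(3) gives it only modulo $(q-1)\sum_{x\in\clX}\clUi_{\bbK_1}x$) --- though it is also unnecessary, since every generator of $\clUi_{0,\bbK_1}$ acts on $v_{\bfa}$ by a scalar. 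Second, and more seriously, the transfer formula $\dim_{\bbK}V(\bfa)_{\mu}=\sum_{\nu|_{\frt'}=\mu}\dim_{\C}\ol{V(\bfa)}_{\nu}$ in the finite-dimensional half is asserted rather than proved: it requires that $V(\bfa)$ be the direct sum of its $\frt'$-weight spaces with eigenvalues in $\bbK_1$, and that the free lattice $V(\bfa)_{\bbK_1}$ be compatible with that decomposition; neither is known at this stage (on finite-dimensional modules outside $\clC$, eigenvalues of $l_j$ of the form $\pm\sqrt{-1}q^{1/2}$ can occur, and the corresponding $B_j$-eigenvalues are not even in $\bbK_1$). The ``Moreover'' statement is more safely obtained by applying the first half to the restricted dual $V(\bfa)^{\vee}$ via $S^{\imath}$ (the mechanism behind Lemma~\ref{3.3.11}), or by actually establishing the weight-compatibility of $\clUi_{-,\bbK_1}v_{\bfa}$ with the classical limit that your formula silently uses. (Also, $\bbK_1$ is a non-Noetherian valuation ring, not a DVR; freeness of the finitely generated pieces is what Proposition~\ref{On K1} is used for.)
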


\begin{lem}\label{3.3.11}
Let $\bfa \in \bbK_1^{\clW}$ be such that $V(\bfa)$ is a finite-dimensional classical weight module. Then, there exists a unique $\bfa' \in \bbK_1^{\clW}$ such that $\lm_{\bfa'} = -w_{\frk}(\lm_{\bfa})$, and $V(\bfa)^\vee \simeq V(\bfa')$
\end{lem}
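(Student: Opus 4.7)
The plan is to exhibit a highest weight vector of $V(\bfa)^\vee$ from the one-dimensional lowest $\frt'$-weight space of $V(\bfa)$, identify $V(\bfa)^\vee$ with $V(\bfa')$ for a suitable $\bfa'$ by irreducibility, and then match $\lm_{\bfa'}$ with $-w_\frk(\lm_\bfa)$ via a classical limit computation.

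By Lemma \ref{3.3.10}, $V(\bfa)_{w_\frk(\lm_\bfa)|_{\frt'}}$ is one-dimensional; I would choose $v_0$ spanning the rank-one $\bbK_1$-module $V(\bfa)_{\bbK_1} \cap V(\bfa)_{w_\frk(\lm_\bfa)|_{\frt'}}$ with $\ol{v_0} \neq 0$, and let $v_0^* \in V(\bfa)^\vee$ be the dual functional (supported on this weight space with $v_0^*(v_0) = 1$). A short conjugation calculation from Proposition \ref{Simath} shows that $S^\imath$ preserves $\frt'$-weights ($x \in \clUi_\mu$ implies $S^\imath(x) \in \clUi_\mu$), and the natural extension to the $\clUi$-structure on $V(\bfa)^\vee$ inherits this. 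For any $x \in \clX$, which has strictly positive $\frt'$-weight, the identity $(x \cdot v_0^*)(m) = v_0^*(S^\imath(x) m)$ then vanishes on every weight space by Lemma \ref{3.3.10}, so $v_0^* \in (V(\bfa)^\vee)_\clX$. For $w \in \clW$ of weight $0$, $w \cdot v_0^*$ lies in the one-dimensional weight space $(V(\bfa)^\vee)_{-w_\frk(\lm_\bfa)|_{\frt'}} = \bbK v_0^*$, so $w \cdot v_0^* = a'_w v_0^*$ for some $a'_w \in \bbK$; the integrality $a'_w \in \bbK_1$ follows from the $\bbK_1$-lattice $V(\bfa)^\vee_{\bbK_1} := \bigoplus_\mu \Hom_{\bbK_1}(V(\bfa)_{\bbK_1,\mu}, \bbK_1)$ being preserved by $\clUi_{\bbK_1}$.

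Set $\bfa' := (a'_w)_{w \in \clW} \in \bbK_1^\clW$. Then $v_0^*$ is a highest weight vector of highest weight $\bfa'$, and since $V(\bfa)^\vee$ is the restricted dual of a finite-dimensional irreducible $\Ui$-module it is itself irreducible, hence generated by $v_0^*$; by the universal property defining $M(\bfa')$ and the irreducibility of $V(\bfa)^\vee$, we conclude $V(\bfa)^\vee \simeq V(\bfa')$. Uniqueness of $\bfa'$ is immediate since its components are recovered from $V(\bfa')$ as the $\clW$-eigenvalues on the (essentially unique) highest weight vector.

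It remains to identify $\lm_{\bfa'}$ with $-w_\frk(\lm_\bfa)$ in $\frt^*$. Since $\frk$ is reductive, the finite-dimensional highest weight $U(\frk)$-module $\ol{V(\bfa)}$ equals $L(\lm_\bfa)$; comparing the one-dimensional $\frt'$-weight space $\ol{V(\bfa)}_{w_\frk(\lm_\bfa)|_{\frt'}}$ with the one-dimensional $\frt$-lowest-weight space $L(\lm_\bfa)_{w_\frk(\lm_\bfa)}$ shows that $\ol{v_0}$ is a $\frt$-weight vector of weight $w_\frk(\lm_\bfa)$. Taking the classical limit of $(w \cdot v_0^*)(v_0) = v_0^*(S^\imath(w) v_0) = a'_w$ and using $\ol{w} \in \frt$ (Conjecture \ref{Assumption}), one obtains $\ol{a'_w} = \la \ol{S^\imath}(\ol{w}), w_\frk(\lm_\bfa) \ra$. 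The principal obstacle is thus the identity $\ol{S^\imath}|_\frt = -\id_\frt$: on the $\frh^\theta$-part it is forced by $\ol{S^\imath}(1) = 1$ applied to $\ol{k_i} = 1$ (yielding $\ol{\zeta_i} = 1$) together with Proposition \ref{Simath}; on the $b_j$ with $j \in I_\otimes$ it amounts to a sign condition on $\ol{\eta_j}$ that can be arranged via the freedom in Lemma \ref{isomorphism between Uis}; and for the remaining $\clW$-elements (those lifting the root vectors $f_\gamma + e_\gamma$ for $\gamma \in \Gamma$ non-simple), it must be checked against the explicit triangular decompositions constructed in Section \ref{Construction}. Granting this identity, $\ol{a'_w} = -\la \ol{w}, w_\frk(\lm_\bfa) \ra = \la \ol{w}, -w_\frk(\lm_\bfa) \ra$ for every $w \in \clW$, which is exactly $\lm_{\bfa'} = -w_\frk(\lm_\bfa)$.
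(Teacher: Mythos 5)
The paper offers no argument for this lemma (it is declared ``clear'' right after $w_{\frk}$ is introduced), so the comparison is with the evident intended argument, and your outline -- dualize the one-dimensional lowest weight space of $V(\bfa)$, use irreducibility of $V(\bfa)^\vee$ to get $V(\bfa)^\vee \simeq V(\bfa')$, and read off $\lm_{\bfa'}$ in the classical limit -- is certainly that argument. Two issues, one minor and one substantive. The minor one: $S^\imath$ is an anti-automorphism of $\Ui$ only, while the elements of $\clX$ and the $l_j$'s live in $\clUi \setminus \Ui$, so the formula $(x \cdot v_0^*)(m) = v_0^*(S^\imath(x)m)$ for $x \in \clX$ and the statement ``$x \in \clUi_\mu$ implies $S^\imath(x) \in \clUi_\mu$'' are not literally defined. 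This is repairable: once you know the $\frt'$-weights of $V(\bfa)^\vee$ are the negatives of those of $V(\bfa)$, any $x \in \clUi_\nu$ with $\nu >' 0$ sends the top weight space of $V(\bfa)^\vee$ into a weight space that Lemma \ref{3.3.10} forces to be zero, with no reference to $S^\imath(x)$ at all.

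The substantive gap is the step you yourself defer: that the classical limits of the $\clW$-eigenvalues on $v_0^*$ are $\la \ol{w}, -w_{\frk}(\lm_{\bfa}) \ra$, which you reduce to $\ol{S^\imath}|_{\frt} = -\id_{\frt}$. This is exactly the content of ``$\lm_{\bfa'} = -w_{\frk}(\lm_{\bfa})$'' -- everything else in your argument only yields $V(\bfa)^\vee \simeq V(\bfa')$ for \emph{some} $\bfa' \in \bbK_1^{\clW}$, together with uniqueness -- and it is not automatic. Indeed $S^\imath = S^\imath_{\bfeta,\bfzeta}$ depends on a choice: for $j \in I_{\otimes}$ one computes $\varsigma'_j = \varsigma_j\inv$, so the constraints of Lemma \ref{isomorphism between Uis} only force $\eta_j = \pm \varsigma_j$, i.e.\ $S^\imath(B_j) = \mp B_j$ by Proposition \ref{Simath}; with the wrong sign the $b^j$-components of weights are \emph{not} negated under $(\cdot)^\vee$, and then not only the final identification but already your intermediate claims (that $v_0^*$ has weight $-w_{\frk}(\lm_{\bfa})|_{\frt'}$ and that this is the top weight of $V(\bfa)^\vee$) break down. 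Similarly one must know $\ol{\zeta_i} = 1$, and in fact that $\ol{(\zeta_i - 1)/(q_i-1)} = 0$, to get $\ol{S^\imath} = -\id$ on the $\frh^\theta$-part without a constant shift, and one must control $\ol{S^\imath}$ (at least modulo terms acting trivially on, or moving, the lowest weight vector) on the elements of $\clW$ lying outside $\Ui(\frt')$, which are only constrained by Conjecture \ref{Assumption} and are constructed case by case in Section \ref{Construction}. So ``can be arranged'' and ``must be checked'' have to be replaced by an actual normalization statement and verification; as written, the proposal does not establish the weight formula that distinguishes this lemma from the mere existence of $\bfa'$.
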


\begin{proof}
By our hypothesis, $V(\bfa)^\vee$ is a finite-dimensional highest weight module with highest weight, say $\bfa'$ such that $\lm_{\bfa'} = -w_{\frk}(\lm_{\bfa})$. Hence, $V(\bfa)^\vee$ is isomorphic to a finite-dimensional quotient of $M(\bfa')$. Then, by Corollary \ref{fd quotient is irreducible}, we have $V(\bfa)^\vee \simeq V(\bfa')$.
\end{proof}

\begin{prop}\label{complete reducibility}
Each finite-dimensional $\clUi$-module in $\clC_{\clW}$ is completely reducible.
\end{prop}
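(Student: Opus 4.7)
Plan: I proceed by induction on $\dim M$, the base case being trivial. By dévissage along composition series, the inductive step reduces to showing that every short exact sequence
\begin{equation*}
0 \to V(\bfa) \to M \to V(\bfb) \to 0
\end{equation*}
with $V(\bfa), V(\bfb) \in \clC_\clW$ finite-dimensional irreducibles splits.

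The main tool is classical-limit reduction. Using the canonical $\bbK_1$-form $V(\bfa)_{\bbK_1}$ together with a choice of lift $m \in M$ of the highest weight vector $v_\bfb \in V(\bfb)$, I build a $\clUi_{\bbK_1}$-stable lattice $M_{\bbK_1} \subset M$. Specializing at $q=1$ yields an exact sequence
\begin{equation*}
0 \to \ol{V(\bfa)} \to \ol M \to \ol{V(\bfb)} \to 0
\end{equation*}
of finite-dimensional $U(\frk)$-modules, in which $\ol{V(\bfa)}$ and $\ol{V(\bfb)}$ are irreducible highest weight modules of respective highest weights $\lm_\bfa, \lm_\bfb$ (cf.\ Lemma~\ref{3.3.10}). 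Since $\frk$ is reductive by Conjecture~\ref{Assumption}\,\eqref{Assumption 4} and $\ol{\clW}=\frt$ is abelian (the commutators in Conjecture~\ref{Assumption}\,\eqref{Assumption 3} lie in $(q-1)\clUi_{\bbK_1}$ and therefore vanish classically), the center $\frz(\frk)\subset\frt$ acts by scalars on each irreducible subquotient. Combining Weyl's theorem on $[\frk,\frk]$ with this central-character analysis splits the classical sequence.

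To lift the splitting, I may adjust $m$ so that its classical image $\ol m$ is a highest weight vector generating a classical complement of $\ol{V(\bfa)}$. The $\clUi$-submodule $\clUi m \subset M$ then projects onto $V(\bfb)$, yielding $\dim \clUi m \ge \dim V(\bfb)$. On the other hand, $\clUi_{\bbK_1} m$ is a finitely generated torsion-free module over the local ring $\bbK_1$, which is a DVR by Proposition~\ref{On K1}, hence free; so
\begin{equation*}
\dim_\bbK \clUi m \;=\; \rk_{\bbK_1}\clUi_{\bbK_1} m \;=\; \dim_\C \ol{\clUi_{\bbK_1} m} \;=\; \dim \ol{V(\bfb)} \;=\; \dim V(\bfb).
\end{equation*}
Equality of dimensions together with surjectivity of the projection forces $\clUi m \cap V(\bfa)=0$, so $M = V(\bfa) \oplus \clUi m$ is the desired splitting.

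The principal obstacle is verifying that $\frz(\frk)$ acts semisimply on all of $\ol M$, which is what ultimately permits Weyl's theorem to apply. Since $\frt'$ acts semisimply on $\ol M$ by the classical-weight-module hypothesis, this reduces to propagating semisimplicity of the remaining $\frt$-directions from $\ol{M_\clX}$ (where $\ol{\clW}$ acts diagonally by the very definition of $\clC_\clW$) to the rest of $\ol M$ along the triangular decomposition, using the commutator identity of Conjecture~\ref{Assumption}\,\eqref{Assumption 3} to ensure the non-diagonal quantum corrections specialize away.
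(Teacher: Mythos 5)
The decisive gap is in your lifting step. After adjusting $m$ so that $\ol m$ is a classical highest weight vector, all you know quantum-mechanically is that $x m$ (for $x \in \clX$) and $(w-b_w)m$ (for $w \in \clW$) lie in $(q-1)M_{\bbK_1} \cap V(\bfa)$; they need not vanish. The classical limit $\ol{\clUi_{\bbK_1}m}$ is indeed a cyclic $U(\frk)$-module generated by the class of $m$, but the natural map $\ol{\clUi_{\bbK_1}m} \rightarrow \ol{M_{\bbK_1}}$ need not be injective, because $\clUi_{\bbK_1}m \cap (q-1)M_{\bbK_1}$ can be strictly larger than $(q-1)\clUi_{\bbK_1}m$; so you cannot identify $\ol{\clUi_{\bbK_1}m}$ with the copy of $\ol{V(\bfb)}$ generated by $\ol m$ inside $\ol M$. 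In fact, by your own freeness argument $\dim_{\C}\ol{\clUi_{\bbK_1}m} = \rk_{\bbK_1}\clUi_{\bbK_1}m = \dim_{\bbK}\clUi m$, so the asserted equality $\dim_{\C}\ol{\clUi_{\bbK_1}m} = \dim\ol{V(\bfb)}$ is literally equivalent to $\dim \clUi m = \dim V(\bfb)$, i.e.\ to the splitting you are trying to prove: the dimension count is circular. A splitting at $q=1$ does not formally lift; one must exhibit an honest quantum highest weight vector mapping to $v_{\bfb}$. (Smaller issues: Proposition~\ref{On K1} does not assert that $\bbK_1$ is a DVR, and the paper explicitly warns that $\ol{V(\bfa)}$ need not be irreducible, though for finite-dimensional $V(\bfa)$ this can be repaired; also the finite generation of $\clUi_{\bbK_1}m$ and the saturation $V(\bfa)_{\bbK_1} = M_{\bbK_1}\cap V(\bfa)$ needed for your specialized sequence are not addressed.)

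A second unfinished point is the complete reducibility of $\ol M$ itself: the center of $\frk$ sits in $\frt$ but in general not in $\frt'$ (e.g.\ in the AIII case), so its semisimple action on $\ol M$ does not follow from the classical-weight hypothesis, and your "propagation along the triangular decomposition" from $\ol{M_{\clX}}$ is precisely the missing argument rather than a routine verification. The paper avoids both difficulties by never passing to $q=1$: it splits the extension by a trichotomy on $\lm_{\bfa}|_{\frt'}$ versus $\lm_{\bfb}|_{\frt'}$ in the order $\leq'$ --- if $\lm_{\bfb}|_{\frt'} \not\leq' \lm_{\bfa}|_{\frt'}$ a preimage of $v_{\bfb}$ is already a quantum highest weight vector (Lemma~\ref{3.3.10}); the opposite strict case is reduced to this one via the duality $M \mapsto M^\vee$ (Lemma~\ref{3.3.11}); and when the restricted weights coincide, the two-dimensional weight space lies in $M_{\clX}$, where the defining condition of $\clC_{\clW}$ yields two independent highest weight vectors. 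To salvage your route you would need an additional input (a quantum Casimir-type operator, or exactly such a weight/duality argument) ruling out non-split quantum extensions whose classical limits split.
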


\begin{proof}
We mainly follow the argument in the proof of \cite[Theorem 5.17]{J96}. For the proof, It suffices to show that each short exact sequence in $\clC_{\clW}$ of the form
$$
0 \rightarrow V(\bfa) \rightarrow M \rightarrow V(\bfb) \rightarrow 0
$$
splits, where $\bfa,\bfb \in \bbK_1^{\clW}$ are such that $V(\bfa),V(\bfb)$ become finite-dimensional classical weight modules.

Suppose first that $\lm_{\bfb}|_{\frt'} \not\leq' \lm_{\bfa}|_{\frt'}$. Then, by Lemma \ref{3.3.10}, a nonzero preimage $v' \in M$ of $v_{\bfb}$ is a highest weight vector of highest weight $\bfb$. This implies that the submodule of $M$ generated by $v'$ is a finite-dimensional highest weight module of highest weight $\bfb$. By Corollary \ref{fd quotient is irreducible}, this submodule is isomorphic to $V(\bfb)$. Hence, there exists a nonzero homomorphism $V(\bfb) \rightarrow M$ which sends $v_{\bfb}$ to $v'$. This implies that the short exact sequence in question splits.

Second, suppose that $\lm_{\bfb}|_{\frt'} \leq' \lm_{\bfa}|_{\frt'}$. Let us take the restricted duals to obtain a new short exact sequence
$$
0 \rightarrow V(\bfb)^\vee \rightarrow M^\vee \rightarrow V(\bfa)^\vee \rightarrow 0.
$$
Then, by Lemma \ref{3.3.11}, there exist $\bfa',\bfb' \in \bbK_1^{\clW}$ such that $V(\bfa)^\vee \simeq V(\bfa')$, $V(\bfb)^\vee \simeq V(\bfb')$, $\lm_{\bfa'} = -w_{\frk}(\lm_{\bfa})$, and $\lm_{\bfb'} = -w_{\frk}(\lm_{\bfb})$. Hence, the short exact sequence splits if $-w_{\frk}(\lm_{\bfa})|_{\frt'} \not\leq' -w_{\frk}(\lm_{\bfb})|_{\frt'}$, equivalently, $\lm_{\bfa}|_{\frt'} \not\leq' \lm_{\bfb}|_{\frt'}$.

Finally, suppose that $\lm := \lm_{\bfa}|_{\frt'} = \lm_{\bfb}|_{\frt'}$. Then, $M_{\lm}$ is two-dimensional, and $M_\mu = 0$ if $\mu \not\leq' \lm$. In particular, $M_{\lm} \subset M_{\clX}$. Since $M \in \clC_{\clW}$, there are linearly independent two highest weight vectors in $M_\lm$. This implies that $M \simeq V(\bfa) \oplus V(\bfb)$.

By above, we see that the short exact sequence always splits. Hence, the assertion follows.
\end{proof}

\begin{cor}
Let $M$ be a finite-dimensional $\clUi$-module in $\clC_{\clW}$. Then, we have $M^{\vee \vee} \simeq M$.
\end{cor}

\begin{proof}
Since $M$ is complete reducible by Proposition \ref{complete reducibility}, it suffices to prove the assertion for the case when $M$ is irreducible. By Proposition \ref{prop 338}, we have $M = V(\bfa)$ for some $\bfa \in \bbK_1^{\clW}$. By Lemma \ref{3.3.11}, there exist $\bfa',\bfa'' \in \bbK_1^{\clW}$ such that $\lm_{\bfa'} = -w_{\frk}(\lm_{\bfa})$, $\lm_{\bfa''} = -w_{\frk}(\lm_{\bfa'})$, and
$$
V(\bfa)^\vee \simeq V(\bfa'), \qu V(\bfa')^\vee \simeq V(\bfa'').
$$
Combining Proposition \ref{prop 319} and Lemma \ref{3.3.10}, we obtain
$$
\dim V(\bfa'')_{\lm_\bfa|_{\frt'}} = 1, \qu \dim V(\bfa'')_{\mu} = 0 \qu \text{ unless } \mu \leq' \lm_{\bfa}|_{\frt'}.
$$
This implies that $V(\bfa'')$ contains a highest weight module of highest weight $\bfa$. Since $V(\bfa'')$ is irreducible, we conclude that $V(\bfa'') \simeq V(\bfa)$. This completes the proof.
\end{proof}

\section{Construction of $\clX,\clY,\clW$}\label{Construction}
In this section, we verify that Conjecture \ref{Assumption} is true when $(\g,\frk)$ is $(\frsl_n, \frso_n)$, $(\frsl_{2r},\frsp_{2r})$, $(\frsl_{2r+1}, \frs(\frgl_r \oplus \frgl_{r+1}))$, or $(\frsl_{2r}, \frs(\frgl_r \oplus \frgl_r))$. Since the conjecture concerns only algebraic structure of $\clUi$, we may choose the parameter $\bfvarsigma$ arbitrarily.

\subsection{$(\frsl_{2r+1},\frso_{2r+1})$}
Set $\varsigma_i = q\inv$ for all $i \in I$. Then, the defining relations for $\Ui$ are as follows; for $i,j \in \{ 1,\ldots,r \}$,
\begin{align}
&[B_i,B_j] = 0 \qu \IF |i-j| > 1, \label{AI 1}\\
&[B_i,[B_i,B_j]_q]_{q\inv} = B_j \qu \IF |i-j| = 1. \label{AI 2}
\end{align}

Recall from Subsections \ref{A2,B2,G2} and \ref{A3} that we have
\begin{align}
\begin{split}
&B_{2i-1} = [l_{2i-1};0], \\
&B_{2r} = B_{2r,+} + B_{2r,-}, \\
&B_{2i} = B_{2i,+} + B_{2i,-} = B_{2i,++} + B_{2i,+-} + B_{2i,-+} + B_{2i,--} \qu \IF i \neq r.
\end{split} \nonumber
\end{align}
and $B_{2r,e}$, $B_{2i,e_1e_2}$ are root vectors of root $eb^{2r-1}$, $e_1b^{2i-1} + e_2b^{2i+1}$, respectively.

By decomposing both sides of equation \eqref{AI 1} into the weight vectors, we obtain the following; for $i,j \in \{ 1,\ldots,r \}$, $e_1,e_2,e_3,e_4 \in \{ +,- \}$,
\begin{align}
%\begin{split}
&[B_{2r-2,e_1e_2}, B_{2r,e_2}] = 0, \label{AI15} \\
&[B_{2r-2,e_1+}, B_{2r,-}] + [B_{2r-2,e_1-}, B_{2r,+}] = 0, \label{AI 16}\\
&[B_{2i,e_1e_2},B_{2r,e_3}] = 0 \qu \IF i < r-1, \\
&[B_{2i,e_1e_2},B_{2i+2,e_2e_3}] = 0 \qu \IF i < r-1, \\
&[B_{2i,e_1+},B_{2i+2,-e_2}] + [B_{2i,e_1-},B_{2i+2,+e_2}] = 0 \qu \IF i < r-1, \\
&[B_{2i,e_1e_2},B_{2j,e_3e_4}] = 0 \qu \IF i,j \neq r \AND |i-j| > 1.
%\end{split}
\end{align}

Similarly, from equation \eqref{AI 2}, we obtain
\begin{align}
&[B_{2i,+e}\{ l_{2i-1};0 \},B_{2i,-e}\{ l_{2i-1};0 \}] = 0, \\
&[B_{2i,e+}\{ l_{2i+1};0 \},B_{2i,e-}\{ l_{2i+1};0 \}] = 0, \\
&[B_{2r,+}\{ l_{2r-1};0 \}, B_{2r,-}\{ l_{2r-1};0 \}] = [l_{2r-1}^2;0], \label{AI 8}\\
&[B_{2i,++}\{ l_{2i-1};0 \}, B_{2i,--}\{ l_{2i-1};0 \}] + [B_{2i,+-}\{ l_{2i-1};0 \}, B_{2i,-+}\{ l_{2i-1};0 \}] = [l_{2i-1}^2;0], \label{AI 6}\\
&[B_{2i,++}\{ l_{2i+1};0 \}, B_{2i,--}\{ l_{2i+1};0 \}] + [B_{2i,-+}\{ l_{2i+1};0 \}, B_{2i,+-}\{ l_{2i+1};0 \}] = [l_{2i+1}^2;0] \label{AI 7}
\end{align}
for $1 \leq i < r$. By calculating $\text{\eqref{AI 6}} \cdot \frac{\{ l_{2i+1};-1 \}}{[2]\{ l_{2i-1};0 \}} - \text{\eqref{AI 7}} \cdot \frac{\{ l_{2i-1};-1 \}}{[2]\{ l_{2i+1};0 \}}$, we deduce that
\begin{align}\label{AI 12}
[B_{2i,+-}\{l_{2i-1};0 \},B_{2i,-+}\{l_{2i+1};0\}] = [l_{2i-1}l_{2i+1}\inv;0] + (q-q\inv)^2B_{2i,--}[l_{2i-1}l_{2i+1}\inv;0]B_{2i,++}.
\end{align}
Similarly, we obtain
\begin{align}\label{AI 13}
[B_{2i,++}\{l_{2i-1};0\}, B_{2i,--}\{l_{2i+1};0\}] = [l_{2i-1}l_{2i+1};0] + (q-q\inv)^2B_{2i,-+}[l_{2i-1}l_{2i+1};0]B_{2i,+-}.
\end{align}

Set
\begin{align}
\begin{split}
&\clX := \{ B_{2i,++}, B_{2i,+-} \mid 1 \leq i \leq r-1 \} \sqcup \{ B_{2r,+} \}, \\
&\clY := \{ B_{2i,-+}, B_{2i,--} \mid 1 \leq i \leq r-1 \} \sqcup \{ B_{2r,-} \}, \\
&\clW := \{ B_{2i-1} \mid 1 \leq i \leq r \}.
\end{split} \nonumber
\end{align}

\begin{prop}\label{Conjecture for type AIeven}
The sets $\clX,\clY,\clW$ satisfy Conjecture \ref{Assumption} \eqref{Assumption 1}--\eqref{Assumption 3}.
\end{prop}

\begin{proof}
The most difficult part is \eqref{Assumption 1}. To prove the assertion, consider the monomials $z = z_1z_2 \cdots z_d$ of finite length $d$ in $\clX \cup \clY \cup \clW \cup \{ l_j^{\pm 1}, \{ l_j;a \}\inv \mid j \in I_\otimes, a \in \Z \}$. We say that the monomial $z$ is straight if it is of the form $z = y_1 \cdots y_{d_1} w_1 \cdots w_{d_2} x_1 \cdots x_{d_3}$, where $y_1,\ldots,y_{d_1} \in \clY$, $w_1 \cdots w_{d_2} \in \clW \cup \{ l_j^{\pm 1}, \{ l_j;a \}\inv \mid j \in I_\otimes, a \in \Z \}$, $x_1,\ldots,x_{d_3} \in \clX$. Also, we say that the monomial $z$ can be straightened if it can be rewritten as a $\bbK_1$-linear combination of straight monomials.

Then, in order to prove assertion \eqref{Assumption 1}, it suffices to show that each monomial can be straightened. The latter is proved by induction on the length $d$ of a monomial $z = z_1 \cdots z_d$, and case-by-case analysis according to the pair $(z_1,z_2)$. For example, suppose that $(z_1,z_2) = (B_{2r-2,++}, B_{2r,-})$. Then, by equation \eqref{AI 16}, we have
$$
z = z_1 \cdots z_d = (B_{2r,-}B_{2r-2,++} + B_{2r,+}B_{2r-2,+-} - B_{2r-2,+-}B_{2r,+})z_3 \cdots z_d.
$$
Then, by induction hypothesis, the monomial $B_{2r-2,++}z_3 \cdots z_d$ can be straightened, and hence, $B_{2r,-}B_{2r-2,++}z_3 \cdots z_d$ can be straightened. Therefore, $z$ can be straightened if any monomials of length $d$ beginning at $B_{2r,+}$ or $B_{2r-2,+-}$ can be straightened. The latter can be verified by the commutation relations \eqref{AI15}--\eqref{AI 13}.
\end{proof}

\begin{lem}
$\frk$ is generated by $b_{2i,+-},b_{2i,-+}$ for $i = 1,\ldots,r-1$, and $b_{2r,+},b_{2r,-}$, where $b_{2i,e_1e_2} := \ol{B_{2i,e_1e_2}}$ and $b_{2r,e} := \ol{B_{2r,e}}$.
\end{lem}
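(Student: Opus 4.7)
The plan is to identify the elements $\{b_{2i,+-}, b_{2i,-+}\}_{i=1}^{r-1} \cup \{b_{2r,\pm}\}$, up to nonzero scalars, with Chevalley generators of $\frk = \frso_{2r+1}$ for a simple system of type $B_r$, and to conclude by the Chevalley--Serre theorem.

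For $(\frsl_{2r+1},\frso_{2r+1})$ we have $\tau = \id$ and $I_\bullet = \emptyset$, hence $\frh^\theta = 0$; therefore $\frt'$ is the $r$-dimensional space spanned by $b_1,b_3,\ldots,b_{2r-1}$, and by Theorem 3.1.1 it is a Cartan subalgebra of $\frk$. Using the expansion $l_{2j-1} = 1 + (q-1)b_{2j-1} + O((q-1)^2)$, whence $\ol{\{l_{2j-1};0\}} = 2$ and $\ol{[l_{2j-1}^2;0]} = 2 b_{2j-1}$, the classical limits of equations \eqref{AI 8}, \eqref{AI 6}, and \eqref{AI 7} read
\[
b_{2r-1} = 2\,[b_{2r,+}, b_{2r,-}], \qquad b_{2i-1} - b_{2i+1} = 4\,[b_{2i,+-}, b_{2i,-+}] \qquad (1 \le i < r).
\]
A downward induction on $i$ from $r$ down to $1$ then places every $b_{2i-1}$ into the Lie subalgebra $\mathfrak{l} \subseteq \frk$ generated by the proposed set, so $\frt' \subseteq \mathfrak{l}$.

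Next, taking classical limits of the commutation relations $l_{2j-1} B_{2i,e_1e_2} = q^{e_1\delta_{ij} + e_2\delta_{i+1,j}} B_{2i,e_1e_2} l_{2j-1}$ and of the analogous relation for $B_{2r,e}$, I obtain
\[
[b_{2j-1},\, b_{2i,e_1e_2}] = (e_1\delta_{ij} + e_2\delta_{i+1,j})\, b_{2i,e_1e_2}, \qquad [b_{2j-1},\, b_{2r,e}] = e\,\delta_{jr}\, b_{2r,e}.
\]
Under the identification $b^{2j-1} \leftrightarrow \epsilon_j$ of $(\frt')^*$ with the standard weight lattice, this exhibits $b_{2i,+-}, b_{2i,-+}, b_{2r,\pm}$ as $\frt'$-weight vectors of weights $\pm(\epsilon_i - \epsilon_{i+1})$ and $\pm \epsilon_r$, i.e., as root vectors attached to the full set of simple roots of $B_r$ together with their negatives. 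Each such generator is moreover nonzero: the bracket identities of the previous paragraph have nonzero left-hand sides in $\frt'$, forcing every factor on the right to be nonzero.

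Since each root space of $\frk = \frso_{2r+1}$ for a simple root (or its negative) is one-dimensional, our generators coincide up to nonzero scalars with the Chevalley generators $e_i^{B_r}, f_i^{B_r}$; the Chevalley--Serre theorem then gives $\mathfrak{l} = \frk$. The main obstacle is the classical-limit bookkeeping in the first step: one must verify that the subtraction of the limit of \eqref{AI 7} from that of \eqref{AI 6} genuinely produces the difference $b_{2i-1} - b_{2i+1}$ (rather than a sum or a multiple of $b_{2i-1} + b_{2i+1}$), because it is precisely this difference that feeds the inductive step and allows $\frt'$ to be recovered.
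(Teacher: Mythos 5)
Your first two steps are sound, and they in fact reproduce the identities that open and close the paper's own argument: the classical limits of \eqref{AI 8} and of \eqref{AI 6}--\eqref{AI 7} do give $b_{2r-1}=2[b_{2r,+},b_{2r,-}]$ and $b_{2i-1}-b_{2i+1}=4[b_{2i,+-},b_{2i,-+}]$ (the sign works out because the second commutator in \eqref{AI 7} has its arguments swapped relative to \eqref{AI 6}), so the point you flag as "the main obstacle" is not where the difficulty lies. The genuine gap is the sentence "Under the identification $b^{2j-1}\leftrightarrow\epsilon_j$ \dots this exhibits \dots root vectors attached to the full set of simple roots of $B_r$." Nothing in your argument establishes that the weights $b^{2i-1}-b^{2i+1}$ $(i<r)$ and $b^{2r-1}$ form a base of the root system of $(\frk,\frt')$; that is precisely what has to be proved, and it is not a property of the individual roots but of the whole configuration. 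Knowing only that your elements are nonzero $\frt'$-root vectors for $r$ linearly independent roots and their negatives does not give generation: for example, nonzero root vectors for $\pm(\epsilon_i-\epsilon_{i+1})$, $i<r$, together with $\pm(\epsilon_{r-1}+\epsilon_r)$ generate only a copy of $\frso_{2r}$ inside $\frso_{2r+1}$. To close the gap along your route you would have to compute the coroots (your brackets already give $[b_{2i,+-},b_{2i,-+}]$ and $[b_{2r,+},b_{2r,-}]$ in $\frt'$), check that the resulting Cartan integers form the $B_r$ Cartan matrix, and then argue (by a root-count or dimension argument) that a linearly independent family of roots realizing the full Cartan matrix of the ambient type must be a base — exactly the computation the paper performs \emph{after} the lemma to verify Conjecture \ref{Assumption} \eqref{Assumption 4}. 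Two smaller points also need justification: that $\frt'$ is a Cartan subalgebra of $\frk$ in this case (Letzter's theorem produces a Cartan subalgebra attached to some $\Gamma$; here one should note $\dim\frt'=r=\rank\frk$ and that the commuting elements $b_{2j-1}=e_{2j-1}+f_{2j-1}$ are semisimple), and that the limits $b_{2i,e_1e_2}$ actually lie in $\frk$ rather than merely in $U(\frk)$.

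By contrast, the paper needs no root-system input at all: it shows directly that the original generators $b_1,\dots,b_{2r}$ of $\frk$ lie in the subalgebra $\frk'$ generated by your elements. The odd-indexed ones come from the same brackets you wrote; the even-indexed ones require recovering the missing components $b_{2i,++}$ and $b_{2i,--}$, which the paper obtains from the classical limit of the quantum identity for $B_{2r,+}[B_{2r-2,+-},B_{2r,+}]$, namely $b_{2r-2,++}=2[b_{2r,+},[b_{2r-2,+-},b_{2r,+}]]$, followed by a descending induction, and finally $b_{2i-1}\pm b_{2i+1}=4[b_{2i,+\pm},b_{2i,-\mp}]$. Recovering these $(+,+)$ and $(-,-)$ components is exactly the content your argument bypasses by appealing to the unproved simple-system identification, so as it stands the proposal is incomplete at its key step.
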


\begin{proof}
Let $\frk'$ denote the subalgebra of $\clUi$ generated by $b_{2i,+-},b_{2i,-+}$ for $i = 1,\ldots,r-1$, and $b_{2r,+},b_{2r,-}$. To prove the lemma, it suffices to show that $b_1,\ldots,b_{2r} \in \frk'$. By equation \eqref{AI 8}, it follows that
$$
b_{2r-1} = 2[b_{2r,+},b_{2r,-}] \in \frk'.
$$

Next, by using equation \eqref{AI 8}, and that $[B_{2r,+},B_{2r-2,++}] = 0$, let us see that
\begin{align}
\begin{split}
B_{2r,+}[B_{2r-2,+-},B_{2r,+}] &= B_{2r,+}[B_{2r,-},B_{2r-2,++}] \\
&= [B_{2r,+}B_{2r,-}, B_{2r-2,++}] \\
&= [\frac{[l_{2r-1};0]}{\{ l_{2r-1};-1 \}} + B_{2r,-}B_{2r,+} \frac{\{ l_{2r-1};1 \}}{\{ l_{2r-1};-1 \}}, B_{2r-2,++}] \\
&=[B_{2r,-}\frac{\{ l_{2r-1};0 \}}{\{ l_{2r-1};-2 \}},B_{2r-2,++}]B_{2r,+} + B_{2r-2,++}\frac{[2]}{\{ l_{2r-1};0 \}\{ l_{2r-1};-1 \}}.
\end{split} \nonumber
\end{align}
Taking the classical limit, we obtain
\begin{align}
\begin{split}
b_{2r,+}[b_{2r-2,+-},b_{2r,+}] &= [b_{2r,-},b_{2r-2,++}]b_{2r,+} + \frac{1}{2}b_{2r-2,++} \\
&= [b_{2r-2,+-},b_{2r,+}]b_{2r,+} + \frac{1}{2}b_{2r-2,++}.
\end{split} \nonumber
\end{align}
Hence, we have
$$
b_{2r-2,++} = 2[b_{2r,+},[b_{2r-2,+-},b_{2r,+}]] \in \frk'.
$$
Similarly, one can verify that $b_{2r-2,--} \in \frk'$, and consequently,
$$
b_{2r-2} = b_{2r-2,++} + b_{2r-2,+-} + b_{2r-2,-+} + b_{2r-2,--} \in \frk'.
$$

Replacing $B_{2r-2,+-}$ and $B_{2r,+}$ by $B_{2r-4,+-}$ and $B_{2r-2,++} + B_{2r-2,+-}$ respectively, we obtain $b_{2r-4,++} \in \frk'$. Similarly, we see that $b_{2r-4,--} \in \frk'$, and hence $b_{2r-4} \in \frk'$. Proceeding in this way, we conclude that $b_{2i} \in \frk'$ for all $i = 1,\ldots,r$.

Finally, we see that
$$
b_{2i-1} \pm b_{2i+1} = 4[b_{2i,+\pm}, b_{2i,-\mp}] \in \frk'.
$$
Thus, the proof completes.
\end{proof}

For each $i \in \{ 1,\ldots,r \}$, define $X_i,Y_i,W_i \in \clUi_{\bbK_1}$ by
\begin{align}
\begin{split}
&X_i := \begin{cases}
B_{2i,+-}\{ l_{2i-1};0 \} \qu & \IF i \neq r, \\
B_{2r,+}\{ l_{2r-1};0 \} \qu & \IF i = r,
\end{cases} \\
&Y_i := \begin{cases}
B_{2i,-+}\{ l_{2i+1};0 \} \qu & \IF i \neq r, \\
B_{2r,-}\{ l_{2r-1};0 \} \qu & \IF i = r,
\end{cases} \\
&W_i := \begin{cases}
[l_{2i-1}l_{2i+1}\inv;0] \qu & \IF i \neq r, \\
[l_{2r-1}^2;0] \qu & \IF i = r.
\end{cases}
\end{split} \nonumber
\end{align}
The vectors $\ol{X_i},\ol{Y_i}$ are $\frt$-root vectors of $\frt$-root, say, $\pm \gamma_i$ given by
$$
\gamma_i = \begin{cases}
b^{2i-1} - b^{2i+1} \qu & \IF i \neq r, \\
b^{2r-1} \qu & \IF i = r.
\end{cases}
$$
Also, we have $[\ol{X_i}, \ol{Y_j}] = \delta_{i,j} \ol{W_i} = \delta_{i,j} w_i$, where
$$
w_i = \begin{cases}
b_{2i-1} - b_{2i+1} \qu & \IF i \neq r, \\
2b_{2r-1} \qu & \IF i = r.
\end{cases}
$$
Then, by Lemma \ref{proof of Conjecture 4}, we see that $\clX,\clY,\clW$ satisfies Conjecture \ref{Assumption} \eqref{Assumption 4}. Moreover, we obtain $\Delta_{\frk} = \{ \beta_1,\ldots,\beta_r \}$.

By induction on $n \in \Z_{\geq 0}$, one sees that
\begin{align}
\begin{split}
&X_rY_r^n = [n]Y_r^{n-1}[l_{2r-1}^2;-n+1] + Y_r^nX_r, \\
&X_iY_i^n \in [n]Y_i^{n-1}[l_{2i-1}l_{2i+1}\inv;-n+1] + Y_i^nX_i + \clUi B_{2i,++}.
\end{split} \nonumber
\end{align}

\begin{theo}\label{Classification for AI-1}
Let $\bfa = (a_w)_{w \in \clW} \in \bbK_1^{\clW}$. Then, the following are equivalent:
\begin{enumerate}
\item $V(\bfa)$ is finite-dimensional.
\item There exist $n_1,\ldots,n_r \in \hf \Z_{\geq 0}$ and $\sigma_1,\ldots,\sigma_r \in \{ +,- \}$ such that $n_r = \hf \la w_r, \lm_{\bfa} \ra$, $n_i-n_{i+1} = \la w_i, \lm_{\bfa} \ra \in \Z_{\geq 0}$ for all $1 \leq i \leq r-1$, and either $l_{2i-1}v_{\bfa} = \sigma_iq^{n_i} v_{\bfa}$ for all $1 \leq i \leq r$ or $l_{2i-1}v_{\bfa} = \sigma_i \sqrt{-1} q^{n_i} v_{\bfa}$ for all $1 \leq i \leq r$.
\end{enumerate}
\end{theo}

\begin{proof}
For each $1 \leq i \leq r$, define $l_{2i-1}(\bfa) \in \bbK_1^{\times}$ by
$$
l_{2i-1}v_{\bfa} = l_{2i-1}(\bfa) v_{\bfa}.
$$

Suppose first that $V(\bfa)$ is finite-dimensional. Then, for each $1 \leq i \leq r$, there exists a unique $N_i \in \Z_{\geq 0}$ such that
$$
Y_i^{N_i} v_{\bfa} \neq 0 \AND Y_i^{N_i + 1} v_{\bfa} = 0.
$$
Therefore, we have
$$
0 = X_i Y_i^{N_i + 1} v_{\bfa} = \begin{cases}
[N_r+1] [l_{2r-1}(\bfa)^2 ; -N_r] Y_r^{N_r} v_{\bfa} \qu & \IF i = r, \\
[N_i+1] [l_{2i-1}(\bfa) l_{2i+1}(\bfa)\inv; -N_i] Y_i^{N_i} v_{\bfa} \qu & \IF i \neq r.
\end{cases}
$$
Hence, we obtain
$$
l_{2r-1}(\bfa)^4 = q^{2N_r} \AND (l_{2i-1}(\bfa) l_{2i+1}(\bfa)\inv)^2 = q^{2N_i} \Forall i \neq r.
$$
Then, setting $n_r := \frac{N_r}{2}$ and $n_i := N_i + n_{i+1}$ for $i \neq r$, we see that there exist $\sigma_1,\ldots,\sigma_r \in \{ +,- \}$ such that either $l_{2i-1}(\bfa) = \sigma_i q^{n_i}$ or $l_{2i-1}(\bfa) = \sigma_i \sqrt{-1} q^{n_i}$.

On the other hand, we have
$$
0 = \ol{[l_{2r-1}(\bfa)^2;-N_r]} = \la w_r,\lm_{\bfa} \ra - N_r = \la w_r,\lm_{\bfa} \ra - 2n_r,
$$
and
$$
0 = \ol{[l_{2i-1}(\bfa) l_{2i+1}(\bfa)\inv;-N_i]} = \la w_i,\lm_{\bfa} \ra - N_i = \la w_i,\lm_{\bfa} \ra - (n_i-n_{i+1})
$$
for all $i \neq r$. Therefore, we obtain
$$
n_r = \hf \la w_r, \lm_{\bfa} \ra, \ n_i-n_{i+1} = \la w_i, \lm_{\bfa} \ra,
$$
which proves $(2)$.

Conversely, assume the condition $(2)$. By calculation above, we have
$$
X_i Y_i^{N_i+1} v_{\bfa} = 0
$$
for all $1 \leq i \leq r$. Also, by weight consideration,
$$
X_j Y_i^{N_i + 1} v_{\bfa} = 0
$$
for all $j \neq r$. Therefore, the $\clUi$-submodule of $V(\bfa)$ generated by $Y_i^{N_i + 1} v_{\bfa}$ is strictly smaller than $V(\bfa)$. Since $V(\bfa)$ is irreducible, we conclude that $Y_i^{N_i + 1} v_{\bfa} = 0$. Then, condition $(1)$ follows from Corollary \ref{Sufficient condition for fd of irr hwm}.
\end{proof}

\begin{cor}
Let $\bfa = (a_w)_{w \in \clW} \in \bbK_1^{\clW}$. Then, the following are equivalent:
\begin{enumerate}
\item $V(\bfa)$ is a finite-dimensional classical weight module.
\item There exist $n_1,\ldots,n_r \in \hf \Z_{\geq 0}$ such that $n_r = \hf \la w_r, \lm_{\bfa} \ra$, $n_i-n_{i+1} = \la w_i, \lm_{\bfa} \ra \in \Z_{\geq 0}$ for all $1 \leq i \leq r-1$, and $l_{2i-1}v_{\bfa} = q^{n_i} v_{\bfa}$ for all $1 \leq i \leq r$.
\end{enumerate}
\end{cor}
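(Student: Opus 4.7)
The plan is to derive this corollary by combining the finite-dimensionality classification of Theorem~\ref{Classification for AI-1} with the classical-weight membership criterion of Lemma~\ref{3.3.6}.

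First, I would invoke Theorem~\ref{Classification for AI-1} to rephrase ``$V(\bfa)$ is finite-dimensional'' as the existence of $n_1, \ldots, n_r \in \hf \Z_{\geq 0}$ and signs $\sigma_1, \ldots, \sigma_r \in \{+, -\}$ satisfying $n_r = \hf \la w_r, \lm_\bfa \ra$ and $n_i - n_{i+1} = \la w_i, \lm_\bfa \ra \in \Z_{\geq 0}$ for $1 \leq i \leq r-1$, with the eigenvalue condition $l_{2i-1} v_\bfa = \sigma_i q^{n_i} v_\bfa$. The remaining task is to detect exactly which choices of $\bfa$ of this form satisfy the stronger requirement $V(\bfa) \in \clC$, and to show that this stronger requirement amounts precisely to forcing each $\sigma_i$ to be $+$.

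Next, I would apply Lemma~\ref{3.3.6}: $V(\bfa) \in \clC$ iff the classical limits of the eigenvalues of $k_i v_\bfa$ for $i \in I \setminus I_\otimes$ and $l_j v_\bfa$ for $j \in I_\otimes$ are all $1$. In the setting $(\frsl_{2r+1}, \frso_{2r+1})$ the Satake involution is trivial ($\tau = \id$), so $k_i = K_i K_{\tau(i)}\inv = 1$, and the condition on $k_i$ is automatic. The condition on $l_{2i-1}$ becomes $\ol{\sigma_i q^{n_i}} = 1$; since $n_i \in \hf \Z$ the element $q^{n_i}$ lies in $\bbK_1$ with $\ol{q^{n_i}} = 1$, so this forces $\sigma_i = +$ for every $i$.

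Combining the two reductions, $V(\bfa)$ is a finite-dimensional classical weight module iff the data of the theorem exists with all $\sigma_i = +$, which is exactly condition (2). Conversely, any $\bfa$ satisfying (2) yields a finite-dimensional $V(\bfa)$ by Theorem~\ref{Classification for AI-1} (take $\sigma_i = +$) and the classical-weight condition of Lemma~\ref{3.3.6} is then satisfied. Since the corollary follows so directly, I do not expect any substantive obstacle; the only delicate point is verifying that $\sigma_i = +$ is both necessary and sufficient for the classical limit of $l_{2i-1} v_\bfa$ to equal $1$, which is immediate from the definition of $\bar{\,\cdot\,}$ on $\bbK_1$.
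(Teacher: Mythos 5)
Your proposal is correct and follows exactly the paper's route: the paper also deduces the corollary directly from Theorem \ref{Classification for AI-1} together with Lemma \ref{3.3.6}, the only content being that the classical-limit condition on the $l_{2i-1}$-eigenvalues (the $k_i$ being trivial here) forces all signs $\sigma_i$ to be $+$. Your filled-in details match the intended argument.
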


\begin{proof}
The assertion is an immediate consequence of the previous theorem, and Lemma \ref{3.3.6}.
\end{proof}

\begin{cor}
Every finite-dimensional irreducible module in $\clC$ is defined over $\C(q^{1/2})$.
\end{cor}

By above, we see that the triple $(X_r,Y_r,l_{2r-1}^{\pm 2})$ forms a quantum $\frsl_2$-triple $(E,F,K^{\pm 1})$. Let $\clUi_r$ denote the subalgebra generated by this triple. Let $V_r(n,e)$, $n \in \Z_{\geq 0}$, $e \in \{ +,- \}$ denote the $(n+1)$-dimensional irreducible $\clUi_r$-module of type $e$. Namely, it is generated by $v_0$ such that
$$
X_r v_0 = 0,\ l_{2r-1}^2 v_0 = eq^nv_0,\ Y_r^{n+1}v_0 = 0.
$$

Suppose that a finite-dimensional $\clUi$-module contains a $\clUi_r$-submodule isomorphic to $V_r(n,e)$ for some $n,e$. Set $v_\pm := (l_{2r-1} \pm \sqrt{e}q^{n/2})v_0$. Then, we have
$$
X_r v_{\pm} = 0,\ l_{2r-1} v_\pm = \pm\sqrt{e}q^{n/2} v_\pm, \ Y_r^{n+1} v_\pm = 0.
$$
Hence, by replacing $v_0$ with $v_+$ or $v_-$, we may assume that $l_{2r-1} v_0 = \pm \sqrt{e}q^{n/2} v_0$.

Suppose further that $n = 2m$ for some $m \in \Z_{\geq 0}$ and $e = -$. Then, we have
$$
Y^m v_0 = \{ l_{2r-1};0 \}\inv \{ l_{2r-1};0 \} Y^m v_0 = \{ l_{2r-1};0 \}\inv Y^m \{ \pm\sqrt{-1}q^m;-m \}v_0 = 0.
$$
On the other hand, since $\clUi_r \simeq U_q(\frsl_2)$, the vector $X^m Y^m v_0$ is a nonzero scalar multiple of $v_0$. Therefore, we can conclude that each finite-dimensional $\clUi$-module does not contain a $\clUi_r$-submodule isomorphic to $V_r(2m,-)$.

Replacing $X_r,Y_r,l_{2r-1}$ with $(B_{2i,++} + B_{2i,+-})\{ l_{2i-1};0 \}$, $(B_{2i-+} + B_{2i,--})\{ l_{2i-1} ; 0 \}$, $l_{2i-1}$, respectively, we obtain the following:

\begin{prop}
Let $1 \leq i \leq r$. Then, on each finite-dimensional $\clUi$-module, $l_{2i-1}$ acts diagonally with eigenvalues in $\{ \pm q^a \mid a \in \hf \Z \} \cup \{ \pm \sqrt{-1}q^b \mid b \in \hf + \Z \}$. In particular, on each finite-dimensional classical weight $\Ui$-module, the eigenvalues of $l_{2i-1}$ are of the form $\{ q^a \mid a \in \hf \Z \}$.
\end{prop}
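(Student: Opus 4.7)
My plan is to extend to arbitrary $i \in \{1,\ldots,r\}$ the quantum $\frsl_2$ argument already spelled out for $i = r$ in the paragraph preceding the proposition. For $1 \leq i < r$, set
\[
X_i := (B_{2i,++} + B_{2i,+-})\{l_{2i-1};0\}, \quad Y_i := (B_{2i,-+} + B_{2i,--})\{l_{2i-1};0\},
\]
and keep the previously defined $(X_r, Y_r)$. I would first verify that $(X_i, Y_i, l_{2i-1}^{\pm 2})$ satisfies the defining relations of a homomorphic image of $U_q(\frsl_2)$: the $l_{2i-1}$-conjugation relations $l_{2i-1} X_i l_{2i-1}\inv = q X_i$ and $l_{2i-1} Y_i l_{2i-1}\inv = q\inv Y_i$ are immediate from the $l_{2i-1}$-weight decomposition of the $B_{2i,e_1 e_2}$, and the key bracket identity $[X_i, Y_i] = [l_{2i-1}^2;0]$ follows from \eqref{AI 6} together with the vanishing cross-commutators $[B_{2i,+e}\{l_{2i-1};0\}, B_{2i,-e}\{l_{2i-1};0\}] = 0$ already recorded just above it. Denote by $\clUi_i \subset \clUi$ the resulting subalgebra.

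Next, for a finite-dimensional $\clUi$-module $M$, restriction along $\clUi_i \hookrightarrow \clUi$ makes $M$ into a finite-dimensional semisimple module over this quotient of $U_q(\frsl_2)$, whose irreducible constituents $V_i(n, e)$ carry $l_{2i-1}^2$ as a diagonal operator with eigenvalues $eq^n, eq^{n-2}, \ldots, eq^{-n}$. Since $l_{2i-1}^2$ thus acts on $M$ with nonzero scalar eigenvalues, $l_{2i-1}$ is also diagonalizable on $M$ and its eigenvalues lie in $\{\pm q^{n/2} \mid n \in \Z\} \cup \{\pm \sqrt{-1}\,q^{n/2} \mid n \in \Z\}$. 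To exclude the eigenvalues $\pm \sqrt{-1}\,q^m$ with $m \in \Z$, I would replay the argument given for $i = r$: any such eigenvalue can only occur inside a summand $V_i(2k, -)$ for some $k \in \Z_{\geq 0}$, whose highest weight vector $v_0$ would satisfy $\{l_{2i-1};0\}\,Y_i^k v_0 = Y_i^k \{\pm\sqrt{-1}\,q^k;-k\} v_0 = 0$ and hence $Y_i^k v_0 = 0$, contradicting the $U_q(\frsl_2)$-fact that $X_i^k Y_i^k v_0$ is a nonzero scalar multiple of $v_0$.

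For the classical weight case, $B_{2i-1}$ acts on $M$ with eigenvalues in $\bbK_1$, so the defining formula $l_{2i-1} = \bigl((q-q\inv) B_{2i-1} + \sqrt{(q-q\inv)^2 B_{2i-1}^2 + 4}\bigr)/2$ — with the square root normalized to have classical limit $2$ — forces each eigenvalue of $l_{2i-1}$ to lie in $\bbK_1$ with classical limit $1$, which is incompatible with $\pm \sqrt{-1}\,q^b$. Only $\pm q^a$ with $a \in \hf \Z$ remain. The main step requiring genuine care is the bracket identity $[X_i, Y_i] = [l_{2i-1}^2;0]$; once one grants the cross-vanishings tabulated in the preceding subsection, the remainder of the proof is a verbatim transcription of the $i=r$ discussion.
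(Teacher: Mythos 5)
Your proposal is correct and follows essentially the same route as the paper: the paper establishes the exclusion of the constituents $V_r(2m,-)$ via the invertibility of $\{l_{2r-1};0\}$ and then simply declares that replacing $X_r,Y_r,l_{2r-1}$ by $(B_{2i,++}+B_{2i,+-})\{l_{2i-1};0\}$, $(B_{2i,-+}+B_{2i,--})\{l_{2i-1};0\}$, $l_{2i-1}$ yields the proposition, which is exactly the quantum $\frsl_2$-triple you verify from \eqref{AI 6} and the vanishing cross-commutators. Your extra care about the bracket identity, the normalization of $v_0$ to an $l_{2i-1}$-eigenvector, and the $\bbK_1$/classical-limit argument for the classical weight case just makes explicit what the paper leaves implicit.
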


Now, the following is an immediate consequence of the previous proposition and Proposition \ref{complete reducibility}.

\begin{cor}
Every finite-dimensional classical weight module is completely reducible.
\end{cor}

\subsection{$(\frsl_{2r},\frso_{2r})$}
Set $\varsigma_i = q\inv$ for all $i \in I$. $\Ui$ is a subalgebra of $\Ui(\frsl_{2r+1},\frso_{2r+1})$ generated by $B_i$, $1 \leq i \leq 2r-1$.

Set
\begin{align}
\begin{split}
&\clX := \{ B_{2i,++}, B_{2i,+-} \mid 1 \leq i \leq r-1 \}, \\
&\clY := \{ B_{2i,-+}, B_{2i,--} \mid 1 \leq i \leq r-1 \}, \\
&\clW := \{ B_{2i-1} \mid 1 \leq i \leq r \}.
\end{split} \nonumber
\end{align}
Then, by a similar way to the previous subsection, we see that $\clX,\clY,\clW$ satisfy Conjecture \ref{Assumption} \eqref{Assumption 1}--\eqref{Assumption 3}.

For each $i \in \{ 1,\ldots,r \}$, define $X_i,Y_i,W_i \in \clUi_{\bbK_1}$ and $\gamma_i \in \frt^*$ by
\begin{align}
\begin{split}
&X_i := \begin{cases}
B_{2i,+-}\{ l_{2i-1};0 \} \qu & \IF i \neq r, \\
B_{2r-2,++}\{ l_{2r-3};0 \} \qu & \IF i = r,
\end{cases} \\
&Y_i := \begin{cases}
B_{2i,-+}\{ l_{2i+1};0 \} \qu & \IF i \neq r, \\
B_{2r-2,--}\{ l_{2r-1};0 \} \qu & \IF i = r,
\end{cases} \\
&W_i := \begin{cases}
[l_{2i-1}l_{2i+1}\inv;0] \qu & \IF i \neq r, \\
[l_{2r-3}l_{2r-1};0] \qu & \IF i = r,
\end{cases} \\
&\gamma_i := \begin{cases}
b^{2i-1}-b^{2i+1} \qu & \IF i \neq r, \\
b^{2r-3}+b^{2r-1} \qu & \IF i = r.
\end{cases}
\end{split} \nonumber
\end{align}
Then, $\ol{X_i},\ol{Y_i}$ are $\frt$-root vectors of $\frt$-root $\pm\gamma_i$, we have $[\ol{X_i},\ol{Y_j}] = \delta_{i,j} \ol{W_i} = \delta_{i,j} w_i$, where
$$
w_i = \begin{cases}
b_{2i-1} - b_{2i+1} \qu & \IF i \neq r, \\
b_{2r-3} + b_{2r-1} \qu & \IF i = r,
\end{cases}
$$
and the matrix $(\la w_i,\gamma_j \ra)_{1 \leq i,j \leq r}$ coincides with the Cartan matrix of $\frk = \frso_{2r}$. Hence, $\clX,\clY,\clW$ satisfy Conjecture \ref{Assumption} \eqref{Assumption 4}, and we have $\Delta_{\frk} = \{ \beta_1,\ldots,\beta_r \}$.

By induction on $n \in \Z_{\geq 0}$, we see that
$$
X_rY_r^n \in [n]Y_r^{n-1}[l_{2r-3}l_{2r-1};-n+1] + Y_r^nX_r + \clUi B_{2i,+-}.
$$
Now, the following are proved in a similar way to the $(\frsl_{2r+1},\frso_{2r+1})$ case.

\begin{theo}
Let $\bfa = (a_w)_{w \in \clW} \in \bbK_1^{\clW}$. Then, the following are equivalent:
\begin{enumerate}
\item $V(\bfa)$ is finite-dimensional.
\item There exist $n_1,\ldots,n_r \in \hf \Z_{\geq 0}$ and $\sigma_1,\ldots,\sigma_r \in \{ +,- \}$ such that $n_i-n_{i+1} = \la w_i,\lm_{\bfa} \ra \in \Z_{\geq 0}$ for all $1 \leq i \leq r-1$, $n_{r-1} + n_{r} = \la w_r, \lm_{\bfa} \ra \in \Z_{\geq 0}$, and $l_{2i-1} v_{\bfa} = \sigma_i q^{n_i} v_{\bfa}$ for all $1 \leq i \leq r$.
\end{enumerate}
\end{theo}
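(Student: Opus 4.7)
The plan is to follow the pattern of Theorem~\ref{Classification for AI-1} (the $B_r$ case) almost verbatim, substituting the identity $X_r Y_r^n \in [n] Y_r^{n-1} [l_{2r-3} l_{2r-1}; -n+1] + Y_r^n X_r + \clUi B_{2r-2,+-}$ displayed just before the theorem for the $B_r$ identity that involved $l_{2r-1}^2$. The torsion estimate $X_i Y_i^{N_i+1} v_\bfa = 0$ together with Corollary~\ref{Sufficient condition for fd of irr hwm} then delivers both implications, as in the $B_r$ case.

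For $(1)\Rightarrow(2)$, I define $l_{2i-1}(\bfa)\in\bbK_1^\times$ by $l_{2i-1}v_\bfa = l_{2i-1}(\bfa)v_\bfa$ and, for each $1\le i\le r$, choose the minimal $N_i\in\Z_{\ge 0}$ with $Y_i^{N_i+1}v_\bfa=0$, guaranteed by finite-dimensionality. Applying $X_i$ and using that every element of $\clX$ (in particular $B_{2r-2,+-}$) annihilates $v_\bfa$, the commutation identities reduce $0=X_iY_i^{N_i+1}v_\bfa$ to the scalar relations $(l_{2i-1}(\bfa) l_{2i+1}(\bfa)\inv)^2 = q^{2N_i}$ for $1\le i\le r-1$ and $(l_{2r-3}(\bfa) l_{2r-1}(\bfa))^2 = q^{2N_r}$. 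Taking classical limits identifies $N_i=\la w_i,\lm_\bfa\ra$. I then solve the linear system by setting $n_{r-1}:=(N_r+N_{r-1})/2$, $n_r:=(N_r-N_{r-1})/2$, and $n_i:=N_i+n_{i+1}$ recursively for $i<r-1$, and choose $\sigma_i\in\{+,-\}$ absorbing the square-root ambiguity so that $l_{2i-1}(\bfa)=\sigma_iq^{n_i}$.

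For $(2)\Rightarrow(1)$, the same computation in reverse yields $X_iY_i^{N_i+1}v_\bfa=0$ with $N_i=\la w_i,\lm_\bfa\ra$. A weight argument in $(\frt')^*$, exploiting that $(N_i+1)\gamma_i|_{\frt'}$ lies outside the $\Z_{\ge 0}$-span of $\{\gamma_j|_{\frt'} : j\ne i\}$, extends this to $X_jY_i^{N_i+1}v_\bfa=0$ for every $j$. Hence $Y_i^{N_i+1}v_\bfa$ generates a proper $\clUi$-submodule of the irreducible $V(\bfa)$ and must therefore be zero. Corollary~\ref{Sufficient condition for fd of irr hwm} applied with $y_{\gamma_i}=Y_i$ (rescaled if necessary so that $\ol{y_{\gamma_i}}$ lands in the correct root space of $\frk$) then yields $\dim V(\bfa)<\infty$.

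The main obstacle I anticipate is the sign bookkeeping at the fork of the $D_r$ diagram: whereas the $B_r$ case produced a single decoupled equation $l_{2r-1}(\bfa)^4 = q^{2N_r}$, here one has the coupled pair $(l_{2r-3}l_{2r-1})^2 = q^{2N_r}$ and $(l_{2r-3}l_{2r-1}\inv)^2 = q^{2N_{r-1}}$. Multiplying and dividing gives $l_{2r-3}(\bfa)^4=q^{2(N_r+N_{r-1})}$ and $l_{2r-1}(\bfa)^4=q^{2(N_r-N_{r-1})}$, and the analysis for each variable mirrors the $B_r$ case, with the signs $\sigma_{r-1},\sigma_r$ tied together by the original coupling $(l_{2r-3}l_{2r-1})^2=q^{2N_r}$; the remaining $\sigma_i$ with $i<r-1$ are then determined by propagating along the chain via the relations $l_{2i-1}(\bfa)l_{2i+1}(\bfa)\inv=\pm q^{N_i}$.
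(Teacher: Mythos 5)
Your reconstruction follows exactly the route the paper intends (the paper itself only says this theorem ``is proved in a similar way to the $(\frsl_{2r+1},\frso_{2r+1})$ case''): evaluate the displayed identities for $X_iY_i^n$ ($i<r$) and for $X_rY_r^n$ on $v_{\bfa}$, use that $\clX$ kills $v_{\bfa}$, read off the quadratic relations on the $l$-eigenvalues, identify $N_i=\la w_i,\lm_{\bfa}\ra$ by taking classical limits, and for the converse use the weight argument to upgrade $X_iY_i^{N_i+1}v_{\bfa}=0$ to $Y_i^{N_i+1}v_{\bfa}=0$ and invoke Corollary \ref{Sufficient condition for fd of irr hwm}. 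The $(2)\Rightarrow(1)$ direction and the weight argument are fine as you describe them.

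There is, however, a gap in $(1)\Rightarrow(2)$ at the final bookkeeping step, and it is precisely the type-$D$ feature that has no analogue in the $B_r$ case you are copying. Your solution of the linear system gives $n_r=(N_r-N_{r-1})/2$, while statement (2) demands $n_r\in\hf\Z_{\geq 0}$; you never verify $n_r\geq 0$, and nothing in the derivation forces it, since $N_{r-1}=\la w_{r-1},\lm_{\bfa}\ra$ and $N_r=\la w_r,\lm_{\bfa}\ra$ are the pairings with the two independent fork coroots $w_{r-1}=b_{2r-3}-b_{2r-1}$, $w_r=b_{2r-3}+b_{2r-1}$ and may satisfy $N_{r-1}>N_r$. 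Moreover the coupled relations pin down the exponent itself, not just its square: $l_{2r-1}(\bfa)^2=\pm q^{N_r-N_{r-1}}$, and after excluding the $\pm\sqrt{-1}\,q^{(N_r-N_{r-1})/2}$ solutions (which you should justify: they would make $a_{B_{2r-1}}=[l_{2r-1}(\bfa);0]$ have a pole at $q=1$, contradicting $\bfa\in\bbK_1^{\clW}$), one gets $l_{2r-1}(\bfa)=\pm q^{(N_r-N_{r-1})/2}$ exactly. So for a finite-dimensional $V(\bfa)$ whose classical highest weight has negative last orthogonal coordinate $\la b_{2r-1},\lm_{\bfa}\ra<0$ (already for $r=2$ such constituents occur in the restriction of the quantum analogue of $\Lambda^2\C^4$, with classical highest weight $(1,-1)$), the eigenvalue of $l_{2r-1}$ on $v_{\bfa}$ is $\pm q^{m}$ with $m<0$, and no $n_r\geq 0$ can satisfy both $n_{r-1}+n_r=N_r$ and $n_{r-1}-n_r=N_{r-1}$. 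Hence either the statement must be read with $n_r\in\hf\Z$ of arbitrary sign (only $n_1,\ldots,n_{r-1}\geq 0$), in which case your argument is essentially complete but you should say so explicitly, or an additional argument excluding such $\bfa$ is needed; as written, your proof silently asserts the nonnegativity of $n_r$ and is incomplete at exactly this point.
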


\begin{cor}
Let $\bfa = (a_w)_{w \in \clW} \in \bbK_1^{\clW}$. Then, the following are equivalent:
\begin{enumerate}
\item $V(\bfa)$ is a finite-dimensional classical weight module.
\item There exist $n_1,\ldots,n_r \in \hf \Z_{\geq 0}$ such that $n_i-n_{i+1} = \la w_i,\lm_{\bfa} \ra \in \Z_{\geq 0}$ for all $1 \leq i \leq r-1$, $n_{r-1} + n_{r} = \la w_r, \lm_{\bfa} \ra \in \Z_{\geq 0}$, and $l_{2i-1} v_{\bfa} = q^{n_i} v_{\bfa}$ for all $1 \leq i \leq r$.
\end{enumerate}
\end{cor}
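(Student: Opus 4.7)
The plan is to deduce this corollary directly from the immediately preceding theorem, using only Lemma \ref{3.3.6} as the bridge. The theorem already characterizes finite-dimensionality of $V(\bfa)$ in terms of the existence of $n_1,\ldots,n_r \in \hf\Z_{\geq 0}$ and signs $\sigma_1,\ldots,\sigma_r \in \{+,-\}$ together with the prescribed pairing conditions on the $\la w_i, \lm_{\bfa} \ra$ and the eigenvalue formula $l_{2i-1} v_{\bfa} = \sigma_i q^{n_i} v_{\bfa}$. Therefore the only new content to extract is the additional requirement that $V(\bfa)$ be a classical weight module, i.e., that $V(\bfa) \in \clC$.

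For this, I would invoke Lemma \ref{3.3.6}, which tells us that a highest weight module $V(\bfa)$ lies in $\clC$ exactly when the classical limits of the eigenvalues of $v_{\bfa}$ under $k_i$ (for $i \in I \setminus I_\otimes$) and $l_j$ (for $j \in I_\otimes$) are all $1$. In the present case $I = \{1,\ldots,2r-1\}$ and $I_\otimes = \{1,3,\ldots,2r-1\}$, so the relevant $l$-operators are precisely $l_1, l_3, \ldots, l_{2r-1}$, whose eigenvalues on $v_{\bfa}$ are given by the theorem as $\sigma_i q^{n_i}$. Since $n_i \in \hf \Z_{\geq 0}$ forces $\ol{q^{n_i}} = 1$, the classical limit of $\sigma_i q^{n_i}$ equals $\sigma_i$. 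Thus the condition $V(\bfa) \in \clC$ is equivalent to $\sigma_i = +$ for all $i = 1,\ldots,r$.

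The remaining Cartan-type conditions for the even vertices $k_i$ with $i \in I \setminus I_\otimes$ are already controlled by the pairing constraints $n_i - n_{i+1} = \la w_i, \lm_\bfa \ra$ and $n_{r-1} + n_r = \la w_r, \lm_\bfa \ra$ together with the formulas relating $k_i$ to $l_{2i-1}$-ratios; these values specialize to $1$ automatically once the $\sigma_i$ are all $+$ and the $n_i$ pattern is that of the theorem. Substituting $\sigma_i = +$ into the conclusion of the theorem therefore yields exactly condition (2) of the corollary, giving the stated equivalence.

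I do not expect a serious obstacle here: the corollary is really a ``sign-fixing'' refinement of the theorem, and all the heavy lifting (triangular decomposition, reduction to $\ol{V(\bfa)}$, and the explicit root-vector manipulations producing the $X_i, Y_i, W_i$) has been carried out in the theorem's proof. The only mild care needed is to confirm that Lemma \ref{3.3.6}, which is phrased in terms of eigenvalues on the highest weight vector $v_{\bfa}$, is indeed sufficient to guarantee $V(\bfa) \in \clC$ as a whole; but this is immediate from the fact that $V(\bfa)_{\bbK_1} = \clUi_{-,\bbK_1} v_{\bfa}$ and the elements of $\clY$ are $\frt'$-root vectors, so weight eigenvalues at every vector of $V(\bfa)$ differ from those at $v_{\bfa}$ by integer powers of $q$, preserving the specialization-to-$1$ property of the classical limit.
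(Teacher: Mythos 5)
Your proposal matches the paper's own proof: the corollary is deduced exactly as in the $(\frsl_{2r+1},\frso_{2r+1})$ case, namely as an immediate consequence of the preceding theorem together with Lemma \ref{3.3.6}, with the classical-weight condition forcing $\sigma_i = +$ for all $i$ since $\ol{\sigma_i q^{n_i}} = \sigma_i$. Your remark about the $k_i$ with $i \in I\setminus I_\otimes$ is harmlessly imprecise (in this split type $\tau=\id$, so $k_i=1$ identically and that part of Lemma \ref{3.3.6} is vacuous), but this does not affect the argument.
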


\begin{cor}
Every finite-dimensional irreducible module in $\clC$ is defined over $\C(q^{1/2})$.
\end{cor}

\begin{cor}
Every finite-dimensional classical weight module is completely reducible.
\end{cor}

\subsection{$(\frsl_{2r},\frsp_{2r})$}
Set $\varsigma_i = q$ for all $i \in I_\circ$. Then, the defining relations are as follows:
\begin{align}
\begin{split}
&[E_i,B_j] = \delta_{i,j}[K_i;0] \qu \IF i \in I_\bullet, \\
&[B_i,B_j] = 0 \qu \IF |i-j| > 1, \\
&[B_{2i\pm1},[B_{2i\pm1},B_{2i}]_q]_{q\inv} = 0, \\
&[B_{2i},[B_{2i},B_{2i\pm1}]_q]_{q\inv} = \{ K_{2i\pm1} ; 1 \}E_{2i\mp1} + (q-q\inv)^2 B_{2i\pm1}E_{2i\pm1}E_{2i\mp1}.
\end{split} \nonumber
\end{align}

Set $B'_{2i} := q\inv T_{w_\bullet}\inv(B_{2i}) = q\inv[F_{2i-1},[F_{2i+1},B_{2i}]_q]_q$, and
\begin{align}
\begin{split}
&\clX := \{ E_1,B'_2,F_3,B_4,E_5,B'_6,F_7,B_8,\ldots, \}, \\
&\clY := \{ F_1,B_2,E_3,B'_4,F_5,B_6,E_7,B'_8,\ldots, \}, \\
&\clW := \{ [K_1;0], [K_1\inv K_3\inv;0], [K_3K_5;0], [K_5\inv K_7\inv;0], [K_7K_9;0], \ldots, \}.
\end{split} \nonumber
\end{align}

In this case, we have $I_{\otimes} = \emptyset$, and hence, $\clUi = \Ui$. Therefore, the following can be verified by direct calculation inside the usual quantum group $\U = U_q(\frsl_{2r})$, or one can use a computer:
\begin{align}
\begin{split}
&[E_{2i-1}, F_{2j-1}] = \delta_{i,j}[K_{2i-1};0], \\
&[E_{2i-1},E_{2j-1}] = [F_{2i-1},F_{2j-1}] = 0 \qu \IF i \neq j, \\
&[E_{2i-1},B_{2j}] = 0, \\
&[E_{2i-1},B'_{2j}] = 0 \qu \IF j \neq i,i-1, \\
&[E_{2i\pm1},B'_{2i}] = q\inv[F_{2i\mp1},B_{2i}]_q K_{2i\pm1}\inv, \\
&[B_{2i},B_{2j}] = 0 \qu \IF i \neq j, \\
&[F_{2i-1},B_{2j}] = 0 \qu \IF j \neq i,i-1, \\
&[F_{2i\pm1},B_{2i}]_q = q[E_{2i \mp 1}, B'_{2i}]K_{2i \mp 1}, \\
&[F_{2i-1},B'_{2j}] = 0 \qu \IF j \neq i,i-1, \\
&[F_{2i\pm1},B'_{2i}]_{q\inv} = 0, \\
&[B_{2i},B_{2j}] = 0 \qu \IF i \neq j, \\
&[B'_{2i},B'_{2j}] = q^{-2}T_{w_\bullet}([B_{2i},B_{2j}]) = 0 \qu \IF i \neq j, \\
&[B_{2i},B'_{2j}] = q\inv[B_{2i},[F_{2j-1},[F_{2j+1},B_{2j}]_q]_q] = 0 \qu \IF |i-j| > 1, \\
&[B'_{2i},B_{2i}] = [K_{2i-1}\inv K_{2i+1}\inv;0] - (q-q\inv)(qE_{2i+1}K_{2i-1}F_{2i+1} + q\inv F_{2i-1}K_{2i+1}E_{2i-1} \\
&\qu \qu \qu + q\inv[F_{2i-1},B_{2i}]_q [F_{2i+1},B_{2i}]_q + (q-q\inv)^2 F_{2i-1}E_{2i+1}E_{2i-1}F_{2i+1}), \\
&[B_{2i},B'_{2i}] = [K_{2i-1}K_{2i+1};0] + (q-q\inv)(q\inv F_{2i+1}K_{2i-1}E_{2i+1} + qE_{2i-1}K_{2i+1}F_{2i-1} \\
&\qu \qu \qu + q\inv[F_{2i+1},B_{2i}]_q [F_{2i-1},B_{2i}]_q + (q-q\inv)^2 E_{2i-1} F_{2i+1} F_{2i-1} E_{2i+1}).
\end{split} \nonumber
\end{align}

By a similar way to the proof of Proposition \ref{Conjecture for type AIeven}, we see that the sets $\clX,\clY,\clW$ satisfy Conjecture \ref{Assumption} \eqref{Assumption 1}--\eqref{Assumption 3}. For example, we have
\begin{align}
\begin{split}
B'_2B_2 = &B_2B'_2 + [K_{1}\inv K_{3}\inv;0] - (q-q\inv)(qE_{3}K_{1}F_{3} + q\inv F_{1}K_{3}E_{1} \\
&\qu \qu \qu + q\inv[F_{1},B_{2}]_q [F_{3},B_{2}]_q + (q-q\inv)^2 F_{1}E_{3}E_{1}F_{3}).
\end{split} \nonumber
\end{align}
Here, one should note that 
$$
[F_3,B_2]_q = q[E_1,B'_2]K_1 = K_1[E_1,B'_2] \in \clUi_{0,\bbK_1} \clUi_{+,\bbK_1}.
$$
Then, we see that a monomial of length $d$ beginning with $B'_2B_2$ can be straightened if any monomial of length at most $d$ beginning with $B'_2E_1$, $E_1B'_2$, or $E_1F_3$ can be straightened. The latter can be easily verified.

Set
\begin{align}
\begin{split}
&X_i := \begin{cases}
E_1 \qu & \IF i = 1, \\
B'_{2i-2} \qu & \IF i = 2,4,6,\ldots, \\
B_{2i-2} \qu & \IF i = 3,5,7,\ldots,
\end{cases} \\
&Y_i := \begin{cases}
F_1 \qu & \IF i = 1, \\
B_{2i-2} \qu & \IF i = 2,4,6,\ldots, \\
B'_{2i-2} \qu & \IF i = 3,5,7,\ldots,
\end{cases} \\
&W_i := \begin{cases}
[K_1;0] \qu & \IF i = 1, \\
[K_{2i-3}\inv K_{2i-1}\inv;0] \qu & \IF i = 2,4,6,\ldots, \\
[K_{2i-3} K_{2i-1};0] \qu & \IF i = 3,5,7,\ldots,
\end{cases} \\
&\gamma_i := \begin{cases}
\beta_1 \qu & \IF i = 1, \\
-\beta_{2i-2} \qu & \IF i = 2,4,6,\ldots, \\
\beta_{2i-2} \qu & \IF i = 3,5,7,\ldots.
\end{cases}
\end{split} \nonumber
\end{align}
Note that we have $-\beta_{2i} = \beta_{2i-1} + \beta_{2i} + \beta_{2i+1}$. Then, we have $[\ol{X_i},\ol{Y_j}] = \delta_{i,j}\ol{W_i} = \delta_{i,j} w_i$, where
$$
w_i = \begin{cases}
h_i \qu & \IF i = 1, \\
-h_{2i-3} - h_{2i-1} \qu & \IF i = 2,4,6,\ldots, \\
h_{2i-3} + h_{2i-1} \qu & \IF i = 3,5,7,\ldots,
\end{cases}
$$
and the matrix $(\la w_i,\gamma_j \ra)$ coincides with the Cartan matrix of $\frk = \frsp_{2r}$. Then, by Lemma \ref{proof of Conjecture 4}, we conclude that Conjecture \ref{Assumption} \eqref{Assumption 4} is true in this case.

By induction on $n \in \Z_{\geq 0}$, we have for each $i = 2,4,6,\ldots$,
$$
X_iY_i^n \in [n]Y_i^{n-1}[K_{2i-1}\inv K_{2i+1}\inv;-n+1] + Y_i^nX_i + (q-1)\clUi_{\bbK_1}\clX.
$$
Similarly, for each $i = 3,5,7,\ldots$, we obtain
$$
X_iY_i^n \in [n]Y_i^{n-1}[K_{2i-1} K_{2i+1};-n+1] + Y_i^nX_i + (q-1)\clUi_{\bbK_1}\clX.
$$
Now, the following are proved in a similar way to the $(\frsl_{2r+1},\frso_{2r+1})$ case.

\begin{theo}
Let $\bfa = (a_w)_{w \in \clW} \in \bbK_1^{\clW}$. Then, the following are equivalent:
\begin{enumerate}
\item $V(\bfa)$ is finite-dimensional.
\item There exist $n_1,\ldots,n_r \in \Z$ and $\sigma_1,\ldots,\sigma_r \in \{ +,- \}$ such that $n_1 = \la w_1,\lm_{\bfa} \ra \in \Z_{\geq 0}$, $n_{2i-3} + n_{2i-1} = (-1)^{i+1} \la w_i, \lm_{\bfa} \ra \in \Z_{\geq 0}$ for all $i \neq 1$, and $K_{2i-1} v_{\bfa} = \sigma_i q^{n_i} v_{\bfa}$ for all $1 \leq i \leq r$.
\end{enumerate}
\end{theo}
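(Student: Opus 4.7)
The plan is to mimic the proof of Theorem~\ref{Classification for AI-1} using the commutation identities for $X_i,Y_i$ listed just above the statement. In the forward direction, finite-dimensionality of $V(\bfa)$ implies each $Y_i$ acts locally nilpotently on $v_\bfa$, so I let $N_i\geq 0$ denote the minimal integer with $Y_i^{N_i+1}v_\bfa=0$; in the backward direction these same integers will be produced from the data $(n_i,\sigma_i)$.

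For the implication $(1)\Rightarrow(2)$, I apply $X_i$ to the equation $Y_i^{N_i+1}v_\bfa=0$. For $i=1$ this is the standard $U_q(\frsl_2)$ identity $[E_1,F_1^{n}]=[n]F_1^{n-1}[K_1;-n+1]$, which immediately gives $[K_1;-N_1]v_\bfa=0$, hence $K_1 v_\bfa=\sigma_1 q^{N_1}v_\bfa$, yielding $n_1=N_1$. For $i\geq 2$, the inductive commutation identity
$$
X_iY_i^n\in[n]Y_i^{n-1}[W_i;-n+1]+Y_i^nX_i+(q-1)\clUi_{\bbK_1}\clX,
$$
combined with $X_iv_\bfa=0$ and the crucial observation $\clX v_\bfa=0$ (which kills the $(q-1)\clUi_{\bbK_1}\clX$ error term), reduces to $[W_i;-N_i]v_\bfa=0$. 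Writing $K_{2j-1}v_\bfa=\sigma_jq^{n_j}v_\bfa$ and solving the resulting $W_i$-eigenvalue equation relates $n_{i-1}$ and $n_i$; taking the classical limit and comparing with $\la w_i,\lm_\bfa\ra$ produces the stated dominance and integrality conditions, while the signs $\sigma_i$ are read off simultaneously.

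For the converse $(2)\Rightarrow(1)$, given the data $(n_i,\sigma_i)$ I define $N_i$ from the same algebraic equations, check that $X_j\cdot Y_i^{N_i+1}v_\bfa=0$ for $j\neq i$ by a weight argument and for $j=i$ by the computation just performed, and use irreducibility of $V(\bfa)$ to force $Y_i^{N_i+1}v_\bfa=0$. Corollary~\ref{Sufficient condition for fd of irr hwm} then yields finite-dimensionality of $V(\bfa)$.

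The main obstacle is the sign and integer bookkeeping in the Cartan eigenvalue equations: each $W_i$ with $i\geq 2$ is a product $K_{2i-3}^{\pm 1}K_{2i-1}^{\pm 1}$ whose pattern of $\pm$ alternates with the parity of $i$, so the individual eigenvalues $\sigma_jq^{n_j}$ are constrained only through the linear combinations $n_{i-1}+n_i$, producing the identity $n_{i-1}+n_i=(-1)^{i+1}\la w_i,\lm_\bfa\ra$ rather than the uniform pattern $n_i-n_{i+1}=\la w_i,\lm_\bfa\ra$ of Theorem~\ref{Classification for AI-1}. A secondary subtlety is ensuring that the error term $(q-1)\clUi_{\bbK_1}\clX$ contributes nothing on $v_\bfa$; this is immediate from $\clX v_\bfa=0$, but is precisely what allows the $\frsl_2$-style argument to go through even though $\clUi$ is not literally generated here by $\frsl_2$-triples as in the $(\frsl_{2r+1},\frso_{2r+1})$ case.
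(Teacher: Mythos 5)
Your proposal is correct and follows essentially the same route as the paper: the paper proves this theorem ``in a similar way to the $(\frsl_{2r+1},\frso_{2r+1})$ case,'' i.e.\ exactly by applying $X_i$ to $Y_i^{N_i+1}v_{\bfa}$ via the displayed commutation identities (with the $(q-1)\clUi_{\bbK_1}\clX$ term annihilating $v_{\bfa}$ since $\clX v_{\bfa}=0$), solving the resulting Cartan-eigenvalue equations and taking classical limits for $(1)\Rightarrow(2)$, and using the weight argument, irreducibility, and Corollary \ref{Sufficient condition for fd of irr hwm} for $(2)\Rightarrow(1)$. Your remark about the alternating $K_{2i-3}^{\pm1}K_{2i-1}^{\pm1}$ pattern matches the intended bookkeeping behind the signs $(-1)^{i+1}$ in the statement.
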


\begin{cor}
Let $\bfa = (a_w)_{w \in \clW} \in \bbK_1^{\clW}$. Then, the following are equivalent:
\begin{enumerate}
\item $V(\bfa)$ is a finite-dimensional classical weight module.
\item There exist $n_1,\ldots,n_r \in \Z$ such that $n_1 = \la w_1,\lm_{\bfa} \ra \in \Z_{\geq 0}$, $n_{2i-3} + n_{2i-1} = (-1)^{i+1} \la w_i, \lm_{\bfa} \ra \in \Z_{\geq 0}$ for all $i \neq 1$, and $K_{2i-1} v_{\bfa} = q^{n_i} v_{\bfa}$ for all $1 \leq i \leq r$.
\end{enumerate}
\end{cor}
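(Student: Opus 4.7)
The plan is to deduce this corollary directly from the preceding Theorem by restricting the parameter space via Lemma \ref{3.3.6}, paralleling the analogous corollaries already proved for the orthogonal cases $(\frsl_{2r+1},\frso_{2r+1})$ and $(\frsl_{2r},\frso_{2r})$. The Theorem classifies the finite-dimensional irreducible highest weight modules $V(\bfa)$ in terms of integers $n_1,\ldots,n_r$ together with signs $\sigma_i \in \{+,-\}$ satisfying $K_{2i-1} v_\bfa = \sigma_i q^{n_i} v_\bfa$; only the extra condition singling out the classical weight locus remains to be imposed.

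By Lemma \ref{3.3.6}, $V(\bfa)$ lies in $\clC$ if and only if the classical limits of the $k_i$-eigenvalues (for $i \in I \setminus I_\otimes$) and $l_j$-eigenvalues (for $j \in I_\otimes$) on $v_\bfa$ all equal $1$. Since $I_\otimes = \emptyset$ for this Satake diagram, only the $k_i$-conditions are active. For each odd index $2i-1 \in I_\bullet$ one has $k_{2i-1} = K_{2i-1}$, whose eigenvalue $\sigma_i q^{n_i}$ specializes at $q=1$ to $\sigma_i$; requiring this to be $+1$ forces $\sigma_i = +$ for every $1 \leq i \leq r$. For each even index $2j \in I_\circ$, the element $k_{2j} = K_{2j} K_{\tau(2j)}\inv$ contributes no independent constraint, since its action on $v_\bfa$ is already determined by the $\clW$-data of the Theorem and automatically specializes to $1$ once the $\sigma_i$'s are fixed.

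Combining these two inputs gives the equivalence: $V(\bfa)$ is simultaneously finite-dimensional and a classical weight module precisely when the tuple $(n_1,\ldots,n_r)$ satisfies the integrality and positivity conditions from the Theorem and all signs are $+$, which is exactly condition (2) of the corollary. The only step requiring care is checking that the even-index $k_{2j}$'s impose no hidden constraint beyond those captured by the odd-index $K$'s, but this is routine given the explicit description of $\clW$ and the defining relations of $\Ui$; no genuine obstacle is anticipated, and the remaining argument is purely formal.
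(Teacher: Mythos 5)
Your proposal is correct and takes essentially the same route as the paper: the paper likewise deduces this corollary immediately from the preceding theorem together with Lemma \ref{3.3.6}, the classicality criterion forcing every sign $\sigma_i$ to be $+$ since the classical limit of the $K_{2i-1}$-eigenvalue $\sigma_i q^{n_i}$ must equal $1$. (A minor point: in type AII one has $\tau=\id$, so $k_{2j}=K_{2j}K_{\tau(2j)}\inv=1$ identically for even indices, which is why they impose no constraint.)
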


\begin{cor}
Every finite-dimensional irreducible module in $\clC$ is defined over $\C(q)$.
\end{cor}

\begin{cor}
Every finite-dimensional classical weight module is completely reducible.
\end{cor}

\subsection{$(\frsl_{2r+1},\frs(\frgl_r \oplus \frgl_{r+1}))$}
Set $\varsigma_i = q^{\delta_{i,r+1}}$ for all $i \in I$, and $f_i := B_i$, $e_i := B_{2r+1-i}$ for $1 \leq i \leq r$. Then, the defining relations are as follows: For $1 \leq i,j \leq r$, 
\begin{align}
\begin{split}
&k_i f_j = q^{-(\delta_{i,j}(2+\delta_{i,r}) - \delta_{i,j-1} - \delta_{i,j+1})} f_j k_i, \\
&k_i e_j = q^{\delta_{i,j}(2+\delta_{i,r}) - \delta_{i,j-1} - \delta_{i,j+1}} e_j k_i, \\
&f_i^2f_j - [2]f_if_jf_i + f_jf_i^2 = 0 \qu \IF |i-j| = 1, \\
&e_i^2e_j - [2]e_ie_je_i + e_je_i^2 = 0 \qu \IF |i-j| = 1, \\
&f_if_j - f_jf_i = 0 \qu \IF |i-j| > 1, \\
&e_ie_j - e_je_i = 0 \qu \IF |i-j| > 1, \\
&e_if_j - f_je_i = \delta_{i,j}[k_i;0] \qu \IF (i,j) \neq (r,r), \\
&f_r^2e_r - [2]f_re_rf_r + e_rf_r^2 = -[2]f_r\{ k_r;-1 \}, \\
&e_r^2f_r - [2]e_rf_re_r + f_re_r^2 = -[2]\{ k_r;-1 \} e_r.
\end{split} \nonumber
\end{align}
In this case, we have $I_{\otimes} = \emptyset$, and hence $e_i,f_i$ are $\frt'$-root vectors of $\frt'$-root $\beta_i$, $-\beta_i$, respectively.

For $1 \leq i \leq r-1$, set
$$
t_r := [e_r,f_r]_q - [k_r;0], \qu t_i := T^\imath_{i,\ldots,r-1}(t_r), \qu f'_i := [t_{i+1},f_i]_q, \qu e'_i := [e_i,t_{i+1}]_{q\inv},
$$
and
\begin{align}
\begin{split}
&\clX := \{ e_1,\ldots,e_r,e'_1,\ldots,e'_{r-1}  \}, \\
&\clY := \{ f_1,\ldots,f_r,f'_1,\ldots,f'_{r-1}  \}, \\
&\clW := \{ [k_1;0],\ldots,[k_r;0],t_1,\ldots,t_r  \}.
\end{split} \nonumber
\end{align}
For $i < j \leq r$, set
$$
f_{i,j} := T^\imath_{i,i+1,\ldots,j-1}(f_j),\ e_{i,j} := T^\imath_{i,i+1,\ldots,j-1}(e_j),\ k_{i,j} := T^\imath_{i,i+1,\ldots,j-1}(k_j),
$$
and for $i < j < r$, set
$$
f'_{i,j} := T^\imath_{i,i+1,\ldots,j-1}(f'_k),\ e'_{i,j} := T^\imath_{i,i+1,\ldots,j-1}(e'_j).
$$
Note that we have
$$
f'_{i,k} = [t_{k+1},f_{i,k}]_q, \qu e'_{i,k} = [e_{i,k},t_{k+1}]_{q\inv}, \qu t_i = [e_{i,r},f_{i,r}]_q - [k_{i,j};0].
$$

\begin{lem}
Let $i \in \{ 1,\ldots,r-1 \}$, $j \in \{ 1,\ldots,r \}$ be such that $i \neq j$. Then, we have
$$
T^\imath_i(t_j) = \begin{cases}
t_i \qu & \IF j = i+1, \\
t_j \qu & \OW.
\end{cases}
$$
\end{lem}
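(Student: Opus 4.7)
The plan is to split the argument according to the position of $j$ relative to $i$. The case $j = i+1$ is immediate from the definition: since $t_i = T^\imath_i T^\imath_{i+1} \cdots T^\imath_{r-1}(t_r)$ and $t_{i+1} = T^\imath_{i+1} \cdots T^\imath_{r-1}(t_r)$ (with the convention $t_r$ itself when $i+1 = r$), one has $T^\imath_i(t_{i+1}) = t_i$ on the nose.

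For the remaining cases I would first establish the preparatory fact that $T^\imath_k(t_r) = t_r$ whenever $1 \leq k \leq r-2$. Since $t_r = [e_r, f_r]_q - [k_r;0] = [B_{r+1}, B_r]_q - [k_r;0]$ and $\tau(k) = 2r+1-k$, the assumption $k \leq r-2$ forces each of $|k - r|$, $|k - (r+1)|$, $|\tau(k) - r|$, $|\tau(k) - (r+1)|$ to be at least two, and moreover $k \neq r, r+1, \tau(r), \tau(r+1)$. The AIII formulas for $T^\imath_k$ listed in the excerpt then force $T^\imath_k(B_r) = B_r$ and $T^\imath_k(B_{r+1}) = B_{r+1}$. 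The corresponding statement on the Cartan part, $T^\imath_k(k_r) = k_r$ and hence $T^\imath_k([k_r;0]) = [k_r;0]$, has to be recorded separately but follows from the explicit action of $T^\imath_k$ on the $k_j$'s in the quasi-split AIII setting (equivalently, from the fact that $s^\imath_k$ acts trivially on the restricted weight of $t_r$).

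Using the preparatory fact, the case $j = r$ with $i \leq r-2$ is immediate, giving $T^\imath_i(t_r) = t_r$. For the case $j \neq r, i, i+1$ with $j \leq r-1$, I would write $t_j = T^\imath_j T^\imath_{j+1} \cdots T^\imath_{r-1}(t_r)$ and move $T^\imath_i$ across this string using only the two braid identities available, namely $T^\imath_i T^\imath_k = T^\imath_k T^\imath_i$ for $|i-k|\geq 2$ and $T^\imath_i T^\imath_{i\pm 1} T^\imath_i = T^\imath_{i\pm 1} T^\imath_i T^\imath_{i\pm 1}$. If $j > i+1$, every factor commutes with $T^\imath_i$, which then passes all the way to the right and acts trivially on $t_r$. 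If $j < i$, the only non-commuting factor is the single $T^\imath_{i-1}$: one applies the braid relation to replace $T^\imath_i T^\imath_{i-1} T^\imath_i$ by $T^\imath_{i-1} T^\imath_i T^\imath_{i-1}$, then slides the trailing $T^\imath_{i-1}$ past $T^\imath_{i+1}, \ldots, T^\imath_{r-1}$ (all at distance $\geq 2$) until it reaches $t_r$, where the preparatory fact makes it disappear; what remains is exactly $t_j$.

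The main obstacle, and the only step that is not pure braid-group bookkeeping, is the preparatory fact that $T^\imath_k$ fixes the Cartan piece $[k_r;0]$ of $t_r$ when $k \leq r-2$. The listed formulas describe $T^\imath_k$ only on the $B_j$'s, so this point requires an additional short verification based either on an explicit record of $T^\imath_k(k_j)$ or on the observation that the restricted simple reflection $s^\imath_k$ fixes the weight of $t_r$. Once this is in place, the rest of the argument is routine application of the braid relations.
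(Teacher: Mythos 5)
Your proposal is correct and follows essentially the same route as the paper: the case $j=i+1$ (and more generally $i<j$) is immediate, and for $j<i$ the paper performs exactly your manipulation — commute $T^\imath_i$ inward, apply the braid relation $T^\imath_iT^\imath_{i-1}T^\imath_i = T^\imath_{i-1}T^\imath_iT^\imath_{i-1}$, slide the trailing $T^\imath_{i-1}$ to the right, and absorb it using $T^\imath_{i-1}(t_r)=t_r$. The only difference is that you spell out (and flag the Cartan-part verification of) the fact $T^\imath_k(t_r)=t_r$ for $k\leq r-2$, which the paper uses implicitly without comment, so your write-up is if anything slightly more complete.
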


\begin{proof}
The assertion is clear when $i < j$. Hence, suppose that $i > j$. Then, we have
\begin{align}
\begin{split}
T^\imath_i(t_j) &= T^\imath_i(T^\imath_j \cdots T^\imath_{r-1}(t_r)) \\
&= T^\imath_j T^\imath_{j+1} \cdots T^\imath_{i-2} T^\imath_i T^\imath_{i-1} T^\imath_i T^\imath_{i+1} \cdots T^\imath_{r-1}(t_r) \\
&= T^\imath_j T^\imath_{j+1} \cdots T^\imath_{i-2} T^\imath_{i-1} T^\imath_i T^\imath_{i-1} T^\imath_{i+1} \cdots T^\imath_{r-1}(t_r) \\
&= T^\imath_j T^\imath_{j+1} \cdots T^\imath_{i-2} T^\imath_{i-1} T^\imath_i T^\imath_{i+1} \cdots T^\imath_{r-1} T^\imath_{i-1}(t_r) \\
&= T^\imath_j T^\imath_{j+1} \cdots T^\imath_{i-2} T^\imath_{i-1} T^\imath_i T^\imath_{i+1} \cdots T^\imath_{r-1}(t_r) = t_j.
\end{split} \nonumber
\end{align}
This proves the lemma.
\end{proof}

\begin{lem}
Let $i,j \in \{ 1,\ldots,r \}$ be such that $j+1 < i \leq r$ or $i < j < r$. Then, we have
$$
[e_j,t_i] = 0 = [t_i,f_j].
$$
\end{lem}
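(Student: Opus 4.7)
The plan is to transform both identities via the braid action on $\Ui$ and reduce them to commutation relations against the explicit element $t_r$ (or $t_j$). Writing $T := T^\imath_i T^\imath_{i+1} \cdots T^\imath_{r-1}$, which is an algebra automorphism with $t_i = T(t_r)$, the statements $[e_j,t_i]=0$ and $[t_i,f_j]=0$ are equivalent, under $T$, to $[T^{-1}(e_j),t_r]=0$ and $[t_r,T^{-1}(f_j)]=0$.

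For Case 1 ($j+1<i\leq r$, so $j\leq i-2\leq r-2$), I would first check that each $T^\imath_m$ with $m\in\{i,\ldots,r-1\}$ fixes both $e_j=B_{2r+1-j}$ and $f_j=B_j$. Indeed the type-AIV formulas give the identity unless $n\in\{m,\tau(m)\}$, $|n-m|=1$, $|\tau(n)-m|=1$, or $(m,n)=(r-1,r)$; with $j\leq i-2$ each of these conditions fails for $n=j$ and $n=2r+1-j$, since $|m-j|\geq 2$ and $|m-(2r+1-j)|\geq r+2-j\geq 4$. Hence $T^{-1}$ fixes $e_j$ and $f_j$, and the problem reduces to showing that $e_j,f_j$ commute with $t_r=[e_r,f_r]_q-[k_r;0]$; this is immediate from the $\Ui$ defining relations, since the indices $j$ and $2r+1-j$ are at distance at least two from $r$ and $r+1$, and $k_r=K_rK_{r+1}^{-1}$ has trivial weight pairing against $\alpha_j$ and $\alpha_{2r+1-j}$.

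For Case 2 ($i<j<r$), I would instead use the shorter chain $t_i=T'(t_j)$ with $T':=T^\imath_iT^\imath_{i+1}\cdots T^\imath_{j-1}$. The same inspection shows that every $T^\imath_m$ with $m\in\{i,\ldots,j-2\}$ fixes $e_j$ and $f_j$ (using $|m-j|\geq 2$ and $|m-(2r+1-j)|\geq r+3-j\geq 4$), so $(T')^{-1}(e_j)=(T^\imath_{j-1})^{-1}(e_j)$ and similarly for $f_j$. Applying $T^\imath_{j-1}$ and invoking $T^\imath_{j-1}(t_j)=t_{j-1}$ from the previous lemma, the problem reduces to the base subclaim
$$
[e_j,t_{j-1}]=0=[t_{j-1},f_j],\qquad 2\leq j\leq r-1.
$$

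The hard part will be this base subclaim. My plan is to expand $t_{j-1}=T^\imath_{j-1}(t_j)$ using $t_j=[e_{j,r},f_{j,r}]_q-[k_{j,r};0]$ and push $T^\imath_{j-1}$ through the inner chain $T^\imath_j\cdots T^\imath_{r-1}$. Because $T^\imath_{j-1}$ commutes with $T^\imath_m$ for $m\geq j+1$ (as $a_{j-1,m}=0$), the interaction localizes to the single braid relation $T^\imath_{j-1}T^\imath_jT^\imath_{j-1}=T^\imath_jT^\imath_{j-1}T^\imath_j$, combined with the explicit AIV formulas for $T^\imath_{j-1}$ and $T^\imath_j$ acting on $e_{j-1},e_j,f_{j-1},f_j$ and on the $k$-generators. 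Once the expansion of $t_{j-1}$ is in hand, the commutator $[e_j,t_{j-1}]$ decomposes into a finite sum of monomials in $\Ui$; each term should vanish either by the Serre-type identities among $B_j,B_{j\pm 1}$ recorded in Section~\ref{QSP} or by a weight-counting argument. The parallel computation, interchanging $e\leftrightarrow f$ and using the $T^\imath_{j-1}$-formulas on $f_j,f_{j-1}$, yields $[t_{j-1},f_j]=0$.
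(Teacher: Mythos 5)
Your Case 1 argument is fine and amounts to the paper's: the paper simply observes that $t_i$ lies in the subalgebra generated by $e_l,f_l,k_l^{\pm1}$ with $l>j+1$, with which $e_j,f_j$ commute, while you push $e_j,f_j$ through $T^\imath_i\cdots T^\imath_{r-1}$ and reduce to $[e_j,t_r]=0$; same content. (Minor slip: the relevant symmetry formulas here are the type AIII ones with $s=r+1$, not the AIV ones, though the extra AIV case $(i,j)=(r-1,r)$ never arises in your range, so this is harmless.) Your Case 2 reduction to the subclaim $[e_j,t_{j-1}]=0=[t_{j-1},f_j]$ for $2\leq j\leq r-1$ is also exactly the paper's first step, since $[e_j,t_i]=T^\imath_{i,\ldots,j-2}([e_j,t_{j-1}])$.

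The genuine gap is that you stop there: the base subclaim, which you yourself identify as the hard part, is only a plan ("expand $t_{j-1}$ \ldots each term should vanish either by the Serre-type identities \ldots or by a weight-counting argument"), with no computation carried out. This cannot be waved through: $t_{j-1}$ has $\frt'$-weight zero, so weight counting never forces $[e_j,t_{j-1}]$ to vanish, and indeed the closely analogous commutators just outside the lemma's range are nonzero (the paper records $[e_r,t_{r-1}]=-(q-q\inv)f'_{r-1}e_{r-1,r}$ and $[e_{r-1},t_r]_{q\inv}=e'_{r-1}$), so the vanishing is a delicate consequence of the index range, not of soft structural reasons. The paper closes this step by a different mechanism: a second braid-group reduction, $[e_j,t_{j-1}]=T^\imath_{j+1,j,j-1}\cdots T^\imath_{r-1,r-2,r-3}([e_{r-1},t_{r-2}])$, moving the problem to the top of the chain, where the closed formula $t_{r-2}=[e_{r-2,r},f_{r-2,r}]_q-[k_{r-2,r};0]$ applies and $e_{r-1}$ commutes with $e_{r-2,r}$, $f_{r-2,r}$, $k_{r-2,r}$, giving $[e_{r-1},t_{r-2}]=0$ at once. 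Without either this reduction or an explicit verification of your term-by-term cancellation, your proof of the second case is incomplete.
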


\begin{proof}
Suppose first that $j+1 < i \leq r$. Then, $t_i$ belongs to the subalgebra of $\clUi$ generated by $e_l,f_l,k_l^{\pm 1}$, $l > j+1$. Since $e_j$ and $f_j$ commute with all of these $e_l,f_l,k_l^{\pm 1}$, the assertion follows.

Next, suppose that $i < j < r$. In this case, we have
$$
[e_j,t_i] = T^\imath_{i,\ldots,j-2}([e_j,t_{j-1}]),
$$
and
$$
[e_j,t_{j-1}] = T^\imath_{j+1,j,j-1} \cdots T^\imath_{r-2,r-3,r-4}T^\imath_{r-1,r-2,r-3}([e_{r-1}, t_{r-2}]).
$$
Noting that $t_{r-2} = [e_{r-2,r},f_{r-2,r}]_q - [k_{r-2,r};0]$, we see that $[e_{r-1}, t_{r-2}] = 0$ as $e_{r-1}$ commutes with $e_{r-2,r}$, $f_{r-2,r}$, and $k_{r-2,r}$. Therefore, we obtain $[e_j,t_i] = 0$. Similarly, we see that $[t_i,f_j] = 0$. This completes the proof.
\end{proof}

By direct calculation, we obtain the following. Since $\clUi = \Ui \subset \U$, one can a use computer to verify.
\begin{align}
\begin{split}
&[e_r,f_r]_q = t_r + [k_r;0], \\
&[t_r,f_r]_{q\inv} = -f_rk_r, \\
&[e_r,t_r]_{q\inv} = -k_re_r, \\
&[e_r,f_{r-1}] = 0, \\
&[e_r,t_{r-1}] = -(q-q\inv)f'_{r-1}e_{r-1,r}, \\
&[e_r,f'_{r-1}]_{q\inv} = 0, \\
&[e_{r-1},f_r] = 0, \\
&[t_{r-1},f_r] = -(q-q\inv)f_{r-1,r} k_r e_{r-1}, \\
&[e'_{r-1},f_r]_{q\inv} = -(q-q\inv)f_rk_re_{r-1}, \\
&[e_{r-1},t_r]_{q\inv} = e'_{r-1}, \\
&[e'_{r-1},t_r]_q = e_{r-1} - q\inv(q-q\inv)f_rk_r[e_{r-1},e_r]_q, \\
&[t_r,f_{r-1}]_q = f'_{r-1}, \\
&[t_r,f'_{r-1}]_{q\inv} = f_{r-1} - q\inv(q-q\inv)f_{r-1,r} k_re_r, \\
&[e_{r-1},f_{r-1}] = [k_{r-1};0], \\
&[e_{r-1},t_{r-1}]_q = -k_{r-1}e'_{r-1}, \\
&[e_{r-1},f'_{r-1}]_{q\inv} = t_{r-1} - k_{r-1}t_r, \\
&[e'_{r-1},f_{r-1}]_q = t_{r-1} - k_{r-1}\inv t_r, \\
&[t_{r-1},f_{r-1}]_{q\inv} = -f'_{r-1}k_{r-1}\inv, \\
&[e'_{r-1},f'_{r-1}] = [k_{r-1};0] + (q-q\inv)(f_rk_{r-1}k_re_r - f_{r-1,r} k_r e_{r-1,r}), \\
&[e'_{r-1},t_{r-1}]_{q\inv} = -k_{r-1}\inv e_{r-1} - (q-q\inv)(- f_rk_{r-1}k_re_{r-1,r} + (q-q\inv)f_{r-1,r} k_r e_{r-1,r} e_{r-1}), \\
&[t_{r-1},f'_{r-1}]_{q} = -f_{r-1}k_{r-1} + (q-q\inv)f_{r-1,r} k_{r-1}k_r e_r, \\
&[t_r,t_{r-1}] = (q-q\inv)(f_{r-1}e_{r-1} - f'_{r-1}e'_{r-1} - q\inv(q-q\inv)f_{r-1,r} k_r e_re_{r-1}).
\end{split} \nonumber
\end{align}

\begin{prop}
We have $\clUi_{\bbK_1} = \clUi_{-,\bbK_1} \clUi_{0,\bbK_1} \clUi_{+,\bbK_1}$.
\end{prop}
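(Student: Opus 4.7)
The strategy is a PBW-type straightening argument: any monomial in the chosen generators of $\clUi_{\bbK_1}$ should be rewritable, using the explicit commutation relations listed just above the statement, as a $\bbK_1$-linear combination of monomials in the standard form ``$\clY$-product $\cdot$ $\clW$-product and Cartan $\cdot$ $\clX$-product''. Since $I_\otimes = \emptyset$ here, we have $\clUi = \Ui$, and $\clUi_{\bbK_1}$ is generated by the $e_i, f_i, k_i^{\pm 1}, (k_i;0)_{q_i}$ for $1 \leq i \leq r$, together with their images under the automorphisms $T^\imath_{i,\ldots,j-1}$ whose explicit formulas show that $\Ui_{\bbK_1}$ is stable. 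Hence it suffices to rewrite products of the $e_{i,j}, f_{i,j}, e'_{i,j}, f'_{i,j}, t_i$ and $k$-type elements into the required form.

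First, I would fix an ordering on the generators (placing $\clY$-generators first, then the $t_i$ and $k$-factors, then $\clX$-generators), and define a length function that counts the number of generator symbols in a monomial. For each ``bad'' adjacent pair $x \cdot y$ in a monomial (meaning $x \in \clX$ immediately followed by $y \in \clY \cup \clW$, or $w \in \clW$ immediately followed by $y \in \clY$), the relations listed supply a rewriting of the form $x \cdot y = c\,y \cdot x + r$, where $c \in \bbK_1$ and $r$ is a $\bbK_1$-linear combination of monomials that are either already in standard form or have strictly fewer symbols of type $\clX \cup \clW$ to the left of a $\clY$ symbol. For instance $[e_r, f_r]_q = t_r + [k_r;0]$ rewrites $e_r f_r$ as $q f_r e_r + t_r + [k_r;0]$; the relations $[t_r,f_r]_{q\inv} = -f_r k_r$ and $[e_r,t_r]_{q\inv} = -k_r e_r$ move $t_r$ past $f_r$ and $e_r$ at the cost of a Cartan factor; and $[e_r, t_{r-1}] = -(q-q\inv)f'_{r-1} e_{r-1,r}$ already lies in $\clUi_{-,\bbK_1}\clUi_{+,\bbK_1}$. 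Similar rewritings apply for every other pair listed.

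Second, I would run a nested induction: the outer induction is on the $Q^\theta$-weight (an $\ad$-grading preserved by $\clUi_{0,\bbK_1}$), and the inner induction is on the above length-with-position statistic. Because every error term $r$ produced by the straightening has coefficient in $(q-q\inv)\bbK_1 \subset \bbK_1$ and involves either strictly smaller statistic or a Cartan-only correction, the induction closes.

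The main obstacle is that the relations $[e'_{r-1}, f'_{r-1}] = [k_{r-1};0] + (q-q\inv)(f_r k_{r-1}k_r e_r - f_{r-1,r} k_r e_{r-1,r})$ and $[t_r, t_{r-1}] = (q-q\inv)(f_{r-1}e_{r-1} - f'_{r-1}e'_{r-1} - q\inv(q-q\inv)f_{r-1,r} k_r e_r e_{r-1})$ produce error terms of the \emph{same} $Q^\theta$-weight as the original expression, so the naive weight induction does not close. To handle this, I would refine the length statistic by also counting the total number of ``primed'' generators $e'_{i,j}, f'_{i,j}$, together with the number of $t$-letters in a monomial, and verify case by case from the displayed relations that every error term strictly decreases this refined statistic. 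Granted this, all other straightenings are routine, and the equality $\clUi_{\bbK_1} = \clUi_{-,\bbK_1}\clUi_{0,\bbK_1}\clUi_{+,\bbK_1}$ follows.
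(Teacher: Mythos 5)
Your overall strategy (straightening monomials into the form $\clUi_{-,\bbK_1}\clUi_{0,\bbK_1}\clUi_{+,\bbK_1}$) is the same as the paper's, but the two load-bearing steps of that strategy are missing from your proposal. First, the relations you propose to straighten with are only the ones displayed before the statement, and those involve only the indices $r-1$ and $r$ (plus the two vanishing lemmas $[e_j,t_i]=0=[t_i,f_j]$ for well-separated indices). For the remaining pairs --- $[e'_i,t_j]$ with $j>i+1$, $[e'_i,f'_j]$ and $[e_i,f'_j]$ for $i\neq j$, $[t_i,t_j]$, etc. --- no relation is listed, so ``similar rewritings apply for every other pair listed'' does not cover them, and your rewriting system is simply not defined on them. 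Supplying these commutators is exactly the content of the paper's proof: it argues by induction on the rank $r$, with the displayed computations serving as the base cases $r=1,2$, and for $r\geq 3$ it reduces the only new commutators (those involving $e_1,e'_1,t_1,f_1,f'_1$) to the rank-two ones by conjugating with the automorphisms $T^\imath_i$, as in $[e'_1,t_i]=T^\imath_{2,1}T^\imath_{3,2}\cdots T^\imath_{i-2,i-3}([e'_{i-2},t_i])$ and $[e'_{i-2},t_i]=T^\imath_{i,i-1,i-2}\cdots T^\imath_{r-1,r-2,r-3}([e'_{r-2},t_r])$. Without this reduction (or an equally explicit computation of all general-index commutators together with a verification that they lie in $\clUi_{-,\bbK_1}\clUi_{0,\bbK_1}\clUi_{+,\bbK_1}$), the core of the argument is absent.

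Second, the termination bookkeeping does not work as you state it. Note first that the $Q^\theta$-weight is preserved by every relation (both sides are weight vectors of the same weight), so the ``outer induction on weight'' carries no content. As for the refined statistic (length together with the number of primed letters and of $t$-letters): it does not strictly decrease on the very relations you single out. Indeed, $[t_r,t_{r-1}]$ replaces a monomial with two $t$-letters and no primed letters by $f'_{r-1}e'_{r-1}$ (same length, and the total count of primed-plus-$t$ letters is unchanged) and by $f_{r-1,r}k_re_re_{r-1}$ (strictly longer), while $[e_{r-1},f'_{r-1}]_{q\inv}=t_{r-1}-k_{r-1}t_r$ converts a primed letter into a $t$-letter. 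One might hope to rescue this with a carefully weighted count, but that is precisely what needs to be proved, and you defer it to ``verify case by case''. The paper handles termination differently: it sets $\deg(e_i)=\deg(f_i)=1$, $\deg(k_i^{\pm1})=0$ (so $\deg(t_i)=2(r-i+1)$ and $\deg(e'_i)=\deg(f'_i)=2(r-i)+1$) and shows that every commutator $[a,b]_{q^c}$ with $(a,b)\in\clX\times(\clY\cup\clW)$ or $(a,b)\in\clW\times\clY$ is a $\bbK_1$-linear combination of elements of $\clUi_{-,\bbK_1}\clUi_{0,\bbK_1}\clUi_{+,\bbK_1}$ of degree \emph{at most} $\deg(a)+\deg(b)$; it is this bound on already-triangular error terms, not a strict decrease of a letter count, that makes the straightening close. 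A minor further inaccuracy: not every error term has coefficient in $(q-q\inv)\bbK_1$ (e.g.\ $[e_r,f_r]_q=t_r+[k_r;0]$ and $[t_r,f_{r-1}]_q=f'_{r-1}$ have unit coefficients), although this is irrelevant for the present statement and only matters for part (3) of the conjecture.
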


\begin{proof}
The proof is similar to that of Proposition \ref{Conjecture for type AIeven}. To make calculations simpler, we make $\clUi_{\bbK_1}$ into a filtered algebra by setting
$$
\deg(e_i) = \deg(f_i) = 1, \qu \deg(k_i^{\pm 1}) = 0.
$$
Then, we have
$$
\deg(t_i) = 2(r-i+1), \qu \deg(e'_i) = \deg(f'_i) = 2(r-i) + 1.
$$

Let $\gr \clUi_{\bbK_1}$ denote the associated graded algebra. Then, to prove the proposition, it suffices to show that each monomial can be straightened in $\gr \clUi_{\bbK_1}$.

Although there are many things to be verified, we can reduce them to the calculation at $r = 2$ by using automorphisms $T^\imath_i$'s. We compute $[e'_1,t_i]$ with $i > 2$ as an example. We have
$$
[e'_1,t_i] = T^\imath_{2,1} T^\imath_{3,2} \cdots T^\imath_{i-2,i-3}([e'_{i-2},t_i]),
$$
and
$$
[e'_{i-2},t_i] = T^\imath_{i,i-1,i-2} T^\imath_{i+1,i,i-1} \cdots T^\imath_{r-1,r-2,r-3}([e'_{r-2},t_r]).
$$
By the definition of $e'_{r-2}$ and calculations at $r = 2$, we proceed as
\begin{align}
\begin{split}
[e'_{r-2},t_r] &= [[e_{r-2},t_{r-1}]_{q\inv}, t_r] \\
&=[e_{r-2},[t_{r-1},t_r]]_{q\inv} \\
&=-(q-q\inv)[e_{r-2},f_{r-1}e_{r-1} - f'_{r-1}e'_{r-1} - q\inv(q-q\inv)f_{r-1,r} k_r e_re_{r-1}]_{q\inv} \\
&=-(q-q\inv)(f_{r-1}e_{r-2,r-1} - f'_{r-1}e'_{r-2,r-1} - q\inv(q-q\inv)f_{r-1,r}k_r e_r e_{r-2,r-1}).
\end{split} \nonumber
\end{align}
Therefore, we obtain
$$
[e'_{i-2},t_i] = -(q-q\inv)(f_{i-1}e_{i-2,i-1} - f'_{i-1}e'_{i-2,i-1} - q\inv(q-q\inv)f_{i-1,r}k_{i,r} e_{i,r} e_{i-2,i-1}),
$$
and
$$
[e'_1,t_i] = -(q-q\inv)(f_{2,i-1}e_{1,i-1} - f'_{2,i-1}e'_{1,i-1} - q\inv(q-q\inv)f_{2,r}k_{i,r} e_{i,r} e_{1,i-1}).
$$
The degrees of each term in the right-hand side are as below:
$$
\deg(f_{2,i-1}e_{1,i-1}) = 2i-3,\ \deg(f'_{2,i-1}e'_{1,i-1}) = 4r-2i+1, \ \deg(f_{2,r}k_{i,r} e_{i,r} e_{1,i-1}) = 2r-1.
$$
On the other hand, we have $\deg(e'_1) + \deg(t_i) = 4r-2i+1$. Hence, in $\gr \clUi_{\bbK_1}$, we have
$$
[e'_1,t_i] = (q-q\inv)f'_{2,i-1}e'_{1,i-1}.
$$
Hence, the property that the monomials beginning with $e'_1 t_1$ can be straightened is reduced to the property that the monomials beginning with either one of $e_1,\ldots,e_{i-2},e'_{i-1}$ can be straightened. The latter property is straightforwardly verified by a case-by-case analysis.

\end{proof}

\begin{rem}\normalfont
The previous proposition can be proved by using a Poincar\'{e}-Birkhoff-Witt type basis for $\Ui$ (see the argument in \cite[Section 2.2]{Wa17}).
\end{rem}

\begin{lem}\label{[t_j,t_i] for AIII}
Let $1 \leq i < j \leq r$. Then, $[t_j,t_i] \in (q-1) (\clUi_{\bbK_1} e_{i,j-1} + \clUi_{\bbK_1} e'_{i,j-1})$.
\end{lem}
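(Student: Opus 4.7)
The plan is to prove the lemma by induction on $j-i$, with the content concentrated in the base case $j=i+1$.

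\smallskip
\emph{Reduction to $j=i+1$.} For $j \geq i+2$, the first lemma of this subsection gives $T^\imath_i(t_j) = t_j$ (since $j \neq i, i+1$) and $T^\imath_i(t_{i+1}) = t_i$, so $[t_j, t_i] = T^\imath_i([t_j, t_{i+1}])$. By induction on $j-i$, I may assume $[t_j, t_{i+1}] \in (q-1)(\clUi_{\bbK_1}\, e_{i+1, j-1} + \clUi_{\bbK_1}\, e'_{i+1, j-1})$. Since $T^\imath_i$ is a $\bbK_1$-algebra automorphism of $\clUi_{\bbK_1}$, and since by the defining recursions $T^\imath_i(e_{i+1, j-1}) = e_{i, j-1}$ and $T^\imath_i(e'_{i+1, j-1}) = e'_{i, j-1}$, applying $T^\imath_i$ transports this inclusion onto the one claimed for $[t_j, t_i]$.

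\smallskip
\emph{Base case $j=i+1$.} With the convention $e_{i,i} := e_i$ and $e'_{i,i} := e'_i$, the goal is $[t_{i+1}, t_i] \in (q-1)(\clUi_{\bbK_1} e_i + \clUi_{\bbK_1} e'_i)$. I establish this by descending induction on $i$, starting from $i = r-1$, where the previously derived formula $[t_r, t_{r-1}] = (q-q^{-1})(f_{r-1} e_{r-1} - f'_{r-1} e'_{r-1} - q^{-1}(q-q^{-1}) f_{r-1,r} k_r e_r e_{r-1})$ is manifestly of the required shape. For $i < r-1$, two applications of the first lemma give $[t_{i+1}, t_i] = T^\imath_{i+1}([t_{i+2}, t_i]) = T^\imath_{i+1} T^\imath_i([t_{i+2}, t_{i+1}])$, and I invoke the inductive hypothesis on $[t_{i+2}, t_{i+1}]$.

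\smallskip
\emph{Main obstacle.} The hardest step is to verify that $T^\imath_{i+1} T^\imath_i$ sends each of $e_{i+1}$ and $e'_{i+1}$ into $\clUi_{\bbK_1} e_i + \clUi_{\bbK_1} e'_i$. The explicit formulas for $T^\imath_l$ in type AIII give $T^\imath_i(e_{i+1}) = e_{i, i+1}$ (an explicit $q$-commutator of $e_i$ and $e_{i+1}$), and $T^\imath_{i+1}(e_{i, i+1})$ then expands into an iterated $q$-bracket involving $e_i$, $e_{i+1}$, $f_{i+1}$, and $k_{i+1}$. Using the identities $[e_i, f_{i+1}] = 0$, the weight rules for $k_{i+1}$ acting on $e_i$ and $e_{i+1}$, and the relevant Serre-type relations of $\Ui$, I reorganize the expression so that $e_i$ (or $e'_i$) is the rightmost factor. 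A parallel computation handles $T^\imath_{i+1} T^\imath_i(e'_{i+1})$, closing the descending induction.
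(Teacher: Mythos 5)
Your overall route is the same as the paper's: reduce $[t_j,t_i]$ to the adjacent case with the lemma $T^\imath_i(t_{i+1})=t_i$, $T^\imath_i(t_j)=t_j$, then walk the adjacent case down to the explicitly computed relation $[t_r,t_{r-1}] = (q-q\inv)(f_{r-1}e_{r-1}-f'_{r-1}e'_{r-1}-q\inv(q-q\inv)f_{r-1,r}k_re_re_{r-1})$; the paper does exactly this, only packaged as one composite of automorphisms rather than a two-level induction. Your ``main obstacle'' is real but resolves cleanly: a direct computation with the formulas for $T^\imath_l$, using only $[e_i,f_{i+1}]=0$, $[e_{i+1},f_{i+1}]=[k_{i+1};0]$ and the $k_{i+1}$-weights, shows $T^\imath_{i+1}T^\imath_i(e_{i+1})$ is a unit multiple of $e_i$; and then $T^\imath_{i+1}T^\imath_i(e'_{i+1})$ is a unit multiple of $e'_i$ for free, since $e'_{i+1}=[e_{i+1},t_{i+2}]_{q\inv}$ and $T^\imath_{i+1}T^\imath_i(t_{i+2})=t_{i+1}$, so no separate ``parallel computation'' is needed. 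The identities $T^\imath_i(e_{i+1,j-1})=e_{i,j-1}$, $T^\imath_i(e'_{i+1,j-1})=e'_{i,j-1}$ are indeed immediate from the definitions (with your convention $e_{i,i}:=e_i$, which the paper also uses tacitly).

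The one genuine soft spot is your appeal to ``$T^\imath_i$ is a $\bbK_1$-algebra automorphism of $\clUi_{\bbK_1}$.'' Because you transport the \emph{abstract} membership $[t_j,t_{i+1}]\in(q-1)(\clUi_{\bbK_1}e_{i+1,j-1}+\clUi_{\bbK_1}e'_{i+1,j-1})$, your induction structurally needs $T^\imath_i(\clUi_{\bbK_1})\subseteq\clUi_{\bbK_1}$ to handle the unknown coefficients. This is not established in the paper and is not automatic: here $\clUi_{\bbK_1}=\Ui\cap\U_{\bbK_1}$ is defined as an intersection, and $T^\imath_i$ is an automorphism of $\Ui$ only, not the restriction of an automorphism of $\U$, so preservation of the integral form would require an argument (e.g.\ an integral generating set) that is nowhere supplied. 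The paper's proof avoids this entirely by pushing the explicit formula for $[t_r,t_{r-1}]$ through the automorphisms, so that the coefficients of $e_{i,j-1}$ and $e'_{i,j-1}$ in the final expression are the concrete elements $f_{i,j-1}$, $f'_{i,j-1}$, $f_{i,r}k_{j,r}e_{j,r}$, whose membership in $\clUi_{\bbK_1}$ can be checked directly (they are iterated $q$-brackets of generators with unit coefficients). You can repair your argument the same way: strengthen the inductive statement to an explicit identity (carry the coefficients along), at which point your proof coincides with the paper's.
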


\begin{proof}
We compute as
\begin{align}
\begin{split}
[t_j,t_i] &= T^\imath_{i,i+1,\ldots,j-2}([t_j,t_{j-1}]) \\
&= T^\imath_{i,i+1,\ldots,j-2} T^\imath_{j,j-1} T^\imath_{j+1,j} \cdots T^\imath_{r-1,r-2}([t_r,t_{r-1}]) \\
&= (q-q\inv) T^\imath_{i,i+1,\ldots,j-2} T^\imath_{j,j-1} T^\imath_{j+1,j} \cdots T^\imath_{r-1,r-2}(f_{r-1}e_{r-1} - f'_{r-1}e'_{r-1} \\
&\qu - q\inv(q-q\inv)[f_r,f_{r-1}]_q k_r e_r e_{r-1}) \\
&= (q-q\inv)T^\imath_{i,i+1,\ldots,j-2}(f_{j-1}e_{j-1} - f'_{j-1}e'_{j-1} - q\inv(q-q\inv) f_{j-1,r} k_{j,r} e_{j,r}e_{j-1}) \\
&= (q-q\inv)(f_{i,j-1}e_{i,j-1} - f'_{i,j-1}e'_{i,j-1} - q\inv(q-q\inv)f_{i,r}k_{j,r} e_{j,r} e_{i,j-1}).
\end{split} \nonumber
\end{align}
This proves the assertion.
\end{proof}

Let $M \in \clC$, $\lm \in (\frt')^*$. Let $m \in M_\lm$ be such that $M_{\lm - l\beta_{r-1} + \beta_r} = 0$ for all $l \in \Z$, and $t_rm = a_rm$, $t_{r-1}m = a_{r-1}m$ for some $a_r,a_{r-1} \in \bbK_1$. Then, the vectors
\begin{align}
\begin{split}
&l_rm := \frac{(q-q\inv)t_r + \sqrt{(q-q\inv)^2t_r^2 + 4}}{2}m, \\
&f_{r-1,\pm}m := (f_{r-1}l_r^{\pm 1} \pm f'_{r-1})\{l_r;0\}\inv m, \\
&e_{r-1,\pm}m := (e_{r-1}l_r^{\pm 1} \mp q e'_{r-1})\{l_r;0\}\inv m
\end{split} \nonumber
\end{align}
make sense. Moreover, we have
\begin{align}
\begin{split}
&f_{r-1,\pm}m \in M_{\lm-\beta_{r-1}}, \qu e_{r-1,\pm}m \in M_{\lm+\beta_{r-1}}\\
&t_r f_{r-1,\pm}m = [a_r;\pm1] f_{r-1,\pm}m, \\
&t_r e_{r-1,\pm}m = [a_r;\pm1] e_{r-1,\pm}m, \\
&l_r f_{r-1,\pm}m = q^{\pm 1} f_{r-1,\pm}m, \\
&l_r e_{r-1,\pm}m = q^{\pm 1} e_{r-1,\pm}m.
\end{split} \nonumber
\end{align}
Let us compute $t_{r-1}f_{r-1,\pm}m$ as
\begin{align}
\begin{split}
t_{r-1}f_{r-1,\pm}m &= ((-f'_{r-1}k_{r-1}\inv + q\inv f_{r-1}t_{r-1})l_r^{\pm 1} \pm (-f_{r-1}k_{r-1} + qf'_{r-1}t_{r-1}))\{ l_r;0 \}\inv m \\
&= (f_{r-1}l_r^{\pm 1}(q\inv t_{r-1} \mp k_{r-1}l_r^{\mp 1}) \pm f'_{r-1}(qt_{r^1} \mp k_{r-1}\inv l_r^{\pm 1}))\{ l_r;0 \}\inv m \\
&= (f_{r-1,+} b_{\pm} + f_{r-1,-} c_{\pm}) m,
\end{split} \nonumber
\end{align}
where
\begin{align}
\begin{split}
&b_+ = (\{ l_r;-1 \}t_{r-1} - \{ k_{r-1};0 \})\{ l_r;0 \}\inv, \\
&b_- = -(q-q\inv)l_r\inv(t_{r-1} - [k_{r-1}l_r;0])\{ l_r;0 \}\inv, \\
&c_+ = -(q-q\inv)l_r(t_{r-1} + [k_{r-1}l_r\inv;0])\{ l_r;0 \}\inv, \\
&c_- = (\{ l_r;1 \}t_{r-1} + \{ k_{r-1};0 \})\{ l_r;0 \}\inv.
\end{split}\nonumber
\end{align}
Since $[t_r,t_{r-1}] M_{\lm-\beta_{r-1}} = 0$, the vector $t_{r-1} f_{r-1,\pm} m$ is a $t_r$-eigenvector of eigenvalue $[a_r;\pm 1]$. Hence, we must have
$$
t_{r-1}f_{r-1,+}m = f_{r-1,+}b_+m, \AND t_{r-1}f_{r-1,-}m = f_{r-1,-}c_-m.
$$
Therefore, we obtain
$$
t_{r-1}f_{r-1,\pm}m = f_{r-1,\pm}(\{ l_r;\mp 1 \}t_{r-1} \mp \{ k_{r-1};0 \})\{ l_r;0 \}\inv m.
$$
Similarly, we see that
$$
t_{r-1}e_{r-1,\pm}m = e_{r-1,\pm}(\{ l_r;\mp1 \}t_{r^1} \mp \{ k_{r-1};0 \})\{ l_r;0 \}\inv m.
$$

Hence, $f_{r-1,\pm}^2m := f_{r-1,\pm}(f_{r-1,\pm}m)$, $e_{r-1,\pm} f_{r-1,\pm} m$, and so on make sense. Inductively, we can define $g_1 \cdots g_p m$, $g_q \in \{ e_{r-1,\pm}, f_{r-1,\pm} \}$.

Let us investigate the commutation relations between $e_{r-1,\pm}, f_{r-1,\pm}$. To do so, let us see
\begin{align}
\begin{split}
e_{r-1}f_{r-1,\pm}m &= (([k_{r-1};0] + f_{r-1}e_{r-1})l_r^{\pm 1} \pm (t_{r-1} - k_{r-1}t_r + q\inv f'_{r-1}e_{r-1}))\{ l_r;0 \}\inv m \\
&=([k_{r-1}l_r^{\mp 1};0] \pm t_{r-1} + f_{r-1}e_{r-1}l_r^{\pm 1} \pm q\inv f'_{r-1}e_{r-1})\{ l_r;0 \}\inv m.
\end{split} \nonumber
\end{align}
By decomposing into $t_r$-eigenvectors, we obtain
\begin{align}
\begin{split}
&e_{r-1,\pm}f_{r-1,\pm} m = q\inv f_{r-1,\pm} e_{r-1,\pm} \{ l_r;\pm 1 \}\{ l_r;0 \}\inv m, \\
&e_{r-1,\mp}f_{r-1,\pm} m = ([k_{r-1}l_r^{\mp 1};0] \pm t_{r-1})\{ l_r;0 \}\inv + f_{r-1,\pm} e_{r-1,\mp} m.
\end{split} \nonumber
\end{align}

Using the automorphisms $T^\imath_i$'s, we can consider $f_{i,\pm} m, e_{i,\pm} m$ for $m \in M_\lm$ such that $M_{\lm - l\beta_i + \beta_j} = 0$ for all $i < j \leq r$ and $l \in \Z_{\geq 0}$, and $t_{i+1}m = a_{i+1}m, t_im = a_im$ for some $a_{i+1},a_i \in \bbK_1$. Furthermore, we have
\begin{align}
\begin{split}
&e_{i,\pm}f_{i,\pm} m = q\inv f_{i,\pm} e_{i,\pm} \{ l_{i+1},;\pm 1 \}\{ l_{i+1};0 \}\inv m, \\
&e_{i,\mp}f_{i,\pm} m = ([k_{i}l_{i+1}^{\mp 1};0] \pm t_{i})\{ l_{i+1};0 \}\inv + f_{i,\pm} e_{i,\mp} m.
\end{split} \nonumber
\end{align}

Now, one can verify the following

\begin{prop}
Let $1 \leq i < r$. Then, the following hold.
\begin{enumerate}
\item $\ol{e_r}$ (resp., $\ol{f_r}$) is a $\frt$-root vector of $\frt$-root $\gamma_r := \beta_r+t^r$ (resp., $-\gamma_r$).
\item $\ol{e_{r-1,\mp}}$ (resp., $\ol{f_{r-1,\pm}}$) is a $\frt$-root vector of $\frt$-root $\gamma_{i,\mp} := \beta_i \pm (t^i-t^{i+1})$ (resp., $-\gamma_{i_\mp}$), where $t^i \in \frt^*$ is defined by $\la \ol{t_j}, t^i \ra = \delta_{i,j}$ and $\la k_j, t^i \ra = 0$.
\item Set
$$
w_r := \hf(h_r-h_{r+1} + \ol{t_r}),\ w_{i,\mp} := \hf(h_{i} - h_{\tau(i)} \pm (\ol{t_{i}} - \ol{t_{i+1}})).
$$
Then, the matrix $(\la w_i, \gamma_j \ra)_{i,j \in \{ (k,\pm) \mid 1 \leq k < r \} \cup \{ r \}}$ coincides with the Cartan matrix of $[\frk,\frk] = \frsl_r \oplus \frsl_{r+1}$.
\item We have $\Delta_{\frk} = \{ \gamma_{i,\pm} \mid 1 \leq i < r \} \cup \{ \gamma_r \}$.
\end{enumerate}
\end{prop}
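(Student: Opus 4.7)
The plan is to verify each of the four parts separately, using the many commutation relations computed earlier together with the braid automorphisms $T^\imath_i$. Part (1) is the base case and follows directly from taking classical limits of the relations $[t_r,f_r]_{q\inv} = -f_rk_r$ and $[e_r,t_r]_{q\inv} = -k_re_r$. These give $[\ol{t_r},\ol{f_r}] = -\ol{f_r}$ and $[\ol{t_r},\ol{e_r}] = \ol{e_r}$, so the $t^r$-coefficient of the $\frt$-weight of $\ol{e_r}$ (resp.\ $\ol{f_r}$) is $+1$ (resp.\ $-1$). The $\frh^\theta$-part equals $\pm\beta_r$, read off from the $k_i$-relations applied to $f_r = B_r$ and $e_r = B_{r+1}$.

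For part (2), I would first settle the case $i = r-1$. The explicit formula $f_{r-1,\pm} = (f_{r-1}l_r^{\pm 1} \pm f'_{r-1})\{l_r;0\}\inv$ and the identities $l_r f_{r-1,\pm} m = q^{\pm 1} f_{r-1,\pm} m$ and $t_{r-1} f_{r-1,\pm} m = f_{r-1,\pm}(\{l_r;\mp1\}t_{r-1} \mp \{k_{r-1};0\})\{l_r;0\}\inv m$ were established in the paragraphs immediately preceding the proposition. Taking classical limits yields $[\ol{t_r},\ol{f_{r-1,\pm}}] = \pm \ol{f_{r-1,\pm}}$ and $[\ol{t_{r-1}},\ol{f_{r-1,\pm}}] = \mp \ol{f_{r-1,\pm}}$; combined with the $\frh^\theta$-weight $-\beta_{r-1}$, this identifies the $\frt$-root of $\ol{f_{r-1,\pm}}$ as $-(\beta_{r-1} \pm (t^{r-1} - t^r)) = -\gamma_{r-1,\mp}$, and similarly for $\ol{e_{r-1,\mp}}$. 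For general $1 \le i < r-1$, I would use that $f_{i,\pm}$ and $e_{i,\mp}$ are defined by applying $T^\imath_{i,i+1,\ldots,r-2}$ to $f_{r-1,\pm}$ and $e_{r-1,\mp}$; the first of the two lemmas above gives $T^\imath_{i,\ldots,r-2}(t_{r-1}) = t_i$ and $T^\imath_{i,\ldots,r-2}(t_j) = t_j$ for $j \neq i,r-1$, which lets the $\frt$-weight be transported correctly along the automorphism to yield $-\gamma_{i,\mp}$.

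Part (3) is a finite calculation. Using $\la h_j,\alpha_l \ra = a_{j,l}$, $\la \ol{t_j}, t^l \ra = \delta_{j,l}$, $\la \ol{t_j}, \beta_l \ra = 0$, and $\la h_j - h_{\tau(j)}, t^l \ra = 0$, I would evaluate each entry $\la w_i,\gamma_j \ra$. Since $\tau(i) = 2r+1-i$ satisfies $|\tau(i)-i|>1$ for $i < r$, most pairings $a_{\tau(i),l}$ vanish, and the $(2r-1)\times(2r-1)$ matrix splits into two blocks: one of type $A_r$ containing $w_r$ together with the $w_{i,-}$'s, and one of type $A_{r-1}$ containing the $w_{i,+}$'s, matching the Cartan matrix of $[\frk,\frk] = \frsl_r \oplus \frsl_{r+1}$. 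Part (4) then follows immediately from Conjecture \ref{Assumption} \eqref{Assumption 4}: the weights of a minimal set of generators of $\la\ol{\clX}\ra_{\text{Lie-alg}}$ constitute $\Delta_{\frk}$, and by (1), (2) these weights include $\gamma_r$ and $\gamma_{i,\mp}$; since (3) shows the corresponding $2r-1$ elements are linearly independent in $\frt^*$ with a genuine Cartan matrix of pairings, they must exhaust $\Delta_{\frk}$.

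The main obstacle is the bookkeeping in (2): one must confirm that the iterated formulas for $f_{i,\pm}$ and $e_{i,\mp}$ obtained via the braid group really have classical limits with the claimed $\frt$-weights, since the $T^\imath_i$'s permute parts of the Cartan nontrivially, and the intermediate weight shifts $\pm(t^i - t^{i+1})$ must be checked to land on the right pair of indices rather than on neighbouring combinations.
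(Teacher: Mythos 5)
Your overall strategy is the one the paper itself intends -- the paper offers no written proof beyond ``one can verify'', everything being left as a classical-limit reading of the displayed relations -- and your treatment of (1), (3), (4) and of the case $i=r-1$ in (2) is correct, including the sign bookkeeping ($[\ol{t_{r-1}},\ol{f_{r-1,\pm}}]=\mp\ol{f_{r-1,\pm}}$, $[\ol{t_r},\ol{f_{r-1,\pm}}]=\pm\ol{f_{r-1,\pm}}$, $\frh^\theta$-part $-\beta_{r-1}$). The step that fails is your transport to general $i$ in (2), precisely the point you flagged as the main obstacle. The composite $T^\imath_{i,i+1,\ldots,r-2}$ does send $t_{r-1}\mapsto t_i$, but it \emph{fixes} $t_r$ (every factor $T^\imath_j$ with $j\le r-2$ fixes $t_r$ by the first lemma, since $r\neq j+1$), and it sends $f_{r-1}$ to the iterated $q$-commutator $f_{i,r-1}=T^\imath_{i,\ldots,r-2}(f_{r-1})$, whose $\frh^\theta$-weight is $-(\beta_i+\cdots+\beta_{r-1})$. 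Hence the image of $f_{r-1,\pm}$ under this composite is a weight vector for the pair $(t_i,t_r)$, not $(t_i,t_{i+1})$, and has the wrong $\frh^\theta$-part; it is not $f_{i,\pm}$, and its classical limit does not have root $-\gamma_{i,\mp}=-\beta_i\mp(t^i-t^{i+1})$. So for $i<r-1$ part (2) is not established by the transport you describe.

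The repair is to move the whole local configuration $(e_{r-1},f_{r-1},k_{r-1}^{\pm1},t_{r-1},t_r)$ to $(e_i,f_i,k_i^{\pm1},t_i,t_{i+1})$, shifting one index at a time: for instance the composite $T^\imath_{i+1,i}\,T^\imath_{i+2,i+1}\cdots T^\imath_{r-1,r-2}$ sends $t_r\mapsto t_{i+1}$ and $t_{r-1}\mapsto t_i$, and, granted the braid-relation identities implicit in the proofs of the paper's two lemmas (where products of triples $T^\imath_{j+1,j,j-1}$ perform exactly this kind of shift, e.g.\ carrying $[e_{r-1},t_{r-2}]$ to $[e_j,t_{j-1}]$), it carries $e_{r-1},f_{r-1},k_{r-1}$ to $e_i,f_i,k_i$ up to invertible scalars in $\bbK_1$; then $f_{r-1,\pm},e_{r-1,\mp}$ go to $f_{i,\pm},e_{i,\mp}$ and the weight data $(-\beta_i,\mp(t^i-t^{i+1}))$ comes out as claimed. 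Equivalently, one can bypass the transport altogether: the relations the paper displays for general $i$ (that $f_{i,\pm}m\in M_{\lm-\beta_i}$, the $l_{i+1}$-eigenvalue shift, the $t_i$-commutation rule of the same shape as at $i=r-1$, and the commutator $[t_j,t_i]$ lying in $(q-1)$ times left ideals generated by elements of $\clX$) are exactly the input your $i=r-1$ computation uses, so once those are accepted the classical-limit argument goes through verbatim with $r-1,r$ replaced by $i,i+1$. With that correction the remainder of your argument -- the block computation in (3) showing the $\gamma_{i,+}$'s give an $A_{r-1}$ block and $\gamma_r$ together with the $\gamma_{i,-}$'s an $A_r$ block, and the exhaustion argument in (4) using that the $2r-1$ root vectors $\ol{e_{i,\pm}},\ol{e_r}$ generate $\la\ol{\clX}\ra_{\text{Lie-alg}}$ -- is sound.
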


By the argument above, we see that Conjecture \ref{Assumption} is valid in this case.

Now, by induction on $n \in \Z_{\geq 0}$, we have
$$
e_r f_r^n = [n]f_r^{n-1}(t_r + [k_r;-2n+2]) + q^nf_r^ne_r,
$$
and
\begin{align}
\begin{split}
&e_i f_i^n = [n] f_i^{n-1} [k_i;-n+1] + f_i^ne_1, \\
&e_{i,\mp}f_{i,\pm}^n = [n]f_{i,\pm}^{n-1}([k_{i}l_{i+1}^{\mp 1};-2n+2] \pm t_{i})\{ l_{i+1};\pm(n-1) \}\inv + f_{i,\pm}^n e_{i,\mp},
\end{split} \nonumber
\end{align}
for all $1 \leq i < r$.

\begin{theo}
Let $\bfa = (a_w)_{w \in \clW} \in \bbK_1^{\clW}$. Then, the following are equivalent:
\begin{enumerate}
\item $V(\bfa)$ is finite-dimensional.
\item There exist $K \in \bbK_1^{\times}$, $n_{1,\mp}, \ldots, n_{r-1,\mp}, n_r \in \Z_{\geq 0}$ and $\sigma_1,\ldots,\sigma_r \in \{ +,- \}$ such that $n_r = \la w_r,\lm_{\bfa} \ra$, $n_{i,\mp} = \la w_{i,\mp}, \lm_{\bfa} \ra$, and
$$
k_rv_{\bfa} = Kv_{\bfa},\ k_i v_{\bfa} = \sigma_iq^{n_{i,+} + n_{i,-}}v_{\bfa},\ l_i v_{\bfa} = \begin{cases}
\sigma_r q^{-\sigma_r 2n_r} K^{\sigma_r 1} v_{\bfa} \qu & \IF i = r, \\
q^{-\sigma_i(n_{i,+} - n_{i,-})} l_{i+1}^{\sigma_i 1} v_{\bfa} \qu & \IF i \neq r,
\end{cases}
$$
and $\ol{K} = \sigma_r 1$.
\end{enumerate}
\end{theo}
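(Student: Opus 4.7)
The plan is to adapt the argument from the $(\frsl_{2r+1},\frso_{2r+1})$ case proved earlier. The simple roots of $\frk = \frs(\frgl_r \oplus \frgl_{r+1})$ are $\gamma_r$ and $\gamma_{i,\pm}$ for $1 \leq i < r$, with negative root vectors $f_r, f_{i,\pm}$, positive root vectors $e_r, e_{i,\mp}$, and simple coroots $w_r, w_{i,\pm}$. By Corollary \ref{Sufficient condition for fd of irr hwm}, to prove $\dim V(\bfa) < \infty$ it suffices to exhibit nonnegative integers $n_r = \la w_r, \lm_{\bfa} \ra$ and $n_{i,\pm} = \la w_{i,\pm}, \lm_{\bfa} \ra$ with $f_r^{n_r+1} v_{\bfa} = 0$ and $f_{i,\pm}^{n_{i,\pm}+1} v_{\bfa} = 0$. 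The key inputs are the commutation identities
$$
e_r f_r^n = [n] f_r^{n-1}(t_r + [k_r;-2n+2]) + q^n f_r^n e_r,
$$
$$
e_{i,\mp} f_{i,\pm}^n = [n] f_{i,\pm}^{n-1}([k_{i} l_{i+1}^{\mp 1};-2n+2] \pm t_{i})\{ l_{i+1};\pm(n-1) \}\inv + f_{i,\pm}^n e_{i,\mp}
$$
displayed just before the theorem, together with the fact that $t_r, k_r, t_i, k_i, l_{i+1}$ all act on $v_{\bfa}$ by scalars in $\bbK_1$.

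For $(1) \Rightarrow (2)$, assume $V(\bfa)$ is finite-dimensional and let $N_r, N_{i,\pm} \geq 0$ be the largest integers with $f_r^{N_r} v_{\bfa} \neq 0$ and $f_{i,\pm}^{N_{i,\pm}} v_{\bfa} \neq 0$. Substituting $n = N_r+1$ and $n = N_{i,\pm}+1$ into the commutation identities and using $e_r v_{\bfa} = e_{i,\mp} v_{\bfa} = 0$ yields
$$
(t_r + [k_r;-2N_r])v_{\bfa} = 0, \qquad ([k_i l_{i+1}^{\mp 1};-2N_{i,\pm}] \pm t_i)v_{\bfa} = 0.
$$
Writing $K$ for the $k_r$-eigenvalue of $v_{\bfa}$, the first equation determines the $t_r$-, hence $l_r$-, eigenvalue of $v_{\bfa}$ via the quadratic relation $t_r = [l_r;0]$, with the sign $\sigma_r = \ol{K}$ forced by the branch of the square root whose classical limit is $+2$. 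For each $i < r$, the two equations indexed by $\mp$ can be solved simultaneously for the $k_i$- and $l_i$-eigenvalues, producing a single sign $\sigma_i$ and the formulas $k_i v_{\bfa} = \sigma_i q^{n_{i,+}+n_{i,-}} v_{\bfa}$ and $l_i v_{\bfa} = q^{-\sigma_i(n_{i,+}-n_{i,-})} l_{i+1}^{\sigma_i} v_{\bfa}$ with $n_{i,\pm} := N_{i,\pm}$. Applying the classical limit $\ol{\ \cdot\ }$ to the bracket equations translates them into the pairings $n_r = \la w_r, \lm_{\bfa} \ra$ and $n_{i,\pm} = \la w_{i,\pm}, \lm_{\bfa} \ra$, and since $N_r, N_{i,\pm} \in \Z_{\geq 0}$, so are the corresponding pairings.

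For $(2) \Rightarrow (1)$, the assumed eigenvalue conditions are exactly what make the scalar factors $(t_r + [k_r;-2n_r])v_{\bfa}$ and $([k_i l_{i+1}^{\mp 1};-2n_{i,\pm}] \pm t_i)v_{\bfa}$ vanish, so the commutation identities give $e_r f_r^{n_r+1} v_{\bfa} = 0$ and $e_{i,\mp} f_{i,\pm}^{n_{i,\pm}+1} v_{\bfa} = 0$. Every other generator in $\clX$ annihilates these vectors by weight considerations, since their $\frt'$-weights fall outside the relevant one-step translates. Hence $f_r^{n_r+1} v_{\bfa}$ and $f_{i,\pm}^{n_{i,\pm}+1} v_{\bfa}$ each generate a proper $\clUi$-submodule of the irreducible module $V(\bfa)$, so must be zero, and Corollary \ref{Sufficient condition for fd of irr hwm} concludes that $\dim V(\bfa) < \infty$.

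The main technical obstacle is the $(1) \Rightarrow (2)$ direction at each $i < r$: the two equations coming from the two choices of sign must be inverted simultaneously, and the recursive relation $l_i v_{\bfa} = q^{-\sigma_i(n_{i,+}-n_{i,-})} l_{i+1}^{\sigma_i} v_{\bfa}$ emerges only after tracking the square-root branch determined by $\ol{l_i} = 1$; one must check that a single sign $\sigma_i$ is consistent with both the $k_i$- and the $l_i$-eigenvalue formulas, which requires a short case analysis on the signs of the classical limits. Once this sign bookkeeping is settled, the remaining calculations are a routine adaptation of the $(\frsl_{2r+1},\frso_{2r+1})$ argument.
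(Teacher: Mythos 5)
Your proposal is correct and follows essentially the same route as the paper: take maximal powers of the lowering operators $f_r$ and $f_{i,\pm}$ applied to $v_{\bfa}$, use the displayed commutation identities together with $t_i = [l_i;0]$ and the square-root branch $\ol{l_i} = 1$ to solve for the eigenvalues and signs, and obtain the converse by killing $f_r^{n_r+1}v_{\bfa}$, $f_{i,\pm}^{n_{i,\pm}+1}v_{\bfa}$ via weight considerations and irreducibility and then invoking Corollary \ref{Sufficient condition for fd of irr hwm}. The only cosmetic difference is that the paper first pins down $k_i(\bfa) = \sigma_i q^{N_i}$ for $i<r$ using the standard quantum $\frsl_2$-triple $(e_i,f_i,k_i^{\pm 1})$, whereas you recover the $k_i$-eigenvalue by solving the two $\pm$-indexed relations simultaneously; both yield $k_i v_{\bfa} = \sigma_i q^{n_{i,+}+n_{i,-}}v_{\bfa}$, so this is a harmless variation rather than a different argument.
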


\begin{proof}
Let $k_i(\bfa), t_i(\bfa)$ be such that
$$
k_i v_{\bfa} = k_i(\bfa) v_{\bfa}, \ t_i v_{\bfa} = t_i(\bfa) v_{\bfa}.
$$

Assume that $V(\bfa)$ is finite-dimensional. Since for each $1 \leq i < r$, the triple $(e_i,f_i,k_i^{\pm 1})$ is a quantum $\frsl_2$-triple, there exist unique $N_i \in \Z_{\geq 0}$ and $\sigma_i \in \{ +,- \}$ such that
$$
N_i = \la h_i-h_{\tau(i)}, \lm_{\bfa} \ra \AND k_i(\bfa) = \sigma_i q^{N_i}.
$$

Also, there exists unique $N_r \in \Z_{\geq 0}$ such that
$$
f_r^{N_r} v_{\bfa} \neq 0 \AND f_r^{N_r + 1} v_{\bfa} = 0.
$$
Hence, we have
$$
0 = e_r f_r^{N_r + 1} v_{\bfa} = [N_r+1] (t_r(\bfa) + [k_r(\bfa);-2N_r]) f_r^{N_r} v_{\bfa},
$$
which implies $[k_r(\bfa);-2N_r] + t_r(\bfa)$. Since $t_r(\bfa) = [l_r(\bfa);0]$, we obtain $l_r(\bfa) = q^{-2N_r}k_r(\bfa)$ or $-q^{2N_r}k_r(\bfa)\inv$. Recall that the classical limit of the eigenvalues of $l_r$ has to be $1$. Hence, we must have $\ol{k_r}(\bfa) = \sigma_r 1$ for some $\sigma_r \in \{ +,- \}$. Consequently,
$$
l_r(\bfa) = \sigma_r q^{-\sigma_r 2N_r} k_r(\bfa)^{\sigma_r1}.
$$

Let us see that
$$
0 = \ol{[k_r(\bfa);-2N_r] + t_r(\bfa)} = \la \ol{t_r} + h_r-h_{r+1}, \lm_{\bfa} \ra - 2N_r = 2\la w_r, \lm_{\bfa} \ra - 2N_r.
$$
Thus, we obtain $N_r = \la w_r, \lm_{\bfa} \ra$.

Similarly, for each $1 \leq i < r$, there exists unique $N_{i,\pm} \in \Z_{\geq 0}$ such that
$$
l_i(\bfa) = q^{\sigma_i(N_{i,-} - N_{i,+})}l_{i+1}(\bfa)^{\sigma_i 1} \AND N_{i,\pm} = \la w_{i,\pm}, \lm_{\bfa} \ra.
$$
In particular, we have
$$
N_{i,+} + N_{i,-} = \la h_i - h_{2r+1-i}, \lm_{\bfa} \ra = N_i.
$$
This completes the proof of $(2)$.

Conversely, assuming $(2)$, assertion $(1)$ follows by the same argument as in the proof of Theorem \ref{Classification for AI-1}.
\end{proof}

\begin{cor}
Let $\bfa = (a_w)_{w \in \clW} \in \bbK_1^{\clW}$. Then, the following are equivalent:
\begin{enumerate}
\item $V(\bfa)$ is a finite-dimensional classical weight module.
\item There exist $K \in \bbK_1^{\times}$, $n_{1,\mp}, \ldots, n_{r-1,\mp}, n_r \in \Z_{\geq 0}$ such that $n_r = \la w_r, \lm_{\bfa} \ra$, $n_{i,\mp} = \la w_{i,\mp}, \lm_{\bfa} \ra$, and
$$
k_rv_{\bfa} = Kv_{\bfa},\ k_i v_{\bfa} = q^{n_{i,+} + n_{i,-}}v_{\bfa},\ l_i v_{\bfa} = \begin{cases}
q^{-2n_r} K v_{\bfa} \qu & \IF i = r, \\
q^{-(n_{i,+} - n_{i,-})} l_{i+1} v_{\bfa} \qu & \IF i \neq r,
\end{cases}
$$
and $\ol{K} = 1$.
\end{enumerate}
\end{cor}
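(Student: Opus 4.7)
The plan is to deduce this corollary as a direct specialization of the preceding theorem, using Lemma \ref{3.3.6} to identify which of the sign parameters $\sigma_i$ must equal $+1$ in order for $V(\bfa)$ to lie in $\clC$. Concretely, I would begin by invoking Lemma \ref{3.3.6}, which states that for a finite-dimensional irreducible highest weight module $V(\bfa)$, membership in $\clC$ is equivalent to the classical limits of the eigenvalues of $v_\bfa$ under the operators $k_i$ for $i \in I \setminus I_\otimes$ and $l_j$ for $j \in I_\otimes$ all being $1$. Since for the pair $(\frsl_{2r+1}, \frs(\frgl_r \oplus \frgl_{r+1}))$ we have $I_\otimes = \emptyset$, only the $k_i$-eigenvalues need to be checked.

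Next, I would read off these eigenvalues from the theorem: for $1 \leq i < r$ they are $\sigma_i q^{n_{i,+}+n_{i,-}}$, with classical limit $\sigma_i$; and for $i = r$ the eigenvalue is $K$, with classical limit $\sigma_r$. Therefore the classical weight condition forces $\sigma_1 = \cdots = \sigma_r = +$, which in particular gives $\ol{K} = 1$. Substituting $\sigma_i = +$ into the formulas of the theorem immediately collapses the $l_i$-eigenvalue expressions into those claimed in (2), while preserving the integrality and highest-weight identifications $n_r = \la w_r, \lm_{\bfa} \ra$ and $n_{i,\mp} = \la w_{i,\mp}, \lm_{\bfa} \ra$ verbatim. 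This gives the implication (1) $\Rightarrow$ (2); for the converse I would simply note that the data in (2) correspond to the theorem's data with all $\sigma_i = +$, so the theorem yields finite-dimensionality, and the $k_i$-eigenvalues on $v_\bfa$ all have classical limit $1$, so Lemma \ref{3.3.6} yields $V(\bfa) \in \clC$.

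I do not anticipate any genuine obstacle: all the nontrivial work has been carried out in the theorem and in Lemma \ref{3.3.6}. The one point requiring a moment of attention is that the $l_i$'s appearing in this section are not among the $l_j$ for $j \in I_\otimes$ (of which there are none here) but are constructed from the $t_i$'s; their classical limits are automatically $1$ by the defining formula $l_r = \frac{(q-q\inv) t_r + \sqrt{(q-q\inv)^2 t_r^2 + 4}}{2}$ (with the convention fixing the classical limit of the square root to be $+2$), so the classical weight condition imposes no further constraint on them beyond what has already been extracted from the $k_i$'s.
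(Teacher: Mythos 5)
Your proposal is correct and follows exactly the route the paper intends: the corollary is the immediate combination of the preceding theorem with Lemma \ref{3.3.6}, where the classical weight condition (applied to the $k_i$-eigenvalues, there being no $l_j$ with $j \in I_\otimes$ since $I_\otimes = \emptyset$) forces all signs $\sigma_i$ to be $+$ and hence $\ol{K}=1$, and conversely. Your remark that the $l_i$ built from the $t_i$ have classical limit $1$ by construction, so they impose no extra constraint, is also in line with the paper's usage.
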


\begin{cor}
Every finite-dimensional irreducible module in $\clC$ is defined over $\C(q)$.
\end{cor}

\subsection{$(\frsl_{2r},\frs(\frgl_r \oplus \frgl_r))$}
Set $\varsigma_i := q^{-\delta_{i,r}}$, and $f_i := B_i$, $e_i := B_{2r-i}$, $t_r := B_r$. Then, the defining relations are as follows: For $1 \leq i,j \leq r-1$,
\begin{align}
\begin{split}
&k_i f_j = q^{-(\delta_{i,j}(2+\delta_{i,r}) - \delta_{i,j-1} - \delta_{i,j+1})} f_j k_i, \\
&k_i e_j = q^{\delta_{i,j}(2+\delta_{i,r}) - \delta_{i,j-1} - \delta_{i,j+1}} e_j k_i, \\
&k_i t_r = t_r k_i, \\
&f_i^2f_j - [2]f_if_jf_i + f_jf_i^2 = 0 \qu \IF |i-j| = 1, \\
&e_i^2e_j - [2]e_ie_je_i + e_je_i^2 = 0 \qu \IF |i-j| = 1, \\
&f_if_j - f_jf_i = 0 \qu \IF |i-j| > 1, \\
&e_ie_j - e_je_i = 0 \qu \IF |i-j| > 1, \\
&e_if_j - f_je_i = \delta_{i,j}[k_i;0], \\
&f_it_r - t_rf_i = 0 \qu \IF i \neq r-1, \\
&e_it_r - t_re_i = 0 \qu \IF i \neq r-1, \\
&f_{r-1}^2t_r - [2]f_{r-1}t_rf_{r-1} + t_rf_{r-1}^2 = 0, \\
&e_{r-1}^2t_r - [2]e_{r-1}t_re_{r-1} + t_re_{r-1}^2 = 0, \\
&t_r^2f_{r-1} - [2]t_rf_{r-1}t_r + f_{r-1}t_r^2 = f_{r-1}, \\
&t_r^2e_{r-1} - [2]t_re_{r-1}t_r + e_{r-1}t_r^2 = e_{r-1}.
\end{split} \nonumber
\end{align}

Setting
$$
e_{r-1,\pm} := (e_{r-1}l_r^{\pm 1} \pm [t_r,e_{r-1}]_q)\frac{1}{\{ l_r;0 \}}, \ f_{r-1,\pm} := (f_{r-1}l_r^{\pm 1} \pm [t_r,f_{r-1}]_q)\frac{1}{\{ l_r;0 \}},
$$
we have
\begin{align}
&[e_{r-1,+},f_{r-1,+}] = 0 = [e_{r-1,-},f_{r-1,-}], \label{AIV3 1}\\
&[e_{r-1,+},f_{r-1,-}] + [e_{r-1,-},f_{r-1,+}] = [k_{r-1};0], \label{AIV3 2}\\
&[e_{r-1,+}\{l_r;0\},e_{r-1,-}\{l_r;0\}] = 0, \label{AIV3 3}\\
&[f_{r-1,+}\{l_r;0\},f_{r-1,-}\{l_r;0\}] = 0 \label{AIV3 4}.
\end{align}

By induction on $n \in \Z_{\geq 0}$, we see that
$$
e_{r-1,\mp} f_{r-1,\pm}^n = [n] f_{r-1,\pm}^{n-1}([k_{r-1};-n+1] - [e_{r-1,\pm}, f_{r-1,\mp} \frac{\{l_r;0\}}{\{l_r;\pm(n-1)\}}]) + f_{r-1,\pm}^n e_{r-1,\mp}.
$$

For $1 \leq i \leq r-1$, set
$$
t_i := T^\imath_{i,\ldots,r-1}(t_{r}), \qu f'_i := [t_{i+1},f_i], \qu e'_i := [e_i,t_{i+1}]_{q\inv},
$$
and
\begin{align}
\begin{split}
&\clX := \{ e_1,\ldots,e_{r-1},e'_1,\ldots,e'_{r-1}  \}, \\
&\clY := \{ f_1,\ldots,f_{r-1},f'_1,\ldots,f'_{r-1}  \}, \\
&\clW := \{ [k_1;0],\ldots,[k_{r-1};0],t_1,\ldots,t_r  \}.
\end{split} \nonumber
\end{align}

The following two lemmas are proved by the same way as in the previous subsection.

\begin{lem}
Let $i \in \{ 1,\ldots,r-1 \}$ and $j \in \{ 1,\ldots,r \}$ be such that $i \neq j$. Then, we have
$$
T^\imath_i(t_j) = \begin{cases}
t_i \qu & \IF j = i+1, \\
t_j \qu & \OW.
\end{cases}
$$
\end{lem}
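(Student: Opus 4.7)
The proof follows verbatim the argument given for the analogous lemma in the preceding subsection, so I would simply mimic that braid-group computation. The only ingredients needed are the braid relations
$$T^\imath_i T^\imath_j = T^\imath_j T^\imath_i \IF |i-j|\geq 2, \qqu T^\imath_i T^\imath_{i+1} T^\imath_i = T^\imath_{i+1} T^\imath_i T^\imath_{i+1},$$
together with one preliminary calculation: $T^\imath_i(B_r)=B_r$ for $1\leq i\leq r-2$. This last identity is immediate from the explicit $T^\imath_i$-formulas for the quasi-split type listed earlier in the paper, since in the present case $\tau(r)=r$ (so $r\in I_\otimes$) and both $|r-i|$ and $|\tau(r)-i|=|r-i|$ are at least $2$, which puts $B_r$ in the ``otherwise'' branch of the formula.

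I would then split the proof into three cases according to the position of $j$ relative to $i$.

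\textit{Case $j=i+1$.} This is immediate from the definition $t_i := T^\imath_i T^\imath_{i+1}\cdots T^\imath_{r-1}(t_r)$, which gives $T^\imath_i(t_{i+1})=T^\imath_i T^\imath_{i+1}\cdots T^\imath_{r-1}(t_r)=t_i$.

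\textit{Case $j>i+1$.} Every index $l$ appearing in $t_j=T^\imath_j\cdots T^\imath_{r-1}(t_r)$ satisfies $l\geq j\geq i+2$, so $T^\imath_i$ commutes with each $T^\imath_l$. Pushing $T^\imath_i$ all the way in and applying $T^\imath_i(t_r)=T^\imath_i(B_r)=B_r=t_r$ (which is valid since $i\leq j-2\leq r-2$) gives $T^\imath_i(t_j)=t_j$.

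\textit{Case $j<i$.} This is the only nontrivial case, and it is resolved by precisely the manipulation in the previous subsection: one uses $T^\imath_i T^\imath_l=T^\imath_l T^\imath_i$ for $l\leq i-2$ to move $T^\imath_i$ through $T^\imath_j,T^\imath_{j+1},\ldots,T^\imath_{i-2}$, then applies the braid relation $T^\imath_i T^\imath_{i-1} T^\imath_i=T^\imath_{i-1} T^\imath_i T^\imath_{i-1}$ to replace $T^\imath_i T^\imath_{i-1} T^\imath_i$ by $T^\imath_{i-1} T^\imath_i T^\imath_{i-1}$, then uses $T^\imath_{i-1} T^\imath_l=T^\imath_l T^\imath_{i-1}$ for $l\geq i+1$ to push the trailing $T^\imath_{i-1}$ past $T^\imath_{i+1},\ldots,T^\imath_{r-1}$ so that it acts directly on $t_r$. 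Finally $T^\imath_{i-1}(t_r)=t_r$ (since $i-1\leq r-2$), and collapsing the factors yields $T^\imath_i(t_j)=t_j$.

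The main ``obstacle'' is really only bookkeeping: one has to check that the hypothesis $i\leq r-1$ supplies enough room so that the index $i-1$ used in the braid move still satisfies $i-1\leq r-2$, and that $T^\imath_i(B_r)=B_r$ genuinely applies in the $(\frsl_{2r},\frs(\frgl_r\oplus\frgl_r))$ setting where $r$ is the unique $\tau$-fixed circled vertex. Both of these are immediate from the setup.
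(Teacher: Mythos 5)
Your proposal is correct and follows essentially the same route as the paper: the paper proves this lemma by the identical braid-group manipulation given for the analogous lemma in the $(\frsl_{2r+1},\frs(\frgl_r\oplus\frgl_{r+1}))$ subsection (far commutations, one application of $T^\imath_{i-1}T^\imath_iT^\imath_{i-1}=T^\imath_iT^\imath_{i-1}T^\imath_i$, and $T^\imath_{i-1}(t_r)=t_r$), which is exactly your case $j<i$. Your additional verification that $T^\imath_i(B_r)=B_r$ for $i\leq r-2$ from the type AIV formulas just makes explicit the step the paper treats as clear in the cases $j\geq i+1$.
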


\begin{lem}
Let $i,j \in \{ 1,\ldots,r \}$ be such that $j+1 < i \leq r$ or $i < j < r$. Then we have
$$
[e_j,t_i] = 0 = [t_i,f_j].
$$
\end{lem}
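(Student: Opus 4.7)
The plan is to mirror closely the proof of the analogous lemma in the previous subsection for $(\frsl_{2r+1},\frs(\frgl_r \oplus \frgl_{r+1}))$, splitting into the same two cases. The structural reason the argument transfers is that $t_i = T^\imath_i T^\imath_{i+1} \cdots T^\imath_{r-1}(t_r)$ is defined in exactly the same way, and the braid relations on the $T^\imath_l$'s are available in both settings.

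For the first case $j+1 < i \leq r$, I would argue that $t_i$ belongs to the subalgebra of $\clUi$ generated by the $e_l,f_l,k_l^{\pm 1},t_r$ with indices $l > j+1$. The point is that each $T^\imath_l$ appearing in the product (so $l \geq i > j+1$) only transforms generators whose index lies in $\{l, l\pm 1, \tau(l), \tau(l)\pm 1\}$, and for $l \geq i > j+1$ all of these indices exceed $j+1$. Since $e_j$ and $f_j$ (via the defining relations together with $K_\alpha B_m = q^{-(\alpha,\alpha_m)} B_m K_\alpha$) commute with every generator of this subalgebra at the level of commutator brackets, $[e_j,t_i] = 0 = [t_i,f_j]$ follows immediately.

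For the second case $i < j < r$, I would exploit the braid relations. Since $T^\imath_l(e_j) = e_j$ for $l \leq j-2$, one can factor out the first $T^\imath$'s to write
$$
[e_j, t_i] = T^\imath_{i, i+1, \ldots, j-2}([e_j, t_{j-1}]).
$$
Then, by a chain of applications of $T^\imath_{l-1} T^\imath_l T^\imath_{l-1} = T^\imath_l T^\imath_{l-1} T^\imath_l$ matching the pattern used in the previous subsection, push the index on $t$ up until the question is reduced to showing $[e_{r-1}, t_{r-2}] = 0$. This last commutator is the heart of the proof: writing $t_{r-2} = T^\imath_{r-2} T^\imath_{r-1}(B_r)$ explicitly from the formulas for $T^\imath_i$ in the present AIII case with $\tau(r)=r$ (analogous to those listed in Section~\ref{QSP}.7 but adapted to the fixed middle node), and verifying by direct computation that $e_{r-1} = B_{r+1}$ commutes with the resulting expression. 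The vanishing $[t_i, f_j] = 0$ follows by a parallel argument with $e$'s replaced by $f$'s.

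The main obstacle is the final vanishing $[e_{r-1}, t_{r-2}] = 0$: whereas in the $(\frsl_{2r+1}, \frs(\frgl_r \oplus \frgl_{r+1}))$ case one could use the clean expression $t_{r-2} = [e_{r-2,r}, f_{r-2,r}]_q - [k_{r-2,r}; 0]$ and invoke the fact that $e_{r-1}$ commutes with each building block, here $t_r = B_r$ is itself a Chevalley-type generator satisfying the nonstandard Serre relations $e_{r-1}^2 t_r - [2]e_{r-1} t_r e_{r-1} + t_r e_{r-1}^2 = 0$ and $t_r^2 e_{r-1} - [2] t_r e_{r-1} t_r + e_{r-1} t_r^2 = e_{r-1}$, so the bookkeeping of how $T^\imath_{r-2} T^\imath_{r-1}$ rearranges things near the fixed middle node is the genuine computational step.
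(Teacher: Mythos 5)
Your proposal is correct and matches the paper's own treatment: the paper disposes of this lemma by saying it is "proved by the same way as in the previous subsection," which is exactly the two-case scheme you give — for $j+1<i\le r$, the element $t_i=T^\imath_{i,\ldots,r-1}(t_r)$ lies in the subalgebra generated by $e_l,f_l,k_l^{\pm1}$ ($l\ge i$) together with $t_r$, all of which commute with $e_j,f_j$ since $j\le r-2$; for $i<j<r$, one uses the braid relations to reduce to the single vanishing $[e_{r-1},t_{r-2}]=0$. Like the paper, you leave that bottom-rank vanishing as a direct verification (it can be checked inside $\U$ since $t_{r-2},e_{r-1}\in\Ui\subset\U$), and your observation that the odd-case shortcut via $t_{r-2}=[e_{r-2,r},f_{r-2,r}]_q-[k_{r-2,r};0]$ with factorwise commutation does not transfer verbatim — so that a genuine computation near the $\tau$-fixed node $r$ (whose $T^\imath_{r-1}(B_r)$ formula is the type AIV one listed in Section \ref{QSP}) is required — is accurate.
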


\begin{lem}
For $i,j \in \{ 1,\ldots,r \}$, we have $[t_i,t_j] \in (q-1)(\clUi e_{i,r-1} + \clUi [t_r,e_{i,r-1}]_q)$.
\end{lem}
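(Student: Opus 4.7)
The plan is to mimic the proof of Lemma \ref{[t_j,t_i] for AIII}. Since $[t_i,t_i]=0$ and $[t_i,t_j]=-[t_j,t_i]$, I may assume $1\le i<j\le r$. The two commutation lemmas immediately preceding the statement (in particular $T^\imath_k(t_l)=t_l$ whenever $l\ne k+1$), together with the braid relations satisfied by the $T^\imath$'s, allow one to rewrite, exactly as in the previous subsection,
\[
[t_i,t_j] \;=\; T^\imath_{i,i+1,\ldots,j-2}\, T^\imath_{j,j-1}\, T^\imath_{j+1,j}\, \cdots\, T^\imath_{r-1,r-2}\bigl([t_{r-1},t_r]\bigr).
\]
This reduces the claim to the single base case $(i,j)=(r-1,r)$.

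The heart of the argument is therefore the explicit computation of $[t_{r-1},t_r]$. Since $t_{r-1}=T^\imath_{r-1}(t_r)=T^\imath_{r-1}(B_r)$, I would first extract a closed-form expression for $T^\imath_{r-1}(B_r)$ by working inside the rank-two subalgebra generated by $e_{r-1},f_{r-1},t_r,k_{r-1}^{\pm1}$, using the $\imath$Serre-type relations $t_r^2 f_{r-1}-[2]t_rf_{r-1}t_r+f_{r-1}t_r^2=f_{r-1}$ and its $e_{r-1}$-counterpart, together with the braid formulas of \cite{Ko14} transported via $\phi_{\bfeta,\bfzeta}$. Plugging the resulting expression into $[t_{r-1},t_r]$ and collapsing by direct expansion, the classical limit vanishes because $\overline{t_{r-1}},\overline{t_r}$ both lie in the abelian subalgebra $\frt$ of $\frk$, which already gives $[t_{r-1},t_r]\in(q-1)\clUi$. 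The finer containment in $\clUi\, e_{r-1}+\clUi\,[t_r,e_{r-1}]_q$ emerges because every term of $t_{r-1}$ beyond the scalar part is built from $e_{r-1},f_{r-1},t_r,k_{r-1}^{\pm1}$, and the only way such combinations fail to commute with $t_r$ is through factors of $e_{r-1}$ or of the $q$-bracket $[t_r,e_{r-1}]_q$ (which here play the role that $e'_{r-1}$ played in the previous subsection).

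Transporting this base containment along the braid string is then routine: the composite in the displayed equation is an algebra automorphism of $\clUi$ defined over $\bbK_1$, $(q-1)$ is central, and by the definition $e_{i,r-1}:=T^\imath_{i,i+1,\ldots,r-2}(e_{r-1})$ combined with $T^\imath_k(t_r)=t_r$ for all $k\ne r-1$, the composite sends $e_{r-1}$ to a $\bbK_1^\times$-scalar multiple of $e_{i,r-1}$ and $[t_r,e_{r-1}]_q$ to a $\bbK_1^\times$-scalar multiple of $[t_r,e_{i,r-1}]_q$. This finishes the reduction. The main obstacle is clearly the rank-two base calculation: producing the explicit formula for $t_{r-1}$ via $T^\imath_{r-1}(B_r)$ (which is not spelled out for this particular case in the Symmetries subsection), and then recognising by hand that every summand of $[t_{r-1},t_r]$ carries a right factor of $e_{r-1}$ or $[t_r,e_{r-1}]_q$ — a finite but intricate computation inside the rank-two $\imath$quantum group $\Ui(\frsl_4,\frs(\frgl_2\oplus\frgl_2))$, whose asymmetry (caused by $t_r$ being itself a generator rather than a quantum $\frsl_2$-commutator as in the previous subsection) must be confronted head-on.
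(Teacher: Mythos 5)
Your overall strategy (reduce to the single case $(i,j)=(r-1,r)$ via the $T^\imath$-automorphisms, then do an explicit rank-two computation) is the same as the paper's, but at the decisive base-case step your outline leaves the actual content unproven, and the two shortcuts you offer in its place do not work. First, the classical-limit observation only yields $[t_{r-1},t_r]\in(q-1)\clUi_{\bbK_1}$; the lemma, as it feeds into Conjecture \ref{Assumption} (3), is a statement about the $\bbK_1$-form, and knowing separately that an element is divisible by $q-1$ and that it lies in the left ideal $\clUi_{\bbK_1}e_{r-1}+\clUi_{\bbK_1}[t_r,e_{r-1}]_q$ does not formally give membership in $(q-1)$ times that ideal (the ideal need not be $(q-1)$-saturated), while over the field $\bbK$ the factor $q-1$ is vacuous. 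Second, the heuristic that ``the only way such combinations fail to commute with $t_r$ is through factors of $e_{r-1}$ or $[t_r,e_{r-1}]_q$'' is not a valid principle: $f_{r-1}$ fails to commute with $t_r$ just as much, and $t_{r-1}$ is built from $f$'s as well as $e$'s, so a priori $[t_r,t_{r-1}]$ could contain terms ending in $f_{r-1}$ or in $k_{r-1}^{\pm1}$, $l_r^{\pm1}$. The paper's actual mechanism, which is the missing idea, is to rewrite $t_{r-1}=[e_{r-1},[t_r,f_{r-1}]_q]_{q\inv}+k_{r-1}t_r$ in terms of $e_{r-1,\pm}$, $f_{r-1,\pm}$, $l_r$ (using the commutation relations established just before the lemma), so that $t_{r-1}$ splits into an $l_r$-weight-zero part commuting with $t_r$ plus an explicit $q\inv(q-q\inv)$-multiple of $f_{r-1,+}e_{r-1,+}l_r\inv-f_{r-1,-}e_{r-1,-}l_r$; commuting with $t_r$ and recombining gives the closed identity $[t_r,t_{r-1}]=q\inv(q-q\inv)\bigl(qf_{r-1}e_{r-1}+[t_r,f_{r-1}]_q[t_r,e_{r-1}]_q\bigr)$, which exhibits the $(q-1)$-factor and the right factors $e_{r-1}$, $[t_r,e_{r-1}]_q$ simultaneously and over $\bbK_1$. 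Without this (or an equivalent explicit formula) your argument does not establish the finer containment.

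A secondary inaccuracy is in your transport step: you claim the composite of $T^\imath$'s fixes $t_r$ and carries $e_{r-1}$, $[t_r,e_{r-1}]_q$ to $\bbK_1^\times$-multiples of $e_{i,r-1}$, $[t_r,e_{i,r-1}]_q$, citing $T^\imath_k(t_r)=t_r$ for $k\neq r-1$; but for $j<r$ the string $T^\imath_{j,j-1}T^\imath_{j+1,j}\cdots T^\imath_{r-1,r-2}$ contains $T^\imath_{r-1}$, which sends $t_r$ to $t_{r-1}$, so the honest output of the transport is a containment involving $e_{i,j-1}$ and $t_j$ (the case $j=r$ giving exactly the stated form). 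The paper is terse on this point too, but your specific justification is not correct as written and would need to be repaired, e.g.\ by arguing as in the proof of Lemma \ref{[t_j,t_i] for AIII}, pushing the explicit base-case formula through the automorphisms term by term.
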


\begin{proof}
We have
\begin{align}
\begin{split}
t_{r-1} &= [e_{r-1},[t_r,f_{r-1}]_q]_{q\inv} + k_{r-1}t_r \\
&= [e_{r-1},f_{r-1,+}l_r\inv- f_{r-1,-}l_r]_{q\inv} + k_{r-1}t_r \\
&= k_{r-1}t_r + [e_{r-1,-}, f_{r-1,+}]l_r\inv - [e_{r-1,+}, f_{r-1,-}]l_r \\
&\qu + q\inv(q-q\inv)(f_{r-1,+} e_{r-1,+}l_r\inv - f_{r-1,-}e_{r-1,-}l_r).
\end{split} \nonumber
\end{align}
Hence, we obtain
\begin{align}
\begin{split}
[t_r,t_{r-1}] &= q\inv(q-q\inv)[t_r,f_{r-1,+} e_{r-1,+}l_r\inv - f_{r-1,-}e_{r-1,-}l_r] \\
&= q\inv(q-q\inv)(f_{r-1,+}e_{r-1,+} l_r\inv \{ l_r;1 \} + f_{r-1,-}e_{r-1,-} l_r \{ l_r;-1 \}) \\
&= q\inv(q-q\inv)(qf_{r-1}e_{r-1} + [t_r,f_{r-1}]_q[t_r,e_{r-1}]_q).
\end{split} \nonumber
\end{align}
This implies that $[t_r,t_{r-1}] \in (q-1)(\clUi_{\bbK_1}e_{r-1} + \clUi_{\bbK_1}[t_r,e_{r-1}]_q)$. Then, the assertion follows by applying the automorphisms $T^\imath_i$'s (see the proof of Lemma \ref{[t_j,t_i] for AIII}).
\end{proof}

In the proof, we see that
$$
t_{r-1} = k_{r-1}t_r + [e_{r-1,-}, f_{r-1,+}]l_r\inv - [e_{r-1,+}, f_{r-1,-}]l_r.
$$
Combining it with $[e_{r-1,-}, f_{r-1,+}] + [e_{r-1,+}, f_{r-1,-}] = [k_{r-1};0]$, we obtain
$$
[e_{r-1,\pm}, f_{r-1,\mp}]\{ l_r;0 \} = [k_{r-1}l_r^{\pm 1}; 0] \mp t_{r-1}.
$$

By a similar argument to the previous subsection, one can define operators $l_{i+1}^{\pm 1}$, $e_{i,\pm}$, $f_{i,\pm}$ acting on the weight spaces $M_{\lm}$ of $M \in \clC$ such that $M_{\lm-l\beta_i + \beta_j} = 0$ for all $i < j < r$ and $l \in \Z$. Then, we have
$$
[e_{i,\pm}, f_{i,\mp}]\{ l_{i+1};0 \} = [k_il_{i+1}^{\pm 1};0] \mp t_i.
$$

Now, one can verify the following:

\begin{prop}
Let $1 \leq i < r$. Then, the following hold.
\begin{enumerate}
\item $\ol{e_{r-1,\mp}}$ (resp., $\ol{f_{r-1,\pm}}$) is a $\frt$-root vector of $\frt$-root $\gamma_{i,\mp} := \beta_i \pm (t^i-t^{i+1})$ (resp., $-\gamma_{i_\mp}$), where $t^i \in \frt^*$ is defined by $\la \ol{t_j}, t^i \ra = \delta_{i,j}$ and $\la k_j, t^i \ra = 0$.
\item Set
$$
w_{i,\mp} := \hf(h_{i} - h_{\tau(i)} \pm (\ol{t_{i}} - \ol{t_{i+1}})).
$$
Then, the matrix $(\la w_i, \gamma_j \ra)_{i,j \in \{ (k,\pm) \mid 1 \leq k < r \}}$ coincides with the Cartan matrix of $[\frk,\frk] = \frsl_r \oplus \frsl_{r}$.
\item We have $\Delta_{\frk} = \{ \gamma_{i,\pm} \mid 1 \leq i < r \}$.
\end{enumerate}
\end{prop}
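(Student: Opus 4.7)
The plan is to prove all three parts by reducing to the case $i = r-1$ via the braid automorphisms $T^{\imath}_{i,i+1,\ldots,r-2}$, which by definition intertwine $(e_i,f_i,t_i,l_{i+1})$ with $(e_{r-1},f_{r-1},t_{r-1},l_r)$. Since each $T^{\imath}_j$ is an automorphism of $\clUi_{\bbK_1}$ that descends to an automorphism of $\ol{\clUi} = U(\frk)$ implementing the corresponding simple reflection in $\frk$, the $\frt$-root structure transports consistently, and it suffices to verify everything when $i = r-1$.

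For Part (1) at $i=r-1$: the $\frh^{\theta}$-weight of $\ol{e_{r-1,\mp}}$ is inherited from $\ol{e_{r-1}} = \ol{B_{r+1}}$, namely $-\beta_{r+1} = \beta_{r-1}$ (using $\beta_{\tau(j)} = -\beta_j$). The defining relation $t_r^2 e_{r-1} - [2]t_re_{r-1}t_r + e_{r-1}t_r^2 = e_{r-1}$ specializes at $q=1$ to $(\on{ad}\ol{t_r})^2\ol{e_{r-1}} = \ol{e_{r-1}}$, so $\ol{e_{r-1,\pm}} = \tfrac{1}{2}(\ol{e_{r-1}} \pm [\ol{t_r},\ol{e_{r-1}}])$ are the $\pm 1$-eigenvectors of $\on{ad}\ol{t_r}$, giving $t^r$-weight $\mp 1$. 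The $\ol{t_j}$-weight for $j < r-1$ vanishes by the lemma $[e_{r-1},t_j]=0$. For the remaining $\ol{t_{r-1}}$-weight, I would use the explicit formula
\[
t_{r-1} = k_{r-1}t_r + [e_{r-1,-},f_{r-1,+}]l_r\inv - [e_{r-1,+},f_{r-1,-}]l_r
\]
together with relations \eqref{AIV3 1}--\eqref{AIV3 4} to deduce $[\ol{t_{r-1}},\ol{e_{r-1,\mp}}] = \pm \ol{e_{r-1,\mp}}$; alternatively, this follows a posteriori from Part (2) once the $\frsl_2$-triple structure of $(\ol{e_{i,\mp}},\ol{f_{i,\pm}},w_{i,\mp})$ is known. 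The argument for $\ol{f_{r-1,\pm}}$ is entirely symmetric.

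For Part (2): the key identity $[e_{i,\pm},f_{i,\mp}]\{l_{i+1};0\} = [k_il_{i+1}^{\pm 1};0] \mp t_i$ already established above specializes at $q=1$ to
\[
2[\ol{e_{i,\pm}},\ol{f_{i,\mp}}] = (h_i - h_{\tau(i)}) \pm \ol{t_{i+1}} \mp \ol{t_i} = 2 w_{i,\mp},
\]
since $\ol{\{l_{i+1};0\}}=2$ and $\ol{[k_il_{i+1}^{\pm 1};0]} = (h_i-h_{\tau(i)}) \pm \ol{t_{i+1}}$ by the ``log'' property of $[\,\cdot\,;0]$ at $q=1$. The Cartan entries $\langle w_{i,\mu}, \gamma_{j,\nu}\rangle$ decompose into an $\frh^{\theta}$-contribution $(a_{i,j}-a_{\tau(i),j})$ and a $\sum\C\ol{t_k}$-contribution $\mu\nu(2\delta_{i,j}-\delta_{i,j+1}-\delta_{i+1,j})$, the cross-pairings vanishing by the definitions of $\beta_j$ and $t^j$. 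For $1 \le i,j \le r-1$ one has $\tau(i) = 2r-i \ge r+1$, hence $a_{\tau(i),j}=0$, so the matrix is block-diagonal in the sign index, each $(r-1)\times(r-1)$ block being the Cartan matrix of $A_{r-1}$; this is exactly the Cartan matrix of $\frsl_r \oplus \frsl_r = [\frk,\frk]$. Part (3) is then immediate: the $2(r-1)$ weights $\gamma_{i,\pm}$ pair with the linearly independent coroots $w_{i,\pm}$ via the Cartan matrix of $[\frk,\frk]$, so they form a simple system for the root system of $[\frk,\frk]$, which coincides with $\Phi_{\frk}$.

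The main technical obstacle is making precise that the braid operators $T^{\imath}_i$ descend to the classical limit and carry the operator $e_{r-1,\pm}$ (defined by the formula involving $l_r$ and $t_r$) to a ``shifted'' $e_{i,\pm}$ defined analogously via $l_{i+1}$ and $t_{i+1}$; this is implicit in the paper's phrase ``Using the automorphisms $T^{\imath}_i$'s, we can consider $f_{i,\pm}m, e_{i,\pm}m$'', but a careful verification requires checking that $T^{\imath}_{i,\ldots,r-2}$ commutes with the functional construction $l(\cdot)$ applied to $t_{r-1}$, i.e., intertwines $l_r \leftrightarrow l_{i+1}$. Once this compatibility is in place, all the calculations above propagate from $i=r-1$ to arbitrary $1 \le i < r$.
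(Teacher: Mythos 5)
Your proposal is correct and follows essentially the same route the paper intends, which states this proposition as a verification from the identities it has just set up: the classical limit of $[e_{i,\pm},f_{i,\mp}]\{l_{i+1};0\}=[k_il_{i+1}^{\pm1};0]\mp t_i$ together with the relations \eqref{AIV3 1}--\eqref{AIV3 4}, the $l_{i+1}$-eigenvector decomposition coming from the Serre-type relation for $t_{i+1}$, the lemmas $[e_j,t_i]=0=[t_i,f_j]$, and transport to general $i$ by the $T^\imath_i$'s. One bookkeeping remark: in your displayed classical limit the last equality should read $2w_{i,\pm}$ rather than $2w_{i,\mp}$ (equivalently $[\ol{e_{i,\mp}},\ol{f_{i,\pm}}]=w_{i,\mp}$, which is the pairing you correctly use in your parenthetical about the $\frsl_2$-triples), since with your labeling the triple would pair a root vector of root $\gamma_{i,+}$ with the coroot $w_{i,-}$ and fail to close.
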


Then, from the argument above, one can verify by a similar way to the previous subsection that Conjecture \ref{Assumption} is valid in this case.

By induction on $n \in \Z_{\geq 0}$, we obtain
$$
e_{i,\mp}f_{i,\pm}^n \in [n]f_{i,\pm}^{n-1}([k_{i}l_{i+1}^{\mp 1};-2n+2] \pm t_{i})\{ l_{i+1};\pm(n-1) \}\inv + \clUi e_{i,+} + \clUi e_{i,-}
$$

\begin{theo}
Let $\bfa = (a_w)_{w \in \clW} \in \bbK_1^{\clW}$. Then, the following are equivalent:
\begin{enumerate}
\item $V(\bfa)$ is finite-dimensional.
\item There exist $L \in \bbK_1^{\times}$, $n_{1,\mp}, \ldots, n_{r-1,\mp} \in \Z_{\geq 0}$ and $\sigma_1,\ldots,\sigma_{r} \in \{ +,- \}$ such that $n_{i,\mp} = \la w_{i,\mp}, \lm_{\bfa} \ra$, and
$$
l_rv_{\bfa} = Lv_{\bfa},\ k_i v_{\bfa} = \sigma_iq^{n_{i,+} + n_{i,-}}v_{\bfa},\ l_i v_{\bfa} = q^{-\sigma_i(n_{i,+} - n_{i,-})} l_{i+1}^{\sigma_i 1} v_{\bfa},
$$
and $\ol{L} = \sigma_r 1$.
\end{enumerate}
\end{theo}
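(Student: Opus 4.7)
My plan is to adapt the proof of the preceding $(\frsl_{2r+1},\frs(\frgl_r\oplus\frgl_{r+1}))$ classification theorem to the present setting, where the structure is nearly identical except that $t_r=B_r$ is itself a generator of $\Ui$ rather than constructed from $[e_r,f_r]_q-[k_r;0]$. Consequently, the ``$r$-case'' sub-argument disappears: instead of producing an additional non-negative integer $n_r$ constrained by $f_r^{n_r+1}v_\bfa=0$, the eigenvalue $L$ of $l_r$ on $v_\bfa$ becomes a free parameter in $\bbK_1^\times$ subject only to $\ol{L}=\sigma_r 1$ for some sign $\sigma_r$. Also, $k_r=1$ since $\tau(r)=r$, which is why $k_r$ does not appear in condition (2).

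For the forward direction $(1)\Rightarrow(2)$, I first invoke the quantum $\frsl_2$-triple $(e_i,f_i,k_i^{\pm 1})$ for each $1\le i<r$ to produce $N_i\in\Z_{\ge 0}$ and $\sigma_i\in\{+,-\}$ with $k_iv_\bfa=\sigma_iq^{N_i}v_\bfa$ and $N_i=\la h_i-h_{\tau(i)},\lm_\bfa\ra$. I then set $L:=l_r(\bfa)$ and define $\sigma_r$ by $\ol{L}=\sigma_r 1$. Next, the displayed identity
\[
e_{i,\mp}f_{i,\pm}^n\in [n]f_{i,\pm}^{n-1}([k_il_{i+1}^{\mp 1};-2n+2]\pm t_i)\{l_{i+1};\pm(n-1)\}\inv+\clUi e_{i,+}+\clUi e_{i,-}
\]
together with $e_{i,\pm}v_\bfa=0$ (since $e_{i,\pm}\in\clX$) and finite-dimensionality of $V(\bfa)$ yields $N_{i,\pm}\in\Z_{\ge 0}$ maximal with $f_{i,\pm}^{N_{i,\pm}}v_\bfa\ne 0$, forcing $[k_i(\bfa)l_{i+1}(\bfa)^{\mp 1};-2N_{i,\pm}]\pm t_i(\bfa)=0$. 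Passing to classical limits produces $N_{i,\pm}=\la w_{i,\pm},\lm_\bfa\ra$ and the prescribed identity $l_iv_\bfa=q^{-\sigma_i(N_{i,+}-N_{i,-})}l_{i+1}^{\sigma_i 1}v_\bfa$; combining with $N_i=N_{i,+}+N_{i,-}$ recovers the formula for $k_iv_\bfa$.

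For the converse $(2)\Rightarrow(1)$, I follow the pattern of Theorem~\ref{Classification for AI-1}. For each simple root $\gamma_{i,\pm}\in\Delta_\frk$, I take the element $f_{i,\mp}\in\clUi_{\bbK_1}$, whose classical limit is a nonzero $\frt$-root vector in $\frk_{-\gamma_{i,\pm}}$. The hypotheses on $n_{i,\pm}$, $L$ and $l_i(\bfa)$ together with the same commutation identity force $e_{i,\pm}f_{i,\mp}^{N_{i,\mp}+1}v_\bfa=0$, and a weight argument (analogous to the one in Theorem~\ref{Classification for AI-1}) shows that every other $x\in\clX$ also annihilates $f_{i,\mp}^{N_{i,\mp}+1}v_\bfa$. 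Hence the $\clUi$-submodule it generates is proper, and by irreducibility of $V(\bfa)$ it must be zero. Corollary~\ref{Sufficient condition for fd of irr hwm} then gives $\dim V(\bfa)<\infty$.

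The main obstacle will be in the forward direction: extracting $N_{i,\pm}$ from $[k_i(\bfa)l_{i+1}(\bfa)^{\mp 1};-2N_{i,\pm}]\pm t_i(\bfa)=0$ requires inductively propagating the signs through the unconstrained parameter $L$ together with the signs $\sigma_i$ arising from the $k_i$-eigenvalues. Unlike the $(\frsl_{2r+1},\frs(\frgl_r\oplus\frgl_{r+1}))$ case, where $\sigma_r$ was tied to a specific non-negative integer $n_r$, here one must verify that an arbitrary $L\in\bbK_1^\times$ with $\ol{L}\in\{\pm 1\}$ is consistent with the inductive formula $l_i(\bfa)=q^{-\sigma_i(N_{i,+}-N_{i,-})}l_{i+1}(\bfa)^{\sigma_i 1}$ seeded by $l_r(\bfa)=L$. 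Once this propagation is verified, the rest is essentially a bookkeeping exercise parallel to the previous subsection.
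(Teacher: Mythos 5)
Your proposal is correct and follows essentially the same route as the paper: the paper omits the proof of this theorem precisely because it is the argument of the preceding $(\frsl_{2r+1},\frs(\frgl_r\oplus\frgl_{r+1}))$ case with the $r$-th $\frsl_2$-string replaced by the free eigenvalue $L$ of $l_r$ (with $\ol{L}=\pm 1$ forced by integrality of $\bbK_1$, since $t_r(\bfa)\in\bbK_1$), and your sketch reproduces exactly that: $\frsl_2$-triples $(e_i,f_i,k_i^{\pm1})$ for the $k_i$-eigenvalues, the string-length identity for $e_{i,\mp}f_{i,\pm}^n$ for the $l_i$-recursion and $n_{i,\mp}=\la w_{i,\mp},\lm_{\bfa}\ra$, and Corollary \ref{Sufficient condition for fd of irr hwm} plus irreducibility for the converse. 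The only caveats are sign/indexing bookkeeping (the $f_{i,\pm}$-string pairs with $w_{i,\mp}$) and the fact that $e_{i,\pm},f_{i,\pm}$ for $i<r-1$ are operators on suitable weight vectors rather than literal elements of $\clX\subset\clUi_{\bbK_1}$, but these are at the same level of informality as the paper's own treatment.
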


\begin{cor}
Let $\bfa = (a_w)_{w \in \clW} \in \bbK_1^{\clW}$. Then, the following are equivalent:
\begin{enumerate}
\item $V(\bfa)$ is a finite-dimensional classical weight module.
\item There exist $L \in \bbK_1^{\times}$, $n_{1,\mp}, \ldots, n_{r-1,\mp} \in \Z_{\geq 0}$ such that $n_{i,\mp} = \la w_{i,\mp}, \lm_{\bfa} \ra $, and
$$
l_rv_{\bfa} = Lv_{\bfa},\ k_i v_{\bfa} = q^{n_{i,+} + n_{i,-}}v_{\bfa},\ l_i v_{\bfa} = q^{-(n_{i,+} - n_{i,-})} l_{i+1} v_{\bfa},
$$
and $\ol{L} = 1$.
\end{enumerate}
\end{cor}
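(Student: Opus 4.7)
The plan is to derive this corollary directly from the preceding classification theorem together with Lemma \ref{3.3.6}. The previous theorem already gives a precise parametrization of when $V(\bfa)$ is finite-dimensional: it asserts the existence of $L \in \bbK_1^\times$, nonnegative integers $n_{i,\mp}$, and signs $\sigma_1,\ldots,\sigma_r \in \{+,-\}$ together with the explicit action formulas for $l_r, k_i, l_i$ on $v_{\bfa}$. Condition $(2)$ of the present corollary is precisely this data in the special case where $\sigma_1 = \cdots = \sigma_r = +$. Hence the problem reduces to showing that, among finite-dimensional $V(\bfa)$'s, the classical weight condition is equivalent to all $\sigma_i$ being $+$.

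First I would verify the easier direction $(2) \Rightarrow (1)$. Assume $(2)$. By the previous theorem (with $\sigma_i = +$ for all $i$), $V(\bfa)$ is finite-dimensional. By construction, $v_{\bfa}$ is a simultaneous eigenvector for $k_i$ ($i \neq r$) and for $l_r$ whose eigenvalues lie in $\bbK_1^\times$ and have classical limit equal to $1$ (note $\ol{q^{n_{i,+}+n_{i,-}}} = 1$ and $\ol{L} = 1$ by hypothesis). Applying Lemma \ref{3.3.6}, this ensures $V(\bfa) \in \clC$.

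For the converse $(1) \Rightarrow (2)$, suppose $V(\bfa)$ is finite-dimensional and classical weight. The previous theorem produces $L$, the $n_{i,\mp}$, and signs $\sigma_i$ with $l_r v_{\bfa} = L v_{\bfa}$, $\ol{L} = \sigma_r$, $k_i v_{\bfa} = \sigma_i q^{n_{i,+}+n_{i,-}} v_{\bfa}$ and $l_i v_{\bfa} = q^{-\sigma_i(n_{i,+}-n_{i,-})} l_{i+1}^{\sigma_i 1} v_{\bfa}$. By Lemma \ref{3.3.6}, the classical weight hypothesis forces the classical limits of the $k_i$- and $l_i$-eigenvalues on $v_{\bfa}$ to equal $1$. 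Taking classical limits on both sides of the displayed formulas yields $\sigma_i = +1$ for $i \neq r$ (from the $k_i$-eigenvalues) and $\sigma_r = +1$ (from $\ol{L} = \sigma_r = 1$). Substituting these back recovers $(2)$.

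No serious obstacle is anticipated: the nontrivial analytical work has already been done in the previous theorem, and Lemma \ref{3.3.6} packages precisely the classical-limit condition needed to fix the signs. The only point requiring care is to confirm that $I_\otimes = \{r\}$ for this Satake diagram, so that the set of generators whose eigenvalues need classical limit $1$ is exactly $\{k_i \mid i \neq r\} \cup \{l_r\}$, matching the conditions $\sigma_i = 1$ ($i \neq r$) and $\ol{L} = 1$ imposed in $(2)$.
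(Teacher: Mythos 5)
Your proposal is correct and follows exactly the route the paper intends: the corollary is obtained as an immediate consequence of the preceding theorem combined with Lemma \ref{3.3.6}, which (since $I_\otimes=\{r\}$ here) forces the classical limits of the $k_i$- and $l_r$-eigenvalues on $v_{\bfa}$ to be $1$ and hence all signs $\sigma_i$ to be $+$. This matches the paper's treatment, which proves the analogous corollary in the $(\frsl_{2r+1},\frso_{2r+1})$ case this way and declares the present case to be proved similarly.
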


\begin{cor}
Every finite-dimensional irreducible module in $\clC$ is defined over $\C(q)$.
\end{cor}

\subsection{Other classical quasi-split types}
In this subsection, we give a possible construction of $\clX,\clY,\clW$ which satisfies Conjecture \ref{Assumption} when our Satake diagram is of type BI, DI, or DIII.

As an example, consider the marked Satake diagram of type BI-1; similar argument holds for other types. Then it contains the marked Satake diagram of type AI-2 as a subdiagram. Set $\varsigma_i = q_i\inv$ for all $i \in I$. Then, one can consider the weight vectors $B_{2i,e_1e_2} \in \clUi$ of weight $e_1b^{2i-1} + e_2b^{2i+1}$ for each $1 \leq i < r$ and $e_1,e_2 \in \{ +,- \}$. Also, recall that $B_{2r}$ is decomposed into the weight vectors as
$$
B_{2r} = B_{2r,+} + B_{2r,-}.
$$

For each $1 \leq i < r$, set $\tau^\imath_i := T^\imath_{2i,2i-1,2i+1,2i}$. By the braid relations, we see the following:
\begin{itemize}
\item $\tau^\imath_i \tau^\imath_j = \tau^\imath_j \tau^\imath_j$ if $|i-j| > 1$, \\
\item $\tau^\imath_i \tau^\imath_j \tau^\imath_i = \tau^\imath_j \tau^\imath_i \tau^\imath_j$ if $|i-j| = 1$.
\end{itemize}
Also, by the definition of $T^\imath_i$'s, we obtain
$$
\tau^\imath_i(B_j) = \begin{cases}
-q_i\inv[B_{2i+2},[B_{2i+1},B_{2i}]_{q_i}]_{q_i} \qu & \IF j = 2i+2, \\
B_{2i-1} \qu & \IF j= 2i+1, \\
-q_i\inv[B_{2i-1},[B_{2i+1},B_{2i}]_{q_i}]_{q_i} \qu & \IF j = 2i, \\
B_{2i+1} \qu & \IF j = 2i-1, \\
-q_i\inv[B_{2i-2},[B_{2i-1},B_{2i}]_{q_i}]_{q_i} \qu & \IF j = 2i-2, \\
B_j \qu & \OW.
\end{cases}
$$
In particular, $\tau^\imath_i$ keeps $\Ui(\frt')$ invariant. Therefore, it induces an automorphism $\tau^\imath_i$ on $\clUi$.

Set $t_{2r-1} := [B_{2r},[B_{2r},B_{2r-1}]_{q^2}]_{q^{-2}}$. Then, we have
$$
[B_{2r,+}\{ l_{2r-1};0 \}, B_{2r,-}\{ l_{2r-1};0 \}] = t_{2r-1}\{ l_{2r-1};0 \}.
$$
Also, we see that
$$
[t_{2r-1},B_{2r}] = -S_{2r,2r-1}(B_{2r},B_{2r-1}) = [2]^2[B_{2r-1},B_{2r}].
$$
By decomposing into the weight vectors, we obtain
$$
[t_{2r-1},B_{2r,\pm}] = \pm [2] B_{2r,\pm} \{ l_{2r-1};\pm 1 \}.
$$

Set $t_{2i-1} := \tau^\imath_i \tau^\imath_{i+1} \cdots \tau^\imath_{r-1}(t_{2r-1})$, $B'_{2i,+\pm} := [B_{2i,+\pm}, t_{2i+1}]_{q_i\inv}$, $B'_{2i,-\pm} := [t_{2i+1}, B_{2i,-\pm}]_{q_i}$, and
\begin{align}
\begin{split}
&\clX := \{ B_{2i,++}, B_{2i,+-}, B'_{2i,++}, B'_{2i,+-} \mid 1 \leq i < r \} \cup \{ B_{2r,+} \}, \\
&\clY := \{ B_{2i,-+}, B_{2i,--}, B'_{2i,-+}, B'_{2i,--} \mid 1 \leq i < r \} \cup \{ B_{2r,-} \}, \\
&\clW := \{ B_{2i-1}, t_{2i-1} \mid 1 \leq i \leq r \}.
\end{split} \nonumber
\end{align}
Then, we expect that these $\clX,\clY,\clW$ satisfy Conjecture \ref{Assumption}. This is true when $r = 1$ as one can verify from calculation in the previous paragraph. As in the AIII case, we defined elements of $\clW$ by using a family of automorphisms on $\clUi$ satisfying the braid relations. Hence, to verify our conjecture, it might suffice to prove when $r = 2$. However, even when $r = 2$, it requires lengthy calculation. Hence, we do not go further in this paper.

\section*{Table of marked Satake diagrams}\label{List}
Here we list the marked Satake diagrams.

\subsubsection*{{\bf AI-1}}
\begin{itemize}
\item[] $I = \{ 1,\ldots,2r \}$.
\item[] $I_\bullet = \emptyset$.
\item[] $I_{\otimes} = \{ 1,3,\ldots,2r-1 \}$.
\item[] $\xymatrix@R=2.5pt{
 & & & & \\
\text{{\tiny $\otimes$}} \ar@{-}[r] \ar@{}[u]|{1} & \circ \ar@{-}[r] \ar@{}[u]|{2} & \cdots \ar@{-}[r] & \text{{\tiny $\otimes$}} \ar@{}[u]|{2r-1} \ar@{-}[r] & \circ \ar@{}[u]|{2r}
}$
\end{itemize}

\subsubsection*{{\bf AI-2}}
\begin{itemize}
\item[] $I = \{ 1,\ldots,2r-1 \}$.
\item[] $I_\bullet = \emptyset$.
\item[] $I_{\otimes} = \{ 1,3,\ldots,2r-1 \}$.
\item[] $\xymatrix@R=2.5pt{
 & & & & & \\
\text{{\tiny $\otimes$}} \ar@{-}[r] \ar@{}[u]|{1} & \circ \ar@{-}[r] \ar@{}[u]|{2} & \cdots \ar@{-}[r] & \text{{\tiny $\otimes$}} \ar@{}[u]|{2r-3} \ar@{-}[r] & \circ \ar@{}[u]|{2r-2} \ar@{-}[r] & \text{{\tiny $\otimes$}} \ar@{}[u]|{2r-1}
}$
\end{itemize}

\subsubsection*{{\bf AII}}
\begin{itemize}
\item[] $I = \{ 1,\ldots,2r-1 \}$.
\item[] $I_\bullet = \{ 1,3,\ldots,2r-1 \}$.
\item[] $I_{\otimes} = \emptyset$.
\item[] $\xymatrix@R=2.5pt{
 & & & & & \\
\bullet \ar@{-}[r] \ar@{}[u]|{1} & \circ \ar@{-}[r] \ar@{}[u]|{2} & \cdots \ar@{-}[r] & \bullet \ar@{}[u]|{2r-3} \ar@{-}[r] & \circ \ar@{}[u]|{2r-2} \ar@{-}[r] & \bullet \ar@{}[u]|{2r-1}
}$
\end{itemize}

\subsubsection*{{\bf AIII-1}(if $r > 1$),\ {\bf AIV}(if $r = 1$)}
\begin{itemize}
\item[] $I = \{ 1,\ldots,r+s-1 \}$, $1 \leq r < s$.
\item[] $I_\bullet = \{ r+1,\ldots,s-1 \}$.
\item[] $I_{\otimes} = \emptyset$.
\item[] $\xymatrix@C=10pt@R=2.5pt{
 & & & \\
\circ \ar@{-}[r] \ar@{}[u]|1 \ar@{<->}@/_10pt/[dddd] & \cdots \ar@{-}[r] & \circ \ar@{}[u]|r \ar@{-}[r] \ar@{<->}@/_10pt/[dddd] & \bullet \ar@{}[u]|{r+1} \ar@{-}[dddd]  \\
 & & \\
 & & \\
 & & \\
\circ \ar@{-}[r] \ar@{}[d]|{r+s-1} & \cdots \ar@{-}[r] & \circ \ar@{}[d]|{s} \ar@{-}[r] & \bullet \ar@{}[d]|{s-1} \\
 & & &
}$
\end{itemize}

\subsubsection*{{\bf AIII-2}}
\begin{itemize}
\item[] $I = \{ 1,\ldots,2r-1 \}$.
\item[] $I_\bullet = \emptyset$.
\item[] $I_{\otimes} = \{ r \}$.
\item[] $\xymatrix@C=10pt@R=3pt{
 & & \\
\circ \ar@{-}[r] \ar@{}[u]|1 \ar@{<->}@/_10pt/[dd] & \cdots \ar@{-}[r] & \circ \ar@{}[u]|{r-1} \ar@{-}[dr] \ar@{<->}@/_10pt/[dd] \\
                      &                          &                         & \text{{\tiny $\otimes$}} \ar@{-}[dl] \ar@{}[r]|{\ r} & \\
\circ \ar@{-}[r] \ar@{}[d]|{2r-1} & \cdots \ar@{-}[r] & \circ \ar@{}[d]|{r+1} \\
 & &
}$
\end{itemize}

\subsubsection*{{\bf BI-1}}
\begin{itemize}
\item[] $I = \{ 1,\ldots,2r \}$.
\item[] $I_\bullet = \emptyset$.
\item[] $I_{\otimes} = \{ 1,3,\ldots,2r-1 \}$.
\item[] $\xymatrix@C=10pt@R=2.5pt{
 & & & & & & & & \\
\text{{\tiny $\otimes$}} \ar@{}[u]|{1} \ar@{-}[r] & \circ \ar@{}[u]|2 \ar@{-}[r] & \cdots \ar@{-}[r] & \text{{\tiny $\otimes$}} \ar@{}[u]|{2r-3} \ar@{-}[r] & \circ \ar@{}[u]|{2r-2} \ar@{-}[r] & \text{{\tiny $\otimes$}} \ar@{=>}[r] \ar@{}[u]|{2r-1} & \circ \ar@{}[u]|{2r}
}$
\end{itemize}

\subsubsection*{{\bf BI-2}}
\begin{itemize}
\item[] $I = \{ 1,\ldots,2r-1 \}$.
\item[] $I_\bullet = \emptyset$.
\item[] $I_{\otimes} = \{ 1,3,\ldots,2r-1 \}$.
\item[] $\xymatrix@C=10pt@R=2.5pt{
 & & & & & & & & \\
\text{{\tiny $\otimes$}} \ar@{}[u]|{1} \ar@{-}[r] & \circ \ar@{}[u]|2 \ar@{-}[r] & \cdots \ar@{-}[r] & \text{{\tiny $\otimes$}} \ar@{}[u]|{2r-3} \ar@{-}[r] & \circ \ar@{}[u]|{2r-2} \ar@{=>}[r] & \text{{\tiny $\otimes$}} \ar@{}[u]|{2r-1}
}$
\end{itemize}

\subsubsection*{{\bf BII-1}}
\begin{itemize}
\item[] $I = \{ 1,\ldots,2r+s \}$, $s > 0$.
\item[] $I_\bullet = \{ 2r+1,\ldots,2r+s \}$.
\item[] $I_{\otimes} = \{ 1,3,\ldots,2r-1 \}$.
\item[] $\xymatrix@C=10pt@R=2.5pt{
 & & & & & & & & & & & & \\
\text{{\tiny $\otimes$}} \ar@{}[u]|{1} \ar@{-}[r] & \circ \ar@{}[u]|2 \ar@{-}[r] & \cdots \ar@{-}[r] & \text{{\tiny $\otimes$}} \ar@{}[u]|{2r-1} \ar@{-}[r] & \circ \ar@{}[u]|{2r} \ar@{-}[r] & \bullet \ar@{}[u]|{2r+1} \ar@{-}[r] & \bullet \ar@{}[u]|{2r+2} \ar@{-}[r] & \cdots \ar@{-}[r] & \bullet  \ar@{=>}[r] & \bullet \ar@{}[u]|{2r+s}
}$
\end{itemize}

\subsubsection*{{\bf BII-2}}
\begin{itemize}
\item[] $I = \{ 1,\ldots,2r+s \}$, $s \geq 0$.
\item[] $I_\bullet = \{ 2r,\ldots,2r+s \}$.
\item[] $I_{\otimes} = \{ 1,3,\ldots,2r-3 \}$.
\item[] $\xymatrix@C=10pt@R=2.5pt{
 & & & & & & & & & & & & \\
\text{{\tiny $\otimes$}} \ar@{}[u]|{1} \ar@{-}[r] & \circ \ar@{}[u]|2 \ar@{-}[r] & \cdots \ar@{-}[r] & \text{{\tiny $\otimes$}} \ar@{}[u]|{2r-3} \ar@{-}[r] & \circ \ar@{}[u]|{2r-2} \ar@{-}[r] & \circ \ar@{}[u]|{2r-1} \ar@{-}[r] & \bullet \ar@{}[u]|{2r} \ar@{-}[r] & \bullet \ar@{}[u]|{2r+1} \ar@{-}[r] & \cdots \ar@{-}[r] & \bullet  \ar@{=>}[r] & \bullet \ar@{}[u]|{2r+s}
}$
\end{itemize}

\subsubsection*{{\bf CI}}
\begin{itemize}
\item[] $I = \{ 1,\ldots,n \}$
\item[] $I_\bullet = \emptyset$
\item[] $I_{\otimes} = \{ n \}$
\item[] $\xymatrix@C=10pt@R=2.5pt{
 & & & & & & & & \\
\circ \ar@{}[u]|{1} \ar@{-}[r] & \cdots \ar@{-}[r] & \circ \ar@{}[u]|{n-1} \ar@{<=}[r] & \text{{\tiny $\otimes$}} \ar@{}[u]|{n}
}$
\end{itemize}

\subsubsection*{{\bf CII}}
\begin{itemize}
\item[] $I = \{ 1,\ldots,2r+s \}$, $s \geq 0$.
\item[] $I_\bullet = \{ 1,3,\ldots,2r-1 \} \cup \{ 2r+1,\ldots,2r+s \}$.
\item[] $I_{\otimes} = \emptyset$.
\item[] $\xymatrix@R=2.5pt@C=10pt{
 & & & & & & & & & & & & \\
\bullet \ar@{-}[r] \ar@{}[u]|{1} & \circ \ar@{-}[r] \ar@{}[u]|{2} & \cdots \ar@{-}[r] & \bullet \ar@{}[u]|{2r-3} \ar@{-}[r] & \circ \ar@{}[u]|{2r-2} \ar@{-}[r] & \bullet \ar@{}[u]|{2r-1} \ar@{-}[r] & \circ \ar@{}[u]|{2r} \ar@{-}[r] & \bullet \ar@{}[u]|{2r+1} \ar@{-}[r] & \bullet \ar@{-}[r] & \cdots \ar@{-}[r] & \bullet \ar@{<=}[r] & \bullet \ar@{}[u]|{2r+s}
}$
\end{itemize}

\subsubsection*{{\bf DI-1}}
\begin{itemize}
\item[] $I = \{ 1,\ldots,2r+1 \}$.
\item[] $I_\bullet = \emptyset$.
\item[] $I_{\otimes} = \{ 2,4,\ldots,2r \} \cup \{ 2r+1 \}$
\item[] $\xymatrix@C=10pt@R=2.5pt{
& & & & & & \text{{\tiny $\otimes$}} \ar@{}[r]|{\ 2r}& \\
\circ \ar@{}[u]|1 \ar@{-}[r] & \text{{\tiny $\otimes$}} \ar@{}[u]|2 \ar@{-}[r] & \cdots \ar@{-}[r] & \circ \ar@{}[u]|{2r-3} \ar@{-}[r] & \text{{\tiny $\otimes$}} \ar@{}[u]|{2r-2} \ar@{-}[r] & \circ\ar@{}[u]|{2r-1} \ar@{-}[ur] \ar@{-}[dr] \\
& & & & & & \text{{\tiny $\otimes$}} \ar@{}[r]|{\ 2r+1} &
}$
\end{itemize}

\subsubsection*{{\bf DI-2}}
\begin{itemize}
\item[] $I = \{ 1,\ldots,2r \}$.
\item[] $I_\bullet = \emptyset$.
\item[] $I_{\otimes} = \{ 1,3,\ldots,2r-1 \} \cup \{ 2r \}$.
\item[] $\xymatrix@C=10pt@R=2.5pt{
& & & & & \text{{\tiny $\otimes$}} \ar@{}[r]|{\ 2r-1}& \\
\text{{\tiny $\otimes$}} \ar@{}[u]|1 \ar@{-}[r] & \circ \ar@{}[u]|2 \ar@{-}[r] & \cdots \ar@{-}[r] & \text{{\tiny $\otimes$}} \ar@{}[u]|{2r-3} \ar@{-}[r] & \circ\ar@{}[u]|{2r-2} \ar@{-}[ur] \ar@{-}[dr] \\
& & & & & \text{{\tiny $\otimes$}} \ar@{}[r]|{\ 2r} &
}$
\end{itemize}

\subsubsection*{{\bf DI-3}}
\begin{itemize}
\item[] $I = \{ 1,\ldots,2r+s \}$, $s \geq 1$.
\item[] $I_\bullet = \{ 2r,\ldots,2r+s \}$.
\item[] $I_{\otimes} = \{ 2,4,\ldots,2r-2 \}$.
\item[] $\xymatrix@C=10pt@R=2.5pt{
& & & & & & & & & \bullet \ar@{}[r]|{\qu\  2r+s-1}& \\
\circ \ar@{}[u]|1 \ar@{-}[r] & \text{{\tiny $\otimes$}} \ar@{}[u]|2 \ar@{-}[r] & \cdots \ar@{-}[r] & \circ \ar@{}[u]|{2r-3} \ar@{-}[r] & \text{{\tiny $\otimes$}} \ar@{}[u]|{2r-2} \ar@{-}[r] & \circ \ar@{-}[r] & \bullet \ar@{-}[r] & \cdots \ar@{-}[r] & \bullet \ar@{}[u]|{2r-1} \ar@{-}[ur] \ar@{-}[dr] \\
& & & & & & & & & \bullet \ar@{}[r]|{\ 2r+s} &
}$
\end{itemize}

\subsubsection*{{\bf DI-4}}
\begin{itemize}
\item[] $I = \{ 1,\ldots,2r+s \}$, $s > 1$.
\item[] $I_\bullet = \{ 2r+1,\ldots,2r+s \}$.
\item[] $I_{\otimes} = \{ 1,3,\ldots,2r-1 \}$.
\item[] $\xymatrix@C=10pt@R=2.5pt{
& & & & & & & & \bullet \ar@{}[r]|{\qu\ 2r+s-1}& \\
\text{{\tiny $\otimes$}} \ar@{}[u]|1 \ar@{-}[r] & \circ \ar@{}[u]|2 \ar@{-}[r] & \cdots \ar@{-}[r] & \text{{\tiny $\otimes$}} \ar@{}[u]|{2r-1} \ar@{-}[r] & \circ \ar@{}[u]|{2r} \ar@{-}[r] & \bullet \ar@{-}[r] & \cdots \ar@{-}[r] & \bullet \ar@{}[u]|{2r-1} \ar@{-}[ur] \ar@{-}[dr] \\
& & & & & & & & \bullet \ar@{}[r]|{\ 2r+s} &
}$
\end{itemize}

\subsubsection*{{\bf DII-1}}
\begin{itemize}
\item[] $I = \{ 1,\ldots,2r \}$.
\item[] $I_\bullet = \{ 1,3,\ldots,2r-1 \}$.
\item[] $I_{\otimes} = \{ 2r \}$.
\item[] $\xymatrix@C=10pt@R=2.5pt{
& & & & & \bullet \ar@{}[r]|{\ 2r-1}& \\
\bullet \ar@{}[u]|1 \ar@{-}[r] & \circ \ar@{}[u]|2 \ar@{-}[r] & \cdots \ar@{-}[r] & \bullet \ar@{}[u]|{2r-3} \ar@{-}[r] & \circ\ar@{}[u]|{2r-2} \ar@{-}[ur] \ar@{-}[dr] \\
& & & & & \text{{\tiny $\otimes$}} \ar@{}[r]|{\ 2r} &
}$
\end{itemize}

\subsubsection*{{\bf DII-2}}
\begin{itemize}
\item[] $I = \{ 1,\ldots,2r-1 \}$.
\item[] $I_\bullet = \{ 1,3,\ldots,2r-3 \}$.
\item[] $I_{\otimes} = \emptyset$.
\item[] $\xymatrix@C=10pt@R=2.5pt{
& & & & & & \circ \ar@{}[r]|{\ 2r-2} \ar@{<->}@/^10pt/[dd]& \\
\bullet \ar@{}[u]|{1} \ar@{-}[r] & \circ \ar@{}[u]|2 \ar@{-}[r] & \cdots \ar@{-}[r] & \bullet \ar@{}[u]|{2r-5} \ar@{-}[r] & \circ \ar@{}[u]|{2r-4} \ar@{-}[r] & \bullet \ar@{}[u]|{2r-3} \ar@{-}[ur] \ar@{-}[dr] \\
& & & & & & \circ \ar@{}[r]|{\ 2r-1} &
}$
\end{itemize}

\subsubsection*{{\bf DIII-1}}
\begin{itemize}
\item[] $I = \{ 1,\ldots,2r \}$.
\item[] $I_\bullet = \emptyset$.
\item[] $I_{\otimes} = \{ 2,4,\ldots,2r-2 \}$.
\item[] $\xymatrix@C=10pt@R=2.5pt{
& & & & & \circ \ar@{}[r]|{\ 2r-1} \ar@{<->}@/^10pt/[dd]& \\
\circ \ar@{}[u]|{1} \ar@{-}[r] & \text{{\tiny $\otimes$}} \ar@{}[u]|2 \ar@{-}[r] & \cdots \ar@{-}[r] & \circ \ar@{}[u]|{2r-3} \ar@{-}[r] & \text{{\tiny $\otimes$}} \ar@{}[u]|{2r-2} \ar@{-}[ur] \ar@{-}[dr] \\
& & & & & \circ \ar@{}[r]|{\ 2r} &
}$
\end{itemize}

\subsubsection*{{\bf DIII-2}}
\begin{itemize}
\item[] $I = \{ 1,\ldots,2r-1 \}$.
\item[] $I_\bullet = \emptyset$.
\item[] $I_{\otimes} = \{ 1,3,\ldots,2r-3 \}$.
\item[] $\xymatrix@C=10pt@R=2.5pt{
& & & & & & \circ \ar@{}[r]|{\ 2r-2} \ar@{<->}@/^10pt/[dd]& \\
\text{{\tiny $\otimes$}} \ar@{}[u]|{1} \ar@{-}[r] & \circ \ar@{}[u]|2 \ar@{-}[r] & \cdots \ar@{-}[r] & \text{{\tiny $\otimes$}} \ar@{}[u]|{2r-5} \ar@{-}[r] & \circ \ar@{}[u]|{2r-4} \ar@{-}[r] & \text{{\tiny $\otimes$}} \ar@{}[u]|{2r-3} \ar@{-}[ur] \ar@{-}[dr] \\
& & & & & & \circ \ar@{}[r]|{\ 2r-1} &
}$
\end{itemize}

\subsubsection*{{\bf EI}}
\begin{itemize}
\item[] $I = \{ 1,\ldots,6 \}$.
\item[] $I_\bullet = \emptyset$.
\item[] $I_{\otimes} = \{ 2,3,5 \}$.
\item[] $\xymatrix@C=10pt@R=5.5pt{
 & & \text{{\tiny $\otimes$}} \ar@{}[r]|2 & \\
\circ \ar@{}[d]|1 \ar@{-}[r] & \text{{\tiny $\otimes$}} \ar@{}[d]|3 \ar@{-}[r] & \circ \ar@{}[d]|4 \ar@{-}[r] \ar@{-}[u] & \text{{\tiny $\otimes$}} \ar@{}[d]|5 \ar@{-}[r] & \circ \ar@{}[d]|6 \\
& & & & &
}$
\end{itemize}

\subsubsection*{{\bf EII}}
\begin{itemize}
\item[] $I = \{ 1,\ldots,6 \}$.
\item[] $I_\bullet = \emptyset$.
\item[] $I_{\otimes} = \{ 4 \}$.
\item[] $\xymatrix@C=10pt@R=5.5pt{
 & & \circ \ar@{}[r]|2 & \\
\circ \ar@{}[d]|1 \ar@{-}[r] \ar@{<->}@/_20pt/[rrrr] & \circ \ar@{}[d]|3 \ar@{-}[r] \ar@{<->}@/_15pt/[rr] & \text{{\tiny $\otimes$}} \ar@{}[d]|4 \ar@{-}[r] \ar@{-}[u] & \circ \ar@{}[d]|5 \ar@{-}[r] & \circ \ar@{}[d]|6 \\
& & & & &
}$
\end{itemize}

\subsubsection*{{\bf EIII}}
\begin{itemize}
\item[] $I = \{ 1,\ldots,6 \}$.
\item[] $I_\bullet = \{ 3,4,5 \}$.
\item[] $I_{\otimes} = \{ 4 \}$.
\item[] $\xymatrix@C=10pt@R=5.5pt{
 & & \circ \ar@{}[r]|2 & \\
\circ \ar@{}[d]|1 \ar@{-}[r] \ar@{<->}@/_20pt/[rrrr] & \bullet \ar@{}[d]|3 \ar@{-}[r]  & \bullet \ar@{}[d]|4 \ar@{-}[r] \ar@{-}[u] & \bullet \ar@{}[d]|5 \ar@{-}[r] & \circ \ar@{}[d]|6 \\
& & & & &
}$
\end{itemize}

\subsubsection*{{\bf EIV}}
\begin{itemize}
\item[] $I = \{ 1,\ldots,6 \}$.
\item[] $I_\bullet = \{ 2,3,4,5 \}$.
\item[] $I_{\otimes} = \emptyset$.
\item[] $\xymatrix@C=10pt@R=5.5pt{
 & & \bullet \ar@{}[r]|2 & \\
\circ \ar@{}[d]|1 \ar@{-}[r] & \bullet \ar@{}[d]|3 \ar@{-}[r] & \bullet \ar@{}[d]|4 \ar@{-}[r] \ar@{-}[u] & \bullet \ar@{}[d]|5 \ar@{-}[r] & \circ \ar@{}[d]|6 \\
& & & & &
}$
\end{itemize}

\subsubsection*{{\bf EV}}
\begin{itemize}
\item[] $I = \{ 1,\ldots,7 \}$.
\item[] $I_\bullet = \emptyset$.
\item[] $I_{\otimes} = \{ 2,3,5,7 \}$.
\item[] $\xymatrix@C=10pt@R=5.5pt{
 & & \text{{\tiny $\otimes$}} \ar@{}[r]|2 & \\
\circ \ar@{}[d]|1 \ar@{-}[r] & \text{{\tiny $\otimes$}} \ar@{}[d]|3 \ar@{-}[r] & \circ \ar@{}[d]|4 \ar@{-}[r] \ar@{-}[u] & \text{{\tiny $\otimes$}} \ar@{}[d]|5 \ar@{-}[r] & \circ \ar@{}[d]|6 \ar@{-}[r] & \text{{\tiny $\otimes$}} \ar@{}[d]|7 \\
& & & & &
}$
\end{itemize}

\subsubsection*{{\bf EVI}}
\begin{itemize}
\item[] $I = \{ 1,\ldots,7 \}$.
\item[] $I_\bullet = \{ 2,5,7 \}$.
\item[] $I_{\otimes} = \{ 3 \}$.
\item[] $\xymatrix@C=10pt@R=5.5pt{
 & & \bullet \ar@{}[r]|2 & \\
\circ \ar@{}[d]|1 \ar@{-}[r] & \text{{\tiny $\otimes$}} \ar@{}[d]|3 \ar@{-}[r] & \circ \ar@{}[d]|4 \ar@{-}[r] \ar@{-}[u] & \bullet \ar@{}[d]|5 \ar@{-}[r] & \circ \ar@{}[d]|6 \ar@{-}[r] & \bullet \ar@{}[d]|7 \\
& & & & &
}$
\end{itemize}

\subsubsection*{{\bf EVII}}
\begin{itemize}
\item[] $I = \{ 1,\ldots,7 \}$.
\item[] $I_\bullet = \{ 2,3,4,5 \}$.
\item[] $I_{\otimes} = \{ 7 \}$.
\item[] $\xymatrix@C=10pt@R=5.5pt{
 & & \bullet \ar@{}[r]|2 & \\
\circ \ar@{}[d]|1 \ar@{-}[r] & \bullet \ar@{}[d]|3 \ar@{-}[r] & \bullet \ar@{}[d]|4 \ar@{-}[r] \ar@{-}[u] & \bullet \ar@{}[d]|5 \ar@{-}[r] & \circ \ar@{}[d]|6 \ar@{-}[r] & \text{{\tiny $\otimes$}} \ar@{}[d]|7 \\
& & & & &
}$
\end{itemize}

\subsubsection*{{\bf EVIII}}
\begin{itemize}
\item[] $I = \{ 1,\ldots,8 \}$.
\item[] $I_\bullet = \emptyset$.
\item[] $I_{\otimes} = \{ 2,3,5,7 \}$.
\item[] $\xymatrix@C=10pt@R=5.5pt{
 & & \text{{\tiny $\otimes$}} \ar@{}[r]|2 & \\
\circ \ar@{}[d]|1 \ar@{-}[r] & \text{{\tiny $\otimes$}} \ar@{}[d]|3 \ar@{-}[r] & \circ \ar@{}[d]|4 \ar@{-}[r] \ar@{-}[u] & \text{{\tiny $\otimes$}} \ar@{}[d]|5 \ar@{-}[r] & \circ \ar@{}[d]|6 \ar@{-}[r] & \text{{\tiny $\otimes$}} \ar@{}[d]|7 \ar@{-}[r] & \circ \ar@{}[d]|8 \\
& & & & & &
}$
\end{itemize}

\subsubsection*{{\bf EIX}}
\begin{itemize}
\item[] $I = \{ 1,\ldots,8 \}$.
\item[] $I_\bullet = \{ 2,3,4,5 \}$.
\item[] $I_{\otimes} = \{ 7 \}$.
\item[] $\xymatrix@C=10pt@R=5.5pt{
 & & \bullet \ar@{}[r]|2 & \\
\circ \ar@{}[d]|1 \ar@{-}[r] & \bullet \ar@{}[d]|3 \ar@{-}[r] & \bullet \ar@{}[d]|4 \ar@{-}[r] \ar@{-}[u] & \bullet \ar@{}[d]|5 \ar@{-}[r] & \circ \ar@{}[d]|6 \ar@{-}[r] & \text{{\tiny $\otimes$}} \ar@{}[d]|7 \ar@{-}[r] & \circ \ar@{}[d]|8 \\
& & & & & &
}$
\end{itemize}

\subsubsection*{{\bf FI}}
\begin{itemize}
\item[] $I = \{ 1,2,3,4 \}$.
\item[] $I_\bullet = \emptyset$.
\item[] $I_{\otimes} = \{ 2 \}$.
\item[] $\xymatrix@C=10pt@R=5.5pt{
 & & & \\
\circ \ar@{}[u]|1 \ar@{-}[r] & \text{{\tiny $\otimes$}} \ar@{}[u]|2 \ar@{=>}[r] & \circ \ar@{}[u]|3 \ar@{-}[r] & \circ \ar@{}[u]|4
}$
\end{itemize}

\subsubsection*{{\bf FII}}
\begin{itemize}
\item[] $I = \{ 1,2,3,4 \}$.
\item[] $I_\bullet = \{ 1,2,3 \}$.
\item[] $I_{\otimes} = \emptyset$.
\item[] $\xymatrix@C=10pt@R=5.5pt{
 & & & \\
\bullet \ar@{}[u]|1 \ar@{-}[r] & \bullet \ar@{}[u]|2 \ar@{=>}[r] & \bullet \ar@{}[u]|3 \ar@{-}[r] & \circ \ar@{}[u]|4
}$
\end{itemize}

\subsubsection*{{\bf G}}
\begin{itemize}
\item[] $I = \{ 1,2 \}$.
\item[] $I_\bullet = \emptyset$.
\item[] $I_{\otimes} = \{ 2 \}$.
\item[] $\xymatrix@C=10pt@R=5.5pt{
 & & & \\
\circ \ar@{}[u]|1 \ar@{-}[r] & \text{{\tiny $\otimes$}} \ar@{}[u]|2 \ar@{=>}[l] \ar@{-}[l]
}$
\end{itemize}

\end{document}